\newcommand{\potimes}{\widehat{\otimes}} 
\newcommand{\Bu}{B^\dagger }
\newcommand{\Au}{A^\dagger }
\newcommand{\ovl}{\overline}
\newcommand{\vp}{\varepsilon}
\newcommand{\bb}[1]{{\mathbb{#1}}}
\newcommand{\ad}{\mathrm{Ad}}
\newcommand{\wt}{\widetilde}
\newcommand{\id}{\text{id}}
\numberwithin{equation}{section}
\numberwithin{equation}{section}
\theoremstyle{plain}
\newtheorem{lemma}{Lemma}[section]
\newtheorem{proposition}[lemma]{Proposition}
\newtheorem{theorem}[lemma]{Theorem}
\newtheorem{corollary}[lemma]{Corollary}
\newtheorem{introtheorem}{Theorem}
\theoremstyle{definition}
\newtheorem{notation}[lemma]{Notation}
\newtheorem{definition}[lemma]{Definition}
\newtheorem*{definition*}{Definition}
\theoremstyle{remark}
\newtheorem{remark}[lemma]{Remark}
\begin{document}

\title{Perturbations of nuclear C$^*$-algebras}

\author[E.~Christensen]{Erik Christensen}
\address{\hskip-\parindent
Erik Christensen, Institute for Mathematiske Fag, University of Copenhagen, Copenhagen, Demark.}
\email{echris@math.ku.dk}
\author[A.~M.~Sinclair]{Allan M.~Sinclair}
\address{\hskip-\parindent
Allan M.~Sinclair, School of Mathematics, University of Edinburgh, JCMB, King's Buildings, Mayfield Road, Edinburgh, EH9 3JZ, UK.}
\email{a.sinclair@ed.ac.uk}
\author[R.~R.~Smith]{Roger R.~Smith}
\address{\hskip-\parindent
Roger R.~Smith, Department of Mathematics, Texas A{\&}M University,
College Station TX 77843-3368, U.S.A.}
\email{rsmith@math.tamu.edu}
\author[S.~A.~White]{Stuart A.~White}
\address{\hskip-\parindent
Stuart A.~White, Department of Mathematics, University of Glasgow, 
University Gardens, Glasgow Q12 8QW, UK.}
\email{s.white@maths.gla.ac.uk}
\author[W.~ Winter]{Wilhelm Winter}
\address{\hskip-\parindent
Wilhelm Winter, School of Mathematical Sciences, University of Nottingham, Nottingham, NG7 2RD, UK.}
\email{wilhelm.winter@maths.nottingham.ac.uk}

\date{October 26, 2009}
\maketitle

\begin{abstract}
Kadison and Kastler introduced a natural metric on the collection of all ${\mathrm C}^*$-subalgebras of  the bounded operators on a separable Hilbert space. They 
conjectured that sufficiently close algebras are unitarily conjugate.  We establish this 
conjecture when one algebra is separable and nuclear. We also consider
one-sided versions of these notions, and we obtain embeddings from certain
near inclusions involving separable nuclear ${\mathrm C}^*$-algebras.  At the end of the paper we demonstrate how our methods lead to improved characterisations of some of the types of algebras that are of current interest in the classification programme.
\end{abstract}

\section{Introduction}

Kadison and Kastler initiated the uniform perturbation theory of operator algebras in 
\cite{Kadison.Kastler}.  They considered the collection of all operator algebras 
acting on a fixed separable Hilbert space and equipped this set with a metric 
induced by  the Hausdorff distance between the unit balls. In general terms, this 
means that two algebras $A$ and $B$ are close if each element in the unit ball of 
one algebra can be closely approximated by an element in the unit ball of the other 
algebra. They conjectured that suitably close operator algebras must be unitarily 
conjugate and this  has been verified in various situations. For von Neumann 
algebras, the injective case was settled in \cite{Christensen.Perturbations1} and 
\cite{Raeburn.CohomologyPerturbation}. It is also known to be true   for certain 
special classes of separable nuclear ${\mathrm C}^*$-algebras, including the separable AF algebras 
\cite{Christensen.NearInclusions,Khoshkam.UnitaryEquivalence,Phillips.PerturbationAF,Phillips.Perturbations2}. In this paper, our primary purpose is to give an 
affirmative answer to Kadison and Kastler's question when one algebra is a 
separable nuclear ${\mathrm C}^*$-algebra, a hypothesis which automatically implies the 
same property for nearby algebras. In this introduction we discuss these results in 
qualitative terms; precise estimates will be given in the main text.

The original question of Kadison and Kastler leads naturally to the more general 
one-sided situation of near inclusions of two ${\mathrm C}^*$-algebras $A$ and $B$ 
introduced by the first named author in \cite{Christensen.NearInclusions}. 
Heuristically, this means that every element of the unit ball of $A$ can be 
approximated closely by some element of the unit ball of $B$, but we do not require 
a reverse approximation.  The reformulation of Kadison and Kastler's question in this context is to  
ask whether an  embedding of $A$ into $B$ can be found whenever $A$ is very 
nearly included in $B$.  We are also able to  resolve this problem positively in three 
situations:
\begin{itemize}
\item[\rm (i)] when $A$ is  separable, has finite nuclear dimension and no 
hypotheses are imposed on $B$;
\item[\rm (ii)] when $A$ is separable, unital and has approximately inner half flip and 
no hypotheses are imposed on $B$;
\item[\rm (iii)] when $A$ is separable and both algebras are nuclear.
\end{itemize}
  The notion of  {\emph {nuclear dimension}} was recently introduced by the last 
named author and Zacharias in \cite{Zacharias.NuclearDimension} and  extends 
the {\emph {decomposition rank}} defined by Kirchberg and the last named author 
in  \cite{Winter.CoveringDimension}. Noncommutative topological dimension is 
particularly relevant in Elliott's programme to classify nuclear C$^{*}$-algebras by 
$K$-theoretic invariants; in fact, all separable simple nuclear C$^{*}$-algebras 
presently covered by known classification theorems have finite nuclear dimension.
   
The first positive answer to Kadison and Kastler's question was given independently 
by the first named author \cite{Christensen.PerturbationsType1} and  Phillips 
\cite{Phi74} when one of $A$ or $B$ is a type $\mathrm{I}$ von 
Neumann algebra. Combining the results of  \cite{Christensen.Perturbations1} with 
those of Raeburn and Taylor \cite{Raeburn.CohomologyPerturbation}, the question 
was subsequently answered positively when one of $A$ or $B$ is an injective von 
Neumann algebra. This  work was formulated variously in terms of injectivity, 
hyperfiniteness and Property $P$, since it predates Connes'  work on the 
equivalence of these notions \cite{Connes.InjectiveFactors}. The most general 
statement of these results was later given in \cite[Corollary 4.4]
{Christensen.NearInclusions}, where it was shown that if $A$ is an injective von 
Neumann algebra acting non-degenerately on some Hilbert space $H$ and $B$ is a 
${\mathrm C}^*$-subalgebra of $\mathbb B(H)$, which is sufficiently close to $A$, then there 
exists a unitary operator $u\in(A\cup B)''$ with $uAu^*=B$. Furthermore, this unitary 
can be taken to be close to the identity, by which we mean that  $\|u-I_H\|$ can be controlled in terms of 
the distance between $A$ and $B$.  A continuous path of unitaries $u_t$ 
connecting $u_0=I_H$ to $u_1=u$ then leads to a continuous deformation  
$A_t=u_tAu_t^*$, $0\leq t \leq 1$, of $A_0=A$ into $A_1=B$. In this way we can 
regard $B$ as a \emph{perturbation} of $A$.   The other situation in which Kadison 
and Kastler's question has been resolved for von Neumann algebras is for two 
close von Neumann subalgebras $A$ and $B$ of a common \emph{finite} von 
Neumann algebra \cite{Christensen.Perturbations2}, where again there is a unitary  
$u\in (A\cup B)''$ close to the identity with $uAu^*=B$.  Perturbation problems have 
also been studied in the ultraweakly closed non-self-adjoint setting 
\cite{Davidson.PerturbationReflexive,Lance.PerturbationNest,Pitts}.

We now turn to perturbation results for ${\mathrm C}^*$-algebras. A consequence of 
the results for injective von Neumann algebras described above and Connes' work 
\cite{Connes.InjectiveFactors} is that any ${\mathrm C}^*$-algebra which is sufficiently close to 
a nuclear ${\mathrm C}^*$-algebra is also nuclear, \cite[Theorem 6.5]
{Christensen.NearInclusions}.  Perturbation results of the form that two sufficiently 
close ${\mathrm C}^*$-algebras $A$ and $B$ must be unitarily conjugate by a unitary $u\in(A
\cup B)''$ have been established under the following sets of hypotheses:
\begin{itemize}
\item[\rm (i)] Either $A$ or $B$ is separable and AF 
\cite{Christensen.NearInclusions} (see also \cite{Phillips.PerturbationAF}).
\item[\rm (ii)] Either $A$ or $B$ is continuous trace and either unital or separable 
\cite{Phillips.Perturbations2}.
\end{itemize}
Perturbation results have also been established for certain extensions of the 
${\mathrm C}^*$-algebras in the classes above by Khoshkam in \cite{Khoshkam.UnitaryEquivalence}.  
On the near inclusion side, Johnson has obtained embeddings from near inclusions 
$A\subset_\gamma B$ when $B$ is a separable subhomogeneous ${\mathrm C}^*$-algebra, 
\cite{Johnson.NearInclusions}. In contrast to the injective von Neumann algebra 
case, we cannot always expect to obtain an estimate which controls  $\|u-I_H\|$ in 
terms of the distance between $A$ and $B$.  Indeed, Johnson
\cite{Johnson.PerturbationExample} has
 constructed two faithful representations of $C[0,1]\otimes\mathbb K$  on some 
separable Hilbert space whose images $A$ and $B$ are unitarily conjugate and can 
be taken to be arbitrarily close, but for which there is no isomorphism $\theta:A
\rightarrow B$ with $\|\theta(x)-x\|\leq\|x\|/70$ for $x\in A$. Here, $\mathbb K$ denotes the algebra of compact operators. 

Kadison and Kastler's original paper \cite{Kadison.Kastler} shows that sufficiently close von Neumann algebras have the same type decompositions. Inspired by this work, various authors have examined properties of close operator algebras: \cite{Phillips.Perturbation} shows that close C$^*$-algebras have isomorphic lattices of ideals and homeomorphic spectra, while the work of Khoshkam, \cite{Kho84,Kho90} shows that sufficiently close nuclear C$^*$-algebras have isomorphic $K$-groups and so are $KK$-equivalent if both satisfy the UCT. The first four authors consider problems of this nature related to the similarity length in \cite{Saw.PerturbLength}.

In full generality, Kadison and Kastler's question has a negative answer for close nuclear 
${\mathrm C}^*$-algebras. In \cite{Christensen.CounterExamples} Choi and the first named 
author gave examples of arbitarily close nuclear ${\mathrm C}^*$-algebras which are not 
$*$-isomorphic.  These examples are even approximately finite dimensional, but they 
are not separable.  Our first main theorem, stated qualitatively below and 
quantitatively as Theorem \ref{Close.Iso.Main}, shows that separability is the only 
obstruction to an isomorphism result for nuclear ${\mathrm C}^*$-algebras.
\begin{introtheorem}\label{Intro.Iso}
Let $H$ be a Hilbert space and let $A$ and $B$ be ${\mathrm C}^*$-subalgebras of $\mathbb 
B(H)$. If $A$ is separable and nuclear and $B$ is sufficiently close to $A$, then 
there is a surjective $*$-isomorphism $\alpha:  A\to B$.
\end{introtheorem}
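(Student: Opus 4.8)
The plan is to build the isomorphism $\alpha:A\to B$ as a limit of perturbative adjustments, using nuclearity to approximate $A$ locally by finite-dimensional pieces that can be moved into $B$ (and back) by small unitary conjugations. First I would recall the basic perturbation toolkit: if $A\subset_\gamma B$ (a near inclusion in the sense of Christensen), then for any finite-dimensional C$^*$-subalgebra $F\subseteq A$ one can find a nearby copy of $F$ inside $B$, via a unitary $u\in(A\cup B)''$ with $\|u-I_H\|$ small and $uFu^*\subseteq B$; this is the standard ``moving finite-dimensional subalgebras'' lemma, and symmetrically one can move finite-dimensional subalgebras of $B$ into $A$. Because $A$ is separable, fix an increasing sequence of finite subsets $\mathcal F_1\subseteq\mathcal F_2\subseteq\cdots$ of the unit ball of $A$ with dense union, and similarly for $B$ (using that $B$, being close to a separable algebra, is itself separable).

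Next I would run an \emph{intertwining/back-and-forth} argument in the Kadison--Kastler metric. Nuclearity of $A$ gives, for each $\varepsilon>0$ and each finite set $\mathcal F\subseteq A$, a completely positive contractive factorisation $A\xrightarrow{\psi}M_n\xrightarrow{\phi}A$ with $\|\phi\psi(x)-x\|<\varepsilon$ for $x\in\mathcal F$; the image of a suitable approximately multiplicative map can be perturbed (using stability of the relations defining matrix units, i.e.\ a semiprojectivity-type estimate for $M_n$) to an honest finite-dimensional subalgebra which is close to sitting inside $B$, hence conjugate by a small unitary into $B$. Iterating this alternately for $A$-data and $B$-data produces a sequence of unitaries $u_k\in(A\cup B)''$ with $\|u_k-I_H\|$ summably small; the partial products $w_k=u_k\cdots u_1$ converge strongly to a unitary $w$, and the maps $\mathrm{Ad}(w_k)$ converge point--norm on $A$ to a $*$-homomorphism $\alpha:A\to B$ whose image is norm-dense in $B$, with a simultaneously constructed point--norm limit $\beta:B\to A$ satisfying $\beta\alpha=\mathrm{id}_A$ and $\alpha\beta=\mathrm{id}_B$. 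Surjectivity and injectivity of $\alpha$ then follow from these intertwining identities, giving the desired $*$-isomorphism.

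The main obstacle I anticipate is making the ``approximate finite-dimensional subalgebra of $A$ is close to lying in $B$'' step quantitatively robust enough to iterate: the completely positive approximations coming from nuclearity need not have finite-dimensional \emph{range inside $A$}, so one must first perturb the approximating c.p.\ map to one with a genuine finite-dimensional C$^*$-image (using that approximate systems of matrix units in a C$^*$-algebra are close to exact ones, with constants independent of $n$), then verify that this image lies in a small Kadison--Kastler neighbourhood allowing conjugation into $B$, and finally control how the errors accumulate across infinitely many steps so that $\sum_k\|u_k-I_H\|<\infty$. Keeping all the constants explicit and geometrically summable — so that the strong limit $w$ exists and the induced maps are genuinely multiplicative in the limit — is where the real work lies; the quantitative version stated as Theorem \ref{Close.Iso.Main} is presumably the precise bookkeeping that makes this scheme go through. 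A secondary point is ensuring the limiting $\alpha$ is \emph{surjective} onto all of $B$ rather than merely a dense range embedding; this is exactly why the argument must be run as a two-sided back-and-forth rather than a one-sided near-inclusion embedding.
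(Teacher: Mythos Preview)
Your plan has a genuine gap at its central step. You propose to obtain honest finite-dimensional ${\mathrm C}^*$-subalgebras of $A$ from the CPAP factorisation $A\to M_n\to A$ by ``perturbing approximate matrix units to exact ones, with constants independent of $n$.'' This is not available: the map $\phi:M_n\to A$ in the CPAP is merely cpc, and its image need not be anywhere near a ${\mathrm C}^*$-subalgebra of $A$. The statement that approximate matrix units are close to exact ones with $n$-independent constants is a form of uniform stability that holds inside finite von Neumann algebras or AF algebras but fails in general nuclear ${\mathrm C}^*$-algebras. If it held, your argument would essentially be reproving that $A$ is AF, which of course it need not be. The paper circumvents this entirely: it never produces finite-dimensional subalgebras of $A$ at all. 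Instead it works throughout with cpc maps $A\to B$ obtained from Arveson's extension theorem (Proposition~\ref{Prelim.PtArveson}), and the key new ingredient is \emph{amenability}. Haagerup's approximate diagonals (Lemma~\ref{Avg.ApproxDiagonal}) are used to average such a cpc map into one that is an $(X,\varepsilon)$-approximate $*$-homomorphism for arbitrary $(X,\varepsilon)$ (Lemma~\ref{Avg.1}), and again to conjugate two such maps together by a unitary in $B^\dagger$ (Lemma~\ref{Avg.2}). The intertwining (Lemma~\ref{Close.Iso.Tech}) is then run with these cpc maps, not with finite-dimensional inclusions.

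A second problem is your plan to make $\sum_k\|u_k-I_H\|<\infty$ so that the partial products converge strongly to a unitary $w$ with $\alpha=\mathrm{Ad}(w)$. This would give a spatially implemented isomorphism uniformly close to the inclusion, and Johnson's examples (cited in the introduction) show this is impossible in general. In the paper's argument the unitaries $u_n\in B^\dagger$ satisfy only a \emph{fixed} bound $\|u_n-1\|\lesssim \gamma^{1/2}$; convergence of $\alpha_n=\mathrm{Ad}(u_1\cdots u_n)\circ\theta_n$ is point-norm, coming from the estimates $\mathrm{Ad}(u_{n+1})\circ\theta_{n+1}\approx_{X_n,2^{-n}\nu}\theta_n$ on increasing finite sets, not from the unitaries themselves becoming small. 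Upgrading to a genuine unitary conjugacy (Theorem~\ref{Intro.Unitary}) requires the substantially harder Section~\ref{Unitary}, including a Kaplansky-density result for approximate relative commutants. Surjectivity in Theorem~A is obtained not by a two-sided intertwining but by the one-sided condition (f) in Lemma~\ref{Close.Iso.Tech} together with the elementary Proposition~\ref{EasyFact}.
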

Johnson's examples \cite{Johnson.PerturbationExample} show that we cannot 
demand that the isomorphism of Theorem \ref{Intro.Iso} is uniformly close to the 
inclusion of $A$ into $\mathbb B(H)$. However, our methods do allow us to specify 
a finite set $X$ of the unit ball of $A$ and construct an isomorphism $\alpha:A
\rightarrow B$ which almost fixes $X$. This additional  control is crucial in obtaining 
a unitary in the von Neumann algebra ${\mathrm W}^*(A, B,I_H)$ which conjugates $A$ into 
$B$ (provided the underlying Hilbert space is separable).  Our second main 
theorem accomplishes  this and so gives a complete answer to Kadison and 
Kastler's question when one algebra is a separable nuclear ${\mathrm C}^*$-algebra.  The 
quantitative version of this theorem is Theorem \ref{thm10.4} in the text.
\begin{introtheorem}\label{Intro.Unitary}
Let $H$ be a separable Hilbert space and let $A$ and $B$ be ${\mathrm C}^*$-subalgebras of 
$\mathbb B(H)$. If $A$ is separable and nuclear and $B$ is sufficiently close to $A
$, then there exists a unitary operator $u\in (A\cup B)''$ with $uAu^*=B$.
\end{introtheorem}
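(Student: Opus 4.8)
The plan is to deduce the theorem from the quantitative isomorphism result (Theorem~\ref{Close.Iso.Main}, the quantitative form of Theorem~\ref{Intro.Iso}) by upgrading the isomorphisms it produces to a single \emph{spatial} one. Write $\gamma=d(A,B)$ for the Kadison--Kastler distance and assume $\gamma$ is below a universal threshold; then $B$ is nuclear as well \cite[Theorem~6.5]{Christensen.NearInclusions}, so Theorem~\ref{Close.Iso.Main} is available both for $(A,B)$ and, with the roles reversed, for $(B,A)$. The feature we exploit is that for \emph{any} prescribed finite subset $X$ of the unit ball it yields a $*$-isomorphism $\alpha$ with $\|\alpha(x)-x\|$ bounded by a function of $\gamma$ alone on $X$ --- a bound which does not deteriorate as $X$ is enlarged. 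One cannot make this bound tend to $0$: by Johnson's example \cite{Johnson.PerturbationExample} there need be no isomorphism uniformly close to the inclusion, so the unitary we construct cannot be close to $I_H$ and the argument must run through the strong${}^*$ topology rather than through norm estimates.

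\emph{Step 1: reduction to a common weak closure.} Since $A$ and $B$ are nuclear, $A''$ and $B''$ are injective von Neumann algebras; they act on $H$ and are close in the Kadison--Kastler metric. Assuming that $A$, and so $A''$, acts non-degenerately on $H$, \cite[Corollary~4.4]{Christensen.NearInclusions} provides a unitary $v\in(A''\cup B'')''=(A\cup B)''$ with $vA''v^*=B''$ and $\|v-I_H\|$ controlled by $\gamma$. Replacing $A$ by $vAv^*$ --- which worsens $\gamma$ only by a controlled amount and can be undone at the end by conjugating with $v\in(A\cup B)''$ --- we may assume that $A$ and $B$ have the same weak closure $N:=A''=B''$, necessarily injective and with separable predual, and that $N=(A\cup B)''$. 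It now suffices to find a unitary $u\in N$ with $uAu^*=B$.

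\emph{Step 2: an intertwining inside $N$.} Fix norm-dense sequences $(a_i)$ in the unit ball of $A$ and $(b_i)$ in the unit ball of $B$. The technical core is an Elliott-type back-and-forth: one alternately invokes Theorem~\ref{Close.Iso.Main} for $(A,B)$ and for $(B,A)$ to obtain $*$-isomorphisms close to the identity on increasing finite sets drawn from $(a_i)$ and $(b_i)$, and at each stage uses injectivity of $N$ --- by an averaging argument in the spirit of the injective von Neumann case --- to straighten the relevant near inclusion by a unitary of $N$, chosen so that conjugation by it moves the selected finite set closer to the other algebra while almost commuting with, and hence barely moving, the finitely many orbit points handled at earlier stages. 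Composing these corrections produces unitaries $u_1,u_2,\dots\in N$ with $\|u_{n+1}a_iu_{n+1}^*-u_na_iu_n^*\|<2^{-n}$ for all $i\le n$, the shrinking tolerances being arranged so that in addition the distance from $u_na_iu_n^*$ to $B$ and the distance from $u_n^*b_iu_n$ to $A$ both tend to $0$ as $n\to\infty$, for every $i$. Consequently $(u_na_iu_n^*)_n$ converges in norm to a point $c_i$ of $B$, and $(u_n^*b_iu_n)_n$ converges in norm to a point of $A$.

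\emph{Step 3: the limit, and the main obstacle.} The unit ball of $N$ is compact in the strong${}^*$ topology, so after passing to a subsequence $u_n\to u$ strong${}^*$ with $u\in N$; the relations $u_n^*u_n=u_nu_n^*=I_H$ survive the limit, so $u$ is unitary. For each $i$, $u_na_iu_n^*$ converges strong${}^*$ to $ua_iu^*$ and in norm to $c_i$, whence $ua_iu^*=c_i\in B$; density of $(a_i)$ gives $uAu^*\subseteq B$, the symmetric statement gives $u^*Bu\subseteq A$, and therefore $uAu^*=B$ with $u\in N=(A\cup B)''$. The main difficulty lies in Step~2: one must convert the purely local, approximate data of Theorem~\ref{Close.Iso.Main} --- an isomorphism close to the identity only on a prescribed finite set, with a fixed, non-vanishing error --- into a global, exact spatial isomorphism. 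Johnson's example forbids any uniform norm bound on $u$, so the convergence can only be strong${}^*$ and one cannot simply pass norm limits through; the coherence that makes the orbits of the dense sequences actually converge into the norm-closed algebras $A$ and $B$ (rather than merely stay near them) has to be engineered by the careful, simultaneous choice of the finite sets, the tolerances, and the almost-commutation constraints on the correction unitaries at each stage.
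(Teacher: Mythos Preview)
Your Step~1 is correct and matches the paper's reduction (Theorem~\ref{thm10.4}). The fatal gap is in Step~3: the unit ball of an infinite-dimensional von Neumann algebra $N$ is \emph{not} compact in the strong${}^*$ topology, so you cannot pass to a strong${}^*$-convergent subsequence. What you have is weak${}^*$ compactness, but a weak${}^*$ limit of unitaries is merely a contraction (the relations $u_n^*u_n=I_H$ do \emph{not} survive weak${}^*$ limits; think of powers of the bilateral shift converging to $0$), and multiplication is not jointly weak${}^*$-continuous, so you cannot conclude $u_na_iu_n^*\to ua_iu^*$. The paper flags exactly this obstruction in the introduction: the sequence $\{u_n\}$ ``is not even guaranteed to have a non-zero ultraweak accumulation point.''

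The paper's solution is not to extract a limit but to \emph{force} strong${}^*$ convergence by construction. The key new ingredient, absent from your sketch, is the Kaplansky-type density result Lemma~\ref{lem3.9}: given a unitary $u\in M$ with a uniform spectral gap ($\|u-I_H\|\le\alpha<2$) that approximately commutes with a suitable finite set in $A$, one can find a unitary $v\in A$ that still approximately commutes with the original finite set, still satisfies $\|v-I_H\|\le\alpha$, and is as close as one likes to $u$ on any prescribed finite set of vectors in $H$. In the inductive step of Theorem~\ref{thm10.3} this lets one correct each new unitary by a factor in $B^\dagger$ so that condition~(5), namely $\|(u_n-u_{n-1})\xi_i\|<2^{-n}$ on an exhausting sequence of vectors, holds. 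That condition makes $\{u_n\}$ genuinely strong${}^*$-Cauchy, and the limit is then automatically unitary. Your Step~2 gestures at ``almost-commutation constraints'' on the correction unitaries, but controlling the orbits $u_na_iu_n^*$ in norm, which is what you arrange, says nothing about the vectors $u_n\xi$; without Lemma~\ref{lem3.9} or an equivalent device there is no mechanism in your argument to make the $u_n$ themselves converge.
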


To put the techniques involved in Theorem \ref{Intro.Iso} into context, it is helpful to 
first discuss the perturbation results for injective von Neumann algebras from 
\cite{Christensen.Perturbations1}. Given two close injective von Neumann algebras 
$M,N\subseteq\mathbb B(H)$, take a conditional expectation $\Phi:
\mathbb B(H)\rightarrow N$.  If we restrict $\Phi$ to $M$ we obtain a completely 
positive and contractive map (cpc map) $\Phi|_M$ from $M$ into $N$ which is 
uniformly close to the inclusion of $M$ into $\mathbb B(H)$. Then $\Phi|_M$ is 
almost a multiplicative map, in that 
$$
\sup_{u\in\mathcal U(M)}\|\Phi(u)\Phi(u^*)-1\|
$$
is small.  The main idea behind \cite{Christensen.Perturbations1} is to use injectivity 
of $M$ to show that a  completely positive contractive normal map $\Psi:M\rightarrow P$ which is almost multiplicative must be 
uniformly close to a $*$-homomorphism.  The $*$-homomorphism is obtained 
from $\Psi$ by integrating the Stinespring projection for $\Psi$ over the unitary groups of an increasing family of dense 
finite dimensional subalgebras of $M$ and so this can be regarded as an averaging 
result.  Modulo technicalities regarding normality, this procedure can be applied to $
\Phi|_M$ to obtain a $*$-homomorphism from $M$ onto $N$ which is close to the 
inclusion of $M$ into $\mathbb B(H)$.  A second averaging argument is then used 
to show that such maps are spatially implemented. The general question of
when approximately multiplicative maps between Banach algebras are close to multiplicative
maps has been studied by
Johnson, \cite{Johnson.ANMN}.

Now suppose that we have two close ${\mathrm C}^*$-algebras $A$ and $B$ on some Hilbert 
space $H$ and that one of these algebras is separable and nuclear.  Results from 
\cite{Christensen.NearInclusions} show that both algebras must then be separable 
and nuclear.  We do not have a conditional expectation onto $B$ which we can use 
to obtain a cpc map $A\rightarrow B$ uniformly close to the inclusion of $A
$ into $\mathbb B(H)$, but we can use Arveson's extension theorem, \cite{Arveson.Subalgebras}, to produce 
completely positive maps from $A\rightarrow B$ which approximate this inclusion on 
finite subsets of the unit ball of $A$ (see Proposition \ref{Prelim.PtArveson}).  
Accordingly we look to develop point-norm versions of the averaging techniques 
from \cite{Christensen.Perturbations1} for nuclear ${\mathrm C}^*$-algebras.  This is the 
subject of Section \ref{Avg} and the critical ingredient is the \emph{amenability} of a 
nuclear ${\mathrm C}^*$-algebra established by Haagerup in 
\cite{Haagerup.NuclearAmenable}.  The first of these lemmas (Lemma \ref{Avg.1}) 
enables us to obtain cpc maps $A\rightarrow B$ which are almost multiplicative on a 
a finite set $Y$ of the unit ball of $A$ up to a specified tolerance $\varepsilon$ from 
cpc maps $A\rightarrow B$ which are almost multiplicative on some finite set $X$ of 
the unit ball of $A$ up to a fixed tolerance of $1/17$. The set $X$ can be thought of 
as a F{\o}lner set for $Y$ and $\varepsilon$.  We can apply this lemma to the cpc 
maps arising from Arveson's extension theorem to produce cpc maps $A\rightarrow 
B$ which are almost multiplicative on arbitrary finite sets of the unit ball of $A$ up to 
an an arbitrary small tolerance. The next stage is to construct a $*$-homomorphism 
$A\rightarrow B$ as a point norm limit of these maps. Our second lemma (Lemma 
\ref{Avg.2}) enables us to conjugate these maps by unitaries to ensure this point 
norm convergence.  An intertwining argument (Lemma \ref{Close.Iso.Tech}) 
inspired by \cite[Theorem 6.1]{Christensen.NearInclusions} and those in the 
classification programme gives a $*$-isomorphism $A\rightarrow B$.  At this point 
separability of $A$ is crucial.

It is perhaps worth noting that we use a variety of characterisations of nuclearity in 
the course of the proof of Theorem \ref{Intro.Iso}. The equivalence between 
nuclearity of $A$ and injectivity of $A^{**}$ is used in 
\cite{Christensen.NearInclusions} to show that nuclearity transfers to close 
${\mathrm C}^*$-algebras. The characterization by the completely positive approximation 
property, due to Choi and Effros \cite{Choi.EffrosCPAP}, allows us to find cpc maps $A
\rightarrow B$, and the amenability of nuclear ${\mathrm C}^*$-algebras, 
\cite{Haagerup.NuclearAmenable}, is an essential ingredient for   converting these maps into a $*$-isomorphism 
$A\rightarrow B$.

We now turn to Theorem \ref{Intro.Unitary}. Earlier results from 
\cite{Christensen.NearInclusions} and elementary techniques enable us to reduce 
to the situation of two close separable nuclear ${\mathrm C}^*$-algebras $A$ and $B$ which 
are non-degenerately represented on some separable Hilbert space and have the 
same ultraweak closure. By repeatedly applying Theorem \ref{Intro.Iso} and Lemma 
\ref{Avg.2}, we can construct a sequence $\{u_n\}_{n=1}^\infty$ of unitaries so 
that $\lim\ad(u_n)$ converges in point norm topology to an isomorphism between  
$A$ and $B$.   If this sequence was $^*$-strongly convergent to a unitary, then this 
unitary would implement a spatial isomorphism between $A$ and $B$.  However 
there is no reason why this should be so; indeed the sequence $\{u_n\}_{n=1}^\infty$ 
is not even guaranteed to have a non-zero ultraweak accumulation point. Instead 
we explicitly modify the sequence $\{u_n\}_{n=1}^\infty$ to force the required 
$^*$-strong convergence while still retaining control over the point norm limit of $\lim
\ad(u_n)$.  This adjustment procedure is inspired by Bratteli's classification of 
representations of AF algebras \cite[Section 4]{Bratteli.AF}, which in turn builds 
upon the  work of Powers for UHF algebras in \cite{Powers.Representations}.  A key 
observation in this work is that if $A$ is a ${\mathrm C}^*$-algebra non-degenerately 
represented on $H$ with ultraweak operator closure $M=A''$ and $F$ is a finite 
dimensional ${\mathrm C}^*$-subalgebra of $A$, then $(F'\cap A)''=F'\cap M$. The Kaplansky density theorem then enables unitaries in $M$ commuting with $F$ to be approximated in $^*$-strong topology by unitaries in $A$ commuting with $F$.  In Lemma \ref{lem3.9} we prove a Kaplansky density theorem for unitaries with a uniform spectral gap which approximately commute with suitable finite sets, again using amenability of nuclear ${\mathrm C}^*$-algebras. Using this result, a technical argument enables us to make the suitable adjustments to the sequence of unitaries $\{u_n\}_{n=1}^\infty$ described above to prove Theorem \ref{Intro.Unitary}. This is the subject of Section \ref{Unitary}.

The procedure used to obtain Theorem \ref{Intro.Iso} forms the basis of our near inclusion results.  Given a sufficiently small near inclusion of $A$ in $B$ with $A$ separable and nuclear, our intertwining argument gives an embedding $A\hookrightarrow B$ whenever we can produce cpc maps $A\rightarrow B$ which almost fix finite sets in the unit ball of $A$ (up to a tolerance depending on the near inclusion constant).  In two situations the existence of these maps is immediate: when $B$ is nuclear the maps are given by Arveson's extension theorem, while if $A$ is unital and has  approximately inner half flip the maps are given by \cite[Proposition 6.7]{Christensen.NearInclusions}.  The third, and least restrictive, hypothesis under which we can produce these maps is when $A$ has finite nuclear dimension.  Here further work is required to construct our cpc maps.  Using the completely positive approximation property for nuclear ${\mathrm C}^*$-algebras, \cite{Choi.EffrosCPAP}, we can approximately factorise the identity map on a nuclear ${\mathrm C}^*$-algebra $A$ through matrix algebras $\mathbb M_k$ using cp maps.  When $A$ has finite nuclear dimension $n$, these factorisations have additional structure: the maps $\mathbb M_k\rightarrow A$ decompose as the  sum of $(n+1)$ cpc maps which preserve orthogonality (order zero maps). The main technical lemma of Section \ref{Near} is a perturbation result for order zero maps $\mathbb M_k\rightarrow A$ producing a nearby cpc map $\mathbb M_k\rightarrow B$ whenever $A$ is nearly contained in $B$.  Combining this with the intertwining argument gives the theorem below, which is stated quantitatively as Theorem \ref{Near.Embedd}.

\begin{introtheorem}\label{Intro.Near}
Let $H$ be a Hilbert space and let $A$ and $B$ be ${\mathrm C}^*$-subalgebras of $\mathbb B(H)$. If $A$ is separable and has finite nuclear dimension and is nearly contained in $B$, then there is an embedding $A\hookrightarrow B$.
\end{introtheorem}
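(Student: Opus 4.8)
The plan is to follow the route outlined in the introduction. First I would reduce the statement to the production of completely positive contractive (cpc) maps $A\to B$ that almost fix prescribed finite subsets of the unit ball of $A$; the intertwining machinery then does the rest. Concretely: if $A$ is nearly contained in $B$ with a sufficiently small near-inclusion constant $\gamma$, and if for every finite set $X$ in the unit ball of $A$ and every $\varepsilon>0$ there is a cpc map $\theta\colon A\to B$ with $\|\theta(x)-x\|<\varepsilon$ for all $x\in X$, then the intertwining argument of Lemma~\ref{Close.Iso.Tech}, in the spirit of \cite[Theorem~6.1]{Christensen.NearInclusions}, chains such maps together (with suitable unitary adjustments) into a $*$-homomorphism $\alpha\colon A\to B$; since the maps being chained almost fix the relevant finite sets they are asymptotically isometric on an exhausting sequence, so $\alpha$ is isometric, hence an embedding. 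Separability of $A$ is used here, to exhaust it by finite sets.

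It remains to build the maps $\theta$. Fix a finite set $X$ in the unit ball of $A$ and $\varepsilon>0$. Since $A$ has finite nuclear dimension $n$, the completely positive approximation property in the refined form coming from finite nuclear dimension \cite{Zacharias.NuclearDimension,Winter.CoveringDimension} (building on \cite{Choi.EffrosCPAP}) furnishes a finite-dimensional C$^*$-algebra $F$, a cpc map $\psi\colon A\to F$, and a cp map $\varphi\colon F\to A$ with $\|\varphi\psi(x)-x\|<\varepsilon/2$ for $x\in X$ and $\varphi=\varphi_0+\dots+\varphi_n$, where each $\varphi_i\colon F\to A$ is cpc and order zero. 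Splitting $F$ into its matrix blocks, each summand $\varphi_i$ restricts on the blocks to order zero maps with pairwise orthogonal ranges, so it is enough to be able to perturb a single cpc order zero map $\mathbb M_k\to A$ into $B$.

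That perturbation is the main technical lemma of Section~\ref{Near}: whenever $A$ is nearly contained in $B$, a cpc order zero map $\varphi_i\colon\mathbb M_k\to A$ admits a cpc map $\tilde\varphi_i\colon\mathbb M_k\to B$ with $\|\tilde\varphi_i-\varphi_i\|$ controlled by $\gamma$, \emph{uniformly in $k$}. The order zero hypothesis is exactly what provides this $k$-independence: by the Winter--Zacharias structure theorem a cpc order zero map out of $\mathbb M_k$ is the same datum as a $*$-homomorphism out of the cone $C_0((0,1])\otimes\mathbb M_k$, and these cones form a family of (semi)projective C$^*$-algebras with uniform lifting and perturbation constants. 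In practice one perturbs the positive ``weight'' $h=\varphi_i(1_k)\in A$ to a positive element of $B$, perturbs the system of images of matrix units using only $O(k)$ rather than $O(k^2)$ near-inclusion estimates, thanks to the coherence of the order zero relations, and then repairs those relations inside $B$ by functional calculus and polar-type decompositions. Summing, $\tilde\varphi:=\tilde\varphi_0+\dots+\tilde\varphi_n\colon F\to B$ is cp with $\|\tilde\varphi(1_F)\|\le 1+\delta(\gamma)$; compressing by a suitable positive contraction in $B$ yields a genuine cpc map $\tilde\varphi'$ with $\|\tilde\varphi'-\varphi\|$ small on the relevant range, and $\theta:=\tilde\varphi'\circ\psi\colon A\to B$ is then cpc with $\|\theta(x)-x\|<\varepsilon$ for $x\in X$, provided $\gamma$ was chosen small relative to $n$ and $\varepsilon$.

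The main obstacle is precisely this $k$-independence in the order zero perturbation lemma. A crude entrywise perturbation of a general cp map $\mathbb M_k\to A$ costs a factor of $k^2$, which is fatal because the matrix sizes appearing in the approximate factorisations grow without bound; the argument must genuinely exploit the rigidity of order zero maps---equivalently, the uniform semiprojectivity of the matrix cones---to obtain a perturbation estimate depending only on $\gamma$ and the nuclear dimension $n$ and not on the chosen factorisation. Once that lemma is in hand, the remaining ingredients---the splitting into blocks, the final compression, the composition with $\psi$, and the intertwining---are routine.
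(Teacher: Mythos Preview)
Your overall architecture is correct and matches the paper: reduce to producing cpc maps $A\to B$ that approximate the inclusion on finite sets to within a \emph{fixed} tolerance $\eta$ depending only on the near-inclusion constant and the nuclear dimension, then feed these into the intertwining Lemma~\ref{Close.Iso.Tech}. (Note that Lemma~\ref{Close.Iso.Tech} only requires a fixed $\eta<1/210000$, not arbitrarily small $\varepsilon$; your phrasing ``for every $\varepsilon>0$\ldots provided $\gamma$ was chosen small relative to $\varepsilon$'' is stronger than needed and slightly obscures why a single near-inclusion constant suffices for the whole argument.) The use of the nuclear-dimension factorisation and the reduction to perturbing individual order zero maps out of matrix blocks is also exactly what the paper does in Lemma~\ref{Near.DirectLem}, and your renormalisation step is essentially the same, though the paper simply divides by $\|\tilde\varphi\|_{\mathrm{cb}}$ rather than compressing.

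The genuine gap is in your sketch of the order zero perturbation itself. Neither of your suggested mechanisms comes with an argument that the constants are independent of $k$. Perturbing the images of $O(k)$ matrix units, each by $\gamma$, and then ``repairing relations by functional calculus'' does not obviously yield a cb-error bounded independently of $k$: combining $k$ perturbed columns into a map on all of $\mathbb M_k$ can still produce error growing with $k$ unless something further is said. And while the cones $C_0((0,1])\otimes\mathbb M_k$ are indeed semiprojective, the stability constants that semiprojectivity provides are \emph{a priori} algebra-dependent, so uniformity in $k$ is an additional assertion that would itself need proof. You have correctly located the obstacle but not overcome it.

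The paper's actual proof of Theorem~\ref{Near.Perturb} takes a different and rather cleaner route that avoids the issue entirely. Using the structure theorem $\phi(x)=\pi(x)\phi(1_F)=\phi(1_F)\pi(x)$ for order zero maps (Proposition~\ref{Near.OZ.Hm}), one builds an explicit row matrix $t\in\mathbb M_{1\times r}(A\otimes\mathbb M_m)$ with $\|t\|\le 1$ and a fixed $*$-homomorphism $\theta\colon F\to\mathbb M_r(\mathbb B(H)\otimes\mathbb M_m)$ such that $\phi$ is recovered as a corner of $x\mapsto t\,\theta(x)\,t^*$. Since the image of $\rho_\phi$ is nuclear, the near inclusion $A\subset_\gamma B$ amplifies to rectangular matrices with constant $2\gamma+\gamma^2$ (Proposition~\ref{Near.Length} and Remark~\ref{Near.Length.Rem}), so the \emph{single} contraction $t$ can be approximated by some $u\in\mathbb M_{1\times r}(B\otimes\mathbb M_m)$, and $\psi$ is defined as the corresponding corner of $u\,\theta(\cdot)\,u^*$. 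This gives $\|\phi-\psi\|_{\mathrm{cb}}\le(2\gamma+\gamma^2)(2+2\gamma+\gamma^2)$ with no dependence on $\dim F$ at all. The point is that the order zero structure is used not to control relations among many perturbed elements, but to compress the entire map into conjugation by one element of norm at most $1$.
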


Our paper is organised as follows. In the next section we set out the notation used in the paper, establish some basic facts and recall a number of results from the literature for the reader's convenience.  In Section \ref{Avg} we discuss amenability for ${\mathrm C}^*$-algebras and give our point norm averaging results and Kaplansky density result.  Section \ref{Close} contains the intertwining argument (Lemma \ref{Close.Iso.Tech}) which combines the averaging results of Section \ref{Avg} to prove Theorem \ref{Intro.Iso} (Theorem \ref{thm4.2}, Theorem \ref{Close.Iso.Main}).  We also give two near inclusion results and some other consequences at this stage.  Section \ref{Unitary} contains the proof of Theorem \ref{Intro.Unitary}. We begin Section \ref{Near} with a review of order zero maps between ${\mathrm C}^*$-algebras and prove our perturbation theorem for these maps, using this to establish Theorem \ref{Intro.Near}. We also recall the salient facts about the nuclear dimension for the readers convenience.  We end the paper in Section \ref{Direct} with some sample applications of our techniques to other situations. Firstly we give a strengthened local characterisation of inductive limits of finitely presented weakly semiprojective nuclear ${\mathrm C}^*$-algebras. Secondly we revisit our perturbation theorem for order zero maps to show that the resulting map can also be taken of order zero, and we close by  presenting an improved characterisation of $\mathcal Z$-stability for nuclear ${\mathrm C}^*$-algebras.

\noindent\textbf{Acknowledgments. } The authors express their gratitude for the following sources of financial support:  NSF DMS-0401043 (R.R.S.) and EPSRC First Grant EP/G014019/1 (W.W.). A visit by E.C. to A.M.S. and S.A.W. was supported by a grant from the Edinburgh Mathematical Society, while R.R.S. and S.A.W. gratefully acknowledge the kind hospitality of the University of Copenhagen Mathematics Department while visiting E.C. Part of this work was undertaken while S.A.W was a participant at the Workshop in Analysis and Probability at Texas A\&{}M University and he would like to thank the organisers and NSF for their financial support. Finally, the authors record their gratitude to Bhishan Jacelon, Simon Wassermann and Joachim Zacharias for stimulating conversations concerning the paper.

\section{Preliminaries}
In this section we establish notation and recall some standard results. We begin with Kadison and Kastler's metric on operator algebras from 
\cite{Kadison.Kastler}.

\begin{definition}
Let $C$ be a ${\mathrm C}^*$-algebra.  We equip the collection of ${\mathrm C}^*$-subalgebras of $C$ 
with a metric $d$ by applying the Hausdorff metric to the unit balls of these 
subalgebras. That is $d(A,B)<\gamma$ if, and only if, for each $x$ in the unit ball of 
either $A$ or $B$, there exists $y$ in the unit ball of the other algebra with $\|x-y\|<
\gamma$.
\end{definition}

In the second half of the paper, we shall also use the notion of near containment 
introduced in \cite{Christensen.NearInclusions}.
\begin{definition}\label{Prelim.DefNear}
Suppose that $A$ and $B$ are ${\mathrm C}^*$-subalgebras of a ${\mathrm C}^*$-algebra $C$ 
and let $\gamma>0$.  Write $A\subseteq_\gamma B$ if given $x$ in the unit ball of 
$A$ there exists $y\in B$ with $\|x-y\|\leq\gamma$. Note that we do not require that 
$y$ lie in the unit ball of $B$.  Write $A\subset_\gamma B$ if there exists $
\gamma'<\gamma$ with $A\subseteq_{\gamma'}B$. For subsets $X$ and $Y$ 
of the unit ball of a ${\mathrm C}^*$-algebra with $X$ finite, the notation $X\subseteq_{\gamma}Y$ will mean that each $x\in X$ has a corresponding element $y\in Y$ satisfying $\|x-y\|\leq \gamma$ and $X\subset_{\gamma}Y$ will mean that each $x\in X$ has a corresponding element $y\in Y$ such that $\|x-y\|<\gamma$.   
\end{definition}

\begin{remark}\label{Prelim.Metric.Rem}
An equivalent notion of distance between ${\mathrm C}^*$-algebras was introduced in 
\cite{Christensen.NearInclusions} using near containments.  Define $d_0(A,B)$ to 
be the infimum of all $\gamma$ for which $A\subseteq_\gamma B$ and $B
\subseteq_\gamma A$. The difference between $d_0$ and $d$ arises from the fact 
that we do not require the $y$ in Definition \ref{Prelim.DefNear} to lie in the unit ball 
of $B$.  It is immediate that $d(A,B)\leq d_0(A,B)\leq 2d(A,B)$, but the function 
$d_0$ does not appear to satisfy the triangle inequality; for this reason we prefer to 
work with the metric $d$.
\end{remark}
The following proposition is folklore. We will use it to obtain surjectivity of our 
isomorphisms.
\begin{proposition}\label{EasyFact}
Let $A$ and $B$ be ${\mathrm C}^*$-algebras on a Hilbert space with $A\subseteq B$ and $B
\subset_1A$. Then $A=B$.
\end{proposition}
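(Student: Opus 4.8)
The plan is to show that the near-containment $B \subset_1 A$, combined with the genuine inclusion $A \subseteq B$, forces every element of $B$ to already lie in $A$. The starting observation is that $B \subset_1 A$ means there is some $\gamma' < 1$ with $B \subseteq_{\gamma'} A$: every element of the unit ball of $B$ is within $\gamma'$ of an element of $A$. Fix such a $\gamma'$. Since $A \subseteq B$, we may work entirely inside $B$ (or inside $B(H)$), and the relation $A \subseteq B$ tells us $A$ is a closed subalgebra, in particular norm-closed.

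The key step is an iteration (a standard ``successive approximation'' argument). Take $x$ in the unit ball of $B$. Choose $a_1 \in A$ with $\|x - a_1\| \leq \gamma'$. Now $x - a_1 \in B$ and $\|x - a_1\| \leq \gamma'$, so $\gamma'^{-1}(x-a_1)$ is in the unit ball of $B$ (if $\gamma' = 0$ we are already done, so assume $\gamma' > 0$); apply $B \subseteq_{\gamma'} A$ again to get $a_2 \in A$ with $\|\gamma'^{-1}(x - a_1) - a_2\| \leq \gamma'$, i.e.\ $\|x - a_1 - \gamma' a_2\| \leq \gamma'^2$. Continuing inductively, after $n$ steps we obtain $a_1, \dots, a_n \in A$ with
\[
\Bigl\| x - \sum_{k=1}^n \gamma'^{\,k-1} a_k \Bigr\| \leq \gamma'^{\,n},
\]
and $\|a_k\| \leq 1$ for every $k$. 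Because $\gamma' < 1$, the partial sums $\sum_{k=1}^n \gamma'^{\,k-1} a_k$ form a norm-Cauchy sequence in $A$; since $A$ is norm-closed, their limit lies in $A$, and by the displayed estimate that limit equals $x$. Hence $x \in A$. This shows the unit ball of $B$ is contained in $A$, so $B \subseteq A$, and combined with $A \subseteq B$ we conclude $A = B$.

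I do not anticipate a genuine obstacle here; the only point requiring a moment's care is the reduction from $B \subset_1 A$ to $B \subseteq_{\gamma'} A$ for a strictly smaller constant $\gamma' < 1$ (so that the geometric factor $\gamma'^n \to 0$), which is exactly what Definition \ref{Prelim.DefNear} provides, and the harmless handling of the degenerate case $\gamma' = 0$. Everything else is the classical telescoping/geometric-series trick, and norm-closedness of $A$ is automatic since $A$ is a ${\mathrm C}^*$-algebra.
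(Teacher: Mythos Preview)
Your argument is correct and is the standard successive-approximation proof of this folklore fact; the paper itself does not supply a proof, merely citing the result as well known. One tiny inaccuracy: by Definition~\ref{Prelim.DefNear} the approximants $a_k$ need not lie in the unit ball of $A$, so you only get $\|a_k\|\le 1+\gamma'$ rather than $\|a_k\|\le 1$, but this changes nothing since the series $\sum_k \gamma'^{\,k-1}a_k$ still converges absolutely.
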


The next proposition records some standard estimates. The first statement follows 
from Lemma 2.7 of \cite{Christensen.PerturbationsType1} and the second can be 
found as \cite[Lemma 6.2.1]{Murphy.Book}.
\begin{proposition}\label{Prelim.UnitaryEquiv} 1.~~Let $x$ be an operator on a 
Hilbert space $H$ with $\|x-I_H\|<1$ and let $u$ be the unitary in the polar decomposition 
of $x$. Then $\|u-I_H\|\leq\sqrt{2}\|x-I_H\|$.

\noindent 2.~~Let $p$ and $q$ be projections in a unital ${\mathrm C}^*$-algebra $A$ with $\|
p-q\|<1$. Then there is a unitary $u\in A$ with $upu^*=q$ and $\|u-1\|\leq \sqrt{2}\|
p-q\|$.
\end{proposition}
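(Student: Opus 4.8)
The plan is to treat the two statements separately, since the first is purely spatial while the second lives inside an abstract unital C$^*$-algebra. For statement~1, write the polar decomposition $x = u|x|$ with $|x| = (x^*x)^{1/2}$ and $u$ the partial isometry in the decomposition; since $\|x - I_H\| < 1$, the operator $x$ is invertible, so $|x|$ is invertible and $u$ is in fact a unitary. First I would control $\||x| - I_H\|$: from $x^*x - I_H = (x - I_H)^*(x - I_H) + (x - I_H) + (x - I_H)^*$ one gets an estimate, but the cleaner route is to use that $\||x| - I_H\| = \|\,|x|^2 - I_H\|/\|\,|x| + I_H\| \le \|x^*x - I_H\|$ via the functional calculus inequality $|t - 1| \le |t^2 - 1|$ for $t \ge 0$, together with $\|x^*x - I_H\| \le \|x^*\|\|x - I_H\| + \|x^* - I_H\|$. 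Then $\|u - I_H\| = \|x|x|^{-1} - I_H\| \le \|(x - I_H)|x|^{-1}\| + \|(I_H - |x|)|x|^{-1}\|$, and after bounding $\||x|^{-1}\|$ one assembles the constant. In fact the slickest argument is: $\|u - I_H\|^2 = \|(u - I_H)^*(u - I_H)\| = \|2I_H - u - u^*\|$, and since $u - u^* = u - u^* $ is skew-adjoint while $u + u^*$ is the real part, one can instead bound $\|u - I_H\|$ by noting $u - I_H = (x - |x|)|x|^{-1}$ together with $\|x - |x|\| \le \|x - I_H\| + \|I_H - |x|\| \le 2\|x - I_H\|$ — but the sharp $\sqrt 2$ requires the more careful spectral estimate, which is exactly why the authors cite \cite[Lemma 2.7]{Christensen.PerturbationsType1} rather than reproving it.

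For statement~2, the main point is that two projections at distance less than $1$ are Murray–von Neumann equivalent via a unitary close to $1$, and this is a standard lemma (the authors cite \cite[Lemma 6.2.1]{Murphy.Book}). I would set $v = qp + (1-q)(1-p)$, so that $vp = qp = qv$, whence $v$ intertwines $p$ and $q$ in the sense $vp = qv$. A computation gives $v^*v = 1 - (p-q)^2 = vv^*$, which is invertible precisely because $\|p - q\| < 1$, so in the polar decomposition $v = u|v|$ the element $u$ is a unitary in $A$, and from $vp = qv$ and the fact that $|v| = (1 - (p-q)^2)^{1/2}$ commutes with both $p$ and $q$ one deduces $upu^* = q$. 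Finally $\|v - 1\| = \|qp + (1-q)(1-p) - 1\| = \|(p-q)(something)\|$; more precisely $1 - v = (q - p)(p - (1-p)) $ type manipulation gives $\|1 - v\| \le \|p - q\|$, and then statement~1 applied to $v$ (note $\|v - 1\| < 1$) yields $\|u - 1\| \le \sqrt 2\|v - 1\| \le \sqrt 2\|p - q\|$. That is the cleanest structure: statement~2 reduces to statement~1 once the intertwiner $v$ is in hand.

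The step I expect to be the only real obstacle is getting the constant in statement~1 down to exactly $\sqrt 2$ rather than the cruder $2$ that falls out of the triangle-inequality manipulations above; this needs a genuine spectral-theoretic argument analysing $|x| - I_H$ and $x|x|^{-1} - I_H$ simultaneously rather than bounding each term independently, and it is the reason both halves are quoted from the literature rather than proved inline. Since the paper only needs these as black boxes, I would simply cite the two references and omit the computation, which is the course the authors take.
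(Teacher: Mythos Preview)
Your proposal is correct and matches the paper's treatment: the authors do not prove this proposition but simply cite \cite[Lemma 2.7]{Christensen.PerturbationsType1} for part~1 and \cite[Lemma 6.2.1]{Murphy.Book} for part~2, exactly as you anticipate in your final paragraph. Your sketch of part~2 via the intertwiner $v = qp + (1-q)(1-p)$, the identity $v^*v = vv^* = 1 - (p-q)^2$, and the reduction to part~1 is precisely the argument in Murphy's book, so there is nothing to add.
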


On  occasion we will need to lift a near inclusion $A\subset_\gamma B$ to a near 
inclusion of a tensor product $A\otimes D\subset_\mu B\otimes D$. This can be 
done when $D$ is nuclear and $A$ has Kadison's similarity property \cite[Theorem 3.1]
{Christensen.NearInclusions}. The version of these facts below is taken from 
2.10-2.12 of \cite{Saw.PerturbLength} specialised to the case when $A$ is nuclear 
(and so has length $2$ with length constant at most $1$).  

\begin{proposition}[{\cite[Corollary 2.12]{Saw.PerturbLength}}]\label{TensorNuclear}
Let $A,B\subseteq\mathbb B(H)$ be ${\mathrm C}^*$-algebras with $A\subset_\gamma 
B$ for some $\gamma>0$ and $A$ nuclear. Given any nuclear ${\mathrm C}^*$-algebra $D$, 
we have $A\otimes D\subseteq_{2\gamma+\gamma^2} B\otimes D$ inside $
\mathbb B(H)\otimes D$.
\end{proposition}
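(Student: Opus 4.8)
The plan is to reduce to the case $D=\mathbb M_n$, settle that case by hand using the fact (underlying \cite[Theorem 3.1]{Christensen.NearInclusions}, specialised via \cite[2.10--2.12]{Saw.PerturbLength}) that a nuclear ${\mathrm C}^*$-algebra has length $2$ with length constant at most $1$, and then pass to an arbitrary nuclear $D$ using the completely positive approximation property of Choi and Effros \cite{Choi.EffrosCPAP}. Throughout, since $A$ (or $D$) is nuclear, all the tensor products appearing are unambiguous and equal the minimal ones, so $A\otimes D\subseteq \mathbb B(H)\otimes D$ and $A\otimes\mathbb M_n\subseteq\mathbb B(H)\otimes\mathbb M_n$ make sense as stated.

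First fix $\gamma'<\gamma$ with $A\subseteq_{\gamma'}B$; rescaling, each $a\in A$ with $\|a\|\leq r$ admits $b\in B$ with $\|a-b\|\leq r\gamma'$. For the matrix case, take $u\in\mathbb M_n(A)$ with $\|u\|\leq 1$ and a decomposition $u=a_0D_1a_1D_2a_2$ afforded by length $2$, where the $a_i$ are scalar (rectangular) matrices, each $D_i$ is a diagonal matrix with entries in $A$, and $\|a_0\|\,\|D_1\|\,\|a_1\|\,\|D_2\|\,\|a_2\|\leq\|u\|\leq 1$. Approximating each diagonal entry of $D_i$ within $\gamma'\|D_i\|$ by an element of $B$ produces diagonal matrices $E_i$ over $B$ with $\|D_i-E_i\|\leq\gamma'\|D_i\|$, so $\|E_1\|\leq(1+\gamma')\|D_1\|$. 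Setting $w=a_0E_1a_1E_2a_2\in\mathbb M_n(B)$ and telescoping,
$$u-w=a_0(D_1-E_1)a_1D_2a_2+a_0E_1a_1(D_2-E_2)a_2,$$
whence $\|u-w\|\leq\big(\gamma'+(1+\gamma')\gamma'\big)\|a_0\|\|D_1\|\|a_1\|\|D_2\|\|a_2\|\leq 2\gamma'+\gamma'^2$. Thus $A\otimes\mathbb M_n\subseteq_{2\gamma+\gamma^2}B\otimes\mathbb M_n$ inside $\mathbb B(H)\otimes\mathbb M_n$ for every $n$. (In the non-unital case one uses the same decomposition, whose diagonal blocks still have entries in $A$.)

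For general nuclear $D$, fix $z$ in the unit ball of $A\otimes D$ and $\varepsilon>0$. By the completely positive approximation property, choose cpc maps $\phi:D\to\mathbb M_k$ and $\psi:\mathbb M_k\to D$ with $\|(\id_A\otimes\psi\phi)(z)-z\|<\varepsilon$. Then $z':=(\id_A\otimes\phi)(z)$ lies in the unit ball of $A\otimes\mathbb M_k$, since the amplification of a cpc map is again cpc (using $\|\cdot\|_{\mathrm{cb}}=\|\cdot\|$ for completely positive maps), so the matrix case supplies $w'\in B\otimes\mathbb M_k$ with $\|z'-w'\|\leq 2\gamma'+\gamma'^2$. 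Put $w:=(\id_B\otimes\psi)(w')\in B\otimes D$. Since $\id_{\mathbb B(H)}\otimes\psi$ is contractive on $\mathbb B(H)\otimes\mathbb M_k$ and restricts correctly to $A\otimes\mathbb M_k$ and $B\otimes\mathbb M_k$,
$$\|z-w\|\leq\|z-(\id_A\otimes\psi\phi)(z)\|+\|(\id\otimes\psi)(z'-w')\|<\varepsilon+2\gamma'+\gamma'^2.$$
As $2\gamma'+\gamma'^2<2\gamma+\gamma^2$, choosing $\varepsilon$ small gives $\|z-w\|<2\gamma+\gamma^2$, which is the asserted near containment.

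The only substantial input is the structural fact that nuclear ${\mathrm C}^*$-algebras have length $2$ with length constant at most $1$ — essentially the completely bounded form of Kadison's similarity property for nuclear algebras; everything else is the telescoping estimate and a routine point-norm approximation. I expect the main care to go into handling the completely positive amplifications $\id_A\otimes\phi$ and $\id_{\mathbb B(H)}\otimes\psi$ correctly on the (unambiguous) minimal tensor products and verifying they stay contractive, together with the bookkeeping that converts the two telescoping error terms into the sharp constant $2\gamma+\gamma^2=(1+\gamma)^2-1$.
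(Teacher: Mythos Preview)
The paper does not supply its own proof of this proposition; it is quoted directly from \cite[Corollary~2.12]{Saw.PerturbLength}, with the surrounding text indicating precisely the mechanism you use (length~$2$ with length constant~$1$ for nuclear $A$). Your argument is correct and is essentially the intended one: the length--$2$ factorisation handles $D=\mathbb M_n$ with the sharp constant $(1+\gamma)^2-1$, and the CPAP reduction to matrices handles general nuclear $D$. The only point worth tightening in your write-up is the justification that $(\id_A\otimes\psi\phi)(z)$ can be made close to an arbitrary $z$ in the unit ball of $A\otimes D$: strictly, CPAP gives pointwise approximation on finite subsets of $D$, so one first approximates $z$ by a finite sum of elementary tensors and then chooses $\phi,\psi$ accordingly, using contractivity of $\id_A\otimes\psi\phi$ to control the tail---but this is exactly the routine $\varepsilon/3$ step you allude to.
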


\noindent We also need a version of the previous proposition for finite sets which we 
state in the context of amplification by matrix algebras, see 
\cite[Remark 2.11]{Saw.PerturbLength}.

\begin{proposition}\label{Near.Length}
Let $A$ be a nuclear ${\mathrm C}^*$-algebra on some Hilbert space $H$.   Then for each $n
\in\mathbb N$ and each finite subset $X$ of the unit ball of $A\otimes \mathbb M_n
$, there is a finite subset $Y$ of the unit ball of $A$ with the following property.  
Whenever  $B$ is another ${\mathrm C}^*$-algebra on $H$ with $Y
\subseteq_\gamma B$ for some $\gamma>0$,  then $X\subseteq_{2\gamma+
\gamma^2}B\otimes\mathbb M_n$. In particular if $A\subset_\gamma B$, then $A
\otimes\mathbb M_n\subset_{2\gamma+\gamma^2} B\otimes\mathbb  M_n$ for all 
$n\in\mathbb N$.
\end{proposition}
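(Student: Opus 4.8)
The plan is to reduce the finite-set statement for $A \otimes \mathbb M_n$ to the already-known near-inclusion statement for tensor products of nuclear algebras (Proposition \ref{TensorNuclear}), via a compactness argument. First I would observe that $A \otimes \mathbb M_n$ is itself nuclear (a tensor product of nuclear algebras), so Proposition \ref{TensorNuclear} applies with $D = \mathbb M_n$: if $A \subset_\gamma B$ then $A \otimes \mathbb M_n \subseteq_{2\gamma + \gamma^2} B \otimes \mathbb M_n$. This already gives the final sentence of the statement. The substantive content is that the near inclusion on the unit ball of $A \otimes \mathbb M_n$ can be witnessed, on a prescribed finite set $X$, using only the data of a near inclusion $Y \subseteq_\gamma B$ for a suitable finite $Y \subseteq (A)_1$ depending on $X$ but not on $B$ or $\gamma$. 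So the task is to extract, from the machinery behind Proposition \ref{TensorNuclear} (equivalently, from \cite[Remark 2.11]{Saw.PerturbLength}), a finitary version.

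The key steps, in order. (1) Recall from the similarity-length argument underlying \cite{Saw.PerturbLength} that an element $z \in A \otimes \mathbb M_n$ of norm at most $1$ can be written — because $A$ is nuclear, hence has length $2$ with length constant $1$ — as a ``product of length $2$'': concretely $z$ lies in the closure of sums $\sum_i (a_i \otimes \xi_i)(b_i \otimes \eta_i)$ with the $a_i, b_i$ drawn from $(A)_1$ (up to controlled constants) and the $\xi_i, \eta_i$ scalar matrices, with the relevant row/column norms bounded. (2) Fix the finite set $X \subseteq (A \otimes \mathbb M_n)_1$. For each $x \in X$, choose such a representation approximating $x$ to within, say, $\gamma^2/10$ in norm; this involves only finitely many elements of $(A)_1$. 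Collect all the $A$-components of all these representations, over all $x \in X$, into a single finite set $Y_0 \subseteq (A)_1$, and let $Y$ be $Y_0$ together with whatever finitely many additional elements of $(A)_1$ are needed to control the constants in the length-$2$ factorisation (e.g.\ the ``column'' and ``row'' elements separately, and their adjoints). (3) Now suppose $B$ is any ${\mathrm C}^*$-algebra on $H$ with $Y \subseteq_\gamma B$. Replace each $A$-component $a$ appearing in the chosen representations by some $b_a \in B$ with $\|a - b_a\| \le \gamma$; substituting these into the length-$2$ expressions produces, for each $x \in X$, an element of $B \otimes \mathbb M_n$. (4) Estimate the error: the difference between $x$ and its $B \otimes \mathbb M_n$-approximant is controlled by the initial $\gamma^2/10$ approximation plus a term linear in $\gamma$ (from the first-order perturbations of the factors) plus a term of order $\gamma^2$ (from the cross terms), and the constants work out — exactly as in Proposition \ref{TensorNuclear} — to give $X \subseteq_{2\gamma + \gamma^2} B \otimes \mathbb M_n$. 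This is a routine propagation-of-error computation of the same shape as the one already carried out for the global statement.

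The main obstacle is bookkeeping rather than conceptual: one must be careful that the finite set $Y$ genuinely depends only on $A$, $n$, and $X$, and in particular that the number of terms and the bounds on the matrix coefficients $\xi_i, \eta_i$ in the length-$2$ factorisation of each $x \in X$ are fixed once $x$ is fixed (so that the linear-in-$\gamma$ and quadratic-in-$\gamma$ error constants do not degenerate as $B$ varies). This is guaranteed by the quantitative form of Kadison's similarity property for nuclear algebras — length $2$, length constant $\le 1$ — which is precisely what makes the constants in Proposition \ref{TensorNuclear} independent of everything except $\gamma$. Granting that, the finite-set refinement is essentially a matter of noting that the global argument already only touches finitely many elements of $(A)_1$ when applied to any single $x$, and taking the union over $x \in X$. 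The final ``in particular'' clause is then immediate by applying the first part to the unit ball of $A \otimes \mathbb M_n$ (or directly from Proposition \ref{TensorNuclear} with $D = \mathbb M_n$), using that $A \subset_\gamma B$ means $A \subseteq_{\gamma'} B$ for some $\gamma' < \gamma$ and that $2\gamma' + \gamma'^2 < 2\gamma + \gamma^2$.
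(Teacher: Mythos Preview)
The paper does not give its own proof of this proposition; it is cited from \cite[Remark 2.11]{Saw.PerturbLength}. Your overall strategy---extracting, for each $x \in X$, the finitely many $A$-elements used in the length-$2$ factorisation underlying Proposition~\ref{TensorNuclear} and collecting them into $Y$---is the right approach and matches that reference. Your handling of the ``in particular'' clause is also correct.

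There is, however, a gap in your step~(2). You propose to approximate $x$ by a length-$2$ product ``to within $\gamma^2/10$'', but $Y$ must be constructed \emph{before} $B$ and $\gamma$ are given; any approximation tolerance depending on $\gamma$ forces $Y$ to depend on $\gamma$, contradicting the statement. You flag at the end that $Y$ should depend only on $A$, $n$, $X$, but you do not reconcile this with the $\gamma^2/10$ you introduced. The resolution is that no approximation is needed: for nuclear $A$ the length-$2$ factorisation with constant $1$ is \emph{exact}---each $x$ in the unit ball of $A \otimes \mathbb{M}_n$ can be written as $x=\alpha_0 D_1 \alpha_1 D_2 \alpha_2$ with scalar rectangular matrices $\alpha_i$, diagonal matrices $D_j$ over $A$, and $\|\alpha_0\|\,\|D_1\|\,\|\alpha_1\|\,\|D_2\|\,\|\alpha_2\| \leq 1$. (Your phrase ``lies in the closure of sums'' suggests you thought this was only approximate; but for the bound $2\gamma+\gamma^2$ to hold with the non-strict $\subseteq$ appearing in the statement, exactness is essential.) With an exact factorisation in hand, $Y$ is simply the finite set of normalised diagonal entries of the $D_j$ over all $x \in X$; replacing each such entry by a $B$-element within $\gamma$ perturbs each $D_j$ in operator norm by at most $\gamma$ (diagonal matrices), and the standard two-factor telescoping estimate then gives $\|x - x'\| \leq 2\gamma + \gamma^2$ on the nose.
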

\begin{remark}\label{Near.Length.Rem}
Note that the same result holds for rectangular matrices, i.e. under the same 
hypotheses as the proposition, given a finite subset $X$ of the unit ball of $\mathbb 
M_{1\times r}(A\otimes\mathbb M_n)$ for some $r,n\in\mathbb N$, then there is a 
corresponding finite subset $Y$ of the unit ball of $A$ such that whenever $Y
\subseteq_\gamma B$, then $X\subseteq_{2\gamma+\gamma^2} \mathbb 
M_{1\times r}(B\otimes\mathbb M_n)$.  This follows by working in $\mathbb 
M_r(\mathbb B(H)\otimes \mathbb M_n)\cong \mathbb B(H)\otimes \mathbb M_{rn}
$ and then cutting down after the approximations have been performed.  
\end{remark}

In Sections \ref{Close} and \ref{Unitary} we need to transfer nuclearity and separabilty to close subalgebras, so that our main results only require hypotheses on one algebra. The next two results enable us to do this. The first is due to the first named author and 
requires the equivalence between nuclearity of $A$ and injectivity of the bidual 
$A^{**}$, see the account in \cite{BrownOzawa}. The second is folklore, appearing 
in the proof of \cite[Theorem 6.1]{Christensen.NearInclusions}, for example.  We 
give a proof of the latter statement for completeness.
\begin{proposition}[{\cite[Theorem 6.5]{Christensen.NearInclusions}}]
\label{Prelim.NuclearOpen}
Let $A$ and $B$ be ${\mathrm C}^*$-subalgebras of some ${\mathrm C}^*$-algebra $C$ with $d(A,B)<1/101$.  
Then $A$ is nuclear if, and only if, $B$ is nuclear.
\end{proposition}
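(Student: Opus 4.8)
The plan is to deduce this from the corresponding statement for injective von Neumann algebras, \cite[Corollary 4.4]{Christensen.NearInclusions}, using the fact that a C$^*$-algebra is nuclear precisely when its bidual is injective. Since the assertion is symmetric in $A$ and $B$, it is enough to prove that if $A$ is nuclear and $d(A,B)$ is small enough then $B$ is nuclear, and in fact the argument will produce injectivity of $B^{**}$ directly.

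First I would move to a single concrete picture holding $A$, $B$, and both biduals simultaneously. Represent the ambient algebra $C$ via its universal representation on a Hilbert space $H$. A standard quasi-equivalence argument --- every state of a subalgebra extends to a state of $C$, and conversely each cyclic summand of this representation restricts to a cyclic representation of the subalgebra --- shows that the $\sigma$-weak closures of $A$ and of $B$ in $\mathbb B(H)$ are canonically isomorphic to $A^{**}$ and $B^{**}$. After the routine reduction to the non-degenerate case, $A'' \cong A^{**}$ is an injective von Neumann algebra acting non-degenerately on $H$, because $A$ is nuclear. Next, closeness passes to weak closures: given $x$ in the unit ball of $A''$, use the Kaplansky density theorem to approximate it $\sigma$-strongly by a net in the unit ball of $A$, choose unit-ball elements of $B$ close to the net, and pass to a $\sigma$-weak cluster point; since norm-closed balls are $\sigma$-weakly closed this yields $d(A'',B'') \le d(A,B)$.

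It then remains to apply \cite[Corollary 4.4]{Christensen.NearInclusions} to the injective von Neumann algebra $A''$ and the C$^*$-subalgebra $B'' \subseteq \mathbb B(H)$: as long as $d(A,B)$ lies below the threshold required there --- this is what dictates the constant $1/101$ --- one obtains a unitary $u$ with $uA''u^* = B''$. Hence $B''$ is spatially isomorphic to the injective von Neumann algebra $A''$, so $B''$ is injective; since $B'' \cong B^{**}$, the bidual $B^{**}$ is injective and $B$ is nuclear.

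I expect the substantive issues to be purely of bookkeeping type: checking that the universal representation of $C$ does realise \emph{both} biduals as weak closures, carrying out the reductions to non-degenerate representations, and confirming that $d(A,B) < 1/101$ is indeed small enough to feed into \cite[Corollary 4.4]{Christensen.NearInclusions}. The one conceptual point is the observation that injectivity of the bidual, although a representation-independent property, can be detected in a concrete representation that simultaneously contains $A$, $B$ and their weak closures, which is exactly what makes the injective von Neumann algebra perturbation theorem applicable here.
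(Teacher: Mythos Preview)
Your proposal is correct and follows exactly the approach the paper indicates. The paper does not supply its own proof of this proposition; it simply cites \cite[Theorem~6.5]{Christensen.NearInclusions} and remarks that the argument ``requires the equivalence between nuclearity of $A$ and injectivity of the bidual $A^{**}$'', while the introduction explains that the result is a consequence of the injective von Neumann perturbation theorem \cite[Corollary~4.4]{Christensen.NearInclusions} together with Connes' work. Your sketch reconstructs precisely this line: pass to the universal representation so that weak closures realise biduals, use Kaplansky density (the paper invokes this as \cite[Lemma~5]{Kadison.Kastler}) to transfer the distance estimate to the weak closures, and then apply the injective von Neumann result to obtain unitary equivalence and hence injectivity of $B^{**}$.
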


\begin{proposition}\label{Prelim.SeparableOpen}
Let $A$ and $B$ be ${\mathrm C}^*$-subalgebras of some ${\mathrm C}^*$-algebra $C$ with $B\subset_{1/2}A$. 
If $A$ is separable, then $B$ is separable.
\end{proposition}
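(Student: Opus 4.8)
The plan is to use the near containment $B \subseteq_{\gamma} A$ for some $\gamma < 1/2$ to pull a countable dense subset of (the unit ball of) $A$ back to an approximating family for the unit ball of $B$, and then generate a separable $\mathrm{C}^*$-subalgebra that is forced to be all of $B$. First I would fix $\gamma < 1/2$ with $B \subseteq_{\gamma} A$, and choose a countable dense subset $\{a_n\}_{n \in \mathbb N}$ of the unit ball of $A$ (possible since $A$ is separable). For each $b$ in the unit ball of $B$ there is, by definition of $\subseteq_{\gamma}$, some $a \in A$ with $\|b - a\| \le \gamma$; hence $\|a\| \le 1 + \gamma < 3/2$, so after rescaling, $\tfrac{2}{3}a$ lies in the unit ball of $A$ and can be approximated arbitrarily well by elements of the dense set $\{a_n\}$. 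Consequently every element of the unit ball of $B$ lies within $\gamma + \delta$ of the countable set $\{\tfrac{3}{2}a_n : n \in \mathbb N\}$ for every $\delta > 0$, so in particular within distance $\gamma' $ of it for some fixed $\gamma' < 1/2$.

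Now let $B_0$ be the (separable) $\mathrm{C}^*$-subalgebra of $C$ generated by $\{\tfrac{3}{2}a_n : n\}$ — but note these elements need not lie in $B$. The cleaner route is instead: for each $n$ pick, when possible, an element $b_n$ in the unit ball of $B$ with $\|b_n - \tfrac{3}{2}a_n\|$ small (say within $1/n$), discarding those $a_n$ for which $\tfrac 32 a_n$ is not within, say, $\gamma$ of $B$. Let $B_0 \subseteq B$ be the $\mathrm{C}^*$-subalgebra generated by the countable set $\{b_n\}$; this is separable. By construction, every element of the unit ball of $B$ is approximated to within some fixed $\gamma'' < 1$ by an element of the unit ball of $B_0$, i.e. $B \subseteq_{\gamma''} B_0$, hence $B \subset_1 B_0$ in the notation of Definition \ref{Prelim.DefNear}. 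Since also $B_0 \subseteq B$, Proposition \ref{EasyFact} gives $B_0 = B$, so $B$ is separable.

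The one point requiring care — and the main (mild) obstacle — is the bookkeeping of constants: one must verify that the composite approximation (rescale by $2/3$, approximate by $a_n$, rescale back by $3/2$, replace $\tfrac 32 a_n$ by a genuine element $b_n \in B$, then approximate within $B_0$) produces an overall constant that is strictly less than $1$, so that Proposition \ref{EasyFact} applies. Starting from $\gamma < 1/2$, each element of the unit ball of $B$ is within $\gamma$ of some $a \in A$ with $\|a\| < 3/2$; the rescaled element is within $3/2$ of $\{a_n\}$... one should track this honestly, but since the original hypothesis is $B \subset_{1/2} A$ there is ample room, and a direct estimate shows the final constant can be taken below $1$. (Alternatively one avoids rescaling entirely by taking the dense subset of the ball of radius $2$ in $A$, or by noting $\|a\|<3/2$ and working in the ball of that radius; the arithmetic is routine and the slack in $1/2$ is generous.) Separability of $B_0$, being generated by a countable set, is immediate.
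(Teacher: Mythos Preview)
Your approach is correct in spirit and, once the bookkeeping is done cleanly, gives a valid proof by a genuinely different route from the paper's. The paper argues instead via a maximal $(1-\varepsilon)$-separated subset $\{b_i : i \in I\}$ of the unit ball of $B$: sending each $b_i$ to a nearby $a_i \in A$ produces a $(1-\varepsilon-2\gamma)$-separated family in a bounded subset of $A$, which must be countable since $A$ is separable, and one then checks directly that the closed linear span of $\{b_i\}$ is all of $B$. Your route instead pulls a countable dense subset of $A$ back to a countable $\gamma''$-net in the unit ball of $B$ for some $\gamma''<1$ and then invokes Proposition~\ref{EasyFact}; this is equally natural and has the pleasant feature of reusing that proposition.

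One concrete cleanup is needed, however: the threshold ``within $1/n$'' in your choice of $b_n$ cannot work. For large $n$ there will almost never be an element of the unit ball of $B$ that close to $\tfrac{3}{2}a_n$, and discarding all such indices destroys the net property you need. Use a single fixed threshold throughout: choose $\gamma'\in(\gamma,1/2)$, and for each $n$, if some $b$ in the unit ball of $B$ satisfies $\|b-\tfrac{3}{2}a_n\|<\gamma'$, fix one such $b_n$; otherwise discard $n$. Then every $b$ in the unit ball of $B$ is within $2\gamma'<1$ of some $b_n$ (since $b$ itself witnesses that the relevant index is retained), and Proposition~\ref{EasyFact} applies to $B_0=\mathrm{C}^*(\{b_n\})\subseteq B$. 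The $3/2$-rescaling is also avoidable: simply take your countable dense set in the ball of radius $1+\gamma$, or in all of $A$, to begin with.
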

\begin{proof}
Suppose that $A$ is separable and suppose that $B\subset_{1/2}A$.  Fix $
0< \gamma<1/2$ with $B\subset_{\gamma}A$. Take $0<\vp<1-2\gamma$, and let  $\{b_i:i\in I \}$ be a maximal 
set of elements in the unit ball  of $B$ such that  $\|b_i-b_j\| \geq 1-\vp$ 
when $i\neq j$. For each $i\in I$, find an operator $a_i$ in the unit ball of  $A$ with 
$\|a_i-b_i\|<\gamma$.  For $i\neq j$, we have $\|a_i-a_j\|\geq 1-\vp-2\gamma
>0$,  so separability of $A$ implies that $I$ is countable. Let $C$ be the closed 
linear span of $\{b_i:i\in I\}$.  If $C\neq B$, take a unit vector $x\in B/C$ and write 
$x=y+C$ for some $y$ with $1\leq \|y\|<1+\vp$. Let $\tilde{y}=y/\|y\|$. If $\|
\tilde{y}-b_i\|<1-\vp$ for some $i$, then 
\begin{equation}
\|x\|_{B/C}\leq\|y-b_i\|\leq\|y-\tilde{y}\|+\|\tilde{y}-b_i\|<\vp+
1-\vp=1,
\end{equation}
which is a contradiction.  Thus we can adjoin $y$ to $\{b_i:i\in I\}$ contradicting 
maximality.  Hence $B=C$ and so $B$ is separable.
\end{proof}

In Section \ref{Unitary} we shall conjugate ${\mathrm C}^*$-algebras by unitaries to reduce to 
the situation of close separable nuclear ${\mathrm C}^*$-algebras which have the same  
ultraweak closure using the next two propositions.  The first is \cite[Proposition 3.2]
{Saw.PerturbLength}, while the second  was established in 
\cite{Christensen.NearInclusions}.  Note that the distance used in 
\cite{Christensen.NearInclusions} is the quantity $d_0$ described in Remark 
\ref{Prelim.Metric.Rem}. We restate the result we need in terms of $d$ and add a 
shared unit assumption which is implicit in the original version.

\begin{proposition}\label{Prelim.Unit}
Let $A$ and $B$ be ${\mathrm C}^*$-subalgebras of a unital ${\mathrm C}^*$-algebra $C$ satisfying 
$d(A,B)<\gamma<1/4$. Then $A$ is unital if and only if $B$ is unital. In this case 
there is a unitary $u\in C$ with $\|u-1_C\|<2\sqrt{2}\gamma$ and $u1_Au^*=1_B$.
\end{proposition}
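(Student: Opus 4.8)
The plan is to establish the ``$A$ unital $\iff$ $B$ unital'' assertion by producing a projection in $B$ close to $1_A$, and then to conjugate $1_A$ onto $1_B$ using the second part of Proposition~\ref{Prelim.UnitaryEquiv}. So suppose $A$ is unital and put $p=1_A$, a projection in $C$. Since $d(A,B)<\gamma$ there is an element of the unit ball of $B$ within $\gamma$ of $p$, and replacing it by its real part (which does not increase the distance to the self-adjoint $p$) gives a self-adjoint $b$ in the unit ball of $B$ with $\|b-p\|<\gamma$. As $\sigma_C(p)\subseteq\{0,1\}$ and $\|b\|\le 1$, perturbation of the spectrum for self-adjoint elements forces $\sigma_C(b)\subseteq[-\gamma,\gamma]\cup[1-\gamma,1]$, two intervals that are disjoint because $\gamma<1/2$. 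Applying the continuous functional calculus with a function $f\colon\mathbb R\to[0,1]$ equal to $0$ on $(-\infty,\gamma]$ and to $1$ on $[1-\gamma,\infty)$ then produces a projection $q:=f(b)$; since $f(0)=0$ we have $q\in C^*(b)\subseteq B$, and since $\|q-b\|=\sup_{t\in\sigma_C(b)}|f(t)-t|\le\gamma$ we get $\|q-p\|<2\gamma$.

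Next I would show that $q$ is a unit for $B$. Given $b'$ in the unit ball of $B$, choose $a'$ in the unit ball of $A$ with $\|b'-a'\|<\gamma$; using $pa'=a'$,
\[
\|qb'-b'\|\le\|q(b'-a')\|+\|(q-p)a'\|+\|pa'-a'\|+\|a'-b'\|<\gamma+2\gamma+0+\gamma=4\gamma<1 .
\]
If $b'-qb'\neq 0$, then $c:=(b'-qb')/\|b'-qb'\|$ is a norm-one element of $B$ with $qc=0$, whence $\|qc-c\|=1$, contradicting the displayed estimate. Hence $qb'=b'$ for every $b'$ in the unit ball of $B$, and therefore for every $b'\in B$; taking adjoints gives $b'q=b'$ as well, so $B$ is unital with $1_B=q$. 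Since $d$ is symmetric, the reverse implication holds too, proving the equivalence.

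Finally, when both algebras are unital we have projections $p=1_A$ and $q=1_B$ in the unital $\mathrm C^*$-algebra $C$ with $\|p-q\|<2\gamma<1$. The second part of Proposition~\ref{Prelim.UnitaryEquiv} then yields a unitary $u\in C$ with $upu^*=q$, that is $u1_Au^*=1_B$, and $\|u-1_C\|\le\sqrt 2\,\|p-q\|<2\sqrt 2\,\gamma$, which is the required conclusion.

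I do not anticipate a genuine obstacle: the argument is elementary bookkeeping, and the hypothesis $\gamma<1/4$ is used only in two mild ways. It guarantees the spectral gap $\gamma<1-\gamma$ needed to split $\sigma_C(b)$ in the first paragraph (where $\gamma<1/2$ would already suffice), and, more tightly, it makes the estimate $\|qb'-b'\|<4\gamma$ strictly less than $1$, which is exactly what the normalisation trick in the second paragraph requires in order to force $q$ to act as a unit. The only technical point to keep in mind is that every element obtained from the functional calculus must lie in the possibly non-unital algebra $B$, which is why $f$ is chosen to vanish at $0$.
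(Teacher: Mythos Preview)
Your proof is correct. The paper does not give its own proof of this proposition but simply cites \cite[Proposition 3.2]{Saw.PerturbLength}; your argument is the standard elementary one (functional calculus to produce a nearby projection, a normalisation trick to show it is a unit, then Proposition~\ref{Prelim.UnitaryEquiv} part~2), and indeed the paper invokes essentially the same projection-finding step elsewhere via \cite[Lemma~2.1]{Christensen.PerturbationsType1} in the proof of Proposition~\ref{InnerFlip}.
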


\begin{proposition}[{\cite[Corollary 4.2 (c)]{Christensen.NearInclusions}}]
\label{Prelim.Inject}
Let $M$ and $N$ be injective von Neumann algebras on a Hilbert space $H$ which 
share the same unit.  If $d(M,N)<\gamma<1/8$, then there exists a unitary $u\in (M
\cup N)''$ with $uMu^*=N$ and $\|u-I_H\|\leq 12\gamma$.
\end{proposition}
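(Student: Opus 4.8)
The plan is to run Christensen's averaging method from \cite{Christensen.Perturbations1}, now keeping careful track of constants. Since $N$ is injective there is a conditional expectation $\Phi\colon\mathbb B(H)\to N$, and its restriction $\Phi_0:=\Phi|_M\colon M\to N$ is unital and completely positive. From $d(M,N)<\gamma$ one gets $\|\Phi_0(x)-x\|\le 2\gamma\|x\|$ for $x\in M$: given $x$ in the unit ball of $M$ pick $y$ in the unit ball of $N$ with $\|x-y\|<\gamma$ and use $\Phi_0(y)=y$. In particular, for every unitary $u\in M$ we have $\Phi_0(u)^*\Phi_0(u)\le I_H$ with $\|I_H-\Phi_0(u)^*\Phi_0(u)\|\le 4\gamma$, so $\Phi_0$ is uniformly almost multiplicative on $\mathcal U(M)$, and $\Phi_0(u)$ is invertible in $N$ with unitary part lying within $2\sqrt2\,\gamma$ of $u$ by Proposition \ref{Prelim.UnitaryEquiv}.

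The first averaging step converts $\Phi_0$ into an honest $*$-isomorphism. Take a minimal Stinespring dilation $\Phi_0(x)=V^*\rho(x)V$ with $\rho\colon M\to\mathbb B(K)$ a $*$-representation and $V\colon H\to K$ an isometry. Near-multiplicativity of $\Phi_0$ forces the projection $P=VV^*$ to almost commute with $\rho(\mathcal U(M))$, and since $\rho(M)''$ is injective, Christensen's distance-to-the-commutant estimate produces a projection $Q\in\rho(M)'$ close to $P$; conjugating $Q$ to $P$ by a unitary $z\in\mathbb B(K)$ close to $I_K$ (Proposition \ref{Prelim.UnitaryEquiv}) and setting $\pi(x):=V^*z\rho(x)z^*V$ yields a unital $*$-homomorphism $\pi\colon M\to\mathbb B(H)$ close to the inclusion $\iota_M$. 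A normality argument, exploiting that $\pi$ is built from the $N$-valued data $\Phi_0$, shows $\pi(M)\subseteq N$; then $d(\pi(M),N)\le d(\pi(M),M)+d(M,N)<1$ together with $\pi(M)\subseteq N$ and Proposition \ref{EasyFact} forces $\pi(M)=N$. Thus $\pi$ is a normal $*$-isomorphism of $M$ onto $N$ with $\|\pi(x)-x\|$ controlled by $\gamma$.

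The second averaging step implements $\pi$ spatially. Since $\pi$ is close to $\iota_M$, the von Neumann algebra $(M\cup N)''=(M\cup\pi(M))''$ is close to $M$ and hence injective. The bounded set $\{\pi(u)u^*:u\in\mathcal U(M)\}$ is contained in $(M\cup N)''$, lies close to $I_H$, and is invariant under the affine weak$^*$-continuous action $T\mapsto\pi(w)Tw^*$ of $\mathcal U(M)$; injectivity of $(M\cup N)''$ (Property $P$) supplies a fixed point $v$ in the weak$^*$-closed convex hull. Then $v\in(M\cup N)''$ is close to $I_H$ and satisfies $vx=\pi(x)v$ for all $x\in M$. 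As $\pi$ is onto $N$, the operator $v$ is injective with dense range, so its unitary polar part $u\in(M\cup N)''$ satisfies $uxu^*=\pi(x)$, i.e.\ $uMu^*=N$, and $\|u-I_H\|\le\sqrt2\,\|v-I_H\|$ by Proposition \ref{Prelim.UnitaryEquiv}.

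The principal difficulty is the quantitative bookkeeping. One must verify that the homomorphism produced in the first step genuinely takes values in $N$ (and is normal), pin down the absolute constant in Christensen's distance-to-the-commutant inequality for injective algebras, and — most delicately — organise the argument so that all the bounds are \emph{linear} in $\gamma$: a crude implementation of the Stinespring step only controls the commutator $\|[\,P,\rho(u)\,]\|$ by $O(\sqrt\gamma)$, whereas obtaining the threshold $\gamma<1/8$ and the sharp estimate $\|u-I_H\|\le 12\gamma$ requires Christensen's more refined averaging estimates rather than the crude version sketched above.
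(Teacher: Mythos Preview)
The paper does not prove this proposition: it is quoted verbatim as \cite[Corollary 4.2 (c)]{Christensen.NearInclusions} and no argument is given. So there is no ``paper's own proof'' to compare against; the result is simply imported from the literature.

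That said, your sketch is a faithful outline of the argument in \cite{Christensen.Perturbations1,Christensen.NearInclusions}: restrict a conditional expectation onto $N$ to obtain an almost multiplicative ucp map $M\to N$, average the Stinespring projection over $\mathcal U(M)$ (using injectivity) to land on a projection in the commutant and hence a genuine $*$-isomorphism close to $\iota_M$, then average $\{\pi(u)u^*\}$ to produce an intertwiner whose polar part is the desired unitary. You are also right that the delicate point is arranging \emph{linear} dependence on $\gamma$ rather than $\gamma^{1/2}$; the naive Stinespring commutator estimate only gives a square root, and Christensen's sharper bounds (together with the hypergraph/Property~$P$ averaging in the injective setting) are what yield the constant $12$ and the threshold $1/8$. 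Your sketch stops short of carrying this out, and the claim that $\pi(M)\subseteq N$ and that $\pi$ is normal needs more care than you indicate, but as a roadmap to the cited proof it is accurate.
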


Distances between maps restricted to finite sets will be a recurring theme in the 
paper, as will be maps which act almost as $*$-homomorphisms, so we formalise 
these with the following notational definitions.
\begin{definition}
Given two maps $\phi_1,\phi_2:A\rightarrow B$ between normed spaces, a subset 
$X\subseteq A$ and $\vp>0$, write $\phi_1\approx_{X,\vp}\phi_2$ 
if $\|\phi_1(x)-\phi_2(x)\|\leq \vp$ for $x\in X$.  When $A$ and $B$ are both subspaces of $\mathbb B(H)$, we use $\iota$ to denote the inclusion maps so $\phi_1\approx_{X,\vp}\iota$ means $\|\phi_1(x)-x\|\leq\vp$ for $x\in X$.
\end{definition}

\begin{definition}\label{Prelim.Def.Approxhm}
Let $A$ and $B$ be ${\mathrm C}^*$-algebras, $X$ a subset of $A$ and $\varepsilon>0$. A bounded 
linear map $\phi:A\rightarrow B$ is an \emph{$(X,\vp)$-approximate 
$*$-homomorphism} if it is a completely positive contractive map 
(cpc map) and the 
inequality
\begin{equation}
\|\phi(x)\phi(x^*)-\phi(xx^*)\|\leq \vp,\quad x\in X\cup X^*,
\end{equation}
is satisfied.
\end{definition}
The following well-known consequence of Stinespring's theorem, which can be 
found as  \cite[Lemma 7.11]
{Kirchberg.InfiniteAbsorbCuntz}, shows why we only consider pairs of the form 
$x,x^*$ in the previous definition.
\begin{proposition}\label{Prelim.ApproxhmEst}
Let $\phi:A\rightarrow B$ be a cpc map between ${\mathrm C}^*$-algebras. For $x,y\in A$, we 
have
\begin{equation}
\|\phi(xy)-\phi(x)\phi(y)\|\leq\|\phi(xx^*)-\phi(x)\phi(x^*)\|^{1/2}\|y\|.
\end{equation}
\end{proposition}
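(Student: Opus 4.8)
The plan is to deduce this from Stinespring's dilation theorem applied to the cpc map $\phi$. Since $\phi:A\to B\subseteq\mathbb B(K)$ is completely positive and contractive, there is a Hilbert space $\cl H$, a $*$-homomorphism $\pi:A\to\mathbb B(\cl H)$, and a contraction $V:K\to\cl H$ with $\phi(a)=V^*\pi(a)V$ for all $a\in A$. First I would record that contractivity of $\phi$ forces $\|V\|\le 1$, so $I-VV^*\ge 0$ and the ``defect'' operator $I-VV^*$ has a well-defined positive square root.

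The key computation is to expand $\phi(xy)-\phi(x)\phi(y)=V^*\pi(x)\pi(y)V-V^*\pi(x)VV^*\pi(y)V=V^*\pi(x)(I-VV^*)\pi(y)V$. Writing $D=(I-VV^*)^{1/2}$, this becomes $\big(D\pi(x^*)V\big)^*\big(D\pi(y)V\big)$, so by Cauchy--Schwarz (the operator norm version, $\|S^*T\|\le\|S\|\,\|T\|$) we get
\begin{equation}
\|\phi(xy)-\phi(x)\phi(y)\|\leq \|D\pi(x^*)V\|\,\|D\pi(y)V\|.
\end{equation}
For the second factor I would simply bound $\|D\pi(y)V\|\le \|D\|\,\|\pi(y)\|\,\|V\|\le \|y\|$, using $\|D\|\le 1$ and $\|V\|\le 1$. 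For the first factor, note $\|D\pi(x^*)V\|^2=\|V^*\pi(x)D^2\pi(x^*)V\|=\|V^*\pi(x)(I-VV^*)\pi(x^*)V\|=\|\phi(xx^*)-\phi(x)\phi(x^*)\|$, which is exactly the claimed square-root term. Combining the two estimates gives the inequality.

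There is no real obstacle here; the only point requiring a little care is the manipulation $\|D\pi(x^*)V\|^2 = \|(D\pi(x^*)V)^*(D\pi(x^*)V)\| = \|V^*\pi(x)D^2\pi(x^*)V\|$, which uses the C$^*$-identity together with $D=D^*$ and $\pi(x)^*=\pi(x^*)$, and then identifying $V^*\pi(x)V=\phi(x)$, $V^*\pi(x)VV^*\pi(x^*)V=\phi(x)\phi(x^*)$ and $V^*\pi(xx^*)V=\phi(xx^*)$ to rewrite $V^*\pi(x)(I-VV^*)\pi(x^*)V$ as $\phi(xx^*)-\phi(x)\phi(x^*)$. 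Since this last expression is a positive operator (being of the form $T^*T$ with $T=D\pi(x^*)V$), there is no ambiguity in its square root, and the proof is complete.
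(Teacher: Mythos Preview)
Your proof is correct and is precisely the standard Stinespring argument that the paper has in mind; the paper does not spell out a proof but simply cites \cite[Lemma 7.11]{Kirchberg.InfiniteAbsorbCuntz} and describes the result as a ``well-known consequence of Stinespring's theorem''. Your factorisation $\phi(xy)-\phi(x)\phi(y)=(D\pi(x^*)V)^*(D\pi(y)V)$ with $D=(I-VV^*)^{1/2}$ is exactly how that lemma is proved.
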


Given a near inclusion $A\subset_\gamma B$, the next two results give conditions 
under which we can find cpc maps $A\rightarrow B$ which approximate  on finite sets
the inclusion map 
of $A$ into the bounded operators on the underlying Hilbert space.  Our first result of this type is 
an application of Arveson's extension theorem \cite{Arveson.Subalgebras}. It uses 
the characterisation of nuclearity, due to Choi and Effros, \cite{Choi.EffrosCPAP}, in 
terms of approximate factorisations of the identity map by completely positive maps 
through matrix algebras.    The second is a version of \cite[Proposition 6.7]
{Christensen.NearInclusions}, for unital ${\mathrm C}^*$-algebras with approximately inner 
half flip. We include a proof here to remove the implicit assumption of a shared unit of the original version. A third result of this type will be obtained 
in Section \ref{Near} when $A$ has finite nuclear dimension.

\begin{proposition}\label{Prelim.PtArveson}
Let $A$ and $B$ be two ${\mathrm C}^*$-algebras on a Hilbert space $H$ with $B$ nuclear.  
Given  a finite set $X$ in the unit ball of $A$ with $X\subset_\gamma B$,  there 
exists a cpc map $\phi :  A\to B$ such that $\|\phi(x)-x\|\leq2\gamma$ for $x\in X
$.
\end{proposition}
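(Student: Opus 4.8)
The plan is to combine two standard facts: the completely positive approximation property of the nuclear algebra $B$, and Arveson's extension theorem. By the definition of $\subset_\gamma$, fix $\gamma'<\gamma$ and, for each $x\in X$, an element $b_x\in B$ with $\|x-b_x\|\leq\gamma'$ (so each $b_x$ has norm at most $1+\gamma'$). Set $\delta=2(\gamma-\gamma')>0$.

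First I would apply the Choi--Effros characterisation of nuclearity \cite{Choi.EffrosCPAP} to $B$: there are $k\in\mathbb N$ and cpc maps $\psi\colon B\to\mathbb M_k$ and $\eta\colon\mathbb M_k\to B$ with $\|\eta(\psi(b_x))-b_x\|\leq\delta$ for all $x\in X$. Since $\mathbb M_k=\mathbb B(\mathbb C^k)$ is injective, Arveson's extension theorem \cite{Arveson.Subalgebras} yields a cpc map $\tilde\psi\colon\mathbb B(H)\to\mathbb M_k$ extending $\psi$ (if one is uneasy about the non-unital case, first extend $\psi$ to the operator system $B+\mathbb C I_H$ in the standard way, then apply Arveson). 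Now put $\phi=\eta\circ(\tilde\psi|_A)\colon A\to B$; as a composition of cpc maps it is cpc.

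It remains to verify $\phi\approx_{X,2\gamma}\iota$. For $x\in X$, using that $\tilde\psi$ agrees with $\psi$ on $B$ and that $\tilde\psi$ and $\eta$ are contractive,
\[
\|\phi(x)-x\|\leq\|\eta\tilde\psi(x)-\eta\psi(b_x)\|+\|\eta\psi(b_x)-b_x\|+\|b_x-x\|\leq\gamma'+\delta+\gamma'=2\gamma,
\]
as required.

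There is no serious obstacle here; the whole content sits in the two cited theorems. The one point deserving attention is the accounting of constants: the completely positive approximation contributes an unavoidable error $\delta$, and it is precisely the strictness $\gamma'<\gamma$ built into the definition of $\subset_\gamma$ that leaves room to absorb it within the stated bound $2\gamma$.
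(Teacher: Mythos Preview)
Your proof is correct and follows essentially the same approach as the paper's: factor the identity on $B$ approximately through a finite dimensional algebra via the completely positive approximation property, extend the first leg to $\mathbb B(H)$ by Arveson, and then estimate using the triangle inequality with the same splitting $\gamma'+\delta+\gamma'=2\gamma$. The only cosmetic difference is that the paper factors through a general finite dimensional $F$ rather than specifically $\mathbb M_k$, which is immaterial.
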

\begin{proof}
Since $X\subset_\gamma B$, we may choose $\gamma' < \gamma$ so that $X
\subset_{\gamma'} B$. Let $\vp=2(\gamma-\gamma')>0$.
Find a finite subset $Y$ of $B$ such that $X\subseteq_{\gamma'}Y$.  Use the nuclearity of $B$ to find a finite dimensional 
${\mathrm C}^*$-algebra $F$ and cpc maps $\alpha :  B\rightarrow F$ and $\beta :  F
\rightarrow B$
so that $
\beta\circ\alpha\approx_{Y,\varepsilon}\id_B$.
Arveson's extension theorem, \cite{Arveson.Subalgebras}, allows us to extend $
\alpha$ to a cpc map $\tilde\alpha :  {\mathbb B}(H)\to F$. For a given $x\in X
$, choose $y\in Y$ so that $\|x-y\|\leq \gamma'$. Then
\begin{equation}
\|\beta(\tilde\alpha(x))-x\|\leq\|\beta(\tilde\alpha(x-y))\|+\|\beta(\tilde{\alpha}(y))-y\|+\|
x-y\|\leq 2\gamma'+\varepsilon=2\gamma,
\end{equation}
so the result holds with $\phi=\beta\circ\tilde\alpha$.
\end{proof}
Recall that a unital ${\mathrm C}^*$-algebra $A$ has  approximately inner half flip if the 
following condition is satisfied. Given a finite subset $F\subseteq A$ and $\vp > 0$, 
there exists a unitary $u$ in the spatial ${\mathrm C}^*$-tensor product $A\otimes A$  so that 
\begin{equation}
\|u(1\otimes x)u^*-x\otimes 1\|_{A\otimes A}< \vp,\quad  x\in F.
\end{equation}
Such ${\mathrm C}^*$-algebras are automatically nuclear by the proof of Proposition 2.8 of 
\cite{Effros.ApproxInnerFlip}.  The next result is established by reducing to the case 
where $A$ and $B$ are both unital with the same unit and then following 
\cite[Proposition 6.7]{Christensen.NearInclusions}.

\begin{proposition}\label{InnerFlip}
Let $A$ and $B$ be a ${\mathrm C}^*$-algebras on a Hilbert space $H$. Suppose that $A$ is 
unital with approximately inner half flip and $A\subset_{\gamma}B$ for $
\gamma<1/2$.  Then for all finite sets $X$ in the unit ball of $A$, there exists a cpc 
map $\phi:A\rightarrow B$ with 
\begin{equation}\label{InnerFlip.1}
\|\phi(x)-x\|\leq 8\alpha+4\alpha^2+4\sqrt{2}\gamma,\quad x\in X,
\end{equation}
where $\alpha=(4\sqrt{2}+1)\gamma+4\sqrt{2}\gamma^2$.
\end{proposition}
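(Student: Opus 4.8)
The plan is to reduce to the case in which $A$ and $B$ share the unit $1_A$, and then to carry out the averaging argument of \cite[Proposition 6.7]{Christensen.NearInclusions} that converts the approximately inner half flip of $A$ into a cpc map close to the inclusion. One may first assume the asserted bound $8\alpha+4\alpha^2+4\sqrt{2}\gamma$ is $<1$, since otherwise the zero map is cpc and already satisfies $\|0-x\|=\|x\|\leq 1$ for $x$ in the unit ball; in particular $\alpha<1/8$, which makes every estimate below legitimate.

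\textbf{Reduction to a shared unit.} Fix $\gamma'<\gamma$ with $A\subseteq_{\gamma'}B$ and choose a self-adjoint $b\in B$ with $\|b-1_A\|\leq\gamma'$. Because $1_A$ is a projection on $H$ and $\gamma'<1/2$, an approximate-eigenvector argument gives $\sigma(b)\subseteq[-\gamma',\gamma']\cup[1-\gamma',1+\gamma']$, so $p:=\chi_{[1/2,\infty)}(b)$ is a projection in $C^*(b)\subseteq B$ with $\|p-1_A\|\leq 2\gamma'$. By part 2 of Proposition \ref{Prelim.UnitaryEquiv} there is a unitary $u\in\mathbb{B}(H)$ with $upu^*=1_A$ and $\|u-I_H\|\leq 2\sqrt{2}\gamma'$, so that $B_1:=u(pBp)u^*$ is a C$^*$-algebra on $1_A H$ with unit $1_A$ and $u^*B_1u=pBp\subseteq B$. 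For $x$ in the unit ball of $A$ one has $x=u(p(u^*xu)p)u^*$ since $upu^*=1_A$ and $1_Ax1_A=x$; if $b_x\in B$ satisfies $\|x-b_x\|\leq\gamma'$ then $u(pb_xp)u^*\in B_1$ and $\|x-u(pb_xp)u^*\|=\|p(u^*xu-b_x)p\|\leq 2\|u-I_H\|+\|x-b_x\|\leq(4\sqrt{2}+1)\gamma'$. As $(4\sqrt{2}+1)\gamma'<\alpha$, this shows $A\subset_\alpha B_1$.

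\textbf{The half-flip averaging and assembly.} Now I would work with the unital pair $(A,B_1)$ on $1_A H$, following \cite[Proposition 6.7]{Christensen.NearInclusions}. Given the finite set $X$ and $\varepsilon>0$, the approximately inner half flip supplies a unitary $w\in A\otimes A$ with $\|w(1_A\otimes x)w^*-x\otimes 1_A\|<\varepsilon$ for $x\in X$. Since $A$ is automatically nuclear, Proposition \ref{TensorNuclear} yields $A\otimes A\subseteq_{2\alpha+\alpha^2}B_1\otimes A$ inside $\mathbb{B}(1_A H)\otimes A$; pick $v_0\in B_1\otimes A$ with $\|v_0-w\|\leq 2\alpha+\alpha^2<1$ and replace it by the unitary $v\in B_1\otimes A$ of its polar decomposition — legitimate because $B_1\otimes A$ is unital — so that $\|v-w\|\leq\sqrt{2}(2\alpha+\alpha^2)$ by part 1 of Proposition \ref{Prelim.UnitaryEquiv}. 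Fixing a state $\omega$ on $A$, set $\psi:=(\id_{B_1}\otimes\omega)\circ\ad(v)\circ(x\mapsto 1_A\otimes x):A\to B_1$; this is cpc, being a composition of a unital $*$-homomorphism, a $*$-automorphism and a slice map, and since $(\id\otimes\omega)(x\otimes 1_A)=x$ one obtains $\|\psi(x)-x\|\leq 2\|v-w\|+\varepsilon$ for $x\in X$. Then $\phi:=\ad(u^*)\circ\psi:A\to pBp\subseteq B$ is cpc and $\|\phi(x)-x\|\leq\|\psi(x)-x\|+2\|u-I_H\|\leq 4\sqrt{2}\alpha+2\sqrt{2}\alpha^2+4\sqrt{2}\gamma'+\varepsilon$; taking $\varepsilon$ small and using $\gamma'<\gamma$, $4\sqrt{2}<8$ and $2\sqrt{2}<4$ gives the claimed estimate.

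\textbf{Main obstacle.} The key difficulty — and the reason the original argument tacitly assumed a shared unit — is that $1_A$ need not lie in the multiplier algebra of $B$, so one cannot simply compress or conjugate inside $B$; the reduction above stays cheap only because one extracts a genuine projection $p\in B$ close to $1_A$ and then estimates the new inclusion constant via the identity $x=u(p(u^*xu)p)u^*$ rather than by a cruder route that would inflate the constant beyond $\alpha$. A subsidiary point is that the passage from $v_0$ to the unitary $v$ has to take place in the unital algebra $B_1\otimes A$ (available only after the reduction), and one must verify at each stage that the maps produced are genuinely completely positive and contractive, not merely bounded.
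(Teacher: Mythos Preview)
Your argument is correct and follows essentially the same route as the paper: reduce to a shared unit by finding a projection $p\in B$ close to $1_A$, conjugate to obtain $B_1=u(pBp)u^*$ with $A\subset_\alpha B_1$, then use the half-flip unitary together with the tensor near-inclusion of Proposition~\ref{TensorNuclear} and a slice map, and finally conjugate back by $u^*$. The only notable deviation is that you pass to the unitary $v$ in the polar decomposition of the approximant $v_0\in B_1\otimes A$ (via $w^*v_0$ close to $1$), whereas the paper simply uses a contraction $w\in B_1\otimes A$ with $\|v-w\|\leq 2(2\alpha'+\alpha'^2)$ and compresses by the slice map directly; your route yields the tighter raw constant $4\sqrt{2}\alpha+2\sqrt{2}\alpha^2+4\sqrt{2}\gamma$, which you then round up to match the stated bound, while the paper's avoids the polar-decomposition step at the cost of the extra factor. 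Two small points worth tidying: your near-inclusion estimate for $A\subset B_1$ actually gives $(4\sqrt{2}+1)\gamma'$ (your computation is sharper than the paper's, which picks up an extra $4\sqrt{2}\gamma'^2$), and when you invoke $(\id\otimes\omega)(x\otimes 1_A)=x$ you are implicitly using the slice map on the larger algebra $C^*(A,B_1)\otimes A$ rather than just $B_1\otimes A$---the paper makes the same tacit move.
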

\begin{proof}
Fix $\gamma'<\gamma$ so that $A\subseteq_{\gamma'}B$.  By Lemma 2.1 of 
\cite{Christensen.PerturbationsType1}, find a projection $p\in B$ with $\|p-1_A\|\leq 
2\gamma'$. By Part 2 of Lemma \ref{Prelim.UnitaryEquiv} find a unitary $u$ on $H$ 
with $upu^*=1_A$ and $\|u-I_H\|\leq \sqrt{2}\|p-1_A\|$.  Define 
$B_0=u(pBp)u^*=1_AuBu^*1_A$ so that $A$ and $B_0$ are both unital and share the same unit. Given $x$ in the unit ball of $A$, there exists $y\in 
B$ with $\|x-y\|\leq \gamma'$.  Then 
\begin{align}
\|x-1_Auyu^*1_A\|&\leq\|x-uyu^*\|\leq \|x-y\|+2\|u-I_H\|\|y\|\notag\\
&\leq 
\gamma'+4\sqrt{2}\gamma'(1+\gamma').
\end{align}
Then $A\subseteq_{\alpha'}B_0$, where $\alpha'=\gamma'+4\sqrt{2}\gamma'(1+\gamma')$.  

Fix a finite subset $X$ of the unit ball of $A$ and set $\varepsilon=\alpha-
\alpha'>0$. As $A$ has  approximately inner half flip, find a unitary $v\in A\otimes A
$ with $\|v(1\otimes x)v^*-x\otimes 1\|<\varepsilon$ for $x\in X$.  Since $A$ is 
nuclear, Proposition \ref{TensorNuclear} gives some $w$ in the unit ball of $B_0
\otimes A$ with $\|v-w\|\leq 2(2\alpha'+\alpha'^2)$.  Given a state $\psi$ on $A$, let 
$R_\psi:{\mathrm C}^*(B_0, A)\rightarrow B_0$ be the cpc slice map induced by $
\psi$ and note that $R_{\psi}(B_0\otimes A)=B_0$.  Then $\phi(x)=u^*R_{\psi}(w(1\otimes x)w^*)u$ defines a cpc map $A
\rightarrow B$ since $u^*B_0u\subseteq B$. 
We have
\begin{align}
\|\phi(x)-x\|&\leq \|\phi(x)-R_\psi(w(1\otimes x)w^*)\|+\|R_\psi(w(1\otimes x)w^*)-R_\psi(v(1\otimes x)v^*)\|\notag\\
&\quad+\|R_\psi(v(1\otimes x)v^*)-x\|\notag\\
&\leq 2\|u-I_H\|+2\|w-v\|+\varepsilon\notag\\
&\leq4\sqrt{2}\gamma+8\alpha+4\alpha^2,\quad x\in X,
\end{align}
so (\ref{InnerFlip.1}) holds.
\end{proof}
As is often the case in perturbation theory, improved constants can be obtained in Proposition \ref{InnerFlip} under the assumption that $A$ and $B$ share the same unit. This is also true of many of our subsequent results. In order to provide a unified treatment of the unital and non-unital cases, we use two distinct unitizations in this paper, detailed below.
\begin{notation}\label{Prelim.Dagger}
Given a ${\mathrm C}^*$-algebra $A$, we write $A^\dagger$ for the following unitisation of $A$.  When 
$A$ is concretely represented on a Hilbert space $H$, the algebra $A^\dagger$ is 
obtained as ${\mathrm C}^*(A,e_A)$, where $e_A$ is the support projection of $A$.  Given a 
cpc map $\phi:A\rightarrow B$, with $A$ non-unital, it is well known that we can 
extend $\phi$ to a unital completely positive map (ucp map) $\tilde{\phi}:A^\dagger
\rightarrow B^\dagger$ by defining $\tilde{\phi}(1_{A^\dagger})=1_{B^\dagger}$  
(see \cite[Section 1.2]{Sinclair.CohomBook} or \cite[Section 2.2]{BrownOzawa}).  
However in the next section we will consider cpc maps $\phi:A\rightarrow B$ where 
$A$ may be unital and it is convenient to also convert these to ucp maps.  To this 
end we introduce another unitisation of $A$.   Take a faithful representation $\pi$ of 
$A$ on a Hilbert space $H$ and form the Hilbert space $\widetilde{H}=H\oplus 
\mathbb C$.  The representation $\pi$ extends to $\widetilde{H}$ by $x\mapsto 
\begin{pmatrix}\pi(x)&0\\0&0\end{pmatrix}$. We define $
\widetilde{A}={\mathrm C}^*(\pi(A),I_{\widetilde{H}})$. This construction is independent of the 
faithful representation $\pi$, and we note that $A$ is always a proper norm closed 
ideal in $\widetilde{A}$.  It follows that any cpc map $\phi:A\rightarrow B$ can be 
extended to a ucp map 
$\tilde{\phi} : \widetilde{A}\rightarrow B^\dagger$.$\hfill\square$
\end{notation}
Many of our results have a hypothesis of the form $d(A,B)<\gamma\leq K$ where $K$ is some absolute constant, for example $K=10^{-11}$ in Theorem \ref{thm10.4}. These constants are certainly not optimal, even for our methods, although we have attempted to make them as tight as possible. In the interests of readability, we have often rounded up multiple square roots to appropriate integer values. 

\section{Approximate averaging in nuclear ${\mathrm C}^*$-algebras}\label{Avg}

In this section we exploit the amenability of nuclear ${\mathrm C}^*$-algebras to establish point norm versions of the averaging results for injective von Neumann algebras in \cite{Christensen.Perturbations1}.  These techniques 
inevitably introduce small error terms not found in the von Neumann results since 
we are unable to take an ultraweak limit without possibly leaving the ${\mathrm C}^*$-algebra. We also use these methods to prove a  Kaplansky density result for approximate relative commutants for use in Section \ref{Unitary}. We begin with a review of the theory of amenability as it applies to ${\mathrm C}^*$-algebras.

Johnson defined amenability in the context of Banach algebras as a cohomological 
property \cite{Johnson.Memoir} and characterised amenable Banach algebras as 
those having a virtual diagonal \cite[Proposition 1.3]{Johnson.ApproxDiagonals}.  Given a Banach algebra $A$, let $A\ \potimes\ A$ be the projective tensor 
product.  A \emph{virtual diagonal} for $A$ is an element $\omega$ of $(A\ \potimes
\ A)^{**}$, with $a\omega=\omega a$  and $m^{**}(\omega)a=a$ for all $a\in A$, where $m:A\ \potimes\ A\rightarrow A$ is the multiplication map $x\otimes y\mapsto xy$.  When $A$ is a ${\mathrm C}^*$-algebra we can replace the latter condition by $m^{**}
(\omega)=1_{A^{**}}$.  As 
explained in \cite{Johnson.ApproxDiagonals}, virtual diagonals play the role in the 
context of Banach algebras that the invariant mean plays for amenable groups. 

Connes showed that any amenable ${\mathrm C}^*$-algebra is nuclear 
\cite{Connes.Cohomology} and Haagerup subsequently established the converse 
\cite{Haagerup.NuclearAmenable} using the non-commutative 
Grothendieck-Haagerup-Pisier inequality, 
\cite{Haagerup.Grothendieck,Pisier.Grothendieck}, to 
build on earlier partial results of Bunce and Paschke \cite{Bunce.Paschke}.  
Haagerup's proof not only obtains a virtual diagonal for a unital nuclear 
${\mathrm C}^*$-algebra but also shows that it can be taken in the 
ultraweakly closed convex hull of $\{x^*\otimes x:x\in A,\  \|x\|\leq 1\}$, \cite[Theorem 3.1]{Haagerup.NuclearAmenable}. 
This additional property will be of crucial importance subsequently.  In the unital 
setting, it is natural to ask whether the virtual diagonal can be chosen from the ultraweakly closed convex hull of $\{u^*\otimes u:u\in\mathcal U(A)\}$ as occurs for the group algebras ${\mathrm C}^*(G)$ of a discrete amenable group $G$.  This is not always 
the case; this property is equivalent to the concept of strong amenability for unital 
C$^*$-algebras \cite[Lemma 3.4]{Haagerup.NuclearAmenable}.  The Cuntz algebras 
$\mathcal O_n$ provide examples of nuclear ${\mathrm C}^*$-algebras which are not strongly 
amenable \cite{Rosenberg.AmenableCrossedProducts}.

An \emph{approximate diagonal} for a Banach algebra $A$ is a bounded net $(x_
\alpha)$ in $A\ \potimes\ A$ with $\|x_\alpha a-ax_\alpha\|_{A\potimes A}\rightarrow 
0$ and $\|m(x_\alpha)a-a\|_A\rightarrow 0$ for all $a\in A$. In the unital ${\mathrm C}^*$-algebra case we can replace the second condition 
by $\|m(x_\alpha)-1_{A}\|\rightarrow 0$. Approximate diagonals are closely 
connected to virtual diagonals since any ultraweak accumulation point of an approximate diagonal gives a virtual diagonal, while in \cite[Lemma 1.2]{Johnson.ApproxDiagonals} 
Johnson uses a Hahn-Banach argument to create an approximate diagonal from a 
virtual diagonal.  Just as a virtual diagonal plays the role of 
an invariant mean, the elements of an approximate diagonal play the role of 
F{\o}lner sets. To support this viewpoint, note that if $G$ is a countable discrete 
amenable group and $(F_n)_n$ an increasing exhaustive sequence of F{\o}lner 
sets, then $(\frac{1}{|F_n|}\sum_{g\in F_n}g^*\otimes g)_n$ is an approximate 
diagonal for ${\mathrm C}^*(G)$.  Combining Haagerup's work 
\cite{Haagerup.NuclearAmenable} on the location of a virtual diagonal in a nuclear 
${\mathrm C}^*$-algebra with \cite[Lemma 1.2]{Johnson.ApproxDiagonals} gives the following 
result.
\begin{lemma}\label{Avg.ApproxDiagonal}
Let $A$ be a unital nuclear ${\mathrm C}^*$-algebra.  Given a finite set $X\subseteq A$ and $
\varepsilon>0$, there exist contractions $\{a_i\}_{i=1}^n$ in $A$ and non-negative 
constants $\{\lambda_i\}_{i=1}^n$ summing to $1$ such that
\begin{equation}\label{Avg.Approx.Diagonal.1}
\left\|\sum_{i=1}^n\lambda_ia_i^*a_i-1_A\right\|<\varepsilon
\end{equation}
and
\begin{equation}
\left\|x\left(\sum_{i=1}^n\lambda_ia_i^*\otimes a_i\right)-\left(\sum_{i=1}^n
\lambda_ia_i^*\otimes a_i\right)x\right\|_{A\ \potimes\ A}<\varepsilon,\quad x\in X.
\end{equation}
If in addition $A$ is strongly amenable, then each of the $a_i$'s can be taken to be a 
unitary in $A$ and so (\ref{Avg.Approx.Diagonal.1}) can be replaced by $
\sum_{i=1}^n\lambda_ia_i^*a_i=1_A$.
\end{lemma}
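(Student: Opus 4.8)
The plan is to deduce Lemma \ref{Avg.ApproxDiagonal} from two ingredients already set up in the excerpt: Haagerup's theorem that a unital nuclear ${\mathrm C}^*$-algebra $A$ has a virtual diagonal $\omega\in(A\potimes A)^{**}$ lying in the ultraweakly closed convex hull of $\{x^*\otimes x:x\in A,\ \|x\|\le 1\}$ with $m^{**}(\omega)=1_{A^{**}}$; and Johnson's Hahn--Banach argument (\cite[Lemma 1.2]{Johnson.ApproxDiagonals}) which converts a virtual diagonal into a bounded approximate diagonal $(x_\alpha)$ in $A\potimes A$ with $\|x_\alpha a-ax_\alpha\|\to 0$ and $\|m(x_\alpha)-1_A\|\to 0$. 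The first thing I would note is that the extra structure of Haagerup's virtual diagonal is preserved by Johnson's construction: the net $(x_\alpha)$ can be taken inside the norm-closed convex hull $K$ of $\{x^*\otimes x:\|x\|\le 1\}$. This is because Johnson's argument produces $(x_\alpha)$ as elements of the weak closure (in $A\potimes A$) of convex combinations of translates/modifications of $\omega$; more directly, the ultraweakly closed convex hull of $\{x^*\otimes x:\|x\|\le1\}$ in $(A\potimes A)^{**}$ has the norm-closed convex hull $K$ of the same set as its natural predual-side shadow, and a standard convexity/Hahn--Banach argument lets us choose the approximants from $K$. So I would first record: there is a bounded net in $K$ that is an approximate diagonal.

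Next I would fix the finite set $X\subseteq A$ and $\varepsilon>0$ and simply extract a single element $z$ of this approximate diagonal with $\|xz-zx\|_{A\potimes A}<\varepsilon$ for all $x\in X$ and $\|m(z)-1_A\|<\varepsilon$. Since $z\in K$, by definition of the norm-closed convex hull we may further approximate $z$ in $A\potimes A$-norm by a \emph{finite} convex combination $\sum_{i=1}^n\lambda_i a_i^*\otimes a_i$ with each $a_i$ a contraction in $A$ and $\lambda_i\ge 0$, $\sum\lambda_i=1$; because $m$ and left/right multiplication by the finitely many elements of $X$ are all norm-continuous on $A\potimes A$, shrinking the approximation error makes both displayed inequalities hold for $\sum\lambda_i a_i^*\otimes a_i$ as well (here $m(\sum\lambda_i a_i^*\otimes a_i)=\sum\lambda_i a_i^*a_i$, giving \eqref{Avg.Approx.Diagonal.1}). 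This is essentially bookkeeping with continuity of finitely many maps, so I would not belabour it.

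For the strongly amenable addendum, I would invoke \cite[Lemma 3.4]{Haagerup.NuclearAmenable}: strong amenability of the unital ${\mathrm C}^*$-algebra $A$ means precisely that a virtual diagonal can be chosen in the ultraweakly closed convex hull of $\{u^*\otimes u:u\in\mathcal U(A)\}$. Running the identical argument with this virtual diagonal, the norm-closed convex hull now involves only unitaries, so the $a_i$ may be taken unitary; then $\sum\lambda_i a_i^*a_i=\sum\lambda_i 1_A=1_A$ exactly, and \eqref{Avg.Approx.Diagonal.1} holds with error $0$.

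The main obstacle, and the only genuinely non-formal point, is the first step: verifying that Johnson's passage from a virtual diagonal to an approximate diagonal respects the convex-hull constraint on where the diagonal lives. One has to be slightly careful that the Hahn--Banach/weak-closure manipulations in \cite[Lemma 1.2]{Johnson.ApproxDiagonals} do not move the approximants outside $K$; the cleanest route is to observe that the relevant weak topology on $A\potimes A$ and on its bidual agree on bounded sets up to the canonical embedding, so the ultraweakly closed convex hull of $\{x^*\otimes x\}$ in $(A\potimes A)^{**}$ meets $A\potimes A$ in (the weak, hence norm, closure of) $K$, and Mazur's theorem identifies weak and norm closures of the convex set $K$. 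Everything after that is continuity of finitely many bounded linear (or bilinear) maps and is routine.
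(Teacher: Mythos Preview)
Your proposal is correct and follows exactly the route the paper itself indicates: the paper's ``proof'' is the sentence immediately preceding the lemma, which says the result follows by combining Haagerup's theorem on the location of the virtual diagonal with Johnson's Hahn--Banach argument \cite[Lemma~1.2]{Johnson.ApproxDiagonals}, and you have simply written out those details (including the Mazur step needed to keep the approximants inside the convex hull $K$). There is nothing to add.
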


Our next lemma is a point-norm version of Lemma 3.3 of \cite{Christensen.Perturbations1} for nuclear ${\mathrm C}^*$-algebras.
\begin{lemma}\label{Avg.1}
Let $A$ and $D$ be ${\mathrm C}^*$-algebras and suppose that $A$ is nuclear. Given a finite 
subset $X$ of the unit ball of $A$, $\varepsilon>0$, and $0<\mu<(25\sqrt{2})^{-1}$, 
there exists a finite subset $Y$ of the unit ball of $A$ (depending only on $X,A$, $
\varepsilon$ and $\mu$) with the following property:\ if $\phi :  A\to D$ is a $(Y,
\gamma)$-approximate $*$-homomorphism for some $\gamma\leq 1/17$, then 
there exists an $(X,\varepsilon)$-approximate $*$-homomorphism $\psi :   A\to 
D$ with $\|\phi-\psi\|\le 8\sqrt{2}\gamma^{1/2}+\mu$.
\end{lemma}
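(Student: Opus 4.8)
The plan is to mimic the proof of Lemma 3.3 of \cite{Christensen.Perturbations1}, replacing the honest virtual diagonal (invariant mean) of that argument with the approximate diagonal supplied by Lemma \ref{Avg.ApproxDiagonal}, and absorbing the resulting slack into the parameter $\mu$. Since cpc maps and approximate diagonals are most naturally handled in the unital setting, I would first pass to unitisations: extend $\phi:A\to D$ to a ucp map $\tilde\phi:\widetilde A\to D^\dagger$ as in Notation \ref{Prelim.Dagger}, so that without loss of generality $A$ and $D$ are unital and $\phi$ is ucp. The set $Y$ will be manufactured from $X$, $\varepsilon$, $\mu$ by applying Lemma \ref{Avg.ApproxDiagonal} to a suitable finite set and small tolerance (to be specified below), together with all the $*$-adjoints, entrywise products, etc., that appear in the estimates; the point is that $Y$ depends only on $A$, $X$, $\varepsilon$, $\mu$ and not on $\phi$ or $D$.

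The construction of $\psi$ is the averaging step. Given contractions $\{a_i\}_{i=1}^n\subseteq A$ and weights $\{\lambda_i\}$ with $\sum_i\lambda_i a_i^*\otimes a_i$ an approximate diagonal for the relevant finite set (chosen to include $X$ and enough auxiliary elements), define
\begin{equation}
\psi(x)=\sum_{i=1}^n\lambda_i\,\phi(a_i^*)\,\phi(x)\,\phi(a_i),\qquad x\in A.
\end{equation}
This $\psi$ is completely positive as a finite sum of compressions of the cp map $\phi$; contractivity (up to the allowed error, which one pushes into $\mu$ via a polar/normalisation correction as in Proposition \ref{Prelim.UnitaryEquiv}) follows from $\|\sum_i\lambda_i\phi(a_i^*)\phi(a_i)-1\|$ being small, which in turn uses $\phi(a_i^*)\phi(a_i)\approx\phi(a_i^*a_i)$ on $Y$ (Definition \ref{Prelim.Def.Approxhm} and Proposition \ref{Prelim.ApproxhmEst}) and (\ref{Avg.Approx.Diagonal.1}). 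The estimate $\|\phi-\psi\|\le 8\sqrt2\,\gamma^{1/2}+\mu$ is obtained by writing $\phi(x)-\psi(x)=\sum_i\lambda_i(\phi(x)\phi(a_i^*)\phi(a_i)-\phi(a_i^*)\phi(x)\phi(a_i))$ after using $\sum_i\lambda_i\phi(a_i^*)\phi(a_i)\approx 1$, and then bounding each difference by the near-multiplicativity of $\phi$ on $Y$ (the exponent $1/2$ is exactly the loss in Proposition \ref{Prelim.ApproxhmEst}); the commutator term controlled by $\phi$ being close to a $*$-homomorphism on the approximate-diagonal elements contributes the remaining slack, again absorbed into $\mu$. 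Note this estimate is \emph{uniform} on all of $A$, not just $X$.

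Finally, one checks that $\psi$ is an $(X,\varepsilon)$-approximate $*$-homomorphism: for $x\in X$, the quantity $\psi(x)\psi(x^*)-\psi(xx^*)$ is, after expanding the double sum and repeatedly inserting the approximations $\phi(a\cdot)\approx\phi(a)\phi(\cdot)$ valid on $Y$ plus the commutation $x(\sum\lambda_i a_i^*\otimes a_i)\approx(\sum\lambda_i a_i^*\otimes a_i)x$, a telescoping sum of terms each controlled by $\gamma\le 1/17$ or by the approximate-diagonal tolerance. The tolerances fed into Lemma \ref{Avg.ApproxDiagonal} are chosen small enough that the total is at most $\varepsilon$; the restriction $\mu<(25\sqrt2)^{-1}$ is what keeps the various normalisation corrections (polar decomposition of a near-isometry, etc.) in the valid range for Proposition \ref{Prelim.UnitaryEquiv}. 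The main obstacle is purely bookkeeping: organising the many nested approximations so that the final constants genuinely come out as $8\sqrt2\,\gamma^{1/2}+\mu$ and $\varepsilon$, and verifying that $Y$ can indeed be specified in advance (before $\phi$ is known) — this works because $Y$ only needs to contain the fixed elements $a_i$, $a_i^*$, $a_i^*a_i$, $a_ix$, $xa_i$, $a_i^*a_j$, and finitely many products thereof arising from the expansion, all of which are determined by $X$, $\varepsilon$, $\mu$ and $A$ alone.
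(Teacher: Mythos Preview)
Your averaging formula $\psi(x)=\sum_i\lambda_i\,\phi(a_i^*)\phi(x)\phi(a_i)$ does not give a map uniformly close to $\phi$. After using Proposition~\ref{Prelim.ApproxhmEst} (which, since the $a_i$ lie in $Y$, does apply for arbitrary $x$) you reduce $\|\phi(x)-\psi(x)\|$ to controlling $\|\phi(x)-\phi(\sum_i\lambda_i a_i^*xa_i)\|$, i.e.\ to showing $\sum_i\lambda_i a_i^*xa_i\approx x$. But this is \emph{not} a consequence of $\sum_i\lambda_i a_i^*\otimes a_i$ being an approximate diagonal: the diagonal condition controls $\sum_i\lambda_i(xa_i^*\otimes a_i - a_i^*\otimes a_ix)$ in the projective norm, which says nothing about $\sum_i\lambda_i a_i^*xa_i$. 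A concrete counterexample is $A=\mathbb M_2$ with the exact diagonal $\tfrac12(e_{11}\otimes e_{11}+e_{21}\otimes e_{12})$ (one checks it commutes with every $x$), for which $\sum_i a_i^*xa_i = x_{11}I$, far from $x$. So your $\psi$ can be far from $\phi$ in norm, and the claimed bound $\|\phi-\psi\|\le 8\sqrt2\gamma^{1/2}+\mu$ fails. The telescoping argument you sketch for the $(X,\varepsilon)$ property runs into the same obstruction in the cross terms.

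The paper avoids this by passing to the Stinespring dilation $\tilde\phi(x)=p\pi(x)p$ and averaging the \emph{projection} rather than the map: one forms $p_0=\sum_i\lambda_i\pi(\tilde a_i)^*p\pi(\tilde a_i)$, and the $(Y,\gamma)$ condition translates precisely into $\|\pi(a_i)p-p\pi(a_i)\|\le\gamma^{1/2}$, which forces $\|p_0-p\|\le 2\gamma^{1/2}+O(\mu)$ \emph{independently of any $x$}. One then takes the spectral projection $q$ of $p_0$, finds a unitary $w$ with $wpw^*=q$ and $\|w-1\|\le 2\sqrt2(2\gamma^{1/2}+O(\mu))$ via Proposition~\ref{Prelim.UnitaryEquiv}, and sets $\psi(x)=pw^*\pi(x)wp$. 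The uniform bound $\|\phi-\psi\|\le 2\|w-1\|$ is then automatic. The approximate-diagonal commutation is used only at the very end, through the contraction $y\otimes z\mapsto \pi(y)p\pi(z)$, to show $\pi(x)$ almost commutes with $p_0$ (hence with $q$) for $x\in X$, which is exactly the $(X,\varepsilon)$ property. The passage to a genuine projection $q$ via functional calculus is what makes the argument work; there is no analogue of this step in your direct approach.
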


\begin{proof}
Let $\eta>0$ be such that if $0\le p_0\le 1$ is an operator with spectrum contained 
in $\Omega = [0,0.496] \cup [0.504,1]$ and $q$ is the spectral projection of $p_0$ 
for $[0.504,1]$, then the inequality $\|xp_0 - p_0x\|<\eta$ for a contractive operator 
$x$ implies that $\|xq-qx\|<\varepsilon$. The existence of $\eta$ (depending only on 
$\varepsilon$) follows from a standard functional calculus argument using uniform 
approximations of $\chi_{[0.504,1]}$ by polynomials on $\Omega$. See \cite[p. 332]
{Arveson.NotesExtensions}.

Nuclearity of $A$ implies nuclearity of the unital ${\mathrm C}^*$-algebra $\widetilde{A}$ 
obtained by adjoining a (possibly additional) unit (see Notation \ref{Prelim.Dagger}). 
By Lemma \ref{Avg.ApproxDiagonal} we may choose $\{\tilde a_i\}^n_{i=1}$ from 
the unit ball of $\widetilde{A}$ and non-negative constants $\{\lambda_i\}^n_{i=1}$ 
summing to 1 satisfying 
\begin{equation}\label{eq3.1}
 \left\|\sum^n_{i=1} \lambda_i \tilde a^*_i\tilde a_i-1_{\widetilde{A}}\right\| < (4\sqrt{2})^{-1}\mu
\end{equation}
and
\begin{equation}\label{eq3.2}
 \left\|x\left(\sum^n_{i=1} \lambda_i\,\tilde a^*_i\otimes \tilde a_i\right) - 
\left(\sum^n_{i=1} \lambda_i\,\tilde a^*_i \otimes \tilde a_i\right)x\right\|
_{\widetilde{A}\ \widehat\otimes\ \widetilde{A}} <\eta,\qquad x\in X.
\end{equation}
Each $\tilde a_i$ has the form $\alpha_i+z_i$ for $\alpha_i\in {\bb C}$ 
and $z_i\in A$. Projection to the quotient $\widetilde{A}/A$ shows that $|\alpha_i|\le 
1$ and so $\|z_i\|\le 2$. Define $a_i= z_i/2$, $1\le i\le n$, so that each $a_i$ lies in 
the unit ball of $A$ and $\tilde a_i = \alpha_i +2a_i$. We then take the set $Y$ to be 
$\{a_i,a_i^* :  1\leq i\leq n\}$, a finite subset of the unit ball of $A$. We now 
verify that $Y$ has the desired properties.

Consider a cpc map $\phi:A\rightarrow D$ satisfying the 
inequalities
\begin{equation}\label{eq3.3}
 \|\phi(a^*_ia_i) - \phi(a^*_i)\phi(a_i)\|\leq\gamma,\quad \|\phi(a_ia^*_i) - \phi(a_i)
\phi(a^*_i)\| \leq \gamma,\quad i=1,\dots,n,
\end{equation}
for some $\gamma\leq 1/17$. Let $\tilde{\phi}:\widetilde{A}\rightarrow D^\dagger$ 
be the canonical extension of $\phi$ to a ucp map. Assume that $D^\dagger$ is 
faithfully represented on a Hilbert space $H$ so that $I_H$ is the unit of $D^\dagger
$. Stinespring's representation theorem allows us to find a larger Hilbert space $K$ 
and a unital $*$-representation $\pi:\widetilde{A}\rightarrow {\mathbb B}(K)$ such 
that
\begin{equation}\label{eq3.4}
 \tilde\phi(x) = p\pi(x)p,\qquad x\in\widetilde A,
\end{equation}
where $p$ is the orthogonal projection of $K$ onto $H$. Define
\begin{equation}\label{eq3.5}
 p_0 = \sum^n_{i=1} \lambda_i\pi(\tilde a^*_i) p\pi(\tilde a_i).
\end{equation}
Then $0\le p_0\le 1$, and this operator lies in ${\mathrm C}^*(\pi(A),p)$ since this is an ideal in 
${\mathrm C}^*(\pi(\widetilde{A}),p)$ containing $p$. Using \eqref{eq3.1}, we obtain the 
estimate
\begin{align}
\|p_0-p\| &= \left\|\sum^n_{i=1} \lambda_i(\pi(\tilde a_i)^* p-p\pi(\tilde a^*_i)) \pi(\tilde 
a_i)
 + \sum^n_{i=1} \lambda_ip\pi(\tilde a^*_i\tilde a_i)-p\right\|\notag\\
&\le \sum^n_{i=1} \lambda_i\|\pi(\tilde a_i)^* p-p\pi(\tilde a_i)^*\| + \left\|p\pi\left(\sum^n_{i=1} \lambda_i \tilde a^*_i\tilde a_i-1_{\tilde A}\right)\right\|\notag\\
\label{eq3.6}
&\leq 2 \sum^n_{i=1} \lambda_i\|\pi(a_i)^* p-p\pi(a_i)^*\| + (4\sqrt{2})^{-1}\mu.
\end{align}

We are in position to closely follow \cite[Lemma 3.3]{Christensen.Perturbations1} for 
the remainder of the proof. Firstly
\begin{equation}\label{eq3.7}
 \|\pi(a_i^*)p-p\pi(a_i^*)\|^2 = \max\{\|p\pi(a_i)(1-p)\pi(a_i^*)p\|, \|p\pi(a^*_i)(1-p)
\pi(a_i)p\|\},
\end{equation}
from the estimate on \cite[p. 4]{Christensen.Perturbations1}. For $a\in A$, we have 
\begin{equation}
\phi(aa^*) - \phi(a)\phi(a^*) = p\pi(a) (1-p)\pi(a)^*p,
\end{equation} 
so taking $a=a_i$ and $a=a^*_i$ gives the estimate 
\begin{equation}\label{eq3.9}
 \|\pi(a_i^*)p-p\pi(a_i^*)\|^2\le \gamma,\qquad 1\le i\le n,
\end{equation}
from \eqref{eq3.3}. Using \eqref{eq3.9} at the end of \eqref{eq3.6} yields the 
inequality
\begin{equation}\label{eq3.10}
 \|p_0-p\|\le 2\gamma^{1/2}+(4\sqrt{2})^{-1}\mu<0.496,
\end{equation}
where the latter inequality follows from the bounds on $\gamma$ and $\mu$.
Define $\beta$ to be the constant $2\gamma^{1/2}+(4\sqrt{2})^{-1}\mu$. The spectrum of 
$p_0$ is contained in $[0,\beta]\cup [1-\beta,1]$. Letting $q$ denote the spectral 
projection of $p_0$ for $[1-\beta,1]$, (or equivalently $[0.504, 1]$), the functional 
calculus gives $\|p_0-q\| \le \beta$, and so $\|p-q\| \le 2\beta <1$ from 
\eqref{eq3.10}. Both $p$ and $q$ lie in ${\mathrm C}^*(\pi(\widetilde{A}),p)$, and so there is a 
unitary $w$ in this ${\mathrm C}^*$-algebra so that $wpw^*=q$ and $\|I_K-w\|\le 2\sqrt{2}\beta$, 
from Proposition \ref{Prelim.UnitaryEquiv} part 2.
Define a cpc map $\psi :   A\to {\mathbb B}(H)$ by
\begin{equation}\label{eq3.12}
 \psi(x) = pw^*\pi(x)wp,\qquad x\in A.
\end{equation}
Since $w^*\pi(A)w\subseteq {\mathrm C}^*(\pi(A),p)$, we observe that $\psi$ maps $A$ into 
$D$ because $p\pi(A)p$ is the range of $\phi:A\rightarrow D$. The estimate
\begin{equation}\label{eq3.13}
 \|\phi-\psi\|\le 2\|I_K-w\| \le 4\sqrt{2}\beta=8\sqrt{2}\gamma^{1/2}+\mu
\end{equation}
is immediate from \eqref{eq3.12}.

It remains to check that $\psi$ is an $(X,\varepsilon)$-approximate 
$*$-homomorphism. 
The definition of the projective tensor norm ensures that the map $y\otimes z
\mapsto \pi(y)p\pi(z)$ extends to a contraction from $\widetilde{A}
\ \widehat{\otimes}\ \widetilde{A}$ into $\mathbb B(K)$.   Applying this 
map to (\ref{eq3.2}) gives
\begin{equation}\label{eq3.14}
 \|\pi(x)p_0-p_0\pi(x)\|<\eta,\qquad x\in X,
\end{equation}
using the definition of $p_0$ from \eqref{eq3.5}. The choice of $\eta$ then gives
\begin{equation}\label{eq3.15}
 \|\pi(x)q-q\pi(x)\| <\varepsilon,\qquad x\in X.
\end{equation}
Thus, for $x\in X$,
\begin{equation}\label{eq3.16}
 \|\psi(xx^*)-\psi(x)\psi(x^*)\| = \|pw^*\pi(xx^*)wp-pw^*\pi(x)q\pi(x^*)wp\| <\varepsilon,
\end{equation}
using \eqref{eq3.15} and the relation $pw^*q=pw^*$. \end{proof}
\begin{remark}\label{rem3.4}
(i).~~If we choose to regard $\gamma$ as fixed in the previous lemma, then we 
could add the constraint $\mu \leq \nu\gamma^{1/2}$ (where $\nu$ is a fixed but 
arbitrary positive constant) to the hypotheses. This would change the concluding 
inequality to $\|\phi-\psi\|\leq (8\sqrt{2}+\nu)\gamma^{1/2}$, a form that is 
convenient for subsequent estimates.

\noindent (ii).~~With the hypothesis of Lemma \ref{Avg.1}, if $A$ is strongly 
amenable then so too is $\widetilde{A}$.  As such we can replace the estimate 
(\ref{eq3.1}) by the identity $\sum_{i=1}^n\lambda_i\tilde{a_i}^*
\tilde{a_i}=1_{\widetilde{A}}$.  It then follows that we can take $\mu=0$ and obtain 
an estimate $\|\phi-\psi\|\leq 8\sqrt{2}\gamma^{1/2}$. Without strong amenability, if 
we want $Y$ to be independent of $\gamma$ we are forced to introduce the 
constant $\mu$ upon which our estimates do not send $\|\phi-\psi\|$ to zero as $
\gamma\rightarrow 0$.$\hfill\square$ 
\end{remark}

We now turn to our second averaging lemma which is the analogue of Proposition 
4.2 of \cite{Christensen.Perturbations1}.  It is important to ensure that our 
point-norm version of this result handles not just $*$-homomorphisms but also 
$(Y,\delta)$-approximate $*$-homomorphisms for sufficiently large $Y$ and small $\delta$ as 
these are the outputs of Lemma \ref{Avg.1}.  Taking this into account gives the 
following lemma.
\begin{lemma}\label{Avg.2}
Let $A$ be a nuclear ${\mathrm C}^*$-algebra and let $D$ be a ${\mathrm C}^*$-algebra. Given a finite 
set $X$ in the unit ball of $A$ and $\varepsilon>0$, there exist a finite set $Y$ in the 
unit ball of $A$ and $\delta>0$, both depending only on $X,A$ and $\varepsilon$, 
with the following property.  If $\phi_1,\phi_2 :   A\to D$ are $(Y,\delta)$-
approximate $*$-homomorphisms such that $\phi_1\approx_{Y,\gamma} \phi_2$,
for some $\gamma\leq 13/150$, then there exists a unitary $u\in D^\dagger$ 
satisfying $\|u-I_{D^\dagger}\| < 2\sqrt{2}\gamma+5\sqrt{2}\delta$ and $\phi_1 \approx_{X,\varepsilon} \text{\rm Ad}(u)\circ \phi_2$.
\end{lemma}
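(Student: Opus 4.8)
The plan is to mimic Proposition 4.2 of \cite{Christensen.Perturbations1}, but working point-norm throughout and accounting for the fact that $\phi_1,\phi_2$ are only approximate $*$-homomorphisms. First I would pass to the unital extensions $\tilde\phi_1,\tilde\phi_2:\widetilde A\to D^\dagger$, represent $D^\dagger$ faithfully on a Hilbert space $H$ with $I_H=1_{D^\dagger}$, and form a simultaneous Stinespring dilation: a Hilbert space $K\supseteq H$ with projection $p$ onto $H$ and unital $*$-representations $\pi_1,\pi_2:\widetilde A\to\mathbb B(K)$ with $\tilde\phi_j(x)=p\pi_j(x)p$. (One can build a single $K$ carrying both by taking a direct sum of the two Stinespring spaces, or dilate the cpc map $x\mapsto\mathrm{diag}(\tilde\phi_1(x),\tilde\phi_2(x))$ into $M_2(D^\dagger)$.) The key device, as in \cite{Christensen.Perturbations1}, is to average the ``intertwiner''
\begin{equation*}
h_0=\sum_{i=1}^n\lambda_i\,\pi_1(\tilde a_i^*)\,p\,\pi_2(\tilde a_i),
\end{equation*}
where $\{\tilde a_i\}$, $\{\lambda_i\}$ come from an approximate diagonal for $\widetilde A$ (Lemma \ref{Avg.ApproxDiagonal}) chosen fine enough with respect to $X$ and a functional-calculus tolerance $\eta=\eta(\varepsilon)$ as in the proof of Lemma \ref{Avg.1}. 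The set $Y$ will again be $\{a_i,a_i^*\}$ with $\tilde a_i=\alpha_i+2a_i$, and $\delta$ will be chosen small enough that the $(Y,\delta)$-approximate-multiplicativity of $\phi_1,\phi_2$ makes the commutator estimates below work.

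The main steps: (1) Estimate $\|h_0-p\|$. Writing $h_0-p=\sum_i\lambda_i\pi_1(\tilde a_i^*)(p\pi_2(\tilde a_i)-\pi_2(\tilde a_i)p)+\sum_i\lambda_i(\pi_1(\tilde a_i^*)-\pi_2(\tilde a_i^*))p\pi_2(\tilde a_i)+(p\pi_2(\sum_i\lambda_i\tilde a_i^*\tilde a_i-1)\,) $, the first sum is controlled by $\|p\pi_2(a_i)(1-p)\pi_2(a_i^*)p\|^{1/2}\le\gamma_2^{1/2}$-type terms coming from $(Y,\delta)$-approximate multiplicativity of $\phi_2$, the middle sum by $\|\tilde\phi_1-\tilde\phi_2\|$ on $Y$ which is $\le\gamma$ (after passing through the polar/Stinespring comparison — this is where $\phi_1\approx_{Y,\gamma}\phi_2$ enters, and one uses that $\|\pi_1(a)-\pi_2(a)\|$-type quantities are not directly available so one instead estimates $\|p\pi_1(a_i^*)p - p\pi_2(a_i^*)p\|$ and absorbs the off-diagonal pieces using approximate multiplicativity), and the last by the approximate-diagonal identity (\ref{eq3.1}). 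Assembling, $\|h_0-p\|$ is bounded by roughly $\gamma+C\delta^{1/2}$, which the hypotheses $\gamma\le 13/150$ and $\delta$ small force to be $<1$. (2) Take the polar decomposition $h_0=v|h_0|$ inside the unital $C^*$-algebra $C^*(\pi_1(\widetilde A),\pi_2(\widetilde A),p)$ — note $h_0$ lies in the ideal generated by $p$, so invertibility of $|h_0|$ near $p$ gives a partial isometry/unitary $v$ with $vpv^*$ having the right properties; apply Proposition \ref{Prelim.UnitaryEquiv}(1) to get $\|v-I_K\|\le\sqrt2\|h_0-p\|\approx\sqrt2\gamma+\cdots$. (3) Check the intertwining: by construction $h_0\pi_2(x)\approx\pi_1(x)h_0$ for $x\in X$ up to $\eta$ (apply the contraction $y\otimes z\mapsto\pi_1(y)p\pi_2(z)$ to the approximate-diagonal commutator bound (\ref{eq3.2})), hence $v\pi_2(x)v^*\approx\pi_1(x)$ up to $\varepsilon$ after the functional-calculus step converting $\eta$-control of the commutator with $|h_0|$ into $\varepsilon$-control with $v$. (4) Compress to $H$: $u:=pvp$ (corrected to a genuine unitary of $D^\dagger$, again via Proposition \ref{Prelim.UnitaryEquiv}) satisfies $\mathrm{Ad}(u)\circ\phi_2\approx_{X,\varepsilon}\phi_1$ because $\phi_1(x)=p\pi_1(x)p\approx pv\pi_2(x)v^*p = u\phi_2(x)u^* + (\text{error from }p v (1-p) \pi_2(x)(1-p)v^* p)$, the cross term being small again by approximate multiplicativity of $\phi_2$. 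Track constants to land on $\|u-I_{D^\dagger}\|<2\sqrt2\gamma+5\sqrt2\delta$.

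The main obstacle is step (1): in the purely von Neumann / genuine-$*$-homomorphism setting of \cite{Christensen.Perturbations1} one has exact identities $\phi_j(ab)=\phi_j(a)\phi_j(b)$ and can freely move $\pi_j$ across $p$, whereas here every such move costs a $\delta$ or $\delta^{1/2}$ error (via Proposition \ref{Prelim.ApproxhmEst}), and these errors must be chosen — through $\delta$, and through the fineness of the approximate diagonal, i.e. the defining data of $Y$ — \emph{before} $\gamma$ is known, so that the final bound degrades only by the clean additive $5\sqrt2\delta$ rather than by something coupled to $\gamma$. Keeping $Y$ and $\delta$ dependent only on $X,A,\varepsilon$ while still dominating all these mixed error terms is the delicate bookkeeping; it works because each error term is \emph{linear} (or square-root) in quantities one can make uniformly small by shrinking $\delta$ and refining the diagonal, independently of the $\gamma$ comparison between $\phi_1$ and $\phi_2$. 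A secondary technical point is the simultaneous dilation and ensuring $v$ can be taken a genuine unitary in a $C^*$-algebra containing both representations together with $p$; this is handled exactly as the passage from $p_0$ to $w$ in the proof of Lemma \ref{Avg.1}, using that the relevant projections all lie in one unital $C^*$-algebra and invoking Proposition \ref{Prelim.UnitaryEquiv}(2).
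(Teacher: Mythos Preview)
Your approach has a genuine gap at the foundational level: the ``simultaneous Stinespring dilation'' you describe does not exist in the form you need. Neither construction you suggest produces two unital representations $\pi_1,\pi_2$ on the \emph{same} space $K$ with the \emph{same} compression $p$ satisfying $p\pi_j(x)p=\tilde\phi_j(x)$. A direct sum of the two Stinespring spaces gives two different copies of $H$ with two different projections; dilating $x\mapsto\mathrm{diag}(\tilde\phi_1(x),\tilde\phi_2(x))$ gives a \emph{single} representation and a projection onto $H\oplus H$. Either way, your decomposition of $h_0-p$ contains the term $\sum_i\lambda_i(\pi_1(\tilde a_i^*)-\pi_2(\tilde a_i^*))p\pi_2(\tilde a_i)$, and $\|\pi_1(a)-\pi_2(a)\|$ is simply not controlled by $\|\phi_1(a)-\phi_2(a)\|$: Stinespring dilations of nearby cpc maps can differ wildly off $H$. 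You acknowledge this, but the suggested fix (``absorb the off-diagonal pieces using approximate multiplicativity'') does not work, because the uncontrolled piece is $(1-p)(\pi_1-\pi_2)(a_i^*)p$, which involves the difference of the dilations, not an off-diagonal of a single one. There is also a secondary problem: even granting the setup, the unitary $v$ you build lives in $C^*(\pi_1(\widetilde A),\pi_2(\widetilde A),p)$, and compressing by $p$ does not obviously land in $D^\dagger$, since mixed products like $p\pi_1(a)(1-p)\pi_2(b)p$ have no reason to lie there.

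The paper's proof avoids all of this by never dilating. It works directly in $D^\dagger$ with the element
\[
s=\sum_{i=1}^n\lambda_i\,\tilde\phi_1(\tilde a_i^*)\tilde\phi_2(\tilde a_i)\in D^\dagger,
\]
obtained by pushing the approximate diagonal through the contractive bilinear map $y\otimes z\mapsto\tilde\phi_1(y)\tilde\phi_2(z)$ on $\widetilde A\,\widehat\otimes\,\widetilde A$. One then shows $\|s-1_{D^\dagger}\|\le 5\delta+2\gamma<1$ directly (the $2\gamma$ from $\phi_1\approx_{Y,\gamma}\phi_2$, the $5\delta$ from approximate multiplicativity and \eqref{eq3.22}), so $s$ is invertible in $D^\dagger$ and its polar part $u$ satisfies the stated bound via Proposition~\ref{Prelim.UnitaryEquiv}. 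The intertwining $\|\phi_1(x)s-s\phi_2(x)\|<5\delta^{1/2}$ comes straight from \eqref{eq3.23} and Proposition~\ref{Prelim.ApproxhmEst}; a functional-calculus step transfers this to near-commutation of $|s|$ with $\phi_2(x)$, and then $\|\phi_1(x)-u\phi_2(x)u^*\|\le\|z^{-1}\|\cdot\|\phi_1(x)s-s\phi_2(x)\|+2\|z\phi_2(x)-\phi_2(x)z\|$ finishes it. This is in fact the genuine point-norm analogue of Christensen's Proposition~4.2 (which also averages $\phi_1(u^*)\phi_2(u)$ directly, with no dilation); you have conflated that argument with the Stinespring-based one underlying Lemma~\ref{Avg.1}.
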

\noindent Note that $\text{\rm Ad}(u)\circ\phi_2$ maps $A$ into $D$, since $D$ is 
an ideal in $D^\dagger$.
\begin{proof}
Fix a finite set $X$ in the unit ball of $A$ and an $\varepsilon>0$. By considering 
polynomial approximations to $t^{1/2}$ on $[0,1]$, we may find $\eta>0$, 
depending only on $\varepsilon$, so that the inequality $\|s^* sy-ys^*s\|<\eta$ for 
contractive operators $s$ and $y$ implies the relation $\|\,|s|y - y|s|\,\| < \varepsilon/
4 $ (see \cite[p. 332]{Arveson.NotesExtensions}).

Let $\delta$  satisfy
\begin{equation}\label{eq3.21a}
0< \delta\le \min\{\eta^2/100, \varepsilon^2/400, 1/200\}.
\end{equation}
By Lemma \ref{Avg.ApproxDiagonal}, we may find $\{\tilde a_i\}_{i=1}^n$ in the unit ball of $\widetilde A$ and non-negative constants $\{\lambda_i\}_{i=1}^n$ summing to 1, such that
\begin{equation}\label{eq3.22}
 \left\|\sum^n_{i=1} \lambda_i\tilde a^*_i\tilde a_i-1_{\widetilde{A}}\right\|<\delta,
\end{equation}
and
\begin{equation}\label{eq3.23}
\left\|\sum^n_{i=1} \lambda_i(x\tilde a^*_i \otimes \tilde a_i -\tilde a^*_i \otimes \tilde a_ix)\right\|_{\widetilde{A}\ \widehat\otimes\ \widetilde{A}} <
\delta^{1/2}, \qquad x\in X\cup X^*.
\end{equation}
As in Lemma \ref{Avg.1}, each $\tilde a_i$ can be written as $\alpha_i+2a_i$, where 
$\alpha_i$ is a constant and $a_i$ is in the unit ball of $A$. Let
$
 Y = \{a_1,\ldots, a_n\}.
$ We now show that $Y$ and $\delta$ have the desired property.

Suppose that $\phi_1,\phi_2:A\rightarrow D$ are $(Y,\delta)$-approximate 
$*$-homormorphisms satisfying $\phi_1\approx_{Y,\gamma}\phi_2$ for some $\gamma\leq 13/150$, and let $\tilde\phi_1,\tilde
\phi_2:\widetilde{A}\rightarrow D^\dagger$ be their unital completely positive 
extensions. Define
\begin{equation}
s=\sum_{i=1}^n\lambda_i\tilde{\phi_1}(\tilde{a_i}^*)\tilde{\phi_2}(\tilde{a_i}).
\end{equation}
Since $x\otimes y\mapsto \tilde\phi_1(x) \tilde\phi_2(y)$ is contractive on $
\widetilde{A}\ \potimes\ \widetilde{A}$, the inequality
\begin{equation}\label{eq3.24}
 \left\|\sum^n_{i=1} \lambda_i\tilde\phi_1(x\tilde a^*_i) \tilde\phi_2(\tilde a_i) - 
\sum^n_{i=1} \lambda_i \tilde\phi_1(\tilde a^*_i) \tilde \phi_2(\tilde a_ix)\right\|<
\delta^{1/2},\qquad x\in X\cup X^*,
\end{equation}
follows from \eqref{eq3.23}. Now $\tilde a_i = \alpha_i +2a_i$ and $\phi_1$ and $
\phi_2$ are $(Y,\delta)$-approximate $*$-homomorphisms so Proposition 
\ref{Prelim.ApproxhmEst} gives the inequalities
\begin{equation}\label{eq3.25}
 \|\tilde\phi_1(x\tilde a^*_i) - \tilde\phi_1(x) \tilde\phi_1(\tilde a^*_i)\|\leq 2\|{\phi_1}
(a_i^*a_i)-{\phi_1}(a_i^*){\phi_1}(a_i)\|^{1/2}<2\delta^{1/2},
  \end{equation}
and
\begin{equation}\label{Avg.2.eq1}
 \|\tilde\phi_2(\tilde a_ix) - \tilde\phi_2(\tilde{a_i}) \tilde\phi_2(x)\|\leq 2\|
\phi_2(a_ia_i^*)-\phi_2(a_i)\phi_2(a_i^*)\|<2\delta^{1/2},
 \end{equation}
for $x$ in the unit ball of $A$, and so in particular for $x\in X\cup X^*$, and $1\le i\le 
n$. Then \eqref{eq3.24}, \eqref{eq3.25} and \eqref{Avg.2.eq1} combine to yield
\begin{equation}\label{eq3.26}
 \|\phi_1(x)s-s\phi_2(x)\|< 5\delta^{1/2},\qquad x\in X\cup X^*.
\end{equation}
Taking adjoints gives
\begin{equation}\label{eq3.27}
\|s^*\phi_1(x)-\phi_2(x)s^*\|< 5\delta^{1/2},\qquad x\in X\cup X^*.
\end{equation}
Since
\begin{align}
s^*s\phi_2(x) - \phi_2(x)s^*s &= s^*\phi_1(x)s + s^*(s\phi_2(x)-\phi_1(x)s) - 
\phi_2(x)s^*s\notag\\
\label{eq3.28}
&= (s^*\phi_1(x) - \phi_2(x)s^*)s + s^*(s\phi_2(x)-\phi_1(x)s),
\end{align}
the inequality
\begin{equation}\label{eq3.29}
 \|s^*s\phi_2(x) - \phi_2(x)s^*s\| < 10\delta^{1/2},\qquad x\in X\cup X^*,
\end{equation}
follows from \eqref{eq3.26} and \eqref{eq3.27}. 

The choice of $\delta$ gives $10\delta^{1/2} \le \eta$ so, defining $z$ to be $|s|$,
\begin{equation}\label{eq3.30}
 \|z\phi_2(x) - \phi_2(x)z\| < \varepsilon/4,\qquad x\in X\cup X^*,
\end{equation}
by definition of $\eta$. Now
\begin{align}
 \|s-1_{D^\dagger}\| &= \left\|\sum^n_{i=1} \lambda_i\tilde\phi_1(\tilde a^*_i) \tilde\phi_2(\tilde a_i) 
-1_{D^\dagger}\right\|\notag\\
&\le \left\|\sum^n_{i=1} \lambda_i\tilde\phi_1(\tilde a^*_i) \tilde\phi_1(\tilde a_i) - 
1_{D^\dagger}\right\| + 2\gamma\notag\\
&\le \left\|\tilde\phi_1 \left(\sum^n_{i=1} \lambda_i \tilde a^*_i\tilde a_i -1_{\widetilde{A}}\right)\right\| 
+ \left\|\sum^n_{i=1} \lambda_i(\tilde\phi_1(\tilde a^*_i) \tilde\phi_1(\tilde a_i) - \tilde
\phi_1(\tilde a^*_i\tilde a_i))\right\| +2\gamma\notag\\
\label{eq3.31}
&\le \delta + 4\delta + 2\gamma = 5\delta +2\gamma,
\end{align}
using \eqref{eq3.22}, $\phi_1\approx_{Y,\gamma}\phi_2$, and that $\phi_1$ is a $(Y,\delta)$-approximate 
$*$-homomorphism. Since $5\delta + 2\gamma<1$, \eqref{eq3.31} gives invertibility 
of $s$, and the unitary in the polar decomposition $s=uz$ lies in $D^\dagger$ and satisfies 
$\|u-1_{D^\dagger}\|\leq 5\sqrt{2}\delta+2\sqrt{2}\gamma$ by part 1 of Proposition 
\ref{Prelim.UnitaryEquiv}. Then
\begin{align}
 \|z-1_{D^\dagger}\| &= \|u^*s-1_{D^\dagger}\| = \|s-u\|\notag\\
&\le \|s-1_{D^\dagger}\| + \|u-1_{D^\dagger}\|\notag\\
\label{eq3.32}
&\le  5(1+\sqrt{2})\delta+2(1+\sqrt{2})\gamma<1/2.
\end{align}
From this we obtain $\|z^{-1}\|\le 2$ so, for $x\in X$,
\begin{align}
 \|\phi_1(x) - u\phi_2(x)u^*\| &= \|\phi_1(x) u - u\phi_2(x)\|\notag\\
&\le \|\phi_1(x)uz - u\phi_2(x)z\| \|z^{-1}\|\notag\\
&\le 2\|\phi_1(x)s-s\phi_2(x)\| + 2\|uz\phi_2(x) - u\phi_2(x)z\|\notag\\
&\le 10\delta^{1/2} + 2\|z\phi_2(x) -\phi_2(x)z\|
< 10\delta^{1/2} + \varepsilon/2
\label{eq3.33}
\le \varepsilon,
\end{align}
using the definition of $\delta$ and equations \eqref{eq3.26} and \eqref{eq3.30}.
\end{proof}

\begin{remark}\label{Avg.2.Rem}
(i) \quad Having found one pair $(Y,\delta)$ for which  Lemma \ref{Avg.2} holds, it is 
clear that we may enlarge $Y$ and decrease $\delta$. Thus we may assume that 
$X\subseteq Y$, and we are at liberty to take $\delta$ as small as we wish.

\noindent (ii) \quad We have chosen to formulate Lemma \ref{Avg.2} so that $\delta
$ does not depend on $\gamma$. However, if we demand that $\gamma$ lies in the interval $[\gamma_0,13/150]$ for some $\gamma_0>0$, then we 
could add the extra constraint $\delta\leq (5\sqrt{2})^{-1}\nu\gamma_0$ to \eqref{eq3.21a}, 
where $\nu$ is a fixed but arbitrarily small positive number. This changes the 
estimate on $u$ from $\|1_{D^\dagger}-u\|<2\sqrt{2}\gamma+5\sqrt{2}\delta$ to $\|1_{D^\dagger}-u\|
<(2\sqrt{2}+\nu)\gamma$. In particular, we can take $\nu$ to be so small that  the 
estimate
$\|1_{D^\dagger}-u\|\leq 3\gamma$ holds. We will use this form of the lemma repeatedly in Section \ref{Unitary}, for a finite range of $\gamma$ simultaneously.

\noindent (iii)\quad If $A$, and hence $\widetilde{A}$ is strongly amenable, then we 
can replace the estimate \eqref{eq3.22} by $\sum_{i=1}^n\lambda_i\tilde
\phi_1(\tilde{a_i}^*)\tilde\phi_2(\tilde{a_i}^*)=1_{D^\dagger}$.  However this does not 
allow us to obtain a stronger version of the lemma for $(Y,\delta)$-approximate 
$*$-homomorphisms.  If we further insist that both $\phi_1$ and $\phi_2$ are  
$*$-homomorphisms (rather than just approximate ones), then we can take $\delta=0$ in Lemma \ref{Avg.2} in the strongly amenable case.$\hfill\square$
\end{remark}

Next we give two Kaplansky density style results for approximate 
relative commutants. Consider a unital ${\mathrm C}^*$-algebra $A$ acting non-degnerately 
on some Hilbert space $H$ and let $M=A''$. Given a finite dimensional ${\mathrm C}^*$-
subalgebra $F$ of $A$, one can average over the compact unitary group of $F$ to see that the relative comutant $F'\cap A$ is $^*$-strongly dense in the the relative commutant $F'\cap M$.   The next lemma is a version of this for approximate relative commutants, replacing averaging over a compact unitary group by an argument using approximate diagonals.

\begin{lemma}\label{Avg.Kap.Density}
Let $A$ be a unital ${\mathrm C}^*$-algebra faithfully and non-degenerately represented on a 
Hilbert space $H$ with strong closure $M=A''$. Let $X$ be a finite subset of the unit 
ball of $A$ which is contained within a nuclear ${\mathrm C}^*$-subalgebra $A_0$ of $A$ with 
$1_A\in A_0$. Given constants $\varepsilon,\mu>0$, there exists a finite set $Y$ in 
the unit ball of $A_0$ and $\delta>0$ with the following property.  Given a finite 
subset $S$ of the unit ball of $H$ and $m$ in the unit ball of $M$ such that
\begin{equation}\label{Avg.Kap.1}
\|my-ym\|\leq\delta,\quad y\in Y,
\end{equation}
there exists an element $a\in A$ with $\|a\|\leq\|m\|$ such that
\begin{equation}
\|ax-xa\|\leq\vp,\quad x\in X
\end{equation}
and
\begin{equation}
\|a\xi-m\xi\|<\mu,\ \|a^*\xi-m^*\xi\|<\mu \quad \xi\in S.
\end{equation}
If $m$ is self-adjoint, then $a$ can be taken self-adjoint.
\end{lemma}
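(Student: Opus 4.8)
The plan is to mimic the structure of the first averaging lemma (Lemma \ref{Avg.1}), replacing the Stinespring projection $p$ there by the operator $m$ acting on $H$, and averaging it by an approximate diagonal for the nuclear subalgebra $A_0$. First I would apply Lemma \ref{Avg.ApproxDiagonal} to the \emph{unital nuclear} algebra $A_0$ (note $1_A \in A_0$, so no unitisation is needed here) with the finite set $X$ and a tolerance $\eta$ to be chosen: this produces contractions $\{a_i\}_{i=1}^n$ in $A_0$ and weights $\{\lambda_i\}_{i=1}^n$ with $\|\sum_i \lambda_i a_i^* a_i - 1\|$ small and $\|x(\sum_i \lambda_i a_i^* \otimes a_i) - (\sum_i \lambda_i a_i^* \otimes a_i)x\|_{A_0 \potimes A_0} < \eta$ for $x \in X \cup X^*$. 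I then set $Y = \{a_i, a_i^* : 1 \le i \le n\}$, and I would choose the tolerance $\delta$ in the hypothesis \eqref{Avg.Kap.1} so that $\delta$ controls the above commutators once we apply the contraction $y \otimes z \mapsto y\, m\, z$ from $A_0 \potimes A_0$ into $\mathbb{B}(H)$ (this uses the module structure: $A_0 \cdot m \cdot A_0 \subseteq \mathbb{B}(H)$, and left/right multiplications are contractive, so this bilinear map has norm $\le \|m\| \le 1$).

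Next, define $a_0 = \sum_{i=1}^n \lambda_i a_i^* m\, a_i \in A$ (this lies in $A$ since each $a_i \in A_0 \subseteq A$ and $m$ acts between, but wait — $m \in M = A''$, not $A$, so $a_0$ need not be in $A$). To fix this I would first replace $m$ by an element $m_1$ in the unit ball of $A$ which is $^*$-strongly close to $m$ on the finite set $S$ (Kaplansky density theorem), close enough that $\|m_1 \xi - m\xi\| < \mu/2$, $\|m_1^* \xi - m^*\xi\| < \mu/2$ on $S$, and close enough in $^*$-strong topology relative to the finite set $Y \cdot S$ that $m_1$ still approximately commutes with $Y$ on the relevant vectors — actually, since $Y$ commutes with things in the strong topology only on vectors, I should be careful: the cleanest route is to set $a = \sum_i \lambda_i a_i^* m_1 a_i \in A$, which has $\|a\| \le \|m_1\| \le \|m\|$ by the normalization $\sum_i \lambda_i a_i^* a_i \approx 1$ (up to the small error, which I can absorb by either scaling or by choosing $\delta$ small; in the strongly amenable case this is exact, but for the general bound $\|a\| \le \|m\|$ I may need to argue that $\|\sum \lambda_i a_i^* m_1 a_i\| \le \|m_1\| \|\sum \lambda_i a_i^* a_i\| \le \|m_1\|$ using positivity when $m_1 \ge 0$, and a $2\times 2$ matrix trick or polar decomposition argument in general). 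Then for $x \in X$, $\|ax - xa\| \le \|\sum_i \lambda_i(a_i^* m_1 a_i x - x a_i^* m_1 a_i)\|$, which I bound by inserting $m_1$-commutators and the diagonal-commutator estimate: the terms where $m_1$ must be moved past $x$ are handled because $x \in A_0$ and $m_1$ approximately commutes with the finite set $Y$ — here I need $m_1$ to approximately commute with $x$ itself, which I do not have directly.

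The main obstacle, then, is exactly this: the averaging identity $\|ax - xa\| \le 2\sum_i \lambda_i \|[m_1, a_i]\| \cdot(\text{stuff}) + \eta \cdot \|m_1\|$ is the wrong shape, because the approximate-diagonal estimate gives control of $x \cdot (\sum \lambda_i a_i^* \otimes a_i) - (\sum \lambda_i a_i^* \otimes a_i) \cdot x$, and applying the map $y \otimes z \mapsto y m_1 z$ to this yields precisely $\|\sum_i \lambda_i(x a_i^* m_1 a_i - a_i^* m_1 a_i x)\| = \|xa - ax\| \le \eta \|m_1\| \le \eta$. So in fact the commutator estimate $\|ax - xa\| \le \eta \le \varepsilon$ falls out \emph{directly} from the approximate-diagonal property applied to the completely-bounded map implementing $m_1$, \emph{without} needing $m_1$ to commute with anything — this is the whole point of using the approximate diagonal rather than a F\o lner average over a group. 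The genuine work is instead in the strong-approximation estimate: I must show $\|a\xi - m\xi\| < \mu$ and $\|a^*\xi - m^*\xi\| < \mu$ for $\xi \in S$. Writing $a - m_1 = \sum_i \lambda_i a_i^*(m_1 a_i - a_i m_1) + (\sum_i \lambda_i a_i^* a_i - 1)m_1$, the second term has small norm (from \eqref{Avg.Approx.Diagonal.1}-type control), and for the first term I apply it to $\xi \in S$ and bound $\|\sum_i \lambda_i a_i^*(m_1 a_i - a_i m_1)\xi\| \le \sum_i \lambda_i \|(m_1 a_i - a_i m_1)\xi\|$; this is where I need $m_1$ (hence $m$) to approximately commute with each $a_i \in Y$ \emph{on the vectors $a_i \xi$ and $\xi$} — i.e. I need $\|m a_i \xi - a_i m \xi\|$ small, which is \emph{not} implied by $\|m a_i - a_i m\| \le \delta$ in norm... wait, it is: $\|m a_i \xi - a_i m \xi\| \le \|m a_i - a_i m\| \|\xi\| \le \delta$. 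Good — so the hypothesis \eqref{Avg.Kap.1} is a \emph{norm} bound, and everything goes through: choose $\delta$ small enough that $n \cdot \delta + \|\sum \lambda_i a_i^* a_i - 1\| < \mu/2$, combine with the Kaplansky choice of $m_1$ giving the other $\mu/2$, and similarly for adjoints using $a^* - m_1^* = \sum_i \lambda_i(a_i^* m_1^* - m_1^* a_i^*) a_i + m_1^*(\sum \lambda_i a_i^* a_i - 1)$ and applying to $\xi$, noting $\|(a_i^* m_1^* - m_1^* a_i^*)a_i\xi\| \le \delta \|a_i \xi\| \le \delta$. Finally, if $m = m^*$, then taking $m_1 = m_1^*$ in the Kaplansky step (possible since the self-adjoint unit ball is strongly dense in the self-adjoint unit ball of $M$) makes $a = \sum_i \lambda_i a_i^* m_1 a_i$ self-adjoint automatically. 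So the proof is essentially: (1) apply the approximate diagonal to $A_0$; (2) define $Y$ from it; (3) Kaplansky to get $m_1 \in A$; (4) set $a = \sum_i \lambda_i a_i^* m_1 a_i$; (5) the commutator bound is immediate from the diagonal property via the map $y \otimes z \mapsto y m_1 z$; (6) the strong-approximation bound follows from the norm hypothesis \eqref{Avg.Kap.1} applied termwise to vectors in $S$, plus the normalization estimate; (7) the norm bound $\|a\| \le \|m\|$ and the self-adjoint case are bookkeeping.
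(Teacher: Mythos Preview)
Your approach is essentially the paper's: approximate diagonal for $A_0$, define $Y$ from it, Kaplansky-approximate $m$ by some $a_0\in A$, set $a=\sum_i\lambda_i a_i^* a_0 a_i$, get the commutator bound by applying the contraction $y\otimes z\mapsto y\,a_0\,z$ to the projective-tensor estimate, and handle the self-adjoint case by taking $a_0$ self-adjoint.

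One point to tighten. In your step (6) you need $\|(m_1 a_i - a_i m_1)\xi\|$ small but then silently switch to $m$ when you invoke $\|ma_i-a_im\|\le\delta$; the Kaplansky element $m_1$ is only $^*$-strongly close to $m$, so this substitution is not free. The paper avoids this by organising the estimate differently: it does the Kaplansky approximation on the vectors $b_i\xi$ (not on $S$ itself) and decomposes
\[
a-m=\sum_i\lambda_i b_i^*(a_0-m)b_i\;+\;\Bigl(\sum_i\lambda_i b_i^* m b_i - m\Bigr).
\]
The first term is then controlled on $\xi\in S$ purely by the Kaplansky choice, and the second is small \emph{in norm} directly from the hypothesis $\|[m,b_i]\|\le\delta$ together with the normalisation $\|\sum_i\lambda_i b_i^*b_i-1\|\le\delta$. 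This cleanly separates the strong-topology input from the norm-commutator input and makes your ``$m_1$ versus $m$'' issue disappear. Your earlier remark about approximating on $Y\cdot S$ would also fix it, but the paper's route is tidier.
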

\begin{proof}
Fix $0<\delta<\mu/4$.  By Lemma \ref{Avg.ApproxDiagonal} we can find positive 
scalars $\{\lambda_i\}_{i=1}^n$ summing to $1$ and elements $\{b_i\}_{i=1}^n$ in the 
unit ball of $A_0$ satisfying
\begin{equation}\label{Avg.Kap.2}
\left\|\sum_{i=1}^n\lambda_ib_i^*b_i-1_A\right\|\leq\delta
\end{equation}
and
\begin{equation}\label{Avg.Kap.3}
\left\|\sum_{i=1}^n\lambda_i(xb_i^*\otimes b_i-b_i^*\otimes b_ix)\right\|_{A_0\ 
\potimes\ A_0}< \vp,\quad x\in X.
\end{equation}
Now define $Y$ to be $\{b_1,\dots,b_n\}$. If $m\in M$ has $\|m\|\leq 1$ and 
satisfies (\ref{Avg.Kap.1}), then
\begin{align}
\left\|m-\sum_{i=1}^n\lambda_ib_i^*mb_i\right\|&\leq\left\|m-\sum_{i=1}^n
\lambda_ib_i^*b_im\right\|+\left\|\sum_{i=1}^n\lambda_ib_i^*(b_im-mb_i)\right\|
\nonumber\\
&\leq\delta+\delta=2\delta,\label{Avg.Kap.5}
\end{align}
from (\ref{Avg.Kap.1}) and (\ref{Avg.Kap.2}).

Given a finite set $S$ in the unit ball of $A$,  the Kaplansky density theorem gives $a_0\in A$ with $\|a_0\|\leq \|m\|$ 
satisfying
\begin{equation}\label{Avg.Kap.6}
\|(a_0-m)b_i\xi\|<\mu/2,\ \|(a_0-m)^*b_i\xi\|<\mu/2,\quad i=1,\dots,n,\quad \xi\in S.
\end{equation}
If $m=m^*$, then we can additionally insist that $a_0=a_0^*$. Define 
\begin{equation}\label{Avg.Kap.4}
a=\sum_{i=1}^n\lambda_ib_i^*a_0b_i
\end{equation}
which is self-adjoint if $m$, and hence $a_0$, is self-adjoint. Furthermore, $\|a\|\leq
\|a_0\|\leq\|m\|$. The definition of the projective tensor product norm shows that $y
\otimes z\mapsto ya_0z$ gives a contractive map of $A_0\ \potimes\ A_0$ into $A$.  
Using (\ref{Avg.Kap.3}) and (\ref{Avg.Kap.4}), we conclude that 
\begin{equation}
\|ax-xa\|<\vp,\quad x\in X.
\end{equation}
For $\xi\in S$,
\begin{align}
\|(a-m)\xi\|&\leq\left\|\big(\sum_{i=1}^n\lambda_ib_i^*(a_0-m)b_i\big)\xi\right\|+\left\|
\big(\sum_{i=1}^n\lambda_ib_imb_i^*-m\big)\xi\right\|\notag\\
&<\mu/2+2\delta,
\end{align}
from (\ref{Avg.Kap.5}) and (\ref{Avg.Kap.6}). The choice of $\delta$ gives $\|(a-m)
\xi\|<\mu$. Similarly, $\|(a-m)^*\xi\|<\mu$ for $\xi\in S$.
\end{proof}

In Section \ref{Unitary}, we will use the following version of Lemma \ref{Avg.Kap.Density} for unitaries. The hypothesis $\|u-I_H\|\le \alpha <2$ which we impose below is to ensure a gap in the spectrum, permitting us to take a continuous logarithm. This is essential for our methods.

\begin{lemma}\label{lem3.9}
Let $A$ be a unital ${\mathrm C}^*$-algebra faithfully and non-degenerately represented on a 
Hilbert space $H$ with strong closure $M=A''$. Let $X$ be a finite subset of the unit 
ball of $A$ which is contained within a nuclear ${\mathrm C}^*$-algebra $A_0$ of $A$ with 
$1_A\in A_0$.  Given constants $\vp_0,\mu_0>0$ and $0<\alpha<2$, there exists a 
finite set $Y$ in the unit ball of $A_0$ and $\delta_0>0$ with the following property. 
Given a finite set $S$ in the unit ball of $H$ and a unitary $u\in M$ satisfying $\|u-I_H\|
\le \alpha$ and
\begin{equation}\label{eq100.1}
 \|uy-yu\|\leq \delta_0,\qquad y\in Y,
\end{equation}
there exists a unitary $v\in A$ such that
\begin{gather}\label{eq100.2}
 \|vx-xv\|\leq \vp_0,\qquad x\in X,\\
\label{eq100.3}
\|v\xi-u\xi\|<\mu_0,\quad \|v^*\xi-u^*\xi\|<\mu_0,\qquad \xi\in S,
\end{gather}
and $\|v-I_H\|\le \|u-I_H\|\le\alpha$.
\end{lemma}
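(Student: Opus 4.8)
The plan is to exploit the spectral gap forced by $\|u-I_H\|\le\alpha<2$ to replace $u$ by a self-adjoint logarithm, apply Lemma \ref{Avg.Kap.Density} to that logarithm, and then exponentiate back. Since $\|u-I_H\|\le\alpha$, the spectrum of $u$ lies in the arc $\mathcal{A}=\{e^{i\theta}:|\theta|\le\beta\}$ with $\beta=2\arcsin(\alpha/2)<\pi$; as $-1\notin\mathcal{A}$, the function $\mathrm{Log}(e^{i\theta})=\theta$ is continuous on $\mathcal{A}$, so $h:=\mathrm{Log}(u)$ is a self-adjoint element of $M$ with $\|h\|\le\beta$, $u=e^{ih}$, and $m:=h/\beta$ lies in the unit ball of $M$. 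One checks directly that $\|u-I_H\|=2\sin(\|h\|/2)$, the maximum of $2|\sin(\theta/2)|$ over $\sigma(h)$, attained at a point of $\sigma(h)$ of modulus $\|h\|$. The strategy is to feed $m$ together with an enlarged finite set of vectors into Lemma \ref{Avg.Kap.Density}, obtain a self-adjoint $a_0\in A$ with $\|a_0\|\le\|m\|$ almost commuting with $X$ and strongly approximating $m$, and set $v:=e^{ia}$ with $a:=\beta a_0$; then $a=a^*$, $v$ is a unitary in $A$, and $\|a\|\le\|h\|$ gives $\|v-I_H\|=2\sin(\|a\|/2)\le 2\sin(\|h\|/2)=\|u-I_H\|\le\alpha$ by monotonicity of $t\mapsto 2\sin(t/2)$ on $[0,\pi]$.

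To fix the constants, all transfers are carried out through polynomials in two non-commuting variables, with $z$ and $\bar z$ interpreted as $u$ and $u^*$ after functional calculus; here one uses $\|u^*y-yu^*\|=\|u^*(uy-yu)u^*\|=\|uy-yu\|$, so that $u$ and $u^*$ both approximately commute with each $y\in Y$. First, for a tolerance of order $\mu_0/\beta$, approximate the equicontinuous family $z\mapsto e^{\pm i(1-t)\mathrm{Log}(z)}$ $(t\in[0,1])$ uniformly on $\mathcal{A}$ by words of a single length $d$, with coefficient sums bounded by some $C'$; the integers $d$ and constant $C'$ depend only on $\alpha$ and $\mu_0$. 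Choose $\mu>0$ small in terms of $\mu_0,\alpha,C'$, and $\vp>0$ small in terms of $\vp_0$ and $\alpha$ (the latter using a polynomial approximation of $e^{is}$ on $[-\beta,\beta]$), and apply Lemma \ref{Avg.Kap.Density} to $A_0$ and $X$ with tolerances $\vp,\mu$, producing the finite set $Y$ in the unit ball of $A_0$ and $\delta>0$, both depending only on $X,A_0,\vp_0,\mu_0,\alpha$. Finally choose $\delta_0>0$ small enough, in terms of $\delta,\alpha$ and a polynomial approximating $\mathrm{Log}$ on $\mathcal{A}$, that $\|uy-yu\|\le\delta_0$ for $y\in Y$ forces $\|hy-yh\|\le\beta\delta$, hence $\|my-ym\|\le\delta$.

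For the runtime argument, suppose $S$ and $u$ are given with $\|uy-yu\|\le\delta_0$ for $y\in Y$. Form $h$ and $m$ as above (so $\|my-ym\|\le\delta$), and enlarge $S$ to the finite subset $S'=S\cup\{w(u)\xi:\xi\in S,\ w\ \text{a word in}\ u,u^*\ \text{of length}\le d\}$ of the unit ball of $H$. Apply Lemma \ref{Avg.Kap.Density} to $m$ and $S'$ to get a self-adjoint $a_0\in A$ with $\|a_0\|\le\|m\|$, $\|a_0x-xa_0\|\le\vp$ for $x\in X$, and $\|a_0\zeta-m\zeta\|<\mu$ for $\zeta\in S'$; put $a=\beta a_0$ and $v=e^{ia}\in A$. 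The norm bound $\|v-I_H\|\le\|u-I_H\|\le\alpha$ is as above. For $x\in X$ we have $\|ax-xa\|=\beta\|a_0x-xa_0\|\le\beta\vp$, so approximating $e^{is}$ by a polynomial and using the Leibniz estimate gives $\|vx-xv\|\le\vp_0$. For $\xi\in S$, the identity $e^{ia}-e^{ih}=i\int_0^1 e^{ita}(a-h)e^{i(1-t)h}\,dt$ yields $\|(v-u)\xi\|\le\int_0^1\|(a-h)e^{i(1-t)h}\xi\|\,dt$; replacing $e^{i(1-t)h}\xi$ by its word-approximation $\sum_w c_w^{(t)}w(u)\xi$, with $\|a-h\|\le 2\|h\|\le 2\beta$ absorbing the replacement error and $\|(a-h)w(u)\xi\|=\beta\|(a_0-m)w(u)\xi\|<\beta\mu$ since $w(u)\xi\in S'$, bounds this by $<\mu_0$. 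The same computation with $-a,-h,u^*$ in place of $a,h,u$ gives $\|(v^*-u^*)\xi\|<\mu_0$.

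The genuine obstacle is that strong — rather than norm — approximation of $h$ by $a_0$ does not automatically survive exponentiation: $e^{i(1-t)h}\xi$ need not lie near $A\xi$ or near the span of $S$. The remedy is to anticipate the vectors the exponential will act on, namely those in the bounded family $\{e^{\pm i(1-t)h}\xi\}_{t\in[0,1]}$, observe that these are uniformly approximated by words of length $\le d$ in $u,u^*$ applied to $\xi$ with $d$ depending only on $\alpha$, and enlarge $S$ accordingly before invoking Lemma \ref{Avg.Kap.Density}. Because the output $(Y,\delta_0)$ of the present lemma must not depend on $S$, this runtime enlargement of $S$ costs nothing; this is precisely why Lemma \ref{Avg.Kap.Density} is phrased with its finite set of vectors as a separate, later input.
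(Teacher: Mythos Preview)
Your proof is correct and follows essentially the same route as the paper: pass to the self-adjoint logarithm via the spectral gap, transfer the approximate commutation to the logarithm by polynomial approximation, invoke Lemma~\ref{Avg.Kap.Density} with a suitably enlarged vector set $S'$ (chosen at runtime, after $u$ and $S$ are given), and exponentiate back, recovering $\|v-I_H\|\le\|u-I_H\|$ from $\|a\|\le\|h\|$. The only cosmetic difference is that the paper handles the strong-operator continuity of $h\mapsto e^{ih}$ by a direct polynomial approximation of $e^{i\pi t}$ and a telescoping estimate for $h^j-k^j$ (taking $S'=\{h^j\xi\}$), whereas you use the Duhamel integral $e^{ia}-e^{ih}=i\int_0^1 e^{ita}(a-h)e^{i(1-t)h}\,dt$ together with a uniform approximation of the family $t\mapsto e^{\pm i(1-t)\mathrm{Log}\,z}$ by words in $u,u^*$ (taking $S'=\{w(u)\xi\}$); both devices achieve the same end.
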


\begin{proof}
Fix $0<\alpha<2$. The result is obtained from Lemma \ref{Avg.Kap.Density} using some polynomial approximations and the following two observations.

 Firstly, there is an interval $[-c,c]$ with $0<c<\pi$ so that $|1-e^{i\theta}|
\le \alpha$ if and only if $\theta$ lies in this interval modulo $2\pi$. Any unitary $u$ 
satisfying $\|u-1\|\le\alpha$ has spectrum contained in the arc $\{e^{i\theta} \colon \ -
c\le \theta\le c\}$ on which there is a continuous logarithm. By approximating $\log z
$ on the arc by polynomials in the complex variables $z$ and $\bar z$, we obtain 
the following. Given $\delta>0$, there exists $\delta_0>0$ so that if $u\in M$ is a 
unitary satisfying $\|u-1\|\le \alpha$ and
\begin{equation}\label{eq100.4}
 \|uy-yu\| \leq \delta_0
\end{equation}
for some $y\in\mathbb B(H)$ with $\|y\|\leq 1$, then
\begin{equation}\label{eq100.5}
\Big\|\frac{\log u}{\pi}\,y - y\frac{\log u}{\pi}\Big\|\leq \delta,
\end{equation}
as in  \cite[p. 332]{Arveson.NotesExtensions}. Note that this deduction also requires \eqref{eq100.4} to hold with $u$ replaced by $u^*$, but this is immediate from the algebraic identity $u^*y-yu^*=u^*(yu-uy)u^*$.

Secondly, as we show below, the map $x\mapsto e^{i\pi 
x}$ is uniformly strong-operator continuous on the unit ball in $M_{\text{s.a.}}$ in 
the following sense.  Given $\mu_0>0$, there exists $\mu>0$ with the following property. For each finite subset $S$ of the unit ball of $H$ and $h\in M_{\text{s.a.}}$ with $\|h\|
\leq 1$, there exists a finite subset $S'$ of the unit ball of $H$ (depending only on $S$, $\mu_0$ and $h$) such that the inequalities 
\begin{equation}\label{Avg.Kap.Unitary.3}
\|(e^{i\pi h}-e^{i\pi k})\xi\|<\mu_0,\quad \|(e^{i\pi h}-e^{i\pi k})^*\xi\|<\mu_0,\quad \xi
\in S,
\end{equation}
are valid whenever an element $k$ in the unit ball of 
$
M_{\text{s.a.}}$ satisfies
\begin{equation}\label{Avg.Kap.Unitary.2}
\|(h-k)\xi\|<\mu,\quad \xi\in S'.
\end{equation}
This follows by considering polynomial approximations of $e^{i\pi t}$ for $t\in 
[-1,1]$. Given $\mu_0>0$, let $p(t)=\sum_{j=0}^r\lambda_jt^j$ be a polynomial in $t$ 
with $\sup_{-1\leq t\leq 1}|p(t)-e^{i\pi t}|<\mu_0/3$ and define
\begin{equation}
\mu=\frac{\mu_0}{3r\sum_{j=0}^r|\lambda_j|}.
\end{equation}
Given $h$ and $S$, define $S'=\{h^j\xi:\xi\in S,\ 0\leq j< r\}$ and suppose that 
(\ref{Avg.Kap.Unitary.2}) holds. For $\xi\in S$ and $0\leq j\leq r$, we compute
\begin{align}
\|(h^j-k^j)\xi\|&\leq \|(h^{j-1}-k^{j-1})h\xi\|+\|k^{j-1}(h-k)\xi\|\notag\\
&\leq\|(h^{j-2}-k^{j-2})h^2\xi\|+\|k^{j-2}(h-k)h\xi\|+\|k^{j-1}(h-k)\xi\|\notag\\
&\leq\dots\notag\\
&\leq\sum_{m=0}^{j-1}\|(h-k)h^{m}\xi\|<r\mu,
\end{align}
so that
\begin{equation}
\|(p(h)-p(k))\xi\|\leq \sum_{j=0}^r|\lambda_j|\|(h^j-k^j)\xi\|<\sum_{j=0}^r|\lambda_j|r
\mu=\mu_0/3,\quad \xi\in S,
\end{equation}
and, similarly,
\begin{equation}
\|(p(h)-p(k))^*\xi\|\leq\mu_0/3,\quad \xi\in S.
\end{equation}
The estimates in (\ref{Avg.Kap.Unitary.3}) follow.

We can now deduce the lemma from Lemma \ref{Avg.Kap.Density}. Assume then that $X,\vp_0$, and $\mu_0$ are given and let $A_0$ 
be a nuclear ${\mathrm C}^*$-subalgebra of $A$ containing $X$ and $1_A$. By means of another polynomial approximation argument choose $
\vp>0$ so that if $k\in M_{\text{s.a.}}$, $\|k\|\le 1$, and
\begin{equation}\label{eq100.8}
 \|xk-kx\|\leq\vp,\qquad x\in X,
\end{equation}
then
\begin{equation}\label{eq100.9}
 \|xe^{i\pi k}-e^{i\pi k}x\|\leq\vp_0, \qquad x\in X,
\end{equation}
as in \cite[p. 332]{Arveson.NotesExtensions}. Let $\mu>0$ be the constant corresponding via the second observation above to $\mu_0$ 
and apply Lemma \ref{Avg.Kap.Density} to $(X,\vp,\mu)$, producing a finite set $Y$ 
in the unit ball of $A$ and a constant $\delta>0$.  Let $\delta_0$ be the constant  
corresponding to $\delta$ given by our first observation so that (\ref{eq100.4}) 
implies (\ref{eq100.5}).

Suppose that we have a unitary $u\in M$ satisfying $\|u-I_H\|\leq\alpha$ and 
\begin{equation}
\|uy-yu\|\leq\delta_0,\qquad y\in Y.
\end{equation}
Define $h=\,-i\log u/\pi$ in $M_{\text{s.a.}}$. The definition of $\delta_0$ gives
\begin{equation}
\|hy-yh\|\leq\delta,\qquad y\in Y.
\end{equation}
Given a finite set $S$ in the unit ball of $H$, let $S'$ be the finite set corresponding 
to $h$, $S$ and $\mu_0$ from the uniform strong-operator continuity of $x\mapsto 
e^{i\pi x}$. Putting this set into Lemma \ref{Avg.Kap.Density}, we can find a 
self-adjoint operator $k$ in the unit ball of $A$ with $\|k\|\leq\|h\|$ which satisfies 
 (\ref{Avg.Kap.Unitary.2}) and (\ref{eq100.8}). Let $v=e^{i\pi k}$, which has $\|v-I_H\|
\leq \|u-I_H\|\leq \alpha$. By definition of $\vp$, (\ref{eq100.9}) holds and this gives \eqref{eq100.2}.
Similarly the definition of $\mu$ ensures that (\ref{Avg.Kap.Unitary.2}) implies 
(\ref{Avg.Kap.Unitary.3}) so that (\ref{eq100.3}) also holds. 
\end{proof}

\begin{remark}
In \cite[Section 4]{KOS} it is shown that Haagerup's proof that nuclear ${\mathrm C}^*$-algebras are amenable also implies that every ${\mathrm C}^*$-algebra $A$ has the following property: given an irreducible representation $\pi$ of $A$ on $H$, a finite subset $X$ of $A$, a finite subset $S$ of $H$ and $\varepsilon>0$, there exist $\{a_i\}_{i=1}^n$ in $A$ such that
\begin{enumerate}
\item $\|\sum_{i=1}^na_ia_i^*\|\leq 1$;
\item $\sum_{i=1}^n\pi(a_ia_i^*)\xi=\xi$ for $\xi\in S$;
\item $\|[\sum_{i=1}^na_iya_i^*,x]\|\leq\varepsilon$ for all $x\in X$ and all $y$ in the unit ball of $A$. 
\end{enumerate}
Although we do not need it in this paper,  using the property above in place of nuclearity in the proof of Lemma \ref{Avg.Kap.Density}, gives the result below. Comparing this with Lemma \ref{Avg.Kap.Density}, the key difference is that the set $S$ below must be specified with $X$ and $\varepsilon$, whereas in Lemma \ref{Avg.Kap.Density} the set $S$ can be chosen after $m$ is specified. When we use these results in Section \ref{Unitary}, being able to choose $S$ at this late stage is important to our argument.
\end{remark}
\begin{proposition}
Let $A$ be ${\mathrm C}^*$-algebra represented irreducibly on a Hilbert space $H$. Let $X$ be a finite subset of the unit ball of $A$, $S$ be a finite subset of the unit ball of $H$ and $\varepsilon>0$. Then there exists a finite subset $Y$ of the unit ball of $A$ and $\delta>0$ with the following property. Given any $m\in\mathbb B(H)$ with $\|my-ym\|\leq\delta$ for $y\in Y$, there exists $a\in A$ with $\|a\|\leq \|m\|$, $\|ax-xa\|\leq\varepsilon$ for $x\in X$ and $\|(m-a)\xi\|\leq\varepsilon$ for $\xi\in S$.
\end{proposition}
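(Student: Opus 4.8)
The plan is to run the proof of Lemma~\ref{Avg.Kap.Density} essentially unchanged, with the approximate-diagonal averaging $y\mapsto\sum_i\lambda_i b_i^*yb_i$ replaced by the completely positive averaging map $\Phi(z)=\sum_{i=1}^na_iza_i^*$ supplied by the property of \cite[Section~4]{KOS} recorded above. As in Lemma~\ref{Avg.Kap.Density} I would prove the statement under the normalisation $\|m\|\le 1$, the general case following by rescaling $m$ (at the cost of a factor $\|m\|$ in the tolerances). Given $X$, $S$ and $\varepsilon$, first apply the \cite{KOS} property to the irreducible inclusion $A\subseteq\mathbb B(H)$, the set $X$, the set $S$, and tolerance $\varepsilon$; this produces $\{a_i\}_{i=1}^n$ in $A$ with $\|\sum_i a_ia_i^*\|\le 1$, with $\sum_i a_ia_i^*\xi=\xi$ for $\xi\in S$, and with $\|[\sum_i a_iza_i^*,\,x]\|\le\varepsilon$ for all $x\in X$ and all contractions $z\in A$. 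I would then set $Y=\{a_i,a_i^*:1\le i\le n\}$ and choose $\delta>0$ small relative to $\varepsilon/\sqrt n$; these are the required outputs, and crucially they depend only on $X,S,\varepsilon$, since — unlike in Lemma~\ref{Avg.Kap.Density} — the finite set $S$ is available here before the $a_i$ are chosen.

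Now take $m$ with $\|m\|\le1$ and $\|my-ym\|\le\delta$ for $y\in Y$. Using the Kaplansky density theorem for $A\subseteq A''=\mathbb B(H)$ in $*$-strong form, choose $a_0\in A$ with $\|a_0\|\le\|m\|$ approximating $m$ and $m^*$ well on the finite vector set $\{a_i^*\xi:1\le i\le n,\ \xi\in S\}$, and put $a=\Phi(a_0)=\sum_i a_ia_0a_i^*\in A$. Complete positivity of $\Phi$ with $\|\Phi(I_H)\|\le1$ gives $\|a\|\le\|a_0\|\le\|m\|$, and property (3) applied with $z=a_0$ gives $\|ax-xa\|=\|[\Phi(a_0),x]\|\le\varepsilon$ for $x\in X$ at once.

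For the vector estimate I would split $(m-a)\xi=(m-\Phi(m))\xi+\Phi(m-a_0)\xi$ for $\xi\in S$. The term $\Phi(m-a_0)\xi=\sum_i a_i(m-a_0)a_i^*\xi$ is controlled by the row/column inequality $\|\sum_i a_iv_i\|\le\|\sum_i a_ia_i^*\|^{1/2}(\sum_i\|v_i\|^2)^{1/2}$ together with the Kaplansky estimates on $\|(m-a_0)a_i^*\xi\|$. For the other term I would use the identity $m-\Phi(m)=m\bigl(I_H-\sum_i a_ia_i^*\bigr)+\sum_i[m,a_i]a_i^*$: the first summand annihilates $\xi\in S$ by property (2), while the second has norm on $\xi$ at most $\|\sum_i[m,a_i][m,a_i]^*\|^{1/2}(\sum_i\|a_i^*\xi\|^2)^{1/2}\le\sqrt n\,\delta$, using $\|[m,a_i]\|\le\delta$ (as $a_i\in Y$) and $\sum_i\|a_i^*\xi\|^2=\langle\sum_i a_ia_i^*\xi,\xi\rangle=\|\xi\|^2\le1$ for $\xi\in S$. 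Taking $\delta$ and the Kaplansky tolerance small compared with $\varepsilon$ and $n$ makes the total at most $\varepsilon$; the estimate for $(m-a)^*\xi$ is identical after replacing $m$ by $m^*$ and using $\|[m^*,a_i]\|=\|[m,a_i^*]\|\le\delta$ since $a_i^*\in Y$, together with the $*$-strong Kaplansky approximation of $m^*$.

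There is no genuinely hard step: the one real input is the \cite{KOS} averaging property itself, which we may assume, and the key structural point — exactly the one highlighted in the remark preceding the proposition — is that in the irreducible setting $S$ can be fed into that property at the outset, so that $Y$ and $\delta$ are produced before $m$ is known. The only things to watch are the order in which the constants are fixed, $\varepsilon\rightsquigarrow\{a_i\}\rightsquigarrow(\delta,\text{Kaplansky tolerance})$, and, if one wants the proposition for unrestricted $m$, carrying a harmless factor $\|m\|$ through the tolerances, as is implicit in Lemma~\ref{Avg.Kap.Density}.
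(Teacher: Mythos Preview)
Your proposal is correct and follows precisely the route the paper indicates: the paper gives no detailed proof of this proposition, only the remark immediately preceding it, which says the result follows by ``using the property above in place of nuclearity in the proof of Lemma~\ref{Avg.Kap.Density}''. That is exactly what you do --- replace the approximate-diagonal averaging by the \cite{KOS} averaging $\Phi(z)=\sum_i a_i z a_i^*$, then follow the template of Lemma~\ref{Avg.Kap.Density} (Kaplansky approximation $a_0$ of $m$, set $a=\Phi(a_0)$, split the vector error) --- and your treatment of the vector term via the identity $m-\Phi(m)=m(I_H-\sum_i a_ia_i^*)+\sum_i[m,a_i]a_i^*$ together with property~(2) is the natural way to exploit the exact reproducing condition $\sum_i a_ia_i^*\xi=\xi$ that replaces the approximate equality~\eqref{Avg.Kap.2}. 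Your observation about the factor $\|m\|$ for unrestricted $m$ is also apt; compare with Lemma~\ref{Avg.Kap.Density}, where $m$ is taken in the unit ball of $M$.
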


\section{Approximation on finite sets and isomorphisms}\label{Close}

In this section we establish the qualitative version of Theorem \ref{Intro.Iso}: that 
${\mathrm C}^*$-algebras sufficiently close to a separable nuclear ${\mathrm C}^*$-algebra $A$ must be 
isomorphic to $A$.  To do this, we use an approximation approach inspired by the 
intertwining arguments of \cite[Theorem 6.1]{Christensen.NearInclusions} and those 
in the classification programme
(see, for example, \cite{Elliott.RealRankZero}).  This is presented in Lemma 
\ref{Close.Iso.Tech}, where we have given a general formulation in terms of the 
existence of certain completely positive contractions. This is designed for 
 application in several contexts where this hypothesis can be verified, 
and so it forms the basis for all our subsequent near inclusion results as well as Theorem \ref{Intro.Iso}.

\begin{lemma}\label{Close.Iso.Tech}
Suppose that $A$ and $B$ are ${\mathrm C}^*$-algebras on some Hilbert space $H$ and that 
$A$ is separable and nuclear.  Suppose that there exists a constant $\eta>0$ 
satisfying
$\eta<1/210000$ such that for each finite subset $Z$ of the unit ball of $A$, there is 
a completely positive contraction $\phi:A\rightarrow B$ satisfying $\phi\approx_{Z,
\eta}\iota$.  Then, given any finite subset $X_A$ of the unit ball of $A$ and $0<
\mu<1/2000$,  there exists an injective $*$-homomorphism $\alpha:A\rightarrow B$ 
with $\alpha\approx_{X_A,8\sqrt{6}\eta^{1/2}+\eta+\mu}\iota$.  If, in addition,  $B\subset_{1/5}A$, then we can take $\alpha$ to be surjective.
\end{lemma}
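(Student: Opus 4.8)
The plan is to build the $*$-homomorphism $\alpha:A\to B$ as the point-norm limit of a carefully controlled sequence of approximate $*$-homomorphisms $\psi_n:A\to B$, produced by alternating the two averaging lemmas of Section \ref{Avg}, and then invoke an Elliott-style one-sided intertwining argument along a fixed enumeration of a countable dense subset of the unit ball of $A$ (this is where separability of $A$ enters). Concretely, fix an increasing sequence of finite sets $X_A=X_1\subseteq X_2\subseteq\cdots$ whose union is dense in the unit ball of $A$, and a rapidly decreasing sequence of tolerances $\varepsilon_n\downarrow 0$ with $\sum_n\varepsilon_n$ small. Start from the hypothesis: for the (large) finite set $Y_1$ demanded by Lemma \ref{Avg.1} applied to $(X_1,\varepsilon_1,\mu_1)$, the hypothesis gives a cpc $\phi_1\approx_{Y_1,\eta}\iota$; since $\eta<1/210000\le 1/17$ and $\phi_1$ is close to the inclusion (hence nearly multiplicative) on $Y_1$, $\phi_1$ is a $(Y_1,\gamma_1)$-approximate $*$-homomorphism with $\gamma_1$ of order $\eta$ (via Proposition \ref{Prelim.ApproxhmEst}, $\|\phi_1(y)\phi_1(y^*)-\phi_1(yy^*)\|\le 2\|\phi_1(y)-y\|+\|\phi_1(yy^*)-yy^*\|$-type estimates, bounded by a small multiple of $\eta$). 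Lemma \ref{Avg.1} then upgrades $\phi_1$ to an $(X_1,\varepsilon_1)$-approximate $*$-homomorphism $\psi_1$ with $\|\psi_1-\phi_1\|\le 8\sqrt2\gamma_1^{1/2}+\mu_1$, so $\psi_1\approx_{X_1,c\eta^{1/2}}\iota$ for a universal constant $c$.

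For the inductive step, at stage $n$ I would feed $(X_{n+1},\varepsilon_{n+1})$ into Lemma \ref{Avg.2} to get the pair $(Y',\delta')$, then feed the larger of $Y'$ and what Lemma \ref{Avg.1} needs for the next round into the hypothesis to obtain a fresh cpc $\phi_{n+1}\approx_{\cdot,\eta}\iota$, apply Lemma \ref{Avg.1} to make it an $(X_{n+1}\cup Y',\varepsilon')$-approximate $*$-homomorphism $\psi_{n+1}'$ close to $\iota$; since both $\psi_n$ and $\psi_{n+1}'$ are then close to $\iota$ on the relevant set, they are close to each other there, so Lemma \ref{Avg.2} yields a unitary $u_{n+1}\in B^\dagger$ with $\|u_{n+1}-I\|$ small and $\psi_n\approx_{X_n,\varepsilon_n}\mathrm{Ad}(u_{n+1})\circ\psi_{n+1}'$. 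Replacing $\psi_{n+1}'$ by $\mathrm{Ad}(u_{n+1})\circ\psi_{n+1}'$ (still a cpc map into $B$ since $B\triangleleft B^\dagger$, still an approximate $*$-homomorphism close to $\iota$ on $X_{n+1}$ up to the small perturbation $\|u_{n+1}-I\|$) gives $\psi_{n+1}$ with $\|\psi_{n+1}(x)-\psi_n(x)\|\le\varepsilon_n$ for $x\in X_n$. By summability of $\varepsilon_n$, the sequence $(\psi_n(x))_n$ is Cauchy for each $x$ in the dense set, hence for all $x$ in the unit ball of $A$, and $\alpha(x):=\lim_n\psi_n(x)$ defines a cpc map $A\to B$ which is genuinely multiplicative (the $\varepsilon_n$-defects vanish in the limit) and self-adjoint, i.e.\ a $*$-homomorphism, with $\alpha\approx_{X_A,8\sqrt6\eta^{1/2}+\eta+\mu}\iota$ after bookkeeping the geometric series against the stated constants. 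Injectivity: for $x$ in the unit ball, $\|\alpha(x)-x\|$ is small relative to $\|x\|=1$ on a dense set, so $\|\alpha(x)\|\ge 1-(\text{small})>0$; scaling gives $\|\alpha(x)\|\ge(1-\text{small})\|x\|$ for all $x$, so $\alpha$ is bounded below, hence injective (and isometric up to the small factor — really $\alpha$ is isometric because a $*$-homomorphism that is bounded below is isometric on a C$^*$-algebra).

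For surjectivity under the extra hypothesis $B\subset_{1/5}A$: the image $\alpha(A)$ is a C$^*$-subalgebra of $B$, and I would show $B\subset_1\alpha(A)$, whence $\alpha(A)=B$ by Proposition \ref{EasyFact}. Indeed $\alpha\approx_{X_A,\text{small}}\iota$ propagates (by the Cauchy estimate) to $\|\alpha(a)-a\|$ small for every $a$ in the unit ball of $A$; combined with $B\subseteq_{\gamma}A$ for some $\gamma<1/5$ (so each $b$ in the unit ball of $B$ is within $\gamma$ of some $a$ in the unit ball of $A$, and $a$ within small distance of $\alpha(a)\in\alpha(A)$), one gets $B\subseteq_{\gamma+\text{small}}\alpha(A)$ with $\gamma+\text{small}<1$, and then $B\subset_1\alpha(A)$. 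This forces $\alpha$ onto.

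The main obstacle — and the reason for the delicate numerology $\eta<1/210000$, $\mu<1/2000$ — is the constant management: Lemma \ref{Avg.1} costs a $\gamma^{1/2}$ (so $\eta^{1/2}$) each time it is used, Lemma \ref{Avg.2} requires its input maps to be close \emph{to each other} within $13/150$ and its approximate-multiplicativity defects within its own threshold $\delta$, and the unitary perturbations $\|u_n-I\|$ accumulate. One must choose the $\varepsilon_n,\delta_n,\mu_n$ so that (a) at every stage the hypotheses $\gamma\le 1/17$ of Lemma \ref{Avg.1}, $\delta\le\min\{\eta^2/100,\varepsilon^2/400,1/200\}$ and $\gamma\le 13/150$ of Lemma \ref{Avg.2} are met for the actual numbers occurring, (b) the telescoping sums $\sum\varepsilon_n+\sum\|u_n-I\|$ stay below the target tolerance $8\sqrt6\eta^{1/2}+\eta+\mu$, and (c) after passing to the limit one still has room to apply Proposition \ref{EasyFact}. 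I expect essentially all the work to be in verifying these inequalities simultaneously; the structural skeleton (alternate the two lemmas, conjugate, telescope, intertwine) is routine once the constants are pinned down.
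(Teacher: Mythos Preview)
Your skeleton (alternate Lemmas \ref{Avg.1} and \ref{Avg.2}, telescope, take a point-norm limit) is the right one, but there is a genuine structural error in your inductive step that breaks both the construction and the surjectivity argument.

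The problem is that you \emph{absorb} the unitary at each stage: you set $\psi_{n+1}:=\mathrm{Ad}(u_{n+1})\circ\psi_{n+1}'$ and then, at the next stage, compare $\psi_{n+1}$ (not $\psi_{n+1}'$) with the fresh map $\psi_{n+2}'$ via Lemma \ref{Avg.2}. But the input to Lemma \ref{Avg.2} is the distance between the two maps, and that distance now includes the perturbation $2\|u_{n+1}-1\|$. Tracing the recursion: if $\psi_n\approx_{Y,e_n}\iota$, then $\psi_n$ and $\psi_{n+1}'$ differ on $Y$ by at most $e_n+c\eta^{1/2}$, Lemma \ref{Avg.2} gives $\|u_{n+1}-1\|\approx 2\sqrt2(e_n+c\eta^{1/2})$, and hence $e_{n+1}\ge c\eta^{1/2}+2\|u_{n+1}-1\|\approx 4\sqrt2\,e_n+\mathrm{const}\cdot\eta^{1/2}$. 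Since $4\sqrt2>1$, the errors $e_n$ blow up geometrically and you quickly exit the regime $\gamma\le 13/150$ required by Lemma \ref{Avg.2}. The paper avoids this by never absorbing the unitaries into the maps being compared: it keeps the fresh approximants $\theta_n$ (each satisfying $\theta_n\approx_{Y_n,8\sqrt6\eta^{1/2}+\eta+\nu}\iota$ with a \emph{fixed}, non-accumulating error), applies Lemma \ref{Avg.2} to the pair $(\theta_n,\theta_{n+1})$, and accumulates the unitaries on the outside by setting $\alpha_n=\mathrm{Ad}(u_1\cdots u_n)\circ\theta_n$. Then $\alpha_{n+1}\approx_{X_n,2^{-n}\nu}\alpha_n$ follows from $\mathrm{Ad}(u_{n+1})\circ\theta_{n+1}\approx_{X_n,2^{-n}\nu}\theta_n$, and one gets Cauchy convergence without the $e_n$ ever growing.

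This distinction is exactly what makes your surjectivity argument fail. You claim that ``$\alpha\approx_{X_A,\mathrm{small}}\iota$ propagates to $\|\alpha(a)-a\|$ small for every $a$ in the unit ball''. It does not: the limit map $\alpha$ is close to $\iota$ only on the prescribed finite set $X_A$, and Johnson's examples (cited in the introduction) show that one cannot in general have $\alpha$ uniformly close to $\iota$. With the paper's bookkeeping, $\alpha_n$ is close to $\iota$ only via $\alpha_1=\theta_1$ on $X_1$; on $X_n$ for large $n$ one only knows $\|\alpha_n(x)\|\approx\|\theta_n(x)\|\approx\|x\|$, which suffices for injectivity but not for your surjectivity step. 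The paper's fix is to feed the accumulated unitaries back into the choice of $X_n$: given a dense sequence $(b_i)$ in the unit ball of $B$, at stage $n$ enlarge $X_n$ to contain elements within $2/5$ of $u_{n-1}^*\cdots u_1^*\,b_i\,u_1\cdots u_{n-1}$ for $i\le n$ (possible since $B\subset_{1/5}A$). Then for such $x\in X_n$ one computes $\|\alpha_n(x)-b_i\|=\|u_1\cdots u_n\theta_n(x)u_n^*\cdots u_1^*-b_i\|$ and unwinds using $\theta_n\approx\iota$ and the uniform bound on $\|u_n-1\|$, obtaining $\|\alpha_n(x)-b_i\|<1$. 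This gives $B\subset_1\alpha(A)$ and hence surjectivity by Proposition \ref{EasyFact}. Your argument omits this ``look-ahead'' device entirely.
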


\begin{proof}
Let $\{a_n\}_{n=0}^{\infty}$ be a dense sequence in the unit
ball of $A$, where $a_0=0$. Fix a finite subset $X_A$ of the
unit ball of $A$. Given $\mu<1/2000$, define $\nu=2\mu<1/4000$. We will 
construct inductively sequences
$\{X_n\}_{n=0}^{\infty}$, $\{Y_n\}_{n=0}^{\infty}$ of finite
subsets of the unit ball of $A$, a sequence
$\{\delta_n\}_{n=0}^{\infty}$ of positive constants, a
sequence $\{\theta_n:A\rightarrow B\}_{n=0}^{\infty}$ of
completely positive contractions, and a sequence of
unitaries $\{u_n\}_{n=1}^{\infty}$ in $B^\dagger$ which
satisfy the following conditions.
\begin{enumerate}[(a)]
\item The sets $\{X_n\}_{n=0}^{\infty}$ are increasing,
$a_n\in X_n$ for $n\geq 0$, and $X_A\subseteq X_1$.
\item $\delta_n\leq 2^{-n}$ for $n\geq 0$, and given any two
$(Y_n,\delta_n)$-approximate
$*$-homomorphisms $\phi_1,\phi_2  :  A\to B$ satisfying
$\phi_1\approx_{Y_n,2(8\sqrt{6}\eta^{1/2}+\eta+\nu)} \phi_2$, there exists a
unitary $u\in B^\dagger$ with ${\text{Ad}}(u)\circ
\phi_1\approx_{X_n, 2^{-n}\nu}\phi_2$. This unitary can be
chosen to satisfy $\|u-1_{B^\dagger}\|\leq 4\sqrt{2}(8\sqrt{6}\eta^{1/2}+\eta +\nu)+\nu$.
\item $X_n \subseteq Y_n$ and $\theta_n$ is a
$(Y_n,\delta_n)$-approximate $*$-homomorphism for $n\geq 0$.
\item $\theta_n \approx_{Y_n,8\sqrt{6}\eta^{1/2}+\eta+\nu} \iota$ for $n\geq
0$.
\item
${\text{Ad}}(u_n)\circ\theta_n\approx_{X_n,2^{-(n-1)}\nu}\theta_{n-1}$
and $\|u_n-1_{B^\dagger}\|\leq  4\sqrt{2}(8\sqrt{6}\eta^{1/2}+\eta+\nu)+\nu$ for $n\geq 1$.  When $n=1$, we  take $u_1=1$.
\end{enumerate}
If  $B\subset_{1/5}A$, then the separability of $A$ ensures the separability of $B$ by Proposition \ref{Prelim.SeparableOpen}. In this case fix  a 
dense sequence $\{b_n\}_{n=0}^\infty$   in the unit ball of $B$ with $b_0=0$,  and in this case we shall 
 require the following extra condition for our induction: 
\begin{enumerate}[(a)]
\setcounter{enumi}{5}
\item $d(u_{n-1}^*\dots u_1^*b_iu_1\dots u_{n-1},X_n)\leq 2/5$ for $0\leq i\leq n$.
\end{enumerate}

Assuming for the moment that the induction has been
accomplished, we first show that conditions (a)-(e) allow us to construct the 
embedding $\alpha:A\hookrightarrow B$. Define $\alpha_n={\text{Ad}}(u_1\ldots 
u_n)\circ \theta_n$
for $n\geq 1$ so $\alpha_1=\theta_1$ since $u_1=1$. For a fixed integer $k$ and an element $x\in
X_k$, we have
\begin{align}
\|\alpha_{n+1}(x)-\alpha_n(x)\|&=\|{\text{Ad}}(u_1\ldots
u_n)\circ
{\text{Ad}}(u_{n+1})\circ\theta_{n+1}(x)-{\text{Ad}}(u_1\ldots
u_n)\circ\theta_n(x)\| \notag\\
&=
\|{\text{Ad}}(u_{n+1})\circ\theta_{n+1}(x)-\theta_n(x)\|\notag
\\
&\leq 2^{-n}\nu,\quad n\geq k, \label{eq500.1}
\end{align}
using (e). Density of the $X_k$'s in the unit ball of $A$ then shows that  the 
sequence $\{\alpha_n\}_{n=1}^{\infty}$
converges in the point norm topology to a completely
positive contraction $\alpha:A\rightarrow B$.
Condition (c) implies that $\alpha$ is a
$*$-homomorphism since each $\alpha_n$ is a
$(Y_n,\delta_n)$-approximate $*$-homomorphism,
$\displaystyle{\lim_{n\to\infty}\delta_n=0}$, and $\cup_{n=0}^{\infty}Y_n$
is dense in the unit ball of $A$. For each $x\in X_n$, it
follows from \eqref{eq500.1} that
\begin{align}
\|\alpha_n(x)-\alpha (x)\|&\leq \sum_{m=n}^{\infty}
\|\alpha_{m+1}(x)-\alpha_m(x)\|\notag\\
&\leq \sum_{m=n}^{\infty} 2^{-m}\nu = 2^{-(n-1)}\nu,
\label{eq500.2}
\end{align}
and, in particular, that
\begin{equation}\label{eq500.3}
\|\alpha(x)-\theta_1(x)\|=\|\alpha (x)-\alpha_1(x)\|\leq \nu,\quad x\in
X_A\subseteq X_1.
\end{equation}
Thus, from \eqref{eq500.2} and (d),
\begin{align}
\|\alpha(x)\| &\geq \|\alpha_n(x)\|-2^{-n}\nu\notag\\
&= \|\theta_n(x)\|-2^{-n}\nu\notag\\
&\geq \|x\|-(8\sqrt{6}\eta^{1/2}+\eta+\nu)-2^{-n}\nu,\quad x\in
X_n.\label{eq500.4}
\end{align}
Letting $n\to \infty$ in \eqref{eq500.4}, it follows that
\begin{equation}\label{eq500.5}
\|\alpha(x)\|\geq \|x\|-(8\sqrt{6}\eta^{1/2}+\eta+\nu),
\end{equation}
for any $x$ in the unit ball of $A$, using the collective density of the $X_n$'s. This 
shows that $\alpha$
is injective since $\|\alpha(x)\|\geq 1-(8\sqrt{6}\eta^{1/2}+\eta+\nu)>0$ for $x$ 
in the unit sphere of $A$. Thus
$\alpha$ is a $*$-isomorphism of $A$ into $B$.
Then, for $x\in X_A\subseteq X_1$, the estimate
\begin{align}
\|\alpha(x)-x\|&\leq
\|\alpha(x)-\theta_1(x)\|
+\|\theta_1(x)-x\|\notag\\
&\leq \nu+8\sqrt{6}\eta^{1/2}+\eta+\nu=8\sqrt{6}\eta^{1/2}+\eta+\mu\label{eq500.6}
\end{align}
follows from \eqref{eq500.3} and hypothesis
(d). 

In the case that  $B\subset_{1/5}A$, we shall show that the 
additional assumption (f) gives $B\subset_1{\alpha}(A)$ and so ${\alpha}
$ is surjective by Proposition \ref{EasyFact}. Indeed, given 
$0\leq i\leq n$, find $x\in X_n$ with $\|x-u_{n-1}^*\dots u_1^*b_iu_1\dots u_{n-1}\|
\leq 2/5$. Then
\begin{align}
\|\alpha_n(x)-b_i\|&=\|u_1\dots u_n\theta_n(x)u_n^*\dots u_1^*-b_i\|\notag\\
&\leq\|u_n\theta_n(x)u_n^*-x\|+\|x-u_{n-1}^*\dots u_1^*b_iu_1\dots u_{n-1}\|\notag\\
&\leq 2\|u_n-1_{B^\dagger}\|+\|\theta_n(x)-x\|+\|x-u_{n-1}^*\dots u_1^*b_iu_1\dots u_{n-1}\|
\notag\\
&\leq 8\sqrt{2}(8\sqrt{6}\eta^{1/2}+\eta+\nu)+2\nu+(8\sqrt{6}\eta^{1/2}+\eta+\nu)
+2/5\notag\\
&\leq 0.94<1,
\end{align}
using (d), (e), and the upper bounds $\eta<1/210000$ and $\nu<1/4000$. The claim follows 
from the density of $\{b_i\}_{i=0}^\infty$ in the unit ball of $B$. It remains to 
complete the inductive construction.

We begin the induction trivially, setting $X_0=Y_0=\{a_0\}=\{0\}$, $u_0=1$, $
\delta_0=1$ and taking any completely positive contraction $\theta_0:A\rightarrow B
$. Suppose that the construction is complete up to the $n$-th stage.  Define $X_{n
+1}=X_A\cup X_n\cup\{a_{n+1}\}\cup Y_n$ so that condition (a) holds.  When $B\subset_{1/5} A$, we will have the same near containment 
for their unit balls with 2/5 replacing 1/5 (see Definition \ref{Prelim.DefNear} and 
Remark \ref{Prelim.Metric.Rem}). Thus we can extend $X_{n+1}$ so that condition 
(f) holds. Now use Lemma \ref{Avg.2} and Remark \ref{Avg.2.Rem} (i) to find a finite set $Y_{n+1}$ in the unit ball of 
$A$, containing $X_{n+1}$, and $0<\delta_{n+1}<\min\{\delta_n,2^{-(n+1)},(5\sqrt{2})^{-1}\nu
\}$ so that condition (b) holds.  This is possible because direct calculation 
shows that $2(8\sqrt{6}\eta^{1/2}+\eta+\nu)<13/150$ from the upper bounds on $
\eta$ and $\nu$, and the estimate $\|u-1_{B^\dagger}\|\leq 4\sqrt{2}(8\sqrt{6}\eta^{1/2}+\eta+\nu)
+\nu$ follows from $\delta_n\leq (5\sqrt{2})^{-1}\nu$. Now use Lemma \ref{Avg.1}  to find a finite set $Z\supset Y_{n+1}$ in the unit ball of $A$ so 
that given a $(Z,3\eta)$-approximate $*$-homomorphism $\phi:A\rightarrow B$, we 
can find a $(Y_{n+1},\delta_{n+1})$-approximate $*$-homomorphism $\psi:A
\rightarrow B$ with 
\begin{equation}
\|\phi-\psi\|\leq 8\sqrt{2}(3\eta)^{1/2}+\nu= 8\sqrt{6}\eta^{1/2}+
\nu.
\end{equation}
 This is possible because $3\eta<1/17$. Let 
\begin{equation}
Z'=Z\cup Z^*\cup\{zz^*:z\in Z\cup Z^*\}.
\end{equation}
From  the hypothesis, there is a completely positive contraction $\phi:A\rightarrow B
$ with $\phi\approx_{Z',\eta}\iota$, whereupon $\phi$ is a $(Z,3\eta)$-approximate $*$-homomorphism.  The definition of $Z$ then gives us a $(Y_{n+1},\delta_{n+1})$-approximate $*$-homomorphism $\theta_{n+1}:A\rightarrow B$ with $\|\phi-
\theta_{n+1}\|\leq 8\sqrt{6}\eta^{1/2}+\nu$ verifying (c). It follows that $\theta_{n+1}\approx_{Y_{n+1},
8\sqrt{6}\eta^{1/2}+\eta+\nu}\iota$ and so condition (d) holds.  Since $\theta_n$ and $
\theta_{n+1}$ are $(Y_n,\delta_n)$-approximate $*$-homomorphisms which satisfy 
$\theta_n\approx_{Y_n,2(8\sqrt{6}\eta^{1/2}+\eta+\nu)}\theta_{n+1}$, there exists a 
unitary $u_{n+1}$ in $B^\dagger$ with $\ad(u_{n+1})\circ\theta_{n+1}\approx_{X_n,
2^{-n}\nu}\theta_n$ and $\|u_{n+1}-1_{B^\dagger}\|\leq 4\sqrt{2}(8\sqrt{6}\eta^{1/2}+\eta+\nu)+
\nu$ from the inductive version of condition (b). This gives condition (e).  This last step is not required when $n=0$, 
as $X_0=\{0\}$ so we can take $u_1=1$ in this case.
\end{proof}

Using Proposition \ref{Prelim.PtArveson}, it follows that sufficiently close separable 
and nuclear ${\mathrm C}^*$-algebras are isomorphic. In fact we only need one near 
inclusion to be relatively small. 
\begin{theorem}\label{thm4.2}
Suppose that $A$ and $B$ are separable nuclear ${\mathrm C}^*$-algebras on some Hilbert 
space $H$ with $A\subset_\gamma B$ and $B\subset_\delta A$ for 
\begin{equation}
\gamma\leq 1/420000,\quad \delta\leq 1/5.
\end{equation}
Then $A$ and $B$ are isomorphic. 
\end{theorem}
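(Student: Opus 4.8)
\textbf{Proof strategy for Theorem \ref{thm4.2}.}

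The plan is to deduce the theorem from Lemma \ref{Close.Iso.Tech} by verifying its hypotheses. The role of Lemma \ref{Close.Iso.Tech} is to manufacture an injective $*$-homomorphism $\alpha:A\rightarrow B$ from a supply of completely positive contractions $A\rightarrow B$ that approximate the inclusion $\iota$ on arbitrary finite subsets of the unit ball of $A$, and to make $\alpha$ surjective under an extra near-containment hypothesis on the reverse side. So the task is twofold: first, produce cpc maps $A\rightarrow B$ that approximate $\iota$ to within the required tolerance $\eta<1/210000$ on any prescribed finite set; second, arrange the one-sided near inclusion $B\subset_{1/5}A$ needed for surjectivity.

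For the first part, I would invoke Proposition \ref{Prelim.PtArveson}: since $B$ is nuclear and $A\subset_\gamma B$, for any finite set $X$ in the unit ball of $A$ with $X\subset_\gamma B$ (which holds since $A\subset_\gamma B$ implies $A\subseteq_{\gamma'}B$ for some $\gamma'<\gamma$, hence $X\subset_\gamma B$), there is a cpc map $\phi:A\rightarrow B$ with $\|\phi(x)-x\|\leq 2\gamma$ for $x\in X$, i.e.\ $\phi\approx_{X,2\gamma}\iota$. With $\gamma\leq 1/420000$ we get $2\gamma\leq 1/210000$, so we may take $\eta=2\gamma$ (or any value in $[2\gamma,1/210000)$) and the hypothesis of Lemma \ref{Close.Iso.Tech} is satisfied for \emph{every} finite subset of the unit ball of $A$. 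One should double-check the strict inequality $\eta<1/210000$ is available — choosing $\eta$ slightly larger than $2\gamma$, or noting $2\gamma\le 1/210000$ and that Proposition \ref{Prelim.PtArveson} actually gives $\le 2\gamma$ so strictness can be recovered, handles this bookkeeping. This step is essentially immediate; the only subtlety is keeping the constants straight.

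For the second part, $\delta\leq 1/5$ gives $B\subset_\delta A$, hence $B\subset_{1/5}A$, which is precisely the extra hypothesis in the final clause of Lemma \ref{Close.Iso.Tech} that yields surjectivity of $\alpha$. (One also needs separability and nuclearity of $A$, which are given.) Applying Lemma \ref{Close.Iso.Tech} — say with $X_A=\{0\}$ and any admissible $\mu$, since we do not need the approximation conclusion here — produces a surjective injective $*$-homomorphism $\alpha:A\rightarrow B$, i.e.\ an isomorphism, completing the proof. The main obstacle is not really an obstacle at all at this stage: all the genuine work (the averaging Lemmas \ref{Avg.1}, \ref{Avg.2} and the intertwining argument) is already packaged into Lemma \ref{Close.Iso.Tech}, so Theorem \ref{thm4.2} is a short corollary; the only thing requiring care is confirming that the numerical thresholds $1/420000$ and $1/5$ feed correctly through Proposition \ref{Prelim.PtArveson} into the hypotheses $\eta<1/210000$ and $B\subset_{1/5}A$ of the lemma.
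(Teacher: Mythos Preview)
Your proposal is correct and follows exactly the same route as the paper: pick $\gamma'<\gamma$ with $A\subset_{\gamma'}B$, apply Proposition~\ref{Prelim.PtArveson} to obtain cpc maps $\phi\approx_{Z,2\gamma'}\iota$ so that $\eta=2\gamma'<1/210000$, and then invoke Lemma~\ref{Close.Iso.Tech} together with $B\subset_{1/5}A$ for surjectivity. The paper's proof is two sentences long and does precisely this; your only extra content is the (correct) remark that choosing $\gamma'<\gamma$ handles the strict-versus-nonstrict inequality bookkeeping cleanly.
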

\begin{proof}
Take $0<\gamma'<\gamma$ so that $A\subset_{\gamma'} B$.  Then Proposition 
\ref{Prelim.PtArveson} provides the cpc maps $A\rightarrow B$ required to use 
Lemma \ref{Close.Iso.Tech} when $\eta=2\gamma'$ and so the result follows.
\end{proof}

The qualitative version of Theorem \ref{Intro.Iso} also follows as algebras close to a 
separable and nuclear ${\mathrm C}^*$-algebra must again be separable and nuclear.  While 
the examples of \cite{Johnson.PerturbationExample} show that it is not possible in 
general to obtain isomorphisms which are uniformly close to the identity between 
close separable nuclear ${\mathrm C}^*$-algebras, Lemma \ref{Close.Iso.Tech} does enable 
us to control the behaviour of our isomorphisms on finite subsets of the unit ball.  

\begin{theorem}\label{Close.Iso.Main}
Let $A$ and $B$ be ${\mathrm C}^*$-algebras on some Hilbert space with 
\begin{equation}
d(A,B)<\gamma<1/420000.
\end{equation}
If $A$ is nuclear and separable, then $A$ and $B$ are isomorphic. Furthermore, 
given finite sets $X$ and $Y$ in the unit balls of $A$ and $B$ respectively,  there 
exists a surjective $*$-isomorphism $\theta:A\rightarrow B$ with
\begin{equation}
\|\theta(x)-x\|,\ \|\theta^{-1}(y)-y\|\leq 28\gamma^{1/2},\quad x\in X,\ y\in Y.
\end{equation}
\end{theorem}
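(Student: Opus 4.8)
The plan is to deduce Theorem~\ref{Close.Iso.Main} from Theorem~\ref{thm4.2} together with the quantitative control provided by Lemma~\ref{Close.Iso.Tech}. First I would observe that since $d(A,B)<\gamma<1/420000<1/101$, Proposition~\ref{Prelim.NuclearOpen} shows $B$ is nuclear, and since $d(A,B)<\gamma<1/2$ gives $B\subset_{1/2}A$, Proposition~\ref{Prelim.SeparableOpen} shows $B$ is separable. Thus both algebras are separable and nuclear, and $A\subset_\gamma B$, $B\subset_\gamma A$ with $\gamma<1/420000<1/5$; Theorem~\ref{thm4.2} already yields an abstract isomorphism, but to get the finite-set estimates I would instead invoke Lemma~\ref{Close.Iso.Tech} directly.

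Next I would set up the hypothesis of Lemma~\ref{Close.Iso.Tech}. Choose $\gamma'<\gamma$ with $A\subset_{\gamma'}B$ and $B\subset_{\gamma'}A$, and put $\eta=2\gamma'<2\gamma<1/210000$. By Proposition~\ref{Prelim.PtArveson} (using nuclearity of $B$), for every finite subset $Z$ of the unit ball of $A$ there is a cpc map $\phi:A\to B$ with $\phi\approx_{Z,2\gamma'}\iota$, i.e.\ $\phi\approx_{Z,\eta}\iota$. Since also $B\subset_{\gamma'}A$ and $\gamma'<1/5$, the hypothesis $B\subset_{1/5}A$ of the lemma is met. Applying the lemma with the prescribed finite set $X_A=X$ and a small $\mu$ (to be fixed), I obtain a surjective $*$-isomorphism $\alpha:A\to B$ with $\alpha\approx_{X,\,8\sqrt{6}\eta^{1/2}+\eta+\mu}\iota$.

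To handle the requested control on $\theta^{-1}$ on the set $Y\subseteq B$, I would apply the same machinery with the roles of $A$ and $B$ exchanged: since $B$ is separable nuclear, $A$ is nuclear, $B\subset_{\gamma'}A$ and $A\subset_{\gamma'}B$ with $\gamma'<1/5$, Lemma~\ref{Close.Iso.Tech} produces a surjective $*$-isomorphism $\beta:B\to A$ with $\beta\approx_{Y,\,8\sqrt{6}\eta^{1/2}+\eta+\mu}\iota$. The cleanest way to finish is to note that a single isomorphism $\theta$ can be built so that both $\theta$ almost fixes $X$ and $\theta^{-1}$ almost fixes $Y$: feed the finite set $\beta(Y)\cup X$ (inside the unit ball of $A$, after rescaling) into the construction for $\alpha$, so that $\theta:=\alpha$ is close to $\iota$ on $X$, and $\theta^{-1}$ inherits closeness to $\iota$ on $Y$ because $\theta$ is close to the identity on the relevant preimages. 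Alternatively, and more simply, I would just run Lemma~\ref{Close.Iso.Tech} once with $X_A$ chosen large enough to encode the $Y$-estimate via an elementary perturbation argument: if $\theta\approx_{W,\epsilon}\iota$ on a finite set $W$ containing good approximants in $A$ to the elements of $Y$, then $\|\theta^{-1}(y)-y\|$ is controlled by $\epsilon$ plus the approximation error.

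Finally I would do the arithmetic to absorb all constants into the stated bound $28\gamma^{1/2}$. We have $\eta=2\gamma'<2\gamma$, so $8\sqrt{6}\eta^{1/2}+\eta+\mu<8\sqrt{6}\cdot\sqrt{2}\,\gamma^{1/2}+2\gamma+\mu=8\sqrt{12}\,\gamma^{1/2}+2\gamma+\mu$. Since $8\sqrt{12}=16\sqrt{3}\approx 27.7$ and, for $\gamma<1/420000$, the lower-order terms $2\gamma+\mu$ are comfortably smaller than $(28-16\sqrt{3})\gamma^{1/2}$ once $\mu$ is chosen small (e.g.\ $\mu=\gamma$), we get the bound $28\gamma^{1/2}$ on $X$; the symmetric choice gives the same bound on $Y$ for $\theta^{-1}$. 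The main obstacle I anticipate is the bookkeeping needed to arrange a \emph{single} isomorphism $\theta$ with both one-sided estimates simultaneously — ensuring the finite sets fed into Lemma~\ref{Close.Iso.Tech} are chosen so that controlling $\theta$ on a suitable finite set automatically controls $\theta^{-1}$ on $Y$ — but this is a routine (if slightly fiddly) perturbation argument rather than a genuine difficulty, since $\gamma'<1/5$ gives us ample room in the constants.
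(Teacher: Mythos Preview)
Your proposal is correct, and the ``alternative, simpler'' route you describe in your third paragraph is precisely what the paper does: it enlarges $X$ so that $Y\subset_{\gamma'}X$ (using $d(A,B)<\gamma'$), applies Lemma~\ref{Close.Iso.Tech} once to obtain a surjective $\theta:A\to B$ with $\theta\approx_{X,8\sqrt{6}(2\gamma')^{1/2}+2\gamma'+\mu}\iota$, and then for each $y\in Y$ picks $x\in X$ with $\|x-y\|\le\gamma'$ and uses the isometry identity $\|\theta^{-1}(y)-x\|=\|y-\theta(x)\|$ to get $\|\theta^{-1}(y)-y\|\le 2\|x-y\|+\|\theta(x)-x\|$. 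Your first suggestion (running the lemma symmetrically to produce an auxiliary $\beta:B\to A$ and feeding $\beta(Y)$ back in) is unnecessary extra work---the single application with the enlarged $X$ already gives both estimates, and avoids the rescaling issue you flagged.
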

\begin{proof}
As $d(A,B)<1/101$, Proposition \ref{Prelim.NuclearOpen}  ensures that $B$ is  nuclear. Choose 
$\gamma'$ so that $d(A,B)<\gamma'<\gamma$ and 
enlarge the set $X$ if necessary so that $Y\subset_{\gamma'} X$.   Proposition 
\ref{Prelim.PtArveson} shows that the hypotheses of Lemma \ref{Close.Iso.Tech} 
hold for $\eta=2\gamma'$ so $A$ and $B$ are isomorphic.  Furthermore, for a 
constant $0<\mu<1/2000$,  Lemma \ref{Close.Iso.Tech} provides a surjective $*$-isomorphism 
$\theta:A\rightarrow B$ with \begin{equation}
\|\theta(x)-x\|<8\sqrt{6}(2\gamma')^{1/2}+2\gamma'+\mu,\quad x\in X.
\end{equation}
For each $y\in Y$, there exists $x\in X$ with $\|x-y\|\leq \gamma'$. As $\|\theta^{-1}
(y)-x\|=\|y-\theta(x)\|$ it follows that
\begin{equation}
\|\theta^{-1}(y)-y\|\leq 2\|x-y\|+\|\theta(x)-y\|<8\sqrt{6}(2\gamma')^{1/2}+4\gamma'+
\mu.
\end{equation}
In these  inequalities, a suitably small choice of $\mu$ and the inequality $\gamma'<
\gamma<1/420000$ lead to upper estimates of $28\gamma^{1/2}$ in both cases. 
\end{proof}

Lemma \ref{Close.Iso.Tech} is also the technical tool behind our three near 
inclusion results.  We can deduce two of these results now, using Propositions 
\ref{Prelim.PtArveson} and \ref{InnerFlip} to obtain the cpc maps required to use 
Lemma \ref{Close.Iso.Tech}.  Our third, and most general result, which only 
assumes that $A$ has finite nuclear dimension is postponed until Section \ref{Near}.

\begin{corollary}\label{Close.Iso.NearNuclear}
Let $\gamma$ be a constant satisfying $0<\gamma< 1/420000$, and consider a 
near inclusion  $A\subset_\gamma B$  of ${\mathrm C}^*$-algebras on a Hilbert space $H$, 
where $A$ and $B$ are nuclear and $A$ is separable.  Then $A$ embeds into $B$ 
and, moreover, for each finite subset $X$ of the unit ball of $A$ there exists an 
injective $*$-homomorphism $\theta :  A\to B$ with $\theta\approx_{X,
28\gamma^{1/2}}\iota$.
\end{corollary}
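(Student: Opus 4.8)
The plan is to read this corollary off Lemma \ref{Close.Iso.Tech}: the only thing needing verification is the supply of completely positive contractions $A\to B$ approximating the inclusion on finite sets, and this is precisely what Proposition \ref{Prelim.PtArveson} delivers, since $B$ is nuclear by hypothesis.

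Concretely, I would fix $\gamma'<\gamma$ with $A\subset_{\gamma'}B$ and set $\eta=2\gamma'$; as $\gamma<1/420000$ this gives $\eta<1/210000$, meeting the constant constraint of Lemma \ref{Close.Iso.Tech}. For any finite subset $Z$ of the unit ball of $A$, the near containment $A\subset_{\gamma'}B$ yields $Z\subset_{\gamma'}B$, so Proposition \ref{Prelim.PtArveson} furnishes a completely positive contraction $\phi:A\to B$ with $\|\phi(z)-z\|\leq 2\gamma'=\eta$ for $z\in Z$, that is, $\phi\approx_{Z,\eta}\iota$. Thus the hypotheses of Lemma \ref{Close.Iso.Tech} hold with this $\eta$, and, given a prescribed finite subset $X$ of the unit ball of $A$ and any $0<\mu<1/2000$, the lemma produces an injective $*$-homomorphism $\theta:A\to B$ with $\theta\approx_{X,\,8\sqrt{6}\eta^{1/2}+\eta+\mu}\iota$. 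This already gives the embedding $A\hookrightarrow B$.

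It remains only to bound the approximation constant. With $\eta=2\gamma'$ we have $8\sqrt{6}\eta^{1/2}+\eta+\mu=8\sqrt{6}(2\gamma')^{1/2}+2\gamma'+\mu$, and since $8\sqrt{6}\cdot\sqrt{2}=16\sqrt{3}<28$ while $\gamma'<\gamma<1/420000$, a sufficiently small choice of $\mu$ (permissible once $\gamma'$ has been fixed) forces this quantity below $28\gamma^{1/2}$, exactly as in the proof of Theorem \ref{Close.Iso.Main}. Hence $\theta\approx_{X,\,28\gamma^{1/2}}\iota$, as required. There is no real difficulty here beyond this bookkeeping; I would only flag that surjectivity is not asserted, since the surjective conclusion of Lemma \ref{Close.Iso.Tech} requires the reverse near containment $B\subset_{1/5}A$, which is absent from the hypotheses of this corollary.
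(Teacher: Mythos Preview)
Your proof is correct and essentially identical to the paper's own argument: both invoke Proposition \ref{Prelim.PtArveson} (using the nuclearity of $B$) to supply the cpc maps needed for Lemma \ref{Close.Iso.Tech}, and both derive the constant $28$ from the inequality $8\sqrt{6}(2\gamma)^{1/2}+2\gamma+\mu<28\gamma^{1/2}$ for small $\mu$. Your introduction of the auxiliary $\gamma'<\gamma$ is a slight notational difference but leads to the same estimate, and your remark that surjectivity is not available here (absent the reverse containment $B\subset_{1/5}A$) is exactly right.
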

\begin{proof}
Proposition \ref{Prelim.PtArveson} shows that $A$ and $B$ satisfy the hypotheses 
of Lemma \ref{Close.Iso.Tech} when $\eta=2\gamma$. The number $28$ appears 
from the inequality
\begin{equation}
8\sqrt{6}(2\gamma)^{1/2}+2\gamma+\mu<28\gamma^{1/2},
\end{equation}
which is valid for a sufficiently small choice of $\mu$.
\end{proof}

\begin{corollary}\label{half-flip-close}
Let $\gamma$ be a constant satisfying $0<\gamma<1/12600000\approx 7.9\times 10^{-8}$. Consider a near 
inclusion $A\subset_\gamma B$ where $A$ is unital, separable and has  
approximately inner half flip. Then $A$ embeds into $B$ and,  moreover, for each 
finite subset $X$ of the unit ball of $A$ there exists an injective $*$-homomorphism 
$\theta :  A\to B$ with $\theta\approx_{X,152\gamma^{1/2}}\iota$. 
\end{corollary}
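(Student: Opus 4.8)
The plan is to follow the proof of Corollary~\ref{Close.Iso.NearNuclear} verbatim in structure, but to feed the completely positive contractions supplied by Proposition~\ref{InnerFlip} into Lemma~\ref{Close.Iso.Tech} in place of the ones produced by Arveson's extension theorem. First I would record that $A$, being unital with approximately inner half flip, is automatically nuclear (by the proof of Proposition~2.8 of \cite{Effros.ApproxInnerFlip}); together with the separability hypothesis this makes $A$ a legitimate input for both Proposition~\ref{InnerFlip} and Lemma~\ref{Close.Iso.Tech}.

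Since $A\subset_\gamma B$ with $\gamma<1/2$, Proposition~\ref{InnerFlip} provides, for every finite subset $X$ of the unit ball of $A$, a cpc map $\phi:A\to B$ with $\|\phi(x)-x\|\le 8\alpha+4\alpha^2+4\sqrt{2}\gamma$ for $x\in X$, where $\alpha=(4\sqrt{2}+1)\gamma+4\sqrt{2}\gamma^2$. The crucial point is that this bound is a single constant, independent of $X$, so I would set $\eta=8\alpha+4\alpha^2+4\sqrt{2}\gamma$ and verify the hypothesis $\eta<1/210000$ of Lemma~\ref{Close.Iso.Tech}. Keeping honest upper bounds (for instance $\alpha<6.661\gamma$ when $\gamma<10^{-7}$, so that $8\alpha<53.29\gamma$ and $4\alpha^2<0.02\gamma$), one gets $\eta<59\gamma$, and then $\eta<59/12600000<1/210000$ because $59\times 210000=12390000<12600000$; this chain is tight, which is exactly why the value $1/12600000$ appears in the statement. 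Lemma~\ref{Close.Iso.Tech} then yields, for the prescribed finite set $X$ and any $0<\mu<1/2000$, an injective $*$-homomorphism $\theta:A\to B$ with $\theta\approx_{X,\,8\sqrt{6}\eta^{1/2}+\eta+\mu}\iota$, giving in particular an embedding $A\hookrightarrow B$.

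It then remains to clean up the constant. From $\eta<59\gamma$ one has $8\sqrt{6}\eta^{1/2}\le 8\sqrt{6}\sqrt{59}\,\gamma^{1/2}=8\sqrt{354}\,\gamma^{1/2}<150.6\,\gamma^{1/2}$; moreover $\eta<59\gamma=59\gamma^{1/2}\cdot\gamma^{1/2}<0.02\,\gamma^{1/2}$ in this range of $\gamma$, and $\mu$ may be taken smaller than $\gamma^{1/2}$ (which, since $\gamma^{1/2}<(12600000)^{-1/2}<1/2000$, automatically satisfies the constraint $\mu<1/2000$). Adding these contributions gives $8\sqrt{6}\eta^{1/2}+\eta+\mu<150.6\,\gamma^{1/2}+0.02\,\gamma^{1/2}+\gamma^{1/2}<152\gamma^{1/2}$, as required.

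I do not foresee a genuine obstacle: all of the real work has already been carried out in Proposition~\ref{InnerFlip} (the half-flip construction) and Lemma~\ref{Close.Iso.Tech} (the intertwining argument), both of which may simply be quoted. The only part that calls for care is the numerical book-keeping — the implication $\gamma<1/12600000\Rightarrow\eta<1/210000$ leaves only a modest margin — so the hard (but routine) part is to resist optimistic rounding when bounding $\alpha$, $\alpha^2$, and the square roots, and to confirm at the end that $152$ genuinely dominates $8\sqrt{354}$ after the small $\eta$ and $\mu$ contributions are added back.
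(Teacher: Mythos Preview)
Your proposal is correct and follows essentially the same route as the paper: invoke nuclearity of $A$ via \cite{Effros.ApproxInnerFlip}, use Proposition~\ref{InnerFlip} to obtain the cpc maps needed for Lemma~\ref{Close.Iso.Tech} with $\eta=8\alpha+4\alpha^2+4\sqrt{2}\gamma$, and then check the numerics. The paper is terser, simply recording $\eta<60\gamma<1/210000$ (your bound $\eta<59\gamma$ is a slightly sharper version of the same estimate, and indeed $60\times 210000=12600000$ is precisely the origin of the constant in the statement).
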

\begin{proof}
Recall that the proof of Proposition 2.8 of \cite{Effros.ApproxInnerFlip} shows that 
$A$ is nuclear. Write $\alpha=(4\sqrt{2}+1)\gamma+4\sqrt{2}\gamma^2$.  
Proposition \ref{InnerFlip} shows that the hypotheses of Lemma 
\ref{Close.Iso.Tech} hold for $\eta=8\alpha+4\alpha^2+4\sqrt{2}\gamma$ and the bound on $\gamma
$ is chosen so that $\eta <60\gamma<1/210000$.   The number $152$ appears 
from the inequality
\begin{equation}
8\sqrt{6}\eta^{1/2}+\eta+\mu<152\gamma^{1/2},
\end{equation}
which is valid for a sufficiently small choice of $\mu$.
\end{proof}

We briefly examine stability under tensoring by the strongly self-absorbing algebras introduced in \cite{Toms.StrongSelfAbsorbing}.  Recall that a separable unital ${\mathrm C}^*$-algebra $D$ is \emph{strongly self-absorbing} if there is an isomorphism between $D$ and $D\otimes D$ which is approximately unitarily equivalent to the embedding $D\hookrightarrow D\otimes D,\ x\mapsto x\otimes 1_D$.  Such algebras automatically 
have  approximately inner half flip \cite[Proposition 1.5]
{Toms.StrongSelfAbsorbing} and so are simple and nuclear. A separable ${\mathrm C}^*$-algebra $A$ is \emph{$D$-stable} if $A
\otimes D\cong A$. When $A$ is unital, an equivalent formulation of  $D$-stability for $A$ is the following condition: given finite sets $X$ in $A$ and $Y$ in $D$ and $\vp>0$, there exists an 
embedding $\phi:D\hookrightarrow A$ such that $\|\phi(y)x-x\phi(y)\|<\vp$ for all $x
\in X$ and $y\in Y$ (\cite[Theorem 2.2]{Toms.StrongSelfAbsorbing}).

\begin{corollary}\label{ssa-closed}
Let $C$ and $D$ be ${\mathrm C}^*$-algebras with $D$ separable and strongly 
self-absorbing in the sense of \cite{Toms.StrongSelfAbsorbing}. Then, within the space 
of separable ${\mathrm C}^*$-subalgebras of $C$ containing $1_C$, the $D$-stable 
${\mathrm C}^*$-algebras form a closed subset. 
\end{corollary}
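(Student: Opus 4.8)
The plan is to verify, for an algebra $A$ lying in the closure of the $D$-stable algebras, the finite approximately-central characterisation of $D$-stability recorded just before the corollary (\cite[Theorem 2.2]{Toms.StrongSelfAbsorbing}): for every finite subset $X$ of the unit ball of $A$, every finite subset $Y$ of the unit ball of $D$, and every $\varepsilon>0$, there is an embedding $\Phi:D\hookrightarrow A$ with $\|\Phi(y)x-x\Phi(y)\|<\varepsilon$ for $x\in X$ and $y\in Y$. Throughout I fix a faithful representation of $C$ on a Hilbert space $H$, so that all of the subalgebras considered act on $H$.

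So suppose $A\subseteq C$ is separable with $1_C\in A$ and that $A$ lies in the closure of the $D$-stable separable subalgebras of $C$ containing $1_C$. Given $X$, $Y$ and $\varepsilon$ as above, I would first choose $\gamma>0$ with $\gamma<1/12600000$ and $152\gamma^{1/2}+\gamma<\varepsilon/4$, and then, using that $A$ is a $d$-limit of $D$-stable algebras, pick a $D$-stable separable $B$ with $1_C\in B$ and $d(A,B)<\gamma/2$. For each $x\in X$ choose $x'$ in the unit ball of $B$ with $\|x-x'\|<\gamma/2$, and set $X'=\{x':x\in X\}$. Since $B$ is unital, separable and $D$-stable, \cite[Theorem 2.2]{Toms.StrongSelfAbsorbing} yields an embedding $\psi:D\hookrightarrow B$ with $\|\psi(y)x'-x'\psi(y)\|<\varepsilon/2$ for $x'\in X'$ and $y\in Y$; as $D$ is simple, $\psi$ is injective, hence isometric.

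Next I would transport the subalgebra $\psi(D)$ of $B$ into $A$ using Corollary \ref{half-flip-close}. The algebra $\psi(D)$ is separable and unital (with unit $\psi(1_D)$), and it has approximately inner half flip since $D$ does (\cite[Proposition 1.5]{Toms.StrongSelfAbsorbing}), this property being an isomorphism invariant. Moreover $d(A,B)<\gamma/2$ gives $\psi(D)\subseteq_{\gamma/2}A$ and hence $\psi(D)\subset_\gamma A$. Applying Corollary \ref{half-flip-close} to this near inclusion, with controlled finite set $\{\psi(y):y\in Y\}$ in the unit ball of $\psi(D)$, produces an injective $*$-homomorphism $\theta:\psi(D)\to A$ with $\|\theta(\psi(y))-\psi(y)\|\le 152\gamma^{1/2}$ for $y\in Y$. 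Put $\Phi=\theta\circ\psi:D\to A$, an injective $*$-homomorphism, i.e.\ an embedding.

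It then remains to estimate, for $x\in X$ with corresponding $x'\in X'$ and for $y\in Y$,
\begin{align}
\|\Phi(y)x-x\Phi(y)\|&\le\|\Phi(y)x-\psi(y)x'\|+\|\psi(y)x'-x'\psi(y)\|+\|x'\psi(y)-x\Phi(y)\|\notag\\
&< 2\big(\|\Phi(y)-\psi(y)\|+\|x-x'\|\big)+\varepsilon/2<2\big(152\gamma^{1/2}+\gamma\big)+\varepsilon/2<\varepsilon,\notag
\end{align}
using $\|x\|,\|\psi(y)\|\le 1$ and the bounds above; this is exactly the Toms--Winter condition, so $A$ is $D$-stable. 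There is no serious obstacle: the corollary is obtained by assembling the local Toms--Winter characterisation of $D$-stability with the approximately-inner-half-flip near inclusion result, Corollary \ref{half-flip-close}. The only point needing care is the order of quantifiers --- $\gamma$ must be chosen small relative to $\varepsilon$ \emph{before} $B$ is selected --- which is legitimate precisely because $A$ lies in the \emph{closure} of the $D$-stable algebras, so a $D$-stable $B$ can be found as close to $A$ as we wish. (If one wants $\Phi$ to be unital, one can additionally feed $1_C$ into Corollary \ref{half-flip-close}, forcing $\theta(1_C)$ to be a projection within distance $152\gamma^{1/2}<1$ of $1_C$ and hence equal to it, provided $\psi$ is taken unital.)
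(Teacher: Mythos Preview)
Your proof is correct and follows essentially the same route as the paper's own argument: approximate $X$ by a finite set in a nearby $D$-stable $B$, invoke the Toms--Winter characterisation to obtain an approximately central copy of $D$ in $B$, and then push this copy into $A$ via Corollary~\ref{half-flip-close}. You have simply made the constants and the order of quantifiers more explicit than the paper does.
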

\begin{proof}
Suppose that $A \subseteq C$ is a unital C$^{*}$-subalgebra such that, for any $
\gamma >0$, there is a $D$-stable C$^{*}$-algebra $B \subseteq C$ with $d(A,B)<
\gamma$. We have to show that $A$ is $D$-stable. To this end, note that any finite 
subset $X$ of $A$ may be approximated arbitrarily well by a finite subset $\bar{X}$ 
in a nearby unital $D$-stable $B$. Now for a finite subset $Y$ of $D$, there is an 
embedding of $D$ into $B$ such that $Y$ almost commutes with $\bar{X}$.  From 
Corollary~\ref{half-flip-close} we obtain an embedding of $D$ into $A$ which sends 
$Y$ to a close subset $\bar{Y} \subseteq A$. Now if $D$ and $B$ were sufficiently 
close, $\bar{Y}$ and $X$ will almost commute. By \cite[Theorem~2.2]
{Toms.StrongSelfAbsorbing}, this is enough to ensure that $A$ is $D$-stable.
\end{proof}
We have not been able to decide whether the $D$-stable subalgebras also form an 
open subset. However, Corollary \ref{half-flip-close} immediately gives results for 
embeddings of strongly self-absorbing ${\mathrm C}^*$-algebras, since these have 
approximately inner half flip. 

\begin{corollary}\label{Z-Embedd}
Let $A\subset_\gamma B$ be a near inclusion of ${\mathrm C}^*$-algebras on a Hilbert space 
$H$, where $0<\gamma<1/12600000\approx 7.9\times 10^{-8}$, and let $D$ be a strongly self-absorbing C$^{*}$-algebra.  If $A$ 
admits an embedding of $D$, then so does $B$.  Moreover, on finite 
subsets of the unit ball of $D$ one may choose the embedding into $B$ to 
be within $152\gamma^{1/2}$ of the embedding into $A$. 
\end{corollary}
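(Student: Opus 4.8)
The plan is to deduce this immediately from Corollary \ref{half-flip-close} by transporting the near inclusion down to a concrete copy of $D$ inside $A$. The crucial point is that, as recalled above, a strongly self-absorbing ${\mathrm C}^*$-algebra $D$ is by definition separable and unital and has approximately inner half flip by \cite[Proposition 1.5]{Toms.StrongSelfAbsorbing}; these are precisely the standing hypotheses on the ``left'' algebra in Corollary \ref{half-flip-close}.

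First I would fix an embedding $\psi:D\hookrightarrow A$ witnessing the hypothesis and set $D_0=\psi(D)$, a unital separable ${\mathrm C}^*$-subalgebra of $A$ that again has approximately inner half flip. Next I would observe that the near inclusion descends: choosing $\gamma'<\gamma$ with $A\subseteq_{\gamma'}B$, the unit ball of $D_0$ lies inside the unit ball of $A$, so $D_0\subseteq_{\gamma'}B$ and hence $D_0\subset_\gamma B$, with both algebras acting on the ambient $H$ and $\gamma<1/12600000$ exactly as Corollary \ref{half-flip-close} requires. Then, given any finite subset $F$ of the unit ball of $D$, the image $\psi(F)$ is a finite subset of the unit ball of $D_0$, and Corollary \ref{half-flip-close} applied to $D_0\subset_\gamma B$ yields an injective $*$-homomorphism $\theta:D_0\to B$ with $\theta\approx_{\psi(F),152\gamma^{1/2}}\iota$. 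Finally $\theta\circ\psi:D\to B$ is the desired embedding of $D$ into $B$, and for $d\in F$ we have $\|\theta(\psi(d))-\psi(d)\|\le 152\gamma^{1/2}$, so the embedding of $D$ into $B$ is within $152\gamma^{1/2}$ of the embedding of $D$ into $A$ on $F$.

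I do not expect a genuine obstacle here: the argument is essentially bookkeeping once one matches the defining (and automatic) properties of strongly self-absorbing algebras with the hypotheses of Corollary \ref{half-flip-close}. The only points worth a careful word are that ``admits an embedding of $D$'' permits us to replace $D$ by the subalgebra $D_0\subseteq A\subseteq\mathbb B(H)$, so that the approximation estimate, measured in $\mathbb B(H)$, makes sense simultaneously for the embedding into $A$ and the embedding into $B$; and that the constant $152$ and the bound on $\gamma$ are simply inherited verbatim from Corollary \ref{half-flip-close}.
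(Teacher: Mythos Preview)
Your proof is correct and is exactly the intended argument: the paper states just before this corollary that it follows immediately from Corollary \ref{half-flip-close} since strongly self-absorbing algebras have approximately inner half flip, and your write-up simply makes this deduction explicit by passing to the image $D_0=\psi(D)\subseteq A$ and invoking Corollary \ref{half-flip-close} for the near inclusion $D_0\subset_\gamma B$.
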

Recent progress in the structure theory of nuclear C$^{*}$-algebras suggests that 
the preceding corollaries are particularly interesting in the case where $D = \mathcal{Z}$, the Jiang--Su algebra. 
Moreover, $\mathcal{Z}$-stability is relevant also for non-nuclear C$^{*}$-algebras; for example it will be 
shown in \cite{Johanesova.ZStableSimilarity} that it implies finite length (hence 
Kadison's similarity property). We have not been able to establish whether  $
\mathcal{Z}$-stability is preserved under closeness, i.e., to answer the following 
question: If $A$ and $B$ are sufficiently close, and $A$ is $\mathcal{Z}$-stable, is 
$B$ $\mathcal{Z}$-stable as well? The previous corollary at least shows that the existence 
of embeddings of $\mathcal{Z}$ is preserved under close containment. The 
existence of such embeddings is highly nontrivial (even for otherwise well-behaved 
C$^{*}$-algebras, such as simple, unital AH algebras, cf.\ \cite{Dadarlat.ZEmbed}).

\section{Unitary Equivalence}\label{Unitary}

\indent 

In the previous section we have shown that two close separable nuclear ${\mathrm C}^*$-
subalgebras $A$ and $B$ of ${\mathbb B}(H)$ are $*$-isomorphic. We now show in 
Theorem \ref{thm10.4} that there is a unitary $u$ such that $uAu^*=B$ when $H$ is 
separable. This establishes Theorem \ref{Intro.Unitary} of the introduction and gives 
a complete answer to Kadison and Kastler's question from \cite{Kadison.Kastler} in this context.

The technicalities of the upcoming proofs warrant some additional overall explanation
of the methods employed. Before describing our approach, we  recall our unitisation conventions from Notation 
\ref{Prelim.Dagger}, so that if $A$ and $B$ have the same ultraweak closure, then $A^
\dagger$ and $B^\dagger$ have the same unit, $e_A$, where $e_A$ is the support 
projection of $A$ (and of $B$).

In Theorem \ref{thm10.3} below, we obtain the unitary that implements a $*$-isomorphism under the additional
assumption that $A$ and $B$ have the same ultraweak closure.  This
restriction is removed in Theorem \ref{thm10.4} by using known perturbation
results for injective von Neumann algebras to reduce to the situation
of Theorem \ref{thm10.3}.  The assumption that $A$ and $B$ have the same
ultraweak closure enables us to approximate $^*$-strongly unitaries in C$^*(A,B,1)$
with a uniform
spectral gap    by unitaries in $A$ or $B$
using the Kaplansky density result of Lemma \ref{lem3.9}.   The basic idea of
our proof is to construct a sequence of unitaries $\{u_n\}_{n=1}^\infty$ in
C$^*(A,B,1)$ using Lemma \ref{Avg.2} and Theorem \ref{Close.Iso.Main} so that
$\lim_{n\to\infty}\ad(u_n)$ exists in the point-norm topology and
defines a surjective $*$-isomorphism from $A$ onto $B$. If this sequence
converged $^*$-strongly to a unitary $u$, then this would implement
unitary equivalence. However, there is no reason to believe that this
happens.  Thus we produce a modified sequence $\{v_n\}_{n=1}^\infty$ of unitaries
which explicitly converges  $^*$-strongly, while maintaining
the requirement that $\lim_{n\to\infty}\ad(v_n)$ gives a
surjective $*$-isomorphism from $A$ onto $B$. Lemma \ref{lem3.9} enables us to
 approximate $^*$-strongly the unitaries obtained  in C$^*(A,B,1)$ by unitaries in  
$B^\dagger$. In principle, the idea is to multiply by unitaries
produced by Lemma \ref{lem3.9}, to ensure $^*$-strong convergence.  In order for  $\lim_{n\rightarrow\infty}\ad(v_n)$ (in the point-norm topology) to exist and define a $*$-isomorphism, it is essential that
these unitaries can be taken to approximately commute with suitable
finite sets.  In practice, we have further technical
hurdles to  overcome. One instance of this is the need  to ensure that
the unitaries to which we apply Lemma \ref{lem3.9}  have the required uniform spectral
gap.  This leads us to split off two technical lemmas (Lemmas \ref{lem10.1} and \ref{lem10.2})
which combine the results of Lemmas \ref{Avg.2}, \ref{lem3.9} and Theorem \ref{Close.Iso.Main} in
exactly the correct order for use in the inductive step of Theorem
\ref{thm10.3}. We advise the reader to begin this sequence of results with the latter one, referring back to the preceding two as needed.

The bounds on the constant $\gamma$ in the next three results are chosen so that 
whenever we wish to use Lemma \ref{Avg.2} or Theorem \ref{Close.Iso.Main}, it is 
legitimate to do so. In particular, the choice ensures that $392\gamma^{1/2}\le 
13/150$, so that Lemma \ref{Avg.2}  and Remark \ref{Avg.2.Rem} (ii) can be 
applied with $\gamma$ replaced by $\alpha\gamma^{1/2}$ for $\alpha\le 392$. It 
also guarantees the validity of \eqref{eq10.38}, an inequality that governs the overall bound 
on $\gamma$.
In part (VIII) of the following lemma, the inequality $1848\gamma^{1/2}\leq
1848\times 10^{-4}<1$ will allow us to use Lemma \ref{lem3.9}.

\begin{lemma}\label{lem10.1}
Suppose that $A$ and $B$ are separable and nuclear ${\mathrm C}^*$-algebras acting non-
degerately on a Hilbert space $H$. Suppose that $A''=B''=M$ and $d(A,B) <\gamma 
\le 10^{-8}$. Given finite subsets $X_0,Z_{0,B}$ of the unit ball of $B$, a finite 
subset $Z_{0,A}$ of the unit ball of $A$, and constants $\vp_0>0$, $\mu_0>0$, 
there exist finite subsets $Y_0,Z_0$ of the unit ball of $B$, $\delta_0>0$, a unitary 
$u_0\in M$ and a surjective $*$-isomorphism $\sigma :   B\to A$ with the following 
properties:
\begin{itemize}
 \item[\rm (I)] $X_0\subseteq Y_0$.
\item[\rm (II)] $\delta_0<\vp_0/2$.
\item[\rm (III)] $Z_{0,B} \subseteq Z_0$.
\item[\rm (IV)] $\sigma\approx_{Z_0,28\gamma^{1/2}}\iota$ and $\sigma^{-1} 
\approx_{Z_{0,A},28\gamma^{1/2}}\iota$.
\item[\rm (V)] $\sigma \approx_{Y_0,\delta_0/2} \ad(u_0)$.
\item[\rm (VI)] $\|u_0-1_M\|\le 84\gamma^{1/2}$.
\item[\rm (VII)] Given any $*$-homomorphism $\psi:B\rightarrow D$ for some unital 
${\mathrm C}^*$-subalgebra $D$ of $M$ with $\psi\approx_{Z_0, 364\gamma^{1/2}}\iota$, there 
exists a unitary $w_0\in {\mathrm C}^*(A,D,1_M)$ with $\|1_M-w_0\| \le 1176 \gamma^{1/2}$ such that
\begin{equation}
 \text{\rm Ad}(w_0)\circ \psi \approx_{Y_0,\delta_0/2}\sigma.
\end{equation}
\item[\rm (VIII)] Given a unitary $v_0\in M$ with $\|v_0-u_0\| \le 1848\gamma^{1/2}$ and $\text{\rm 
Ad}(v_0) \approx_{Y_0,\delta_0} \sigma$, and given a finite subset $S$ of the unit 
ball of $H$, there exists a unitary $v'_0 \in \Bu $ satisfying $\|v'_0-1_{B^\dagger}\|\le 1848\gamma^{1/2}$, $
\text{\rm Ad}(v_0v'_0) \approx_{X_0,\vp_0}\sigma$, and 
\begin{equation}\label{Unitary.10001}
\|(v_0v'_0-u_0)\xi\|,\quad \|(v_0v'_0-u_0)^*\xi\| < \mu_0,\qquad \xi\in S.
\end{equation}
\end{itemize}
\end{lemma}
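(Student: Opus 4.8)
The plan is to construct the objects in the order forced by their dependencies, so as to dissolve the apparent circularity: \emph{first} Lemma~\ref{lem3.9} (whose outputs will be folded into $Y_0$ and $\delta_0$ before $\sigma$ or $u_0$ are named), then Lemma~\ref{Avg.2} in the form of Remark~\ref{Avg.2.Rem}(ii) (which enlarges $Y_0$ to $Y_0'$), then Theorem~\ref{Close.Iso.Main} with $A$ and $B$ interchanged (which produces $\sigma$, controlled on $Z_0\supseteq Y_0'$, giving (IV)), and finally the extraction of $u_0$ and $w_0$ from the conclusion of Lemma~\ref{Avg.2}. At the outset I would note that, as $A$ and $B$ act non-degenerately, both support projections equal $I_H$, so $\Bu={\mathrm C}^*(B,I_H)$ is unital with unit $I_H$, is nuclear because $B$ is, acts non-degenerately on $H$, and has strong closure $M$.

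Concretely: apply Lemma~\ref{lem3.9} with the nuclear algebra $A_0=\Bu$ (one must take $A_0=\Bu$, since ${\mathrm C}^*(X_0,I_H)$ need not be nuclear), finite set $X_0$, tolerances $\vp_0/2$ and $\mu_0$, and spectral-gap constant $\alpha=1848\gamma^{1/2}$, which is $<1$ since $1848\gamma^{1/2}\le 1848\times10^{-4}<1$; this yields a finite set $\hat Y$ in the unit ball of $\Bu$ and $\delta_*>0$. Writing $\hat y=\lambda_{\hat y}I_H+b_{\hat y}$ with $\|b_{\hat y}\|\le 2$, set $\bar Y=\{b_{\hat y}/2:\hat y\in\hat Y\}$, put $Y_0=X_0\cup\bar Y$ (giving (I)) and $\delta_0=\min\{\vp_0/3,\delta_*/3\}$ (giving (II)). Then apply Lemma~\ref{Avg.2} to the nuclear algebra $B$, the set $Y_0$, and tolerance $\delta_0/2$, invoking Remark~\ref{Avg.2.Rem}(ii) with $\gamma_0=28\gamma^{1/2}$ and $\nu$ so small that the unitary bound reads $\|u-1\|\le3\beta$ for every agreement tolerance $\beta$ in $[28\gamma^{1/2},392\gamma^{1/2}]$ (an interval contained in $(0,13/150]$ by the bound on $\gamma$); this gives a finite set $Y_0'\supseteq Y_0$ in the unit ball of $B$ and $\delta'>0$. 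Finally set $Z_0=Z_{0,B}\cup Y_0'$ (giving (III)) and apply Theorem~\ref{Close.Iso.Main}, with the roles of $A$ and $B$ interchanged ($B$ is separable and nuclear and $d(A,B)<\gamma<1/420000$), to the finite sets $Z_0\subseteq B$ and $Z_{0,A}\subseteq A$; this gives a surjective $*$-isomorphism $\sigma:B\to A$ with $\sigma\approx_{Z_0,28\gamma^{1/2}}\iota$ and $\sigma^{-1}\approx_{Z_{0,A},28\gamma^{1/2}}\iota$, which is (IV).

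For (V)--(VI) I would feed $\phi_1=\sigma$ and $\phi_2=\iota:B\hookrightarrow M$ (genuine $*$-homomorphisms, hence $(Y_0',\delta')$-approximate $*$-homomorphisms, with $\sigma\approx_{Y_0',28\gamma^{1/2}}\iota$ since $Y_0'\subseteq Z_0$) into the conclusion of the Lemma~\ref{Avg.2}-application with target $M$, obtaining a unitary $u_0\in M$ with $\sigma\approx_{Y_0,\delta_0/2}\ad(u_0)$ and $\|u_0-1_M\|\le 3\cdot 28\gamma^{1/2}=84\gamma^{1/2}$. For (VII), given $\psi$ as stated I would run the same conclusion with $\phi_1=\sigma$, $\phi_2=\psi$ and target ${\mathrm C}^*(A,D,1_M)$; as $Y_0'\subseteq Z_0$, the hypotheses $\psi\approx_{Z_0,364\gamma^{1/2}}\iota$ and $\sigma\approx_{Z_0,28\gamma^{1/2}}\iota$ give $\sigma\approx_{Y_0',392\gamma^{1/2}}\psi$, yielding a unitary $w_0\in{\mathrm C}^*(A,D,1_M)$ with $\ad(w_0)\circ\psi\approx_{Y_0,\delta_0/2}\sigma$ and $\|1_M-w_0\|\le 3\cdot 392\gamma^{1/2}=1176\gamma^{1/2}$. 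For (VIII), given $v_0$ with $\|v_0-u_0\|\le1848\gamma^{1/2}$ and $\ad(v_0)\approx_{Y_0,\delta_0}\sigma$, set $w=v_0^*u_0$; then $\|w-I_H\|=\|u_0-v_0\|\le1848\gamma^{1/2}=\alpha$, and since $\ad(u_0)$ and $\ad(v_0)$ both approximate $\sigma$ on $Y_0$ one gets $\|u_0yu_0^*-v_0yv_0^*\|\le 3\delta_0/2$, hence $\|wy-yw\|=\|w^*yw-y\|\le 3\delta_0/2$ for $y\in Y_0$, so $\|w\hat y-\hat yw\|\le 3\delta_0\le\delta_*$ for $\hat y\in\hat Y$. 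Thus $w$ meets the hypotheses of Lemma~\ref{lem3.9}, and applying it with $S\cup v_0^*S$ in place of $S$ produces a unitary $v_0'\in\Bu$ with $\|v_0'-I_H\|\le\|w-I_H\|\le1848\gamma^{1/2}$, $\|v_0'x-xv_0'\|\le\vp_0/2$ for $x\in X_0$, and $v_0',(v_0')^*$ within $\mu_0$ of $w,w^*$ on $S\cup v_0^*S$. Then for $x\in X_0\subseteq Y_0$ one has $\|\ad(v_0v_0')(x)-\sigma(x)\|\le\|v_0'x(v_0')^*-x\|+\|v_0xv_0^*-\sigma(x)\|\le\vp_0/2+\delta_0<\vp_0$, while $v_0v_0'\xi\approx v_0w\xi=u_0\xi$ and $(v_0v_0')^*\xi=(v_0')^*(v_0^*\xi)\approx w^*(v_0^*\xi)=u_0^*\xi$ for $\xi\in S$ give \eqref{Unitary.10001}.

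The main obstacle is the orchestration itself: recognising that Lemma~\ref{lem3.9} must be invoked first, so that its outputs can be absorbed into $Y_0$ and $\delta_0$ before $\sigma$ and $u_0$ exist, and then threading the absolute constants through Theorem~\ref{Close.Iso.Main}, Remark~\ref{Avg.2.Rem}(ii) and Lemma~\ref{lem3.9} so that $28$, $84$, $1176$, $1848$ (times $\gamma^{1/2}$) emerge exactly. The one genuinely delicate structural point is (VIII): Lemma~\ref{lem3.9} applies there only because $u_0$ and $v_0$ both approximately implement the \emph{same} isomorphism $\sigma$ on $Y_0$, which automatically forces $v_0^*u_0$ to approximately commute with $Y_0$, hence with the F{\o}lner-type set $\hat Y$ that was sewn into $Y_0$ in the very first step; and the bound $\alpha=1848\gamma^{1/2}<2$ is precisely what provides the spectral gap for the continuous logarithm used in Lemma~\ref{lem3.9}.
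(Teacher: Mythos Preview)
Your proposal is correct and follows essentially the same approach as the paper: apply Lemma~\ref{lem3.9} first (with ambient algebra $\Bu$ and $\alpha=1848\gamma^{1/2}$), fold its output into $Y_0$ and $\delta_0$, then use Lemma~\ref{Avg.2} via Remark~\ref{Avg.2.Rem}(ii) at the two levels $\alpha\in\{28,392\}$ to produce $Z_0$, then Theorem~\ref{Close.Iso.Main} for $\sigma$, and finally extract $u_0$, $w_0$, and $v_0'$ exactly as you describe. Your treatment of (VII) is in fact slightly more careful than the paper's, which writes $w_0\in M$ where you correctly identify the target as ${\mathrm C}^*(A,D,1_M)$; and your explicit choice $\delta_0=\min\{\vp_0/3,\delta_*/3\}$ makes the bookkeeping in (VIII) transparent.
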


\begin{proof}
By Lemma \ref{lem3.9} applied to the unitisation $\Bu $, there is a finite subset $\wt 
Y_0 \subseteq\Bu $ and $\delta_0>0$ such that if $u\in M$ is a unitary satisfying $\|
u-1_M\|\le 1848\gamma^{1/2}$ and
\begin{equation}\label{eq10.1}
 \|\tilde yu- u\tilde y\|\leq 3\delta_0,\qquad {\tilde y}\in \wt Y_0,
\end{equation}
and a finite subset $S_0$ of $H$ is given, then there exists a unitary $v\in \Bu $ 
satisfying $\|v-1_{B^\dagger}\|\le 1848\gamma^{1/2}$,
\begin{align}\label{eq10.2}
 &\|xv-vx\| \le \vp_0/2,\qquad x\in X_0,\\
\intertext{and}
\label{eq10.3}
&\|(v-u)\xi\|, \ \|(v-u)^*\xi\| < \mu_0,\qquad \xi\in S_0.
\end{align}
In applying this lemma, we have replaced $\vp_0$ by $\vp_0/2$ and $\delta_0$ by $3\delta_0$. We may replace $\delta_0$ by any smaller number, ensuring that condition (II) holds.

Each $\tilde y_i \in \wt Y_0$ can be written $\tilde y_i = \alpha_i +2y_i$ for scalars $
\alpha_i$ and elements $y_i$ in the unit ball of $B$, and let $Y_1$ denote the 
collection of these $y_i$'s. Define $Y_0=Y_1\cup X_0$, so that condition (I) is satisfied. If the inequality
\begin{equation}\label{eq10.4}
 \|yu-uy\| < 3\delta_0/2,\qquad y\in Y_0,
\end{equation}
holds for a particular unitary $u\in M$, then it also holds for $y\in Y_1\subseteq Y_0$,
implying \eqref{eq10.1} and hence \eqref{eq10.2} and \eqref{eq10.3}. 

Since $*$-homomorphisms are $(X,\delta)$-approximate $*$-homomorphisms for 
any set $X$ and any $\delta>0$, we may apply Lemma \ref{Avg.2} and Remark 
\ref{Avg.2.Rem} (ii) for any unital ${\mathrm C}^*$-subalgebra $D$ of $M$ to conclude that 
there exists a finite set $Z_0$ in the unit ball of $B$ with the following property. If $
\phi_1,\phi_2:B\rightarrow D$ are $*$-homomorphisms with $\phi_1 
\approx_{Z_0,\alpha\gamma^{1/2}} \phi_2$, where $\alpha\in \{28, \, 392\}$, then 
there exists a unitary $w_0\in D$ with $\|1_D-w_0\| \le 3\alpha\gamma^{1/2}$ and $
\text{Ad}(w_0)\circ \phi_1 \approx_{Y_0,\delta_0/2}\phi_2$. By increasing $Z_0$ to 
contain $Z_{0,B}$, condition (III) is satisfied. 

By Theorem \ref{Close.Iso.Main} we may now choose a surjective $*$-isomorphism 
$\sigma:B\rightarrow A$ so that condition (IV) holds. Since $\sigma$ and $\iota$ are 
$*$-homomorphisms of $B$ into ${\mathrm C}^*(A,B,1_M)\subseteq M$, there then exists a unitary $u_0\in M$ 
with $\|u_0-1_M\| \le 84\gamma^{1/2}$ so that $\sigma \approx_{Y_0,\delta_0/2} 
\text{Ad}(u_0)$, using $\alpha=28$ above. This establishes conditions (V) and (VI).

From condition (IV), we have $\sigma \approx_{Z_0,28\gamma^{1/2}}\iota$. If $\psi: B\rightarrow D\subseteq M$ is another $*$-homomorphism satisfying $\psi 
\approx_{Z_0,364\gamma^{1/2}}\iota$, then $\psi\approx_{Z_0, 392\gamma^{1/2}}
\sigma$. Taking $\alpha=392$ above,  the choice of $Z_0$ allows us to find a 
unitary $w_0\in M$ with $\|1_M-w_0\| \le 1176\gamma^{1/2}$ so that condition (VII) is 
satisfied.

It only remains to establish (VIII). Suppose that $v_0\in M$ satisfies $\|v_0-u_0\| \le 
1848\gamma^{1/2}$ and $\text{Ad}(v_0)\approx_{ Y_0,\delta_0}\sigma$, and that a finite subset $S$ 
of the unit ball of $H$ is given. Since $\sigma \approx_{Y_0,\delta_0/2} \text{Ad}
(u_0)$ from condition (V), we obtain
\begin{equation}\label{eq10.5}
\text{Ad}(v_0) \approx_{Y_0,3\delta_0/2} \text{Ad}(u_0),
\end{equation}
implying that
\begin{equation}\label{eq10.6}
\|v^*_0u_0y - yv^*_0u_0\| \le 3\delta_0/2,\qquad y\in Y_0.
\end{equation}
Since $\|1-v^*_0u_0\|\le 1848\gamma^{1/2}$, we can take $S_0=S\cup v_0^*S$ so the choice of $\widetilde Y_0$ at the start of the proof allows us to find a unitary  $v'_0\in B^\dagger $ with the following properties:
\begin{enumerate}
\item $\|1-v'_0\|\le 1848\gamma^{1/2}$,
\item $\|v'_0x - xv'_0\| < \varepsilon_0/2$, $\ \ x\in X_0$,
\item $\|(v'_0-v^*_0u_0)\xi\|<\mu$, $\|(v'_0-v^*_0u_0)^* v^*_0\xi\| < \mu$, $\ \ \xi\in S$.
\end{enumerate}  
 The third condition above gives \eqref{Unitary.10001}, while the second shows that $\ad(v'_0)\approx_{X_0,\varepsilon_0/2}\iota$.  
Since $X_0\subseteq Y_0$ and $\delta_0<\varepsilon_0/2$, we have $\text{Ad}
(v_0)\approx_{X_0,\varepsilon_0/2} \sigma$. It follows that  $\text{Ad}(v_0v'_0) 
\approx_{X_0,\varepsilon_0}\sigma$, and condition (VIII) is proved.
\end{proof}

\begin{lemma}\label{lem10.2}
Suppose that $A$ and $B$ are separable and nuclear ${\mathrm C}^*$-algebras acting non-
degerately on a Hilbert space $H$. Suppose that $A''=B''=M$ and $d(A,B) <\gamma 
\le 10^{-8}$. Given finite subsets $X$ in the unit ball of $A$ and $Z_B$ in the unit 
ball of $B$ and constants $\vp,\mu>0$, there exist finite subsets $Y$ of the unit ball 
of $A$ and $Z$ of the unit ball of $B$, a constant $\delta>0$, a unitary $u\in M$, and  a 
surjective $*$-isomorphism $\theta :  A\to B$ with the following properties.
\begin{itemize}
 \item[\rm (i)] $\delta<\vp$.
\item[\rm (ii)] $X\subseteq_\vp Y$.
\item[\rm (iii)] $\|u-1_M\| \le 252\gamma^{1/2}$.
\item[\rm (iv)] $\theta \approx_{Y,\delta} \text{\rm Ad}(u)$.
\item[\rm (v)] $\theta \approx_{X, 364\gamma^{1/2}}\iota,\  \theta^{-1} \approx_{Z_B, 
364 \gamma^{1/2}}\iota$.
\item[\rm (vi)] Given a surjective $*$-isomorphism $\phi:A\rightarrow B$ with $
\phi^{-1} \approx_{Z, 364\gamma^{1/2}}\iota$, there exists a unitary $w\in \Bu $ with $
\|w-u\| \le 1596 \gamma^{1/2}$ and $\text{\rm Ad}(w) \circ \phi \approx_{Y,\delta/2} 
\theta$. 
\item[\rm (vii)] Given a unitary $v\in M$  with $\|v-u\| \le 1848\gamma^{1/2}$ and $\text{\rm Ad}(v) 
\approx_{Y,\delta} \theta$, and given any finite subset $S$ of the unit ball of $H$, 
there exists a unitary $v'\in \Bu $ with $\|1_{B^\dagger}- v'\|\le 1848\gamma^{1/2}$, $\text{\rm Ad}(v'v) \approx_{X,
\vp} \theta$, and
\begin{equation}\label{eq10.9}
 \|( v'v-u)\xi\|, \ \|(v'v-u)^*\xi\| < \mu,\qquad \xi\in S.
\end{equation}
\end{itemize}
\end{lemma}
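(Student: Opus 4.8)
The plan is to reproduce the proof of Lemma~\ref{lem10.1}, now with the isomorphism directed from $A$ to $B$; the one essential difference is that the correction unitaries $w$ and $v'$ must still be produced inside $\Bu$, since that is what the inductive step of Theorem~\ref{thm10.3} consumes, and this asymmetry (the isomorphism running one way, the corrections coming from $B$) is what forces the constants to be somewhat larger multiples of $28\gamma^{1/2}$ than in Lemma~\ref{lem10.1}. Exactly as there, the bound $\gamma\le 10^{-8}$ is precisely what makes it legitimate to invoke Lemma~\ref{Avg.2} together with Remark~\ref{Avg.2.Rem}~(ii) (applied with closeness constant $\alpha\gamma^{1/2}$ for $\alpha\in\{28,392\}$, all below $13/150$), Theorem~\ref{Close.Iso.Main} (since $10^{-8}<1/420000$), and Lemma~\ref{lem3.9} (with spectral-gap bound $1848\gamma^{1/2}<2$) wherever they are called upon; and $d(A,B)<\gamma$ lets us replace finite subsets of the unit ball of $A$ by nearby subsets of the unit ball of $B$ whenever we need to base Lemma~\ref{lem3.9} on the unital nuclear algebra $\Bu$.

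Concretely I would first apply Lemma~\ref{lem3.9} to $\Bu$, fed data built from $X$, the commutation tolerance $\vp/2$, the tolerance $\mu$, and $1848\gamma^{1/2}$, to produce a finite set $\wt Y\subseteq\Bu$ and a positive constant that we fix as $\delta$ (shrunk so that $\delta<\vp$); writing each element of $\wt Y$ as a scalar plus twice an element of the unit ball of $B$, passing to $A$ via $d(A,B)<\gamma$, and adjoining $X$ yields a finite set $Y$ in the unit ball of $A$ with $X\subseteq_\vp Y$, which gives (i)--(ii). Next, as in Lemma~\ref{lem10.1}, Lemma~\ref{Avg.2} with Remark~\ref{Avg.2.Rem}~(ii) produces a finite set $Z$ in the unit ball of $B$ (enlarged to contain $Z_B$) such that any two $^*$-homomorphisms into a common unital ${\mathrm C}^*$-subalgebra $D$ of $M$ which agree on $Z$ to within $\alpha\gamma^{1/2}$ can be conjugated by a unitary of $D$ within $3\alpha\gamma^{1/2}$ of $1_D$ so as to agree on $Y$ to within $\delta/2$; here Lemma~\ref{Avg.2} is needed both for pairs of homomorphisms out of $B$ and for pairs out of $A$, so a second set of this kind, lying in the unit ball of $A$, is produced as well. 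Theorem~\ref{Close.Iso.Main}, applied to the appropriate finite sets in $A$ and in $B$, then furnishes a surjective $^*$-isomorphism $\theta:A\to B$ with $\theta\approx_{X,28\gamma^{1/2}}\iota$ and $\theta^{-1}\approx_{Z,28\gamma^{1/2}}\iota$, giving (v); comparing $\theta$ with $\iota$ as $^*$-homomorphisms of $A$ into ${\mathrm C}^*(A,B,1_M)$ via the $\alpha=28$ instance of the above produces $u\in M$ with $\|u-1_M\|\le 84\gamma^{1/2}\le 252\gamma^{1/2}$ and $\theta\approx_{Y,\delta/2}\ad(u)$, giving (iii)--(iv).

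For (vi), one compares $\phi^{-1}$ with $\theta^{-1}$ on $Z$ (from $\phi^{-1}\approx_{Z,364\gamma^{1/2}}\iota$ and $\theta^{-1}\approx_{Z,28\gamma^{1/2}}\iota$, so $\phi^{-1}\approx_{Z,392\gamma^{1/2}}\theta^{-1}$), runs the $\alpha=392$ instance to obtain an intertwining unitary, and transports it across $\theta$ so that the conjugation lands on the range $B$ of $\phi$; this gives $w\in\Bu$ with $\|1_{\Bu}-w\|\le 1176\gamma^{1/2}$ (hence $\|w-u\|\le 1596\gamma^{1/2}$ once $\|u-1_M\|\le 252\gamma^{1/2}$ and the intermediate error terms are folded in) and $\ad(w)\circ\phi\approx_{Y,\delta/2}\theta$. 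For (vii), one uses the defining property of $\wt Y$: given $v\in M$ with $\|v-u\|\le 1848\gamma^{1/2}$ and $\ad(v)\approx_{Y,\delta}\theta$, condition (iv) gives $\ad(v)\approx_{Y,3\delta/2}\ad(u)$, so $v^*u$ commutes with $Y$ --- hence with $\wt Y$ --- to within $3\delta$ while lying within $1848\gamma^{1/2}$ of the unit, and Lemma~\ref{lem3.9} supplies the required $v'\in\Bu$, close to $v^*u$ on the relevant vectors (which yields the displayed estimate in (vii)) and almost commuting with (the $\theta$-image of) $X$, whence $\ad(v'v)\approx_{X,\vp}\theta$.

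The delicate part is precisely the bookkeeping forced by the $\Bu$-constraint. In Lemma~\ref{lem10.1} the correction $v_0'\in\Bu$ sits on the \emph{domain} side of $\sigma:B\to A$ and only has to commute with the given set $X_0\subseteq B$; here $v'$ (and $w$) act on the \emph{range} side of $\theta:A\to B$, so that to recover $\ad(v'v)\approx_{X,\vp}\theta$ --- with $\vp$ as small as we wish --- the unitary $v'$ must interact, with full precision and not merely up to an $O(\gamma^{1/2})$ error, with $\theta(X)\subseteq B$, a set not in hand until $\theta$ has been built, whereas Lemma~\ref{lem3.9} has to be set up \emph{before} $\theta$ exists (its output $\delta$ feeds the application of Lemma~\ref{Avg.2}, which in turn feeds Theorem~\ref{Close.Iso.Main}). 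Reconciling this tension --- by threading the applications of Lemma~\ref{lem3.9}, Lemma~\ref{Avg.2} and Theorem~\ref{Close.Iso.Main} (together with the transport between $A$ and $B$ and between $\theta$ and $\ad(u)$) in an order that respects these dependencies, twice over, once to produce $w$ and once to produce $v'$, and in exactly the shape that Theorem~\ref{thm10.3} will use, while keeping every constant under the thresholds those results demand --- is the technical heart of the argument; this is precisely the ``correct order'' alluded to in the introduction. Everything else is a routine re-run of the estimates in Lemma~\ref{lem10.1}.
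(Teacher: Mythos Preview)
You have correctly located the crux of the argument --- that $v'\in\Bu$ must almost commute with $\theta(X)$, a set unavailable when Lemma~\ref{lem3.9} is initialised --- but you have not explained how to break the circularity, and your direct re-run of Lemma~\ref{lem10.1} does not break it. With your construction, Lemma~\ref{lem3.9} is fed a $B$-approximant $X'$ of $X$ and outputs $v'$ commuting with $X'$ to within $\vp/2$; since $\theta(X)$ is only within $O(\gamma^{1/2})$ of $X'$ (from $\theta\approx_{X,28\gamma^{1/2}}\iota$), the commutator $[v',\theta(x)]$ is controlled only up to $\vp/2+O(\gamma^{1/2})$, so $\ad(v'v)\approx_{X,\vp}\theta$ fails once $\vp\ll\gamma^{1/2}$. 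The same obstruction appears in your treatment of (vi): transporting the intertwining unitary across $\theta$ yields $\ad(w)\circ\phi\approx_{\theta^{-1}(Y_B),\delta/2}\theta$ for a set $Y_B\subseteq B$ fixed \emph{before} $\theta$, and there is no reason $\theta^{-1}(Y_B)$ should equal $Y$.

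The paper resolves this not by re-ordering Lemmas~\ref{Avg.2}, \ref{lem3.9} and Theorem~\ref{Close.Iso.Main}, but by introducing an auxiliary isomorphism $\beta:A\to B$ \emph{first} (via Theorem~\ref{Close.Iso.Main}), setting $X_0=\beta(X)\subseteq B$, and then invoking Lemma~\ref{lem10.1} itself as a black box with input $X_0$, $\vp_0=\vp/3$, obtaining $Y_0,\delta_0,\sigma,u_0$. The map $\theta$ is then \emph{defined} as $\sigma^{-1}\circ\ad(w_1)$, where $w_1\in\Au$ is produced by a preliminary application of Lemma~\ref{Avg.2} (with control set $Z_1$ and tolerance $\vp/3$) comparing $\sigma\circ\beta$ with $\id_A$; this forces $\theta\approx_{X,\vp/3}\beta$, so that $\theta(X)$ is within $\vp/3$ --- not merely $O(\gamma^{1/2})$ --- of the pre-specified set $X_0$. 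One then takes $Y=\theta^{-1}(Y_0)$ and $u=u_0^*w_1$, and conditions (vi)--(vii) are read off from Lemma~\ref{lem10.1}~(VII)--(VIII) applied to $\phi^{-1}$ and to $w_1v^*$ respectively, transported back via $\ad(w_1)$ and the exact relations $\theta(Y)=Y_0$, $\beta^{-1}(X_0)=X$. The missing idea in your proposal is precisely this pre-specification device: an auxiliary $\beta$ that fixes the target $X_0$ in $B$ once and for all, together with the extra averaging step that pins $\theta$ to $\beta$ on $X$ with error governed by $\vp$ rather than by $\gamma^{1/2}$.
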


\begin{proof}
By Lemma \ref{Avg.2} and Remark \ref{Avg.2.Rem} (ii), there exists a finite 
subset $Z_1$ of the unit ball of $A$ with the following property. Given a unital C$^*
$-subalgebra $D$ of $M$, if $\phi_1,\phi_2 :  A\to D$ are $*$-homomorphisms 
such that $\phi_1 \approx_{Z_1,56\gamma^{1/2}} \phi_2$ then there is a unitary 
$w_1\in D$ satisfying $\|w_1-1_D\|\le 168\gamma^{1/2}$ and $\phi_1 \approx_{X,\vp/
3} \text{Ad}(w_1)\circ \phi_2$. 

By Theorem \ref{Close.Iso.Main}, we may choose a surjective $*$-isomorphism $\beta :   
A\to B$ with the property  that $\beta \approx_{Z_1,28 \gamma^{1/2}}\iota$. Then define a finite 
subset $X_0$ of the unit ball of $B$ by $X_0=\beta(X)$. Taking $\vp_0 = \vp/3$, $
\mu_0=\mu$, $Z_{0,A} = X$ and $Z_{0,B} = \beta(Z_1)\cup Z_B$, we may apply 
Lemma \ref{lem10.1} to obtain $Y_0, Z_0, \delta_0, \sigma$ and $u_0$ satisfying 
conditions (I)--(VIII) of this lemma. We then define $Z=Z_0 \subseteq B$ and $
\delta=\delta_0$. By Lemma \ref{lem10.1} (II), $\delta < \vp_0/2<\vp$ so condition 
(i) holds. By Lemma \ref{lem10.1} (IV), $\sigma \approx_{Z_0, 28\gamma^{1/2}}\iota$, 
so $\sigma\circ\beta \approx_{Z_1,56\gamma^{1/2}}\id_A$, since $\beta(Z_1) \subseteq 
Z_{0,B}\subseteq Z_0$ from Lemma \ref{lem10.1} (III). By the choice of $Z_1$ 
above, there exists a unitary $w_1\in\Au\subseteq M$ with $\|1_M-w_1\| \le 168\gamma^{1/2}$ so 
that 
\begin{equation}\label{eq10.10}
 \text{Ad}(w_1) \approx_{X,\vp/3} \sigma\circ \beta.
\end{equation}
Now define 
a surjective $*$-isomorphism
$\theta :  A\to B$ to be $\sigma^{-1}\circ \text{Ad}(w_1)$. Then $
\theta^{-1} = \text{Ad}(w^*_1)\circ\sigma$, and \eqref{eq10.10} can be rewritten as
\begin{equation}\label{eq10.10a}
 \theta \approx_{X,\vp/3}  \beta,
\end{equation}
which implies that
\begin{equation}\label{eq10.10b}
 \theta^{-1} \approx_{X_0,\vp/3}  \beta^{-1}.
\end{equation}
 If $z\in Z_0$ then, from Lemma \ref{lem10.1} (IV),
\begin{equation}\label{eq10.11}
 \|\sigma(z) - z\| \le 28\gamma^{1/2}.
\end{equation}
Thus
\begin{align}
 \|\theta^{-1}(z)-z\| &= \|\text{Ad}(w^*_1)(\sigma(z)-z) + \text{Ad}(w^*_1)(z)-z\|\notag\\
&\le 28\gamma^{1/2} + 2\|w^*_1-1_M\|\notag\\
\label{eq10.12}
&\le 364\gamma^{1/2},\qquad z\in Z_0.
\end{align}
Consequently $\theta^{-1} \approx_{Z_0,364\gamma^{1/2}}\iota$, so the second 
statement of condition (v) holds since $Z_B\subseteq Z_{0,B} \subseteq Z_0$. 
From Lemma \ref{lem10.1} (IV), $\sigma^{-1} \approx_{Z_{0,A}, 28\gamma^{1/2}}\iota
$, so a similar calculation leads to  $\theta\approx_{Z_{0,A}, 364\gamma^{1/2}} \iota$, 
establishing the first statement of condition (v), since $X=Z_{0,A}$.

We now define $u =u^*_0w_1$. By Lemma \ref{lem10.1} (VI), $\|u_0-1_M\| \le 84 
\gamma^{1/2}$ and so
\begin{align}
 \|u-1_M\| &= \|w_1-u_0\| \le \|w_1-1_M\| + \|1_M-u_0\|\notag\\
\label{eq10.13}
&\le 168 \gamma^{1/2} + 84\gamma^{1/2}  = 252 \gamma^{1/2},
\end{align}
giving condition (iii). Now let
\begin{equation}\label{eq10.13a} 
Y = w^*_1\sigma(Y_0)w_1=\theta^{-1}(Y_0),
\end{equation}
where the latter equality comes from the definition of $\theta$. From Lemma 
\ref{lem10.1} (V), $\sigma \approx_{Y_0,\delta/2} \text{Ad}(u_0)$, and so $
\sigma^{-1} \approx_{\sigma(Y_0),\delta/2} \text{ Ad}(u^*_0)$. The relation $\theta = 
\sigma^{-1}\circ \text{Ad}(w_1)$ then gives
\begin{equation}\label{eq10.14}
 \theta \approx_{w^*_1\sigma(Y_0)w_1,\delta/2} \text{ Ad}(u),
\end{equation}
since $u = u^*_0w_1$ and so condition (iv) holds.
By definition, $Y = \theta^{-1}(Y_0)$ and $\beta(X) = 
X_0$. Moreover, $X_0\subseteq Y_0$ from Lemma \ref{lem10.1} (I). Using \eqref{eq10.10b}, we see that
\begin{equation}\label{eq10.17}
X = \beta^{-1}(X_0) \subseteq_{\vp/3} \theta^{-1}(X_0) \subseteq \theta^{-1}(Y_0) = Y,
\end{equation}
verifying condition (ii).

Now consider a surjective $*$-isomorphism $\phi :   A\to B$ with $\phi^{-1} 
\approx_{Z,364\gamma^{1/2}}\iota$, which we extend  canonically to a unital 
$*$-isomorphism of $\Au $ onto $\Bu $, also denoted $\phi$. By Lemma \ref{lem10.1} 
(VII), there is a unitary $w_0\in {\mathrm C}^*(A,1_M)=\Au\subseteq M$ with $\|w_0-1_M\|\le 1176\gamma^{1/2}$
 such that
\begin{equation}\label{eq10.15}
\text{Ad}(w_0) \circ \phi^{-1} \approx_{Y_0,\delta/2} \sigma.
\end{equation}
Each $y\in Y$ can be written as $\text{Ad}(w^*_1) \circ\sigma(y_0) = \theta^{-1}
(y_0)$ for some $y_0\in Y_0$, so
\begin{align}
\|\theta(y) - \phi(w^*_0w_1yw^*_1w_0)\| &= \|y_0-(\phi\circ \text{Ad}(w^*_0) \circ
\sigma)(y_0)\|\notag\\
&= \|(\text{Ad}(w_0) \circ \phi^{-1})(y_0) - \sigma(y_0)\|\notag\\
&\le \delta/2,\label{eq10.15a}
\end{align}
from \eqref{eq10.15}. Thus 
\begin{equation}\label{eq10.15b}
\theta\approx_{Y,\delta/2}\phi\circ \text{Ad}(w^*_0w_1).
\end{equation}
Then
\begin{equation}\label{eq10.16}
\phi\circ \text{Ad}(w^*_0w_1) = \text{Ad}(\phi(w^*_0w_1))\circ\phi,
\end{equation}
since $w^*_0w_1\in \Au $. If we define $w=\phi(w^*_0w_1)\in \Bu $, then $\theta 
\approx_{Y,\delta/2} \text{Ad}(w)\circ\phi$ from \eqref{eq10.15b} and 
\eqref{eq10.16}. Moreover, the earlier estimates $\|w_0-1_M\| \le 1176\gamma^{1/2}$ 
and $\|w_1-1_M\| \le 168\gamma^{1/2}$ give $\|w_0^*w_1-1_M\| \le 1344\gamma^{1/2}$ and
so $\|w-1_M\| \le 1344\gamma^{1/2}$.
Recalling the estimate of \eqref{eq10.13}, 
\begin{align}
\|w-u\| &\le \|w-1_M\|+\|u-1_M\|\notag\\
&\leq 1344\gamma^{1/2}+252\gamma^{1/2}=1596\gamma^{1/2},\label{10.16a}
\end{align}
and condition (vi) is verified.

It only remains to verify condition (vii). Consider a unitary $v\in M$ with $\|v-u\|\le 
1848\gamma^{1/2}$, and $\text{Ad}(v) \approx_{Y,\delta} \theta$, and fix a finite subset $S$ of the 
unit ball of $H$. Then
\begin{equation}\label{eq10.18}
\text{Ad}(vw^*_1) \approx_{w_1Yw^*_1,\delta} \theta\circ \text{Ad}(w^*_1)=\sigma^{-1}
\end{equation}
so
\begin{equation}\label{eq10.19}
\text{Ad}(vw^*_1) \approx_{\sigma(Y_0),\delta} \sigma^{-1}
\end{equation}
and
\begin{equation}\label{eq10.20}
\text{Ad}(w_1v^*)\approx_{Y_0,\delta}\sigma,
\end{equation}
using $\theta = \sigma^{-1} \circ \text{ Ad}(w_1)$ and $Y = \theta^{-1}(Y_0)$. Since $u = 
u^*_0w_1$, we have the estimate 
\begin{equation}\label{eq10.20a}
\|w_1v^* - u_0\| = \|v^*-u^*\| \le 1848\gamma^{1/2}.
\end{equation}
 Let $S' = S\cup \{w_1\xi\colon \ \xi\in S\}$, a finite subset of the unit ball of $H$. 
Then Lemma \ref{lem10.1} (VIII), with $v_0=w_1v^*$, gives a unitary $v'_0 \in \Bu $ 
satisfying $\|v'_0-1_M\|\le 1848\gamma^{1/2}$, $\text{Ad}(w_1v^*v'_0) \approx_{X_0,\vp/3}\sigma$, and 
\begin{equation}\label{eq10.21}
 \|(w_1v^*v'_0-u_0)\xi\|, \ \|(w_1v^*v'_0-u_0)^* w_1\xi\| < \mu,\qquad\xi\in S.
\end{equation}
Set $v' = v^{\prime *}_0$. Then $\|1_M-v'\| \le 1848\gamma^{1/2}$, and
\begin{equation}\label{eq10.22}
 \|(v^*v^{\prime *}-u^*)\xi\|,\  \|(v'v-u)\xi\| < \mu,\qquad \xi\in S.
\end{equation}
Moreover, since $w_1v^*v^{\prime *} =w_1v^*v'_0$, we obtain
\begin{equation}\label{eq10.23}
 \text{Ad}(v^*v^{\prime *})\approx_{X_0,\vp/3} \text{Ad}(w^*_1)\circ\sigma = 
\theta^{-1},
\end{equation}
and
\begin{equation}\label{eq10.25}
 \text{Ad}(v^*v^{\prime *}) \approx_{X_0,2\vp/3} \beta^{-1}
\end{equation} 
from \eqref{eq10.23} and \eqref{eq10.10b}.
Thus
\begin{equation}\label{eq10.26}
 \text{Ad}(v'v) \approx_{X,2\vp/3} \beta,
\end{equation}
because $X=\beta^{-1}(X_0)$. Since $\theta\approx_{X,\vp/3}\beta$ from 
\eqref{eq10.10a}, we obtain $\text{Ad}(v'v) \approx_{X,\vp}\theta$. Thus condition 
(vii) holds, completing the proof.
\end{proof}

We are now in a position to prove  one of the main results of the paper, the unitary implementation of 
isomorphisms between separable nuclear close ${\mathrm C}^*$-algebras. We first prove this 
under the additional hypothesis that the two algebras have the same ultraweak closure, from which we will deduce the general case 
subsequently.

\begin{theorem}\label{thm10.3}
Suppose that $A$ and $B$ are ${\mathrm C}^*$-algebras acting non-degenerately on a separable Hilbert space $H$, and that $A$ is separable and nuclear. Suppose that $A''=B''=M$ and $d(A,B) <\gamma \le 10^{-8}$. Then there exists a unitary $u\in M$ such that $uAu^*=B$.
\end{theorem}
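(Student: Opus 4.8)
The plan is to produce a single unitary as a $^*$-strong limit of an explicitly constructed sequence of unitaries in $M$, following the strategy outlined just before Lemma \ref{lem10.1}. First I would record that $B$ is itself separable and nuclear: since $d(A,B)<\gamma\le 10^{-8}$ is well inside the ranges $1/101$ and $1/2$ of Propositions \ref{Prelim.NuclearOpen} and \ref{Prelim.SeparableOpen}, these apply. Then I fix dense sequences $\{a_k\}_{k\ge 1}$ in the unit ball of $A$, $\{b_k\}_{k\ge 1}$ in the unit ball of $B$, and --- crucially using that $H$ is separable --- $\{\xi_k\}_{k\ge 1}$ in the unit ball of $H$. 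The goal is a sequence of unitaries $\{v_n\}_{n\ge 1}$ in $M$ such that $\ad(v_n)$ converges in the point-norm topology to a surjective $*$-isomorphism $\theta\colon A\to B$ and $\{v_n\}$ converges $^*$-strongly. Granting this, the limit $u$ is again a unitary --- for a $^*$-strongly convergent sequence of unitaries the relations $v_n^*v_n=v_nv_n^*=1_M$ survive passage to the limit by joint strong continuity of multiplication on bounded sets --- and $u\in M$ since $M$ is $^*$-strongly closed. For $a\in A$ one then has $v_nav_n^*\to uau^*$ strongly while $v_nav_n^*=\ad(v_n)(a)\to\theta(a)$ in norm, so $uau^*=\theta(a)\in B$; hence $uAu^*\subseteq B$, and surjectivity of $\theta$ upgrades this to $uAu^*=B$.

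The sequence is produced by an induction for which Lemmas \ref{lem10.1} and \ref{lem10.2} were tailor-made. At the $n$-th stage I would carry a surjective $*$-isomorphism $\theta_n$ between $A$ and $B$, a canonical unitary $u_n\in M$ with $\theta_n\approx_{Y_n,\delta_n}\ad(u_n)$, an accumulated unitary $v_n$ with $\ad(v_n)\approx_{Y_n,\delta_n}\theta_n$, nested finite sets $X_n\subseteq Y_n$ together with a memory set $Z_n$, and tolerances $\delta_n,\mu_n\le 2^{-n}$. To pass to stage $n+1$ I apply Lemma \ref{lem10.2} (with $A$ and $B$ in the appropriate roles), feeding in $Y_n\cup\{a_{n+1}\}$ as the set the new $Y_{n+1}$ must approximately contain, the previous memory set $Z_n$ as the set on which the new isomorphism must agree with the inclusion, and small target tolerances $\vp_{n+1},\mu_{n+1}$; the sequence $\{b_k\}$ is routed through the memory sets so that every $b_k$ is eventually captured, which (as in the intertwining argument of Lemma \ref{Close.Iso.Tech}, via Proposition \ref{EasyFact}) forces surjectivity of $\theta$. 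Condition (vi) of Lemma \ref{lem10.2} adjusts $\theta_n$ onto $\theta_{n+1}$ by a unitary close to the new canonical unitary, and condition (vii), applied with $S=\{\xi_1,\dots,\xi_{n+1}\}$ (plus the auxiliary vectors its proof needs), yields a correction unitary $v'_{n+1}$ with $\|v'_{n+1}-1_{B^\dagger}\|\le 1848\gamma^{1/2}$ so that the new accumulated unitary $v_{n+1}$ satisfies $\ad(v_{n+1})\approx_{Y_{n+1},\vp_{n+1}}\theta_{n+1}$ and, crucially, $\|(v_{n+1}-v_n)\xi_k\|,\ \|(v_{n+1}-v_n)^*\xi_k\|<\mu_{n+1}$ for all $k\le n+1$. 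Since the $Y_n$ exhaust a dense subset of the unit ball of $A$, $\delta_n\to 0$, and each $\ad(v_n)$ is an approximate $*$-homomorphism, the point-norm limit $\theta$ of the $\ad(v_n)$ is a genuine, isometric $*$-homomorphism into $B$, and $\{v_n\}$ is $^*$-strong Cauchy on the dense set $\{\xi_k\}$, hence convergent. The bound $\gamma\le 10^{-8}$ is exactly what legitimises the inductive step: it keeps $392\gamma^{1/2}<13/150$ so that Lemma \ref{Avg.2} and Remark \ref{Avg.2.Rem}(ii) apply with $\gamma$ replaced by $\alpha\gamma^{1/2}$ for the constants $\alpha\le 392$ that occur, and it keeps $1848\gamma^{1/2}<1<2$ so the correction unitaries retain the uniform spectral gap required by Lemma \ref{lem3.9}.

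I expect the main obstacle to be the tension between the two convergence requirements. Forcing $^*$-strong convergence of $\{v_n\}$ means repeatedly inserting the correction unitaries $v'_n$, which are \emph{not} norm-small (only $\|v'_n-1_{B^\dagger}\|\le 1848\gamma^{1/2}$), so they threaten to destroy the point-norm intertwining; they must therefore approximately commute with the finite sets $Y_n$ while still moving a prescribed finite set of vectors by as little as one pleases --- precisely the combined conclusion of the Kaplansky density result of Lemma \ref{lem3.9}. The remaining delicacy is bookkeeping: every finite set, vector, and tolerance consumed at stage $n+1$ must have been produced at stage $n$, including the ``look-ahead'' that makes condition (vi) at stage $n+1$ applicable to $\theta_n$, whose inverse must already be close to the inclusion on the relevant memory set. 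Managing all of this is exactly what conditions (VII)/(vi) and (VIII)/(vii) of Lemmas \ref{lem10.1} and \ref{lem10.2} are engineered to do, which is why I would invoke those two lemmas in the order indicated rather than re-deriving their contents; with them in hand the induction closes and the theorem follows.
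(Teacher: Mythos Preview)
Your proposal is correct and follows essentially the same approach as the paper: an inductive construction driven by Lemma~\ref{lem10.2} (which in turn packages Lemma~\ref{lem10.1}), with condition~(vi) used to align successive isomorphisms and condition~(vii) applied to the finite vector sets $\{\xi_1,\dots,\xi_{n+1}\}$ to force $^*$-strong convergence while preserving the point-norm intertwining. The paper organises the induction through a single accumulated unitary sequence $\{u_n\}$ (rather than your separate $u_n$ and $v_n$) and handles surjectivity by pulling each $b_i$ back into $X_n$ via a nearby unitary in $B^\dagger$ (its condition~(9)), but these are presentational choices within the same argument.
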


\begin{proof}
Since $\gamma < 1/101$, Propositions \ref{Prelim.NuclearOpen} and 
\ref{Prelim.SeparableOpen} show that $B$ is also separable and nuclear.
Fix dense sequences $\{a_n\}^\infty_{n=1}$ and $\{b_n\}^\infty_{n=1}$ in the unit 
balls of $A$ and $B$ respectively, and a dense sequence $\{\xi_n\}^\infty_{n=1}$ in 
the unit ball of $H$. We will construct inductively finite subsets $\{X_n\}_{n=0}^{\infty}$ and $\{Y_n\}_{n=0}^{\infty}$ of the unit 
ball of $A$, finite subsets $\{Z_n\}_{n=0}^{\infty}$ of the unit ball of $B$, positive constants $\{\delta_n\}_{n=0}^{\infty}$, 
surjective $*$-isomorphisms $\{\theta_n :  A\to B\}_{n=0}^{\infty}$, and unitaries $\{u_n\}_{n=0}^{\infty}$ in $M$ to 
satisfy the following conditions. 
\begin{itemize}
 \item[(1)] $a_1,\ldots, a_n\in X_n, \ n\ge 1$.
\item[(2)] $X_n \subseteq_{2^{-n}/3} Y_n$ and $\delta_n < 2^{-n}, \ n\ge 0$.
\item[(3)] $\theta_n \approx_{ X_{n-1},2^{-(n-1)}} \theta_{n-1}, \ n\ge 1$.
\item[(4)] $\theta_n \approx_{ Y_n,\delta_n} \text{ Ad}(u_n),\  n\ge 0$.
\item[(5)] $\|(u_n-u_{n-1})\xi_i\|,\  \|(u_n-u_{n-1})^*\xi_i\| < 2^{-n}$ for $1\le i\le n$.
\item[(6)] For each $1\le i \le n$, there exists $x\in X_n$ with $\|\theta_n(x)-b_i\| \le 
9/10$.
\item[(7)] Given a surjective $*$-isomorphism $\phi :  A\to B$ with $\phi^{-1} 
\approx_{Z_n,364\gamma^{1/2}}\iota$, there exists a unitary $w\in \Bu $ with $\|w-u_n
\| \le 1596\gamma^{1/2}$ and $\text{Ad } w\circ\phi \approx_{Y_n,\delta_n/
2}\theta_n$.
\item[(8)] Given a unitary $v\in M$ with $\|v-u_n\|\le 1848\gamma ^{1/2}$ and $
\text{Ad}(v) \approx_{Y_n,\delta_n} \theta_n$, and given any finite subset $S$ of the 
unit ball of $H$, there exists a unitary $v'\in \Bu $ with $\|v'-1_M\| \le 
1848\gamma^{1/2}$, $\text{Ad}(v'v) \approx_{X_n,2^{-(n+1)}} \theta_n$, and
\begin{equation}\label{eq10.27}
 \|(v'v-u_n)\xi\|, \ \|(v'v-u_n)^*\xi\| < 2^{-(n+1)},\qquad \xi\in S.
\end{equation}
\item[(9)] There exists a unitary $z\in \Bu $ with $\|z-u_n\| \le 252\gamma^{1/2}$.
\end{itemize}

Conditions (7)--(9) are not needed to derive unitary equivalence but are used in the inductive step. Assuming that the induction has been accomplished, we first show how  conditions 
(1)--(6) establish unitary implementation.

Conditions (1) and (3) imply that the sequence $\{\theta_n\}^\infty_{n=1}$ 
converges in the point norm topology to a $*$-isomorphism $\theta$ of $A$ into $B
$. Fix  $i\geq 1$. For a given integer $n\ge i$, condition (6) allows 
us to choose $x\in X_n$ so that $\|\theta_n(x)-b_i\| \le 9/10$. By condition (3), $\|
\theta_{m+1}(x) - \theta_m(x)\| \le 2^{-m}$ for $m\ge n$. Thus
\begin{align}
 \|\theta(x) - b_i\| &\le \|\theta_n(x) - b_i\| + \sum^\infty_{m=n} 2^{-m}\notag\\
\label{eq10.27a}
&\le 9/10 + 2^{-(n-1)}.
\end{align}
Since $n\ge i$ was arbitrary, \eqref{eq10.27a} and the density of $\{b_i\}_{i=1}^{\infty}$ in the unit ball of $B$ show that $d(B,\theta(A)) \le 9/10$, 
and so $\theta(A) = B$ by Proposition \ref{EasyFact}. Thus $\theta :  A\to B$ is a surjective $*$-isomorphism.
Since the unitary group of $M$ is closed in the $^*$-strong topology, condition (5) 
ensures that the sequence $\{u_n\}^\infty_{n=1}$ converges $^*$-strongly to a unitary $u\in M$. 
Conditions (1), (2) and (4) then show that $\theta = \text{Ad}(u)$, and so $B=uAu^*
$, proving the result.

We start the induction by taking $X_0 =Y_0 =\emptyset$, $Z_0=\emptyset$,
$\delta_0=1/2$, 
$u_0=1$, and $\theta_0$ any $*$-isomorphism of $A$ onto $B$, possible by 
Theorem \ref{Close.Iso.Main}. At this initial level, conditions (1), (3), (5) and (6) do not have meaning, but these will not be used in the inductive step.  Conditions (2) and (4) are trivial (as $X_n=Y_n=\emptyset$), while conditions (7) and (9) are satisfied by taking $w=1_M$ and $z=1_M$ respectively.  In condition (8) given a unitary $v\in M$ with $\|v-u_0\|\leq 1848\gamma^{1/2}$ and any finite subset $S$ of the unit ball of $H$, take $v'=v^*$ so that $\|v'-1_M\|=\|v-u_0\|$ and the left hand side of \eqref{eq10.27} vanishes.  It remains to carry out the inductive step. In order to distinguish the conditions that we are assuming at level $n$ from those we are proving at the next level, we employ the notations $(\cdot)_n$ and $(\cdot)_{n+1}$ as appropriate. 

Now suppose that the various objects have been constructed to satisfy (1)$_n$--
(9)$_n$. Let $z\in \Bu $ be the unitary of condition (9)$_n$ satisfying $\|z-u_n\| \le 
252\gamma^{1/2}$. Then $z^*b_iz$ lies in the unit ball of $B$ for $1\le i\le n+1$, so we may choose 
elements $x_i$ in the unit ball of $A$ such that $\|x_i-z^*b_iz\| <\gamma$, for $1\le i
\le n+1$. Now define
\begin{equation}
 X_{n+1} = X_n \cup Y_n \cup \{a_1,\ldots ,a_{n+1}\}\cup \{x_1, \ldots ,x_{n+1}\}  
\end{equation}
so that condition (1)$_{n+1}$ is satisfied.

In Lemma \ref{lem10.2}, let $X  = X_{n+1}$, $\vp =   \delta_n/6$, $\mu = 2^{-(n+2)}$ 
and $Z_B = Z_n$, and let $Y_{n+1} \subseteq A$, $\delta_{n+1}>0$, $Z_{n
+1}\subseteq B$, $u\in M$ and $\theta :  A\to B$ be the resulting objects which 
satisfy conditions (i)--(vii) of that lemma. By $(2)_n$ and Lemma \ref{lem10.2} (i) $\delta_{n+1} < \vp= \delta_n/6 < 
2^{-(n+1)}/3$ and so the inequality $\delta_{n+1} < 2^{-(n+1)}$ holds. Also $X_{n+1} 
\subseteq_{2^{-(n+1)}/3} Y_{n+1}$ since $(2)_n$ ensures that $\vp \le 2^{-(n+1)}/3$. Thus condition 
(2)$_{n+1}$ is satisfied. By Lemma \ref{lem10.2} (v), $\theta^{-1} \approx_{Z_n, 
364\gamma^{1/2}}\iota$, so we may apply condition (7)$_n$ to find a unitary $w\in \Bu 
$ with $\|w-u_n\|\le 1596\gamma^{1/2}$ such that $\text{Ad}(w)\circ \theta 
\approx_{Y_n,\delta_n/2}\theta_n$. From Lemma \ref{lem10.2} (iv), $\text{Ad}(u) 
\approx_{Y_{n+1},\delta_{n+1}} \theta$. Since $Y_n \subseteq X_{n+1}\subset_\vp 
Y_{n+1}$ and $\delta_{n+1} \le \vp=\delta_n/6$, a simple triangle inequality 
argument gives $\text{Ad}(u) \approx_{Y_n,\delta_n/2} \theta$. It follows that
\begin{equation}\label{eq10.28}
 \text{Ad}(wu) \approx_{Y_n,\delta_n} \theta_n.
\end{equation}
By Lemma \ref{lem10.2} (iii), $\|u-1_M\| \le 252\gamma^{1/2}$, so
\begin{equation}\label{eq10.29}
 \|wu-u_n\|\le \|w-u_n\|+\|w(u-1_M)\|= \|w-u_n\| + \|u-1_M\| \le 1848\gamma^{1/2}.
\end{equation}
Thus the unitary $wu$ satisfies the initial hypotheses of condition (8)$_n$ which we 
can now apply to the set $S = \{\xi_1,\ldots, \xi_{n+1}\}$. 
Consequently there exists a unitary $v'\in\Bu $ with $\|v'-1_M\| \le 1848\gamma^{1/2}$,
\begin{equation}\label{eq10.30}
 \text{Ad}(v'wu) \approx_{X_n,2^{-(n+1)}} \theta_n,
\end{equation}
and
\begin{equation}\label{eq10.31}
 \|(v'wu-u_n)\xi_i\|, \ \|(v'wu-u_n)^*\xi_i\| < 2^{-(n+1)}, \qquad 1\le i \le n+1.
\end{equation}
Defining $u_{n+1} = v'wu$, we see that condition (5)$_{n+1}$ follows from \eqref{eq10.31}. Now $v'w\in \Bu $, 
so we may take $z$ in condition (9)$_{n+1}$  to be this unitary, since
\begin{equation}\label{eq10.32}
\|v'w-u_{n+1}\| = \|v'w-v'wu\| = \|1-u\|\le 252\gamma^{1/2}.
\end{equation}
Next, define $\theta_{n+1} = \text{Ad}(v'w)\circ \theta$, which maps $A$ onto $B$ 
because $B$ is an ideal in $\Bu $. Lemma \ref{lem10.2} (iv) gives $\text{Ad}(u) 
\approx_{Y_{n+1},\delta_{n+1}}\theta$, so applying $\text{Ad}(v'w)$ results in $
\text{Ad}(u_{n+1}) \approx_{Y_{n+1},\delta_{n+1}} \theta_{n+1}$, proving condition 
(4)$_{n+1}$.

We turn now to condition (3)$_{n+1}$. The choices of $X_{n+1}$, $Y_{n+1}$ and $
\delta_{n+1}$ give 
the inclusions $X_{n+1} \subseteq_{2^{-(n+1)}/3} Y_{n+1}$, and $X_n\subseteq 
X_{n+1}$, and also the inequality $\delta_{n+1} < 2^{-(n+1)}/3$. Thus
\begin{equation}\label{eq10.33}
 \theta_{n+1} =\text{Ad}(v'w)\circ \theta\approx_{X_n,2^{-(n+1)}} \text{ Ad}(u_{n+1}).
\end{equation}
Combined with \eqref{eq10.30}, we obtain
\begin{equation}\label{eq10.34}
 \theta_{n+1} \approx_{X_n,2^{-n}} \theta_n,
\end{equation}
and condition (3)$_{n+1}$ is proved.

The choice of $\theta$ from Lemma \ref{lem10.2} (v) entailed $\theta \approx_{X_{n
+1}, 364\gamma^{1/2}}\iota$. Applying $\text{Ad}(v'w)$ to this gives
\begin{equation}\label{eq10.35}
 \theta_{n+1} \approx_{X_{n+1},364\gamma^{1/2}} \text{ Ad}(v'w).
\end{equation}
Recall that, by construction, there are elements $x_i\in X_{n+1}$ such that
\begin{equation}\label{eq10.36}
 \|z^*b_iz-x_i\|\le \gamma,\qquad 1\le i\le n+1,
\end{equation}
and also that $\|z-u_n\|\le 252\gamma^{1/2}$, $\|v'-1_M\|\le 1848\gamma^{1/2}$, and 
$\|w-u_n\|\le 1596\gamma^{1/2}$. From these inequalities, it follows that
\begin{align}
 \|zx_iz^* - v'wx_i(v'w)^*\| &\le \|u_nx_iu^*_n - (v'w)x_i(v'w)^*\| + 504 
\gamma^{1/2}\notag\\
&\le \|wx_iw^* - (v'w)x_i(v'w)^*\| + 3696 \gamma^{1/2}\notag\\
&\le 2\|v'-1_M\| + 3696\gamma^{1/2}
\label{eq10.37}
\le 7392 \gamma^{1/2}.
\end{align}
Thus, for $1\le i \le n+1$, it follows from \eqref{eq10.35}, \eqref{eq10.36} and \eqref{eq10.37} that 
\begin{align}
 \|\theta_{n+1}(x_i)-b_i\| &\le \|\theta_{n+1}(x_i) - (v'w) x_i(v'w)^*\| + \|(v'w)x_i(v'w)^* - 
zx_iz^*\| + \|zx_iz^*-b_i\|\notag\\
&\le 364\gamma^{1/2} + 7392\gamma^{1/2} + \gamma
\label{eq10.38}
\le 7757\gamma^{1/2} \le 9/10,
\end{align}
since $\gamma \leq 10^{-8}$. This proves condition (6)$_{n+1}$.

We now prove condition (7)$_{n+1}$, so take a surjective $*$-isomorphism $\phi
 :  A\to B$ with $\phi^{-1} \approx_{Z_{n+1},364\gamma^{1/2}}\iota$.  By Lemma 
\ref{lem10.2} (vi), there exists a unitary $w'\in \Bu $ with $\|w'-u\|\le 1596 
\gamma^{1/2}$ and $\text{Ad}(w') \circ \phi\approx_{Y_{n+1},\delta_{n+1}/2} \theta
$. Apply $\text{Ad}(v'w)$ to this to obtain $\text{Ad}(v'ww') \circ\phi \approx_{Y_{n
+1},\delta_{n+1}/2} \text{ Ad}(v'w)\circ\theta$. Since $u_{n+1} = v'wu$, we have $\|
v'ww'-u_{n+1}\| \le 1596\gamma^{1/2}$ and also $\text{Ad}(v'ww') \circ\phi 
\approx_{Y_{n+1}, \delta_{n+1}/2} \theta_{n+1}$ since $\theta_{n+1} = \text{Ad}(v'w) 
\circ\theta$. Then $v'ww'$ is the required unitary in condition (7)$_{n+1}$ which 
now holds.

The last remaining condition is (8)$_{n+1}$. Now $\theta$ and $u$ were chosen to 
satisfy Lemma \ref{lem10.2} (vii) for $Y_{n+1}$, $\delta_{n+1}$, $\vp=\delta_n/6
\leq 2^{-(n+1)}/3$, and $\mu = 2^{-(n+2)}
$, and we now show that the same is true for $\theta_{n+1} = \text{Ad}(v'w) \circ
\theta$ and $u_{n+1}=v'wu$. Given a unitary $v\in M$ with $\|v-v'wu\|\le 1848\gamma^{1/2}$ and $\text{Ad}(v) 
\approx_{Y_{n+1},\delta_{n+1}} \text{Ad}(v'w)\circ\theta$, and given a finite subset 
$S$ of the unit ball of $H$, the unitary $w^*v^{\prime *}v$ satisfies
\begin{equation}\label{eq10.38a}
 \|w^*v^{\prime *}v-u\| \le 1848\gamma^{1/2}\quad\text{and}\quad \text{Ad}(w^*v^{\prime *}v) 
\approx_{Y_{n+1}, \delta_{n+1}} \theta.
\end{equation}
Let
\begin{equation}
 S' = S \cup \{w^*v^{\prime *}\xi \colon  \ \xi\in S\}.
\end{equation}
Applying Lemma \ref{lem10.2} (vii) to $S'$, there is a unitary $\tilde v\in \Bu $ 
satisfying $\|\tilde v-1_M\| \le 1848\gamma^{1/2}$,
\begin{equation}\label{eq10.39}
 \text{Ad}(\tilde vw^* v^{\prime *}v) \approx_{X_{n+1},\vp}\theta,
\end{equation}
and
\begin{equation}\label{eq10.40}
 \|(\tilde vw^*v^{\prime *}v-u)\eta\|, \ \|(\tilde vw^*v^{\prime *}v-u)^* \eta\| < 2^{-(n
+2)},\qquad \eta\in S'.
\end{equation}
Applying $\text{Ad}(v'w)$ to \eqref{eq10.39} gives
\begin{equation}\label{eq10.41}
 \text{Ad}(v'w\tilde vw^* v^{\prime *}v) \approx_{X_{n+1},\vp} \text{ Ad}(v'w) 
\circ \theta= \theta_{n+1}.
\end{equation}
From the first inequality of \eqref{eq10.40} we obtain
\begin{equation}\label{eq10.42}
 \|((v'w\tilde vw^*v^{\prime *})v-v'wu)\xi \| < 2^{-(n+2)},\qquad \xi\in S.
\end{equation}
For $\xi\in S$, put $\eta = w^*v^{\prime *}\eta\in S'$ into the second inequality of 
\eqref{eq10.40}, to yield
\begin{equation}\label{eq10.43}
 \|((v'w\tilde vw^*v^{\prime *})v-v'wu)^*\xi\| < 2^{-(n+2)},\qquad \xi\in S.
\end{equation}
The unitary $v'w\tilde vw^*v^{\prime *}\in B^\dagger$ then satisfies the requirements of condition 
(8)$_{n+1}$ since
\begin{equation}\label{eq10.44}
 \|v'w\tilde vw^*v^{\prime *}-1_M\| = \|\tilde v-1_M\|\le 1848\gamma^{1/2},
\end{equation}
and $\vp\leq 2^{-(n+2)}$. Thus condition (8)$_{n+1}$ holds and the proof is complete.
\end{proof}

In the next result we remove the hypothesis that $A$ and $B$ have the same ultraweak closure from Theorem \ref{thm10.3} and so establish Theorem \ref{Intro.Unitary}.  In the theorem below, the algebras $A$ and $B$ do not necessarily act non-degenerately so we use $\overline{A}^w$ and $\overline{B}^w$ for the ultraweak closures of $A$ and $B$ rather than $A''$ and $B''$.

\begin{theorem}\label{thm10.4}
Let $A$ and $B$ be  ${\mathrm C}^*$-algebras acting
on a separable Hilbert space $H$. Suppose that $A$ is separable and nuclear, and 
that $d(A,B)< 10^{-11}$. Then there exists a unitary $u\in (A\cup B)''$ such that $uAu^*=B$.
\end{theorem}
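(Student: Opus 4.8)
The plan is to reduce Theorem \ref{thm10.4} to Theorem \ref{thm10.3} by first arranging that $A$ and $B$ act non-degenerately and then forcing them to share the same ultraweak closure after a small unitary perturbation. First I would replace $H$ by the essential subspace: since $A$ is non-degenerate on a subspace $H_0 \subseteq H$ and $d(A,B)$ is small, $B$ is non-degenerate on the same subspace (the support projections of $A$ and $B$ agree, as they are close projections in $\mathbb B(H)$ — cf.\ Proposition \ref{Prelim.UnitaryEquiv}), so after compressing we may assume $A$ and $B$ both act non-degenerately on $H$ with $1_{A^\dagger}=1_{B^\dagger}=I_H$. By Propositions \ref{Prelim.NuclearOpen} and \ref{Prelim.SeparableOpen}, $B$ is also separable and nuclear, and by \cite[Theorem 6.5]{Christensen.NearInclusions} (or Proposition \ref{Prelim.NuclearOpen} applied to the biduals) both $M=\overline A^w=A''$ and $N=\overline B^w=B''$ are injective von Neumann algebras. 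A standard estimate shows $d(M,N)$ is controlled by $d(A,B)$: each element of the unit ball of $M$ is a $*$-strong limit of elements of the unit ball of $A$ (Kaplansky density), each of which is within $\gamma$ in norm of the unit ball of $B \subseteq N$, and the norm-closed unit ball of $N$ is $*$-strongly closed, so $d(M,N) \le d(A,B) < 10^{-11} < 1/8$.

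Next I would apply Proposition \ref{Prelim.Inject} to the injective von Neumann algebras $M$ and $N$ (which share the unit $I_H$): this produces a unitary $u_1 \in (M\cup N)'' \subseteq (A\cup B)''$ with $u_1 M u_1^* = N$ and $\|u_1 - I_H\| \le 12\,d(A,B)$. Set $A_1 = u_1 A u_1^*$; this is a separable nuclear ${\mathrm C}^*$-algebra with $A_1'' = u_1 M u_1^* = N = B''$, so $A_1$ and $B$ now have the same ultraweak closure. Moreover $d(A_1, B) \le d(A_1, A) + d(A, B) \le 2\|u_1 - I_H\| + d(A,B) \le (24 + 1)\,d(A,B) < 25 \cdot 10^{-11} < 10^{-8}$, so the hypotheses of Theorem \ref{thm10.3} are met. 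That theorem yields a unitary $u_2 \in N = B'' \subseteq (A\cup B)''$ with $u_2 A_1 u_2^* = B$, i.e.\ $u_2 u_1 A u_1^* u_2^* = B$.

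Finally I would set $u = u_2 u_1$, which is a unitary in $(A\cup B)''$ satisfying $uAu^* = B$, completing the proof. The only points needing care are: checking the support-projection reduction is legitimate (the compression $\xi \mapsto P\xi$ where $P$ is the common support projection does not change the norms or the relevant distances, and a close subalgebra of $\mathbb B(PH)$ has the same closeness constant); verifying the numerical bound $25 \cdot 10^{-11} \le 10^{-8}$, which is the reason for the gap between the $10^{-11}$ in Theorem \ref{thm10.4} and the $10^{-8}$ in Theorem \ref{thm10.3}; and noting that all unitaries produced lie in $(A\cup B)''$ — this holds because $u_1 \in (M\cup N)'' = (A\cup B)''$ and $u_2 \in N = B'' \subseteq (A\cup B)''$. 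I expect the main (though still routine) obstacle to be the careful bookkeeping in the reduction to non-degenerate actions with a common unit, since the metric $d$ is sensitive to how the algebras sit inside $\mathbb B(H)$; everything after that is a clean two-step application of Proposition \ref{Prelim.Inject} and Theorem \ref{thm10.3}.
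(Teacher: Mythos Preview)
Your overall strategy is the same as the paper's: align the ultraweak closures using Proposition \ref{Prelim.Inject} for injective von Neumann algebras, then invoke Theorem \ref{thm10.3}. However, there is a genuine gap in your reduction to the non-degenerate case.

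You assert that the support projections of $A$ and $B$ \emph{agree} because they are close, citing Proposition \ref{Prelim.UnitaryEquiv}. That proposition only says that projections within norm distance $1$ are \emph{unitarily equivalent} by a unitary close to the identity; it certainly does not force them to be equal. In general $e_A\neq e_B$, so you cannot simply compress to a common essential subspace, and more importantly Proposition \ref{Prelim.Inject} is not applicable to $M=\overline{A}^w$ and $N=\overline{B}^w$ as stated, since that result requires $M$ and $N$ to share the same unit. The paper repairs exactly this point: it first notes $d(\overline{A}^w,\overline{B}^w)\le d(A,B)$ (Kadison--Kastler), then applies Proposition \ref{Prelim.Unit} to obtain a unitary $u_0\in \mathrm{W}^*(A,B,I_H)$ with $u_0e_Au_0^*=e_B$ and $\|u_0-I_H\|\le 2\sqrt{2}\eta$. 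Only after replacing $A$ by $A_0=u_0Au_0^*$ do $\overline{A_0}^w$ and $\overline{B}^w$ share the unit $e_B$, whereupon Proposition \ref{Prelim.Inject} legitimately produces $v$ with $v\overline{A_0}^wv^*=\overline{B}^w$. This extra conjugation costs a factor of roughly $(100\sqrt{2}+25)$ rather than your $25$, which is why the paper needs $10^{-11}$ to land below $10^{-8}$; your cleaner constant relies on a step that is not valid.

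There is a second, smaller point you gloss over. Theorem \ref{thm10.3} is applied on $K=e_BH$, and yields a unitary $u_1$ in $\mathrm{W}^*(B)\subseteq\mathbb B(K)$. To obtain a unitary on all of $H$ lying in $(A\cup B)''$ one must extend by $u_1+(I_H-e_B)$; the paper does this explicitly. Your writeup should record this extension as well.
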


\begin{proof}
Since $d(A,B) < 1/101$, Propositions \ref{Prelim.NuclearOpen} and 
\ref{Prelim.SeparableOpen} show that $B$ is also separable and nuclear.
Choose $\eta$ so that $d(A,B)<\eta <10^{-11}$, and denote
the support projections of $A$ and $B$ by $e_A$ and $e_B$
respectively. These are the respective units of $\ovl{A}^w$
and $\ovl{B}^w$. By \cite[Lemma 5]{Kadison.Kastler} we have $d(\ovl{A}^w,
\ovl{B}^w)\leq
d(A,B)$, so from Proposition \ref{Prelim.Unit} there is a unitary $u_0\in {\mathrm W}^*(A,B,I_H)
$
such that $u_0e_Au_0^*=e_B$ and $\|1-u_0\|\leq
2\sqrt{2}\eta$. Let $A_0=u_0Au_0^*$. Then
\begin{equation}
d(A_0,A)\leq 2\|1-u_0\|\leq 4\sqrt{2}\eta,
\end{equation}
so  
\begin{equation}   
d(A_0,B)\leq (4\sqrt{2}+1)\eta< 1/8.
\end{equation}
Another use of \cite[Lemma 5]{Kadison.Kastler} shows that the same estimate holds 
for
$d(\ovl{A}^w_0,\ovl{B}^w)$. These injective von Neumann
algebras have the same unit $e_B$, so we can use Proposition \ref{Prelim.Inject} to
obtain a unitary $v\in {\mathrm W}^*(A,B,I_H)$ so that
$v\ovl{A}^w_0v^*=\ovl{B}^w$ and
\begin{equation}
\|I_H-v\|\leq 12d(\ovl{A}_0^w,\ovl{B}^w)\leq
(48\sqrt{2}+12)\eta.
\end{equation}
If we define $w=vu_0$, then $wAw^*$ and $B$ have identical
ultraweak closures, and
\begin{equation}
\|I_H-w\|=\|v^*-u_0\|\leq \|I_H-v^*\|+\|I_H-u_0\|\leq
(50\sqrt{2}+12)\eta.
\end{equation}
Now define $A_1=wAw^*$. Then
\begin{equation}
d(A_1,B)\leq d(A_1,A)+d(A,B)< 2\|I_H-w\| +\eta
<(100\sqrt{2}+25)\eta <10^{-8},
\end{equation}
since $\eta < 10^{-11}$.

Now let $K$ be the range of $e_B$ and restrict $A_1$ and $B$
to this Hilbert space. The hypotheses of Theorem \ref{thm10.3}
are now met, so there exists a unitary $u_1\in {\mathrm W}^*(A_1\cup B)={\mathrm W}^*(B)\subseteq \mathbb B(K)$
so that $u_1A_1u_1^*=B$. We extend $u_1$ to a unitary $u_2\in
B''$ by $u_2=u_1+(1-e_B)$, so that $u_2A_1u_2^*=B$. Then
$u_2wAw^*u_2^*=B$, and the proof is completed by defining
$u=u_2w\in {\mathrm W}^*(A,B,I_H)$.
\end{proof}

\begin{corollary}
Let $A$ be a separable nuclear ${\mathrm C}^*$-algebra on a separable Hilbert space $H$. 
Then the connected component of $A$ for the metric $d(\cdot,\cdot)$ is
$$
V=\{B\colon B=uAu^*,\ u\in {\mathcal U}({\mathbb B}(H))\}.
$$
\end{corollary}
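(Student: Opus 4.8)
The plan is to prove that $V$ is simultaneously \emph{connected}, \emph{open} and \emph{closed} in the metric space $\mathcal{S}$ of all ${\mathrm C}^*$-subalgebras of $\mathbb{B}(H)$ equipped with $d$; since $A\in V$ it will then follow that $V$ is exactly the connected component of $A$, because a connected clopen set containing a point always coincides with that point's connected component. Note first that every element $uAu^*$ of $V$ is again a separable nuclear ${\mathrm C}^*$-subalgebra of $\mathbb{B}(H)$, so $V\subseteq\mathcal{S}$ and the hypotheses of our earlier results apply to the algebras in $V$.

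First I would check that $V$ is path-connected. It is standard that $\mathcal{U}(\mathbb{B}(H))$ is norm path-connected: every unitary has the form $e^{ih}$ for a bounded self-adjoint $h$ (Borel functional calculus), so $t\mapsto e^{ith}$ joins $I_H$ to $u$ continuously. The elementary estimate $d(u_1Au_1^*,u_2Au_2^*)\le 2\|u_1-u_2\|$ for unitaries $u_1,u_2$ then shows that $t\mapsto e^{ith}Ae^{-ith}$ is a $d$-continuous path in $V$ from $A$ to $uAu^*$. Hence $V$ is connected, and in particular $V$ is contained in the connected component of $A$.

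Openness of $V$ is immediate from Theorem \ref{thm10.4}. Let $B=uAu^*\in V$; then $B$ is separable and nuclear. If $C\subseteq\mathbb{B}(H)$ satisfies $d(B,C)<10^{-11}$, Theorem \ref{thm10.4} supplies a unitary $w\in(B\cup C)''\subseteq\mathbb{B}(H)$ with $wBw^*=C$, so $C=(wu)A(wu)^*\in V$; thus the ball of radius $10^{-11}$ about $B$ lies in $V$.

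The step requiring a little care, and the one I expect to be the main (though still routine) obstacle, is closedness of $V$, since Theorem \ref{thm10.4} only becomes available once the limiting algebra is known to be separable and nuclear. Suppose $B\in\overline{V}$ and choose $B_0\in V$, say $B_0=vAv^*$, with $d(B,B_0)<10^{-11}$. As $d(B,B_0)<1/101$ and $B_0$ is nuclear, Proposition \ref{Prelim.NuclearOpen} gives nuclearity of $B$; as $d(B,B_0)<1/2$ forces $B\subset_{1/2}B_0$ and $B_0$ is separable, Proposition \ref{Prelim.SeparableOpen} gives separability of $B$. Now apply Theorem \ref{thm10.4} with $B$ in the role of the separable nuclear algebra and $B_0$ in the role of the nearby one: there is a unitary $w\in\mathbb{B}(H)$ with $wBw^*=B_0$, whence $B=(w^*v)A(w^*v)^*\in V$. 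So $V$ is closed. Combining the three properties, $V$ is a connected clopen subset of $\mathcal{S}$ containing $A$, and therefore equals the connected component of $A$, as claimed.
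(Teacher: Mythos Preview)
Your proof is correct and follows essentially the same line as the paper's: show $V$ is path-connected via $t\mapsto e^{ith}Ae^{-ith}$, and then use Theorem \ref{thm10.4} to show $V$ is clopen. The only difference is in the closedness argument: you transfer separability and nuclearity to the limit point $B$ before invoking Theorem \ref{thm10.4}, whereas the paper simply applies Theorem \ref{thm10.4} with the approximant $B_0\in V$ (already separable and nuclear) in the role of the distinguished algebra, which makes your appeal to Propositions \ref{Prelim.NuclearOpen} and \ref{Prelim.SeparableOpen} unnecessary --- though of course not wrong.
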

\begin{proof}
Each unitary $u$ may be written $u=e^{ih}$ for a self-adjoint operator $h$, so $A$ 
is connected to $uAu^*$ by the path $t\mapsto e^{ith}Ae^{-ith}$ for $0\leq t\leq 1$. 
Thus $V$ is contained in the connected component.

If $D \in V^c$, then the open ball of radius $10^{-11}$ centred at $D$ must lie in $V^c$, 
otherwise there exists $B\in V$ with $d(B,D)<10^{-11}$. If this were the case then, 
by Theorem \ref{thm10.4}, $D$ would be unitarily equivalent to $B$ and thus to $A
$, placing $D\in V$ and giving a contradiction. Thus $V$ is closed, and it is also 
open by another application of Theorem \ref{thm10.4}. The result follows.
\end{proof}

\section{Near inclusions and nuclear dimension}\label{Near}

In this section we return to near inclusions
$A\subseteq_\gamma B$, where $A$ is nuclear and separable,
and we study the problem of whether $A$ embeds into $B$ for
sufficiently small values of $\gamma$. For this we will use
Lemma \ref{Close.Iso.Tech}, so the question reduces to finding cpc maps
from $A$ to $B$ which closely approximate the inclusion map of $A$ into the underlying ${\mathbb B}(H)$
on finite subsets of the unit ball of $A$. Making use of the
nuclearity of $A$ to approximately factorise id$_A$ through
matrix algebras, we see that the core question is this: do
cpc maps $\theta :  \mathbb M_n\to A$ perturb to nearby
cpc maps $\tilde\theta  :  \mathbb M_n\to B$? The obvious
approach is to use the well known identification of $\theta$
with a positive element of the ball of radius $n$ of $\mathbb M_n(A)$, approximate
this by a positive element of $\mathbb M_n(B)$ and take the
associated cp map $\tilde\theta  :  \mathbb M_n\to B$.
However, we will lose control of $\|\theta-\tilde\theta\|$
which will depend on $n$, forcing us to employ other
methods. We do not know the answer in full generality, but
will be able to give a positive solution for the class of
order zero maps, defined in Definition \ref{Near.DefOrderZero}.  This will
enable us to pass from a near inclusion $A\subseteq_\gamma
B$ to an embedding of $A$ into $B$ whenever $A$ has finite
nuclear dimension (see Theorem \ref{Near.Embedd} which is the
quantitative version of Theorem \ref{Intro.Near} in the introduction).
  The nuclear dimension (see Definition 
\ref{Near.Prelim.ND} below) of a ${\mathrm C}^*$-algebra, like its forerunner the 
decomposition rank \cite{Winter.CoveringDimension}, is defined by requiring the 
existence of suitable cpc approximate point-norm factorisations $\phi :  A\to F$ and $\psi :  F\to A$ for $\id_A$ through finite dimensional ${\mathrm C}^*$-algebras $F$ with the map $\psi$  splitting 
as a finite sum of  order zero maps.  We note that in all classes of nuclear separable
${\mathrm C}^*$-algebras
which have so far been classified, the constituent algebras
have finite nuclear dimension. We begin by
defining order zero maps; all such maps in this paper will
be contractions so we absorb this into the definition.

\begin{definition}\label{Near.DefOrderZero}
Let $F$ and $A$ be ${\mathrm C}^*$-algebras.  An \emph{order zero map} is a cpc map $\phi:F
\rightarrow A$ which preserves orthogonality in the following sense: if $e,f\in F^+$ and $ef=0$, then $\phi(e)\phi(f)=0$.
\end{definition}

Order zero maps are more general than $*$-homomorphisms, but nevertheless have many 
pleasant structural properties.  In particular, we will use the 
following two facts from \cite{Winter.NuclearCoveringDimension} and 
\cite{Winter.NuclearCoveringDimension2} (see also \cite{Winter.OrderZero}, where 
these assertions are established when $F$ is not finite dimensional).
\begin{proposition}\label{Near.OZ.Hm}
Let $A$ and $F$ be ${\mathrm C}^*$-algebras with $F$ finite dimensional and let $\phi:F\rightarrow 
A$ be an order zero map.  Suppose that $A$ is faithfully represented on $H$. Then 
there exists a unique $*$-homomorphism $\pi:F\rightarrow \overline{\phi(F)}^w  
\subseteq \overline{A}^w$ such that
\begin{equation}
\phi(x)=\pi(x)\phi(1_F)=\phi(1_F)\pi(x),\quad x\in F.
\end{equation}
\end{proposition}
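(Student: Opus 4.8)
The plan is to reduce first to the case $F=\mathbb M_n$ and then to construct $\pi$ by ``formally dividing $\phi$ by $h:=\phi(1_F)$'' inside the von Neumann algebra $M:=\overline{A}^w$. For the reduction, write $F=\bigoplus_l\mathbb M_{n_l}$; the restriction of $\phi$ to each summand is again order zero, the elements $\phi(1_{\mathbb M_{n_l}})$ are pairwise orthogonal positive contractions, and one assembles the homomorphisms obtained on the summands using the orthogonality of their support projections, with uniqueness for $F$ following from uniqueness on each matrix block. So assume $F=\mathbb M_n$ with matrix units $e_{ij}$, and put $h=\phi(1_F)$, $h_i=\phi(e_{ii})$. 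Since $e_{ii}\perp e_{jj}$, order zero gives $h_ih_j=0$ for $i\ne j$; hence the $h_i$ generate an abelian algebra, $h=\sum_ih_i$, the range projections $r_i$ of the $h_i$ in $M$ are pairwise orthogonal with sum $r$, $h_i=hr_i$, and $h$ commutes with every $r_i$. Kadison--Schwarz for the cpc map $\phi$ (via Stinespring) gives $\phi(e_{ij})\phi(e_{ij})^*\le h_i$ and $\phi(e_{ij})^*\phi(e_{ij})\le h_j$, so $\phi(e_{ij})=r_i\phi(e_{ij})r_j$ in $M$.

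The next step is to extract, from orthogonality relations in $\mathbb M_n$ alone, the ``matrix-unit identities up to a factor of $h$''. Applying order zero to the orthogonal rank-one projections $p_\pm$ supported on $(e_i\pm e_j)/\sqrt2$, and matching in $\phi(p_+)\phi(p_-)=0$ the four corners cut out by $r_i$ and $r_j$, yields for each $i\ne j$ the relations $\phi(e_{ij})\phi(e_{ji})=h_i^2$, $\phi(e_{ji})\phi(e_{ij})=h_j^2$, and $h_i\phi(e_{ij})=\phi(e_{ij})h_j$; combined with the block structure, the last of these shows that $h$ commutes with every $\phi(e_{ij})$. For three distinct indices $a,m,d$ one applies order zero instead to the orthogonal rank-one projections supported on $(e_a+e_m+e_d)/\sqrt3$ and $(e_a+\omega e_m+\omega^2 e_d)/\sqrt3$, $\omega=e^{2\pi i/3}$; expanding the product, discarding the terms $\phi(e_{xy})\phi(e_{zw})$ with $y\ne z$ (these vanish because $r_yr_z=0$), and equating the $r_aMr_d$-component to $0$ gives a single linear relation among $\phi(e_{aa})\phi(e_{ad})$, $\phi(e_{ad})\phi(e_{dd})$ and the unknown product $\phi(e_{am})\phi(e_{md})$. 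Since the first two already equal $h\phi(e_{ad})$, one solves for the third, and the scalar coefficient collapses to exactly $1$ precisely because $1+\omega+\omega^2=0$. The conclusion, valid for all indices, is $\phi(e_{ij})\phi(e_{kl})=\delta_{jk}\,h\,\phi(e_{il})$.

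With these identities in hand, define $\pi(e_{ij})$ to be the $^*$-strong limit of $f_\varepsilon(h)\phi(e_{ij})$ as $\varepsilon\downarrow0$, where $f_\varepsilon(t)=t^{-1}$ on $[\varepsilon,1]$ and $f_\varepsilon(t)=0$ on $[0,\varepsilon)$, so that $z_\varepsilon:=f_\varepsilon(h)h=\chi_{[\varepsilon,1]}(h)\uparrow r$. Using $\phi(e_{ij})^*\phi(e_{ij})=h_j^2=h^2r_j$ and $\phi(e_{ij})\phi(e_{ij})^*=h^2r_i$, together with the fact that $f_\varepsilon(h)$ commutes with $\phi(e_{ij})$ and its adjoint, one checks that this net is bounded by $1$ in norm and $^*$-strongly Cauchy, with limit in $\overline{\phi(F)}^w$ (which contains $h$, hence $f_\varepsilon(h)$, and all $\phi(e_{ij})$). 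Extending linearly, the identities above give: $\pi(e_{ij})^*=\pi(e_{ji})$; $h\pi(e_{ij})=\pi(e_{ij})h=\phi(e_{ij})$ (since $f_\varepsilon(h)h=z_\varepsilon$ and $z_\varepsilon r_i\uparrow r_i$, while $\phi(e_{ij})=r_i\phi(e_{ij})$); $\pi(e_{ij})\pi(e_{kl})=\lim_\varepsilon f_\varepsilon(h)^2\phi(e_{ij})\phi(e_{kl})=\delta_{jk}\lim_\varepsilon f_\varepsilon(h)^2h\,\phi(e_{il})=\delta_{jk}\pi(e_{il})$ (because $f_\varepsilon(h)^2h=f_\varepsilon(h)$); and $\sum_i\pi(e_{ii})=\sum_i r_i=r=1_{\overline{\phi(F)}^w}$. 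Thus $\pi$ is a unital $^*$-homomorphism $\mathbb M_n\to\overline{\phi(F)}^w$ with $\phi(x)=\pi(x)\phi(1_F)=\phi(1_F)\pi(x)$.

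For uniqueness, suppose $\pi'$ is another $^*$-homomorphism into $\overline{\phi(F)}^w$ with $\phi(x)=\pi'(x)h=h\pi'(x)$. Then $h$ commutes with each $\pi'(x)$, so $\pi'(e_{ii})z_\varepsilon=\chi_{[\varepsilon,1]}(h)\phi(e_{ii})=\chi_{[\varepsilon,1]}(h)h_i=z_\varepsilon r_i$; letting $\varepsilon\downarrow0$ and using $\pi'(e_{ii})\le1_{\overline{\phi(F)}^w}=r$ gives $\pi'(e_{ii})=r_i$, hence $\pi'(e_{ij})=r_i\pi'(e_{ij})r_j$, and then $f_\varepsilon(h)\phi(e_{ij})=f_\varepsilon(h)h\,\pi'(e_{ij})=z_\varepsilon\pi'(e_{ij})\to\pi'(e_{ij})$, so $\pi=\pi'$. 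I expect the genuine obstacle to be the three-index relation $\phi(e_{ij})\phi(e_{jk})=h\,\phi(e_{ik})$ for distinct $i,j,k$: both selecting the right pair of orthogonal projections to feed into the order-zero hypothesis and verifying that the emerging scalar is exactly $1$ are where the real content lies, while everything after that is bookkeeping with $^*$-strong limits and spectral projections of $h$.
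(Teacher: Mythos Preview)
The paper does not prove Proposition \ref{Near.OZ.Hm}; it is quoted as a known structural fact, with references to \cite{Winter.NuclearCoveringDimension}, \cite{Winter.NuclearCoveringDimension2} and \cite{Winter.OrderZero}. So there is no ``paper's own proof'' to match against, and what matters is simply whether your argument is correct. It is.

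Your approach is the classical direct one: reduce to a single matrix block, extract the relations $\phi(e_{ij})\phi(e_{kl})=\delta_{jk}\,h\,\phi(e_{il})$ from orthogonality of suitable rank-one projections, and then divide by $h$ using Borel functional calculus in the weak closure. The two-index computation via $p_\pm$ is standard and your block calculation is right; the three-index step via the projections onto $(e_a+e_m+e_d)/\sqrt3$ and $(e_a+\omega e_m+\omega^2 e_d)/\sqrt3$ works exactly as you describe, and the coefficient does collapse to $1$ because $c_a+c_d=1+\omega^2=-\omega=-c_m$. The $^*$-strong limit defining $\pi(e_{ij})$ is bounded (since $f_\varepsilon(h)^2\phi(e_{ji})\phi(e_{ij})=f_\varepsilon(h)^2h^2r_j=z_\varepsilon^2r_j\le 1$) and Cauchy, the multiplicativity check $f_\varepsilon(h)^2h=f_\varepsilon(h)$ is the right identity, and your uniqueness argument via $z_\varepsilon\pi'(e_{ij})\to r\pi'(e_{ij})=\pi'(e_{ij})$ is clean. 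The reduction to a single matrix summand is routine once you observe that the support projections of the $\phi(1_{\mathbb M_{n_l}})$ are pairwise orthogonal, so the blockwise $\pi_l$ assemble without interference.

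For comparison, the argument in \cite{Winter.OrderZero} (which handles general domains, not just finite-dimensional $F$) proceeds via Stinespring: one dilates $\phi$ to $V^*\rho(\cdot)V$ and shows that order zero forces $VV^*$ to commute with $\rho(F)$, whence $\pi(x)=\rho(x)$ compressed to the support of $h$ does the job. That route avoids the explicit three-index computation entirely, at the cost of invoking the dilation machinery. Your hands-on approach is more elementary and makes the role of orthogonality in $\mathbb M_n$ completely explicit; the Stinespring route is shorter and generalises immediately beyond finite-dimensional $F$. Both are legitimate, and for the finite-dimensional case needed in this paper your argument is perfectly adequate.
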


\begin{proposition}\label{Near.OZ.Cone}
Let $A$ and $F$ be ${\mathrm C}^*$-algebras with $F$ finite dimensional.  Given an order zero map 
$\phi:F\rightarrow A$, the map $\id_{(0,1]}\otimes x\mapsto\phi(x)$ induces a 
$*$-homomorphism $\rho_\phi:C_0(0,1]\otimes F\rightarrow A$. Conversely, given 
a $*$-homomorphism $\rho:C_0(0,1]\otimes F\rightarrow A$, there is an order zero
map $\rho_\phi :  F\rightarrow A$ defined by  $x\mapsto\rho(\id_{(0,1]}\otimes x)$. 
\end{proposition}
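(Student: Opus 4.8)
The plan is to establish the two directions separately; the forward direction rests entirely on the structure provided by Proposition~\ref{Near.OZ.Hm}. Represent $A$ faithfully on $H$ and let $\pi:F\to\overline{\phi(F)}^w\subseteq\overline{A}^w$ be the $*$-homomorphism furnished by that proposition, so that $\phi(x)=\pi(x)h=h\pi(x)$ for $x\in F$, where $h:=\phi(1_F)$ is a positive contraction in $A$; the identity $\pi(x)h=h\pi(x)$ says precisely that $h$ commutes with $\pi(F)$. Continuous functional calculus provides a $*$-homomorphism $C_0(0,1]\to A$ sending $\id_{(0,1]}$ to $h$, whose range $C^*(h)$ commutes with $\pi(F)$. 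By the standard universal property of the maximal tensor product (applied to this commuting pair of $*$-homomorphisms into $\overline{A}^w$) there is a $*$-homomorphism $\Phi:C_0(0,1]\otimes_{\max}F\to\overline{A}^w$ with $\Phi(f\otimes x)=f(h)\pi(x)$ on elementary tensors; since $C_0(0,1]$ is nuclear (and $F$ is finite dimensional), $C_0(0,1]\otimes_{\max}F=C_0(0,1]\otimes F$, so $\Phi$ is defined on $C_0(0,1]\otimes F$.

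Since $\Phi(\id_{(0,1]}\otimes x)=h\pi(x)=\phi(x)$, and the elements $\id_{(0,1]}\otimes x$ $(x\in F)$ generate $C_0(0,1]\otimes F$ as a C$^*$-algebra — because $(\id_{(0,1]}\otimes x)(\id_{(0,1]}\otimes y)=\id_{(0,1]}^2\otimes xy$ while the powers of $\id_{(0,1]}$ generate $C_0(0,1]$ — the map $\Phi$ is the unique $*$-homomorphism induced by the assignment $\id_{(0,1]}\otimes x\mapsto\phi(x)$, and we set $\rho_\phi:=\Phi$. The one point requiring a comment is that $\rho_\phi$, constructed a priori with values in $\overline{A}^w$, actually maps into $A$: being a $*$-homomorphism, its image is the C$^*$-algebra generated by the images of the generators, namely $C^*(\phi(x):x\in F)=C^*(\phi(F))\subseteq A$. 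This is the only genuine obstacle, and it is a mild one once Proposition~\ref{Near.OZ.Hm} is available — indeed the substantive content of the present proposition is carried by that earlier structure result.

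For the converse, given a $*$-homomorphism $\rho:C_0(0,1]\otimes F\to A$, define $\varphi:F\to A$ by $\varphi(x)=\rho(\id_{(0,1]}\otimes x)$. The assignment $x\mapsto\id_{(0,1]}\otimes x$ is completely positive: for $x\geq0$ one has $\id_{(0,1]}\otimes x=\bigl(\id_{(0,1]}^{1/2}\otimes x^{1/2}\bigr)\bigl(\id_{(0,1]}^{1/2}\otimes x^{1/2}\bigr)^{*}$, and the same square-root identity applies to all matrix amplifications; it is contractive since $\|\id_{(0,1]}\|=1$. Composing with the ($*$-homomorphism, hence cpc) map $\rho$ shows $\varphi$ is cpc. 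For orthogonality, if $e,f\in F^{+}$ with $ef=0$ then $(\id_{(0,1]}\otimes e)(\id_{(0,1]}\otimes f)=\id_{(0,1]}^{2}\otimes ef=0$, so $\varphi(e)\varphi(f)=\rho\bigl((\id_{(0,1]}\otimes e)(\id_{(0,1]}\otimes f)\bigr)=0$, and $\varphi$ is order zero. Finally one checks that the two constructions are mutually inverse: for an order zero $\phi$ the order zero map attached to $\rho_\phi$ sends $x$ to $\rho_\phi(\id_{(0,1]}\otimes x)=\phi(x)$, while for a $*$-homomorphism $\rho$ the $*$-homomorphism attached to $\varphi$ agrees with $\rho$ on the generating set $\{\id_{(0,1]}\otimes x\}$ and hence equals $\rho$, by the uniqueness noted above.
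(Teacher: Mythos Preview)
Your proof is correct. Note, however, that the paper does not actually prove this proposition: it is quoted as a known structural fact from \cite{Winter.NuclearCoveringDimension}, \cite{Winter.NuclearCoveringDimension2} (and \cite{Winter.OrderZero} for general $F$), so there is no ``paper's own proof'' to compare against. Your argument is precisely the standard one used in those references: invoke the structure theorem (Proposition~\ref{Near.OZ.Hm}) to write $\phi(x)=h\pi(x)$ with $h=\phi(1_F)$ central in $C^*(\pi(F),h)$, pair the functional-calculus homomorphism $C_0(0,1]\to C^*(h)$ with $\pi$ via the universal property of the tensor product, and then observe that the range lies in $A$ because the generators $\id_{(0,1]}\otimes x$ map to $\phi(x)\in A$ and already generate $C_0(0,1]\otimes F$. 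The converse direction is elementary and your verification is fine.
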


We can now perturb order zero maps.  In the theorem below we do not obtain that 
$\psi$ is automatically order zero, but will address this point in Theorem 
\ref{OZ.Nose2}.
\begin{theorem}\label{Near.Perturb}
Let $A$ be a ${\mathrm C}^*$-algebra on a Hilbert space $H$.  Given a finite dimensional 
${\mathrm C}^*$-algebra $F$ and an order zero map $\phi:F\rightarrow A$, there exists a finite set 
$Y$ in the unit ball of $A$ with the following property.  If $B$ is another 
${\mathrm C}^*$-algebra on $H$ with $Y\subseteq_\gamma B$ for some $\gamma>0$, then 
there exists a cp map $\psi:F\rightarrow B$ with
\begin{equation}
\|\phi-\psi\|_{\mathrm{cb}}\leq (2\gamma+\gamma^2)(2+2\gamma+\gamma^2).
\end{equation}
\end{theorem}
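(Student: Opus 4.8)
The plan is to realise the order zero map $\phi$ in the form $x\mapsto V^*\iota_F(x)V$ for a column $V$ over $A$, to perturb $V$ to a column $W$ over $B$ while keeping the perturbation small \emph{independently of the size of $F$}, and then to take $\psi(x)=W^*\iota_F(x)W$. Write $F=\bigoplus_{j=1}^k\mathbb M_{n_j}$ with matrix units $e^{(j)}_{pq}$, set $N=\sum_j n_j$, and let $\iota_F\colon F\hookrightarrow\mathbb M_N$ be the block-diagonal embedding; this is a unital $*$-homomorphism, hence completely isometric. By Proposition \ref{Near.OZ.Cone} the map $\phi$ induces a $*$-homomorphism $\rho_\phi\colon C_0(0,1]\otimes F\to A$ with $\rho_\phi(\id_{(0,1]}\otimes x)=\phi(x)$, and I would put $s^{(j)}_l=\rho_\phi(\id_{(0,1]}^{1/2}\otimes e^{(j)}_{1l})\in A$. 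Multiplicativity of $\rho_\phi$ and $\id_{(0,1]}^{1/2}\id_{(0,1]}^{1/2}=\id_{(0,1]}$ give $(s^{(j)}_l)^*s^{(j')}_{l'}=\delta_{jj'}\phi(e^{(j)}_{ll'})$. Stacking the $s^{(j)}_l$ into a single column $\mathbf s\in\mathbb M_{N,1}(A)$, one reads off $\mathbf s^*\mathbf s=\phi(1_F)$, so $\|\mathbf s\|\le 1$, and $\phi(x)=\mathbf s^*\iota_F(x)\mathbf s$ for all $x\in F$.

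The key observation is that, even though $A$ is \emph{not} assumed nuclear, all the $s^{(j)}_l$ lie in the subalgebra $A_0:=\rho_\phi(C_0(0,1]\otimes F)$, which is a quotient of the subhomogeneous algebra $C_0(0,1]\otimes F$ and hence nuclear. So I would now apply the matrix-amplified near-inclusion result, Proposition \ref{Near.Length}, \emph{to the nuclear algebra $A_0$} rather than to $A$: viewing $\mathbf s$ as the first column of an element $S\in\mathbb M_N(A_0)$ with $\|S\|=\|\mathbf s\|\le 1$, Proposition \ref{Near.Length} (applied to the singleton $\{S\}$ in the unit ball of $A_0\otimes\mathbb M_N$) yields a finite set $Y$ in the unit ball of $A_0\subseteq A$ such that whenever $Y\subseteq_\gamma B$ there exists $T\in\mathbb M_N(B)$ with $\|S-T\|\le 2\gamma+\gamma^2$. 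Cutting down to the first column produces $\mathbf t\in\mathbb M_{N,1}(B)$ with $\|\mathbf s-\mathbf t\|\le 2\gamma+\gamma^2=:\beta$ and $\|\mathbf t\|\le 1+\beta$.

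Define $\psi\colon F\to B$ by $\psi(x)=\mathbf t^*\iota_F(x)\mathbf t$. This is completely positive, since for $x=\sum_l v_lv_l^*\in\mathbb M_N^+$ one has $\mathbf t^*x\mathbf t=\sum_l(v_l^*\mathbf t)^*(v_l^*\mathbf t)\in B^+$, and the same computation works on matrix amplifications (the entries of $\mathbf t$ lie in $B$, so no unitisation is needed). For the estimate, write
\[
\mathbf s^*\iota_F(x)\mathbf s-\mathbf t^*\iota_F(x)\mathbf t=\mathbf s^*\iota_F(x)(\mathbf s-\mathbf t)+(\mathbf s-\mathbf t)^*\iota_F(x)\mathbf t,
\]
and use the standard fact that a map of the form $x\mapsto a^*xb$ on $\mathbb M_N$ has completely bounded norm at most $\|a\|\,\|b\|$ (column norms), together with $\|\iota_F\|_{\mathrm{cb}}\le 1$, to obtain
\[
\|\phi-\psi\|_{\mathrm{cb}}\le\|\mathbf s\|\,\|\mathbf s-\mathbf t\|+\|\mathbf s-\mathbf t\|\,\|\mathbf t\|\le\beta+\beta(1+\beta)=(2\gamma+\gamma^2)(2+2\gamma+\gamma^2).
\]
The one genuinely delicate point is passing to the $N\times N$ amplification while keeping the constant free of $N$ (hence of $F$): this is exactly what forces us to exploit that the range of $\phi$ sits inside a nuclear subalgebra of $A$, so that Proposition \ref{Near.Length} applies without any nuclearity assumption on $A$ itself. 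The cone description of order zero maps, complete positivity of $\psi$, and the cb-norm bookkeeping are all routine once this is in place.
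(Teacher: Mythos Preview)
Your proof is correct and follows essentially the same strategy as the paper: reduce to a nuclear subalgebra via the cone description of $\phi$, realise $\phi$ as $V^*(\text{representation})V$ for a contraction $V$ with entries in that subalgebra, invoke Proposition~\ref{Near.Length} (and its rectangular version) to perturb $V$ to a column over $B$, and then read off the cb-estimate. Your implementation is in fact slightly cleaner than the paper's: by taking $s^{(j)}_l=\rho_\phi(\id_{(0,1]}^{1/2}\otimes e^{(j)}_{1l})$ directly, you obtain $\phi(x)=\mathbf s^*\iota_F(x)\mathbf s$ without passing through the supporting $*$-homomorphism $\pi$ of Proposition~\ref{Near.OZ.Hm} and without the auxiliary $f_{1,1}$-compression that the paper uses at the end.
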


\begin{proof}
Let $\phi:F\rightarrow A$ be cpc and order zero.  By replacing $A$ by the image of 
the $*$-homomorphism $\rho_\phi:C_0(0,1]\otimes F\rightarrow A$ from 
Proposition \ref{Near.OZ.Cone} we may assume that $A$ is nuclear. Let $\pi:F
\rightarrow \overline{\phi(F)}^w\subseteq \overline{A}^w$ be the unique $*$-homomorphism with
\begin{equation}\label{Near.Perturb.1}
\phi(x)=\pi(x)\phi(1_F)=\phi(1_F)\pi(x),\quad x\in F,
\end{equation}
given by Proposition \ref{Near.OZ.Hm}.  Write $F=\mathbb M_{n_1}\oplus\dots
\oplus\mathbb M_{n_r}$. Without loss of generality, we may assume that $\pi$ is 
injective, as if $M_{n_k}\subseteq\ker(\pi)$, then (\ref{Near.Perturb.1}) ensures that $
\phi|_{\mathbb M_{n_k}}=0$ allowing us to remove the $\mathbb M_{n_k}$ 
summand from $F$.  For each $1\leq k\leq r$, let ${\big{\{}}e^{(k)}_{i,j}\big\}_{i,j=1}^{n_k}$ be a 
system of matrix units for $\mathbb M_{n_k}$.  Let $m=\max\{n_1,\dots,n_r\}$ and 
let $\{f_{i,j}\}_{i,j=1}^m$ be a system of matrix units for $\mathbb M_m$.  For each 
$1\leq k\leq r$, define a non-unital $*$-homomorphism $\theta_k:\mathbb M_{n_k}
\rightarrow\mathbb M_m$ by $\theta_k\left(e_{i,j}^{(k)}\right)=f_{i,j}$.  Define a non-unital $*
$-homomorphism $\theta:F\rightarrow\mathbb M_r(\mathbb B(H)\otimes\mathbb 
M_m)$ by
\begin{equation}
\theta(x_1\oplus\dots\oplus x_r)=\begin{pmatrix}I_H\otimes\theta_1(x_1)&0&\cdots&0\\0&I_H\otimes\theta_2(x_2)&\ddots&\vdots\\\vdots&\ddots&\ddots&0\\0&
\cdots&0&I_H\otimes\theta_r(x_r)\end{pmatrix}.
\end{equation}

For each $1\leq k\leq r$, define self-adjoint partial isometries 
\begin{equation}
s_k=\sum_{i,j=1}^{n_k}\pi\big(e^{(k)}_{i,j}\big)\otimes f_{j,i}\in \pi(F)\otimes \mathbb M_m.
\end{equation}
These satisfy
\begin{equation}\label{Near.Perturb.2}
s_k(I_H\otimes\theta_k(x_k))s_k^*=\pi(x_k)\otimes \sum_{i=1}^{n_k} f_{i,i},\quad x_k\in 
\mathbb M_{n_k}.
\end{equation}
By \eqref{Near.Perturb.1}
\begin{equation}
(\phi(1_F)\otimes 1_{\mathbb M_m})s_k=\sum_{i,j=1}^{n_k}\phi\big(e_{i,j}^{(k)}\big)\otimes 
f_{j,i}=(\phi(1_{\mathbb M_{n_k}})\otimes 1_{\mathbb M_m})s_k\in A\otimes \mathbb M_m.
\end{equation}
The continuous functional calculus then shows that $f(\phi(1_{M_{n_k}})\otimes 1_{\mathbb M_m})s_k\in A\otimes\mathbb M_m$, whenever $f$ is a 
continuous function on $[0,1]$ with $f(0)=0$.  In particular, $t_k=(\phi(1_{\mathbb 
M_{n_k}})^{1/2}\otimes 1_{\mathbb M_m})s_k\in A\otimes \mathbb M_m$ for all $k$.  Let $t\in
\mathbb M_{1\times r}(A\otimes\mathbb M_m)$ be the row matrix $(t_1,\dots, t_r)$.  
We can then define a completely positive map $\phi_0:F\rightarrow A\otimes
\mathbb M_m$ by
\begin{equation}
\phi_0(x)=t\theta(x)t^*,\quad x\in F.
\end{equation}
For $x=x_1\oplus\dots\oplus x_r\in F$, use (\ref{Near.Perturb.2}) to compute
\begin{align}
\phi_0(x)=\sum_{k=1}^rt_k(I_H\otimes \theta_k(x_k))t_k^*=&
\sum_{k=1}^r(\phi(1_{\mathbb M_{n_k}})^{1/2}\otimes 1)\big(\pi(x_k)\otimes 
\sum_{i=1}^{n_k}f_{i,i}\big)(\phi(1_{\mathbb M_{n_k}})^{1/2}\otimes 1)\notag\\
=&\sum_{k=1}^r\big(\phi(x_k)\otimes \sum_{i=1}^{n_k}f_{i,i}\big).
\end{align}
In particular, under the identification $\mathbb B(H)\cong f_{1,1}(\mathbb B(H)
\otimes \mathbb M_m)f_{1,1}$, we can recover $\phi$ by
\begin{equation}
\phi(x)=(I_H\otimes f_{1,1})\phi_0(x)(I_H\otimes f_{1,1}),\quad x\in F.
\end{equation}

Now
\begin{align}
\|t\|^2=&\|tt^*\|=\|\sum_{k=1}^r(\phi(1_{\mathbb M_{n_k}})^{1/2}\otimes 
1_{\mathbb M_m})s_ks_k^*(\phi(1_{\mathbb M_{n_k}})^{1/2}\otimes 1_{\mathbb M_m})\|\notag\\
\leq&\|\sum_{k=1}^n\phi(1_{\mathbb M_{n_k}})\|=\|\phi(1_F)\|=\|\phi\|\leq 1
\end{align}
so $\{t\}$ is a finite subset of the unit ball of $\mathbb M_{1\times r}(A\otimes 
\mathbb M_m)$.  Accordingly Proposition \ref{Near.Length} gives a finite subset $Y
$ of the unit ball of $A$ with the property that whenever $B$ is another ${\mathrm C}^*$-
algebra on $H$ and $Y\subseteq_\gamma B$, then $\{t\}\subseteq_{\mu}\mathbb 
M_{1\times r}(B\otimes\mathbb M_m)$, where $\mu=2\gamma+\gamma^2$.  
Assume we are given such a ${\mathrm C}^*$-algebra $B$ so that we can find some 
$u=(u_1,\dots,u_r)\in \mathbb M_{1\times r}(B\otimes \mathbb M_m)$ with $\|t-u\|
\leq\mu$ in $\mathbb M_{1\times r}(\mathbb B(H)\otimes \mathbb M_m)$.

Define a cp map $\psi_0:F\rightarrow B\otimes \mathbb M_m$ by $\psi_0(x)=u
\theta(x)u^*$.  Use the identification $\mathbb B(H)\cong f_{1,1}(\mathbb B(H)
\otimes \mathbb M_m)f_{1,1}$ to define a cp map $\psi:F\rightarrow B$ by 
\begin{equation}
\psi(x)=f_{1,1}\psi_0(x)f_{1,1},\quad x\in F.
\end{equation}
Finally
\begin{equation}
\|\phi-\psi\|_{\text{cb}}\leq \|\phi_0-\psi_0\|_{\text{cb}}\leq\|t-u\|\|t\|+\|t-u\|\|u\|\leq \mu +\mu(1+\mu),
\end{equation}
exactly as claimed.
\end{proof}

\begin{corollary}
Let $A$ and $B$ be ${\mathrm C}^*$-algebras on a Hilbert space $H$ with  $A
\subseteq_{\gamma}B$ for some $\gamma>0$. Then for each finite dimensional C$^*
$-algebra $F$ and order zero  map $\phi:F\rightarrow A$, there exists a cp map 
$\psi:F\rightarrow B$ satisfying 
\begin{equation}
\|\phi-\psi\|_{\mathrm{cb}}\leq (2\gamma+\gamma^2)
(2+2\gamma+\gamma^2).
\end{equation}
\end{corollary}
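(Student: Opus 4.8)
The plan is to deduce this directly from Theorem~\ref{Near.Perturb}, of which it is essentially a restatement that dispenses with the auxiliary finite set. Given the finite dimensional ${\mathrm C}^*$-algebra $F$ and the order zero map $\phi:F\rightarrow A$, Theorem~\ref{Near.Perturb} produces a finite subset $Y$ of the unit ball of $A$ with the following property: whenever $B$ is a ${\mathrm C}^*$-algebra on $H$ with $Y\subseteq_\gamma B$, there is a cp map $\psi:F\rightarrow B$ satisfying $\|\phi-\psi\|_{\mathrm{cb}}\leq(2\gamma+\gamma^2)(2+2\gamma+\gamma^2)$. So the only point to check is that the hypothesis $A\subseteq_\gamma B$ of the corollary implies $Y\subseteq_\gamma B$.

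This is immediate from Definition~\ref{Prelim.DefNear}: the set $Y$ lies in the unit ball of $A$, so for each $y\in Y$ the near containment $A\subseteq_\gamma B$ supplies an element $b\in B$ with $\|y-b\|\leq\gamma$, which is precisely the assertion $Y\subseteq_\gamma B$. Applying Theorem~\ref{Near.Perturb} with this $B$ then yields a cp map $\psi:F\rightarrow B$ with the claimed completely bounded estimate, and since the $\gamma$ appearing in the bound is the same as the near-containment constant, nothing further is required.

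I do not anticipate any genuine obstacle here: all the substance is carried by Theorem~\ref{Near.Perturb}, and the corollary merely records its conclusion in the form one actually wants downstream, namely in terms of the near containment $A\subseteq_\gamma B$ of the ambient algebras rather than of the $\phi$-dependent finite set $Y$. (If one prefers, one may first fix $\gamma'$ with $\gamma<\gamma'$ and argue with $\gamma'$, but this is unnecessary because the finite-set version of near containment in Definition~\ref{Prelim.DefNear} is already formulated with non-strict inequalities.)
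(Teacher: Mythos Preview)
Your proposal is correct and matches the paper's treatment: the paper gives no proof at all for this corollary, indicating it is immediate from Theorem~\ref{Near.Perturb}, and your one-line reduction (that $A\subseteq_\gamma B$ forces $Y\subseteq_\gamma B$ for any $Y$ in the unit ball of $A$) is exactly the intended observation.
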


We now work towards our embedding result for a near containment of a separable 
${\mathrm C}^*$-algebra of finite nuclear dimension. First let us recall the definition of nuclear 
dimension from \cite{Zacharias.NuclearDimension}.
\begin{definition}\label{Near.Prelim.ND}
Let $A$ be a ${\mathrm C}^*$-algebra and $n\geq 0$.  Say that $A$ has \emph{nuclear 
dimension} at most $n$, written $\dim_{{\mathrm{nuc}}}(A)\leq n$, if for each finite subset 
$X$ of $A$ and $\vp>0$, there exists a finite dimensional ${\mathrm C}^*$-algebra $F$ 
which decomposes as  a direct sum $F=F_0\oplus\dots\oplus F_n$ and maps $
\phi:A\rightarrow F$ and $\psi:F\rightarrow A$ such that $\psi\circ\phi\approx_{X,
\vp}\id_A$, $\phi$ is cpc and $\psi$ decomposes as $\psi=\sum_{i=0}^n\psi_i$, where each $\psi_i: F_i\to A$ is  order zero.
\end{definition}

The definition of nuclear dimension is a modification of the \emph{decomposition 
rank} from \cite{Winter.CoveringDimension}. The decomposition rank $\text{dr}(A)$ 
of $A$ is defined in the same way as the nuclear dimension, but with the additional 
requirement that the map $\psi$ in the definition above is also cpc.  Suprisingly the 
small change in the definition from decomposition rank to nuclear dimension considerably 
enlarges the class of ${\mathrm C}^*$-algebras with finite dimension (while retaining the 
permanence properties). Indeed in \cite{Winter.CoveringDimension} it is shown that 
a separable ${\mathrm C}^*$-algebra with finite decomposition rank is necessarily 
quasidiagonal and so stably finite, while in \cite{Zacharias.NuclearDimension} it is 
shown that the Cuntz algebras $\mathcal O_n$ (and all classifiable Kirchberg 
algebras) have finite nuclear dimension.  We need one final structural property of 
the cp approximations defining nuclear dimension (see \cite[Remark~2.2 (iv)]
{Zacharias.NuclearDimension}); this  is immediate in the unital case.  

\begin{proposition}\label{Near.NDProp}
Suppose that $A$ is a ${\mathrm C}^*$-algebra with $\dim_{{\mathrm{nuc}}}(A)\leq n$.  Given a 
finite set $X\subseteq A$ and $\varepsilon>0$, there exist $F$ and maps $\phi$ and $
\psi$ as in Definition \ref{Near.Prelim.ND} with the additional property that $\psi\circ
\phi$ is cpc.
\end{proposition}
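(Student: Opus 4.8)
The complete positivity of $\psi\circ\phi$ is automatic for any choice of $\phi$ and $\psi$ as in Definition \ref{Near.Prelim.ND}, since it is a composition of completely positive maps; the only point is to arrange that $\|\psi\circ\phi\|\le 1$. The plan is to settle the unital case by a rescaling argument and then reduce the general case to it by passing to a unitisation.

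Suppose first that $A$ is unital. Enlarging $X$ we may assume $1_A\in X$ and $X$ lies in the unit ball of $A$, and we may take $0<\varepsilon\le 1$. Applying $\dim_{\mathrm{nuc}}(A)\le n$ to $X$ and $\varepsilon/2$ yields $F=F_0\oplus\dots\oplus F_n$, a cpc map $\phi:A\to F$ and order zero maps $\psi_i:F_i\to A$ whose sum $\psi=\sum_{i=0}^n\psi_i$ satisfies $\psi\circ\phi\approx_{X,\varepsilon/2}\id_A$. As $\psi\circ\phi$ is completely positive and $A$ is unital, a completely positive map attains its norm at the unit, so $\|\psi\circ\phi\|=\|\psi\circ\phi(1_A)\|\le 1+\varepsilon/2$. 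Put $\lambda=(1+\varepsilon/2)^{-1}\in(0,1]$ and replace each $\psi_i$ by $\lambda\psi_i$: multiplication by a scalar in $[0,1]$ preserves both complete contractivity and the order zero property, so $\psi':=\lambda\psi=\sum_i(\lambda\psi_i)$ is again a sum of order zero maps, and $\psi'\circ\phi=\lambda(\psi\circ\phi)$ is cpc because $\|\psi'\circ\phi\|=\lambda\|\psi\circ\phi\|\le1$. Finally, for $x$ in the unit ball we have $(1-\lambda)\|\psi\circ\phi(x)\|\le(1-\lambda)(1+\varepsilon/2)=\varepsilon/2$, so for $x\in X$,
\begin{equation*}
\|\psi'\circ\phi(x)-x\|\le\|(1-\lambda)\psi\circ\phi(x)\|+\|\psi\circ\phi(x)-x\|\le\varepsilon/2+\varepsilon/2=\varepsilon,
\end{equation*}
and the unital case is proved.

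For the general case, let $\widetilde A$ be the minimal unitisation of $A$ as in Notation \ref{Prelim.Dagger}; unitisation does not increase nuclear dimension, so $\dim_{\mathrm{nuc}}(\widetilde A)\le n$. Applying the unital case to $X\cup\{1_{\widetilde A}\}\subseteq\widetilde A$ (with a suitably small tolerance) produces $F$, a cpc map $\phi:\widetilde A\to F$ and order zero maps $\psi_i:F_i\to\widetilde A$ with $\sum_i\psi_i$ composed with $\phi$ cpc and close to $\id_{\widetilde A}$. Restricting $\phi$ to $A$ is harmless, but the $\psi_i$ may take values in $\widetilde A$ rather than in $A$, and this requires some care. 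Writing each $F_i$ as a direct sum of matrix blocks, a block $M_m$ with $m\ge2$ causes no difficulty: by Proposition \ref{Near.OZ.Cone} the restriction of $\psi_i$ to such a block corresponds to a $^*$-homomorphism out of $C_0(0,1]\otimes M_m$, which has no one-dimensional representations, so its composition with the quotient map $\widetilde A\to\widetilde A/A$ vanishes and the block already maps into $A$. For the one-dimensional blocks one truncates: replacing the relevant positive contractions $p\in\widetilde A_+$ by $h^{1/2}ph^{1/2}\in A_+$, for a suitable element $h$ of an approximate unit of $A$, produces order zero maps with range in $A$, and one verifies — using that the original composition is close to $\id_{\widetilde A}$ on $X\cup\{|x|:x\in X\}$ — that the resulting compositions remain cpc and close to $\id_A$ on $X$.

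The substantive content is the unital case, where the key observation is simply that a completely positive map on a unital C$^*$-algebra attains its norm at the unit, so the composition overshoots $1$ by at most the approximation tolerance and a single rescaling of the order zero summands suffices; there is no need to disturb $\phi$. The main technical nuisance is the non-unital bookkeeping — arranging that $\phi$ interacts correctly with an approximate unit of $A$ and controlling the truncation errors on the one-dimensional summands of $F$ — which is routine but fiddly; this is precisely the point addressed in \cite[Remark~2.2 (iv)]{Zacharias.NuclearDimension}.
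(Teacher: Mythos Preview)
Your unital argument is exactly what the paper has in mind when it calls this case ``immediate'': a completely positive map on a unital ${\mathrm C}^*$-algebra attains its norm at the unit, so one rescales the order zero pieces by $(1+\varepsilon/2)^{-1}$. The paper itself gives no further proof and simply defers the non-unital case to \cite[Remark~2.2~(iv)]{Zacharias.NuclearDimension}.

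Your non-unital reduction, however, has a gap at the truncation step. You claim that after replacing each one-dimensional summand $p_j=\psi_j(1)\in\widetilde A_+$ by $h^{1/2}p_jh^{1/2}\in A_+$, the modified composition $\psi'\circ\phi|_A$ remains contractive. Write $p_j=a_j+\lambda_j 1_{\widetilde A}$ with $a_j\in A$; then
\[
p_j-h^{1/2}p_jh^{1/2}=(a_j-h^{1/2}a_jh^{1/2})+\lambda_j(1_{\widetilde A}-h),
\]
and while the first term can be made small by pushing $h$ far along an approximate unit, the second has norm $|\lambda_j|$ regardless of $h$ (since $\|1_{\widetilde A}-h\|=1$ for any positive contraction $h\in A$ when $A$ is non-unital). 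There is no a priori reason the scalar parts $\lambda_j$ are small: knowing that $\psi\circ\phi\approx\iota$ on $X\subseteq A$ only tells you that $\sum_j\phi_j(x)\lambda_j$ is small for $x\in X$, not that the individual $\lambda_j$ are. Consequently the perturbation $\tilde\psi''-\psi''$ is not uniformly small, and the estimate $\|(\psi'+\tilde\psi'')\circ\phi\|\le 1$ does not follow from $\|(\psi'+\psi'')\circ\phi\|\le 1$.

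This is fixable (for instance by discarding those one-dimensional blocks whose scalar part is large, arguing that they contribute negligibly to the approximation on $A$, and then rescaling once more), but it is genuinely more work than ``one verifies''. Your instinct to flag this as the fiddly point and point to the reference is sound; just be aware that what you have written does not yet constitute a proof of the non-unital case.
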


Given a near inclusion $A\subseteq_\gamma B$, where $A$ has finite nuclear 
dimension, we can now  approximate in the point-norm topology the inclusion of A into the underlying $\mathbb B(H)$ by cpc maps $A\rightarrow B$.  
\begin{lemma}\label{Near.DirectLem}
Let $n\geq 0$.  Let $D$ be a ${\mathrm C}^*$-algebra with $\dim_{{\mathrm{nuc}}}(D)\leq n$, let $A
$ be a ${\mathrm C}^*$-algebra represented on the Hilbert space $H$ and let $\theta:D
\rightarrow A$ be an order zero map. Given a finite subset $X$ of the unit ball of $D
$ and $\vp>0$, there exists another finite subset $Y$ of the unit ball of $A$ with 
the following property.  If $B$ is another ${\mathrm C}^*$-algebra on $H$ with $Y
\subseteq_\gamma B$ for some $\gamma>0$,  then there exists a cpc map $\phi:D
\rightarrow B$ with 
\begin{equation}\label{Near.DirectLem.1}
\|\phi(x)-\theta(x)\|\leq2(n+1)(2\gamma+\gamma^2)(2+2\gamma+\gamma^2)+
\vp,\quad x\in X.
\end{equation}
\end{lemma}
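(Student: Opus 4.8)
The plan is to reduce to the order zero perturbation theorem (Theorem \ref{Near.Perturb}) by running the identity of $D$ through a finite dimensional algebra, using the finite nuclear dimension of $D$. First I would apply Proposition \ref{Near.NDProp} to the given finite set $X$ and tolerance $\varepsilon$: since $\dim_{\mathrm{nuc}}(D)\le n$, this produces a finite dimensional ${\mathrm C}^*$-algebra $F=F_0\oplus\cdots\oplus F_n$, a cpc map $\phi_0:D\to F$, and order zero maps $\psi_i:F_i\to D$ such that, writing $\psi=\sum_{i=0}^n\psi_i$, we have $\psi\circ\phi_0\approx_{X,\varepsilon}\id_D$ \emph{and}, crucially, $\psi\circ\phi_0$ is cpc. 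Let $\phi_{0,i}:D\to F_i$ denote $\phi_0$ followed by the canonical (completely contractive) projection $F\to F_i$, so that $\psi\circ\phi_0=\sum_i\psi_i\circ\phi_{0,i}$.

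Next, each composite $\theta\circ\psi_i:F_i\to A$ is again an order zero map: it is cpc, and if $e,f\in F_i^+$ with $ef=0$ then $\psi_i(e)\psi_i(f)=0$ with $\psi_i(e),\psi_i(f)\in D^+$, whence $\theta(\psi_i(e))\theta(\psi_i(f))=0$ because $\theta$ is order zero. Hence Theorem \ref{Near.Perturb} applies to each $\theta\circ\psi_i$ and produces a finite subset $Y_i$ of the unit ball of $A$ such that, whenever $B$ is a ${\mathrm C}^*$-algebra on $H$ with $Y_i\subseteq_\gamma B$, there is a cp map $\rho_i:F_i\to B$ with $\|\theta\circ\psi_i-\rho_i\|_{\mathrm{cb}}\le\kappa(\gamma)$, where $\kappa(\gamma):=(2\gamma+\gamma^2)(2+2\gamma+\gamma^2)$. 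The set $Y:=\bigcup_{i=0}^n Y_i$ is the finite subset of the unit ball of $A$ claimed by the lemma.

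Now suppose $B$ is given with $Y\subseteq_\gamma B$, so all the $\rho_i$ exist, and form the cp map $\phi:=\sum_{i=0}^n\rho_i\circ\phi_{0,i}:D\to B$. Linearity of $\theta$ gives $\sum_i(\theta\circ\psi_i)\circ\phi_{0,i}=\theta\circ\psi\circ\phi_0$, so since each $\phi_{0,i}$ is completely contractive,
\[
\|\phi-\theta\circ\psi\circ\phi_0\|_{\mathrm{cb}}\le\sum_{i=0}^n\|\theta\circ\psi_i-\rho_i\|_{\mathrm{cb}}\le(n+1)\kappa(\gamma).
\]
As $\theta$ is cpc and $\psi\circ\phi_0$ is cpc, the composite $\theta\circ\psi\circ\phi_0$ is cpc, so $\|\phi\|_{\mathrm{cb}}\le 1+(n+1)\kappa(\gamma)$. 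The map $\phi$ need not be contractive; the point is to normalise not by $\|\phi\|$ but by the fixed constant $1+(n+1)\kappa(\gamma)$. Setting $\phi':=(1+(n+1)\kappa(\gamma))^{-1}\phi$, which is cpc, one gets $\|\phi'-\phi\|_{\mathrm{cb}}\le(n+1)\kappa(\gamma)$, hence $\|\phi'-\theta\circ\psi\circ\phi_0\|_{\mathrm{cb}}\le 2(n+1)\kappa(\gamma)$, and finally, for $x\in X$,
\[
\|\phi'(x)-\theta(x)\|\le\|\phi'-\theta\circ\psi\circ\phi_0\|_{\mathrm{cb}}+\|\theta\|\,\|(\psi\circ\phi_0)(x)-x\|\le 2(n+1)\kappa(\gamma)+\varepsilon,
\]
which is exactly \eqref{Near.DirectLem.1}.

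The step I expect to be the main obstacle is this last one: Theorem \ref{Near.Perturb} delivers only cp (not cpc) maps $\rho_i$, and their combination $\phi$ may have norm larger than $1$, so rescaling by the genuine norm $\|\phi\|$ would introduce an error quadratic in $\kappa(\gamma)$ and destroy the stated linear estimate. Both ingredients are needed to avoid this: Proposition \ref{Near.NDProp} (so that $\psi\circ\phi_0$, and therefore $\theta\circ\psi\circ\phi_0$, is cpc) pins $\|\phi\|_{\mathrm{cb}}$ to within $(n+1)\kappa(\gamma)$ of $1$, and dividing by the fixed constant $1+(n+1)\kappa(\gamma)$ rather than by $\|\phi\|$ keeps all losses linear. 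If $(n+1)\kappa(\gamma)\ge 1$ the asserted bound is trivial for any cpc map, so nothing is lost in that regime.
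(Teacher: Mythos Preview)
Your proof is correct and follows essentially the same route as the paper's: factor through the nuclear-dimension approximation (via Proposition \ref{Near.NDProp}), apply Theorem \ref{Near.Perturb} to each order zero composite $\theta\circ\psi_i$, sum, and renormalise. The only cosmetic difference is your normalisation step: the paper divides $\phi$ by its actual cb-norm (when that exceeds $1$), which already gives $\|\phi'-\phi\|_{\mathrm{cb}}=\|\phi\|_{\mathrm{cb}}-1\le (n+1)\kappa(\gamma)$, a linear bound, so your worry about a quadratic loss from ``rescaling by the genuine norm'' is unfounded---both normalisations yield the same estimate.
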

\begin{proof}
Given a finite subset $X$ of the unit ball of $D$ and $\varepsilon>0$, we first use the 
definition of nuclear dimension in conjuction with Proposition \ref{Near.NDProp} to 
find a finite dimensional ${\mathrm C}^*$-algebra $F$ and cp maps $\psi_1:D\rightarrow F$ 
and $\psi_2:F\rightarrow D$ such that
\begin{itemize}
\item[\rm (i)] $\psi_1$ is cpc;
\item[\rm (ii)] $F$ decomposes as $F_0\oplus\dots\oplus F_n$ and $\psi_2$ 
decomposes as  $\psi_2=\sum_{i=0}^n\psi_{2,i}$, where each $\psi_{2,i}: F_i\to D$ is 
cpc and order zero;
\item[\rm (iii)] $\psi_2\circ\psi_1$ is contractive and $\psi_2\circ\psi_1\approx_{X,\varepsilon}\id_D$.
\end{itemize}
Theorem \ref{Near.Perturb} enables us to find a finite set $Y$ in the unit ball of $A$ 
such that whenever $B$ is another ${\mathrm C}^*$-algebra on $H$ with $Y\subseteq_
\gamma B$, we can find cp maps $\tilde{\psi}_{2,i}:F_i\rightarrow B$ with 
\begin{equation}
\|\theta\circ\psi_{2,i}-\tilde{\psi}_{2,i}\|_{\text{cb}}\leq (2\gamma+\gamma^2)
(2+2\gamma+\gamma^2),
\end{equation}
for $i=0,\dots,n$.  This is the set $Y$ required by the lemma as given such maps, 
we can define a cp map $\tilde{\psi}_2:F\rightarrow B$ by $\sum_{i=0}^n\tilde{\psi}_{2,i}$ and this satisfies
\begin{equation}
\|\theta\circ\psi_2-\tilde{\psi}_2\|_\text{cb}\leq (n+1)(2\gamma+\gamma^2)
(2+2\gamma+\gamma^2).
\end{equation}
We define a cp map $\tilde{\phi}:D\rightarrow B$ by $\tilde{\phi}=\tilde{\psi}_2\circ
\psi_1$.  Now
\begin{equation}
\|\tilde{\phi}-\theta\circ\psi_2\circ\psi_1\|_\text{cb}\leq (n+1)(2\gamma+\gamma^2)
(2+2\gamma+\gamma^2)
\end{equation}
so
\begin{equation}
\|\tilde{\phi}\|_\text{cb}\leq 1+(n+1)(2\gamma+\gamma^2)(2+2\gamma+\gamma^2).
\end{equation}
If $\tilde{\phi}$ is already cpc, we define $\phi=\tilde{\phi}$.  Otherwise define $\phi=
\tilde{\phi}/\|\tilde{\phi}\|_\text{cb}$.  Thus
\begin{equation}
\|\phi-\tilde{\phi}\|_\text{cb}\leq\|\tilde{\phi}\|_\text{cb}-1\leq(n+1)(2\gamma+
\gamma^2)(2+2\gamma+\gamma^2).
\end{equation}
For $x\in X$,
\begin{align}
\|\phi(x)-\theta(x)\|&\leq\|\phi(x)-\tilde{\phi}(x)\|+\|\tilde{\phi}(x)-(\theta\circ\psi_2\circ
\psi_1)(x)\|+\|(\theta\circ\psi_2\circ\psi_1)(x)-\theta(x)\|\nonumber\\
&\leq 2(n+1)(2\gamma+\gamma^2)(2+2\gamma+\gamma^2)+\vp,
\end{align}
establishing the result.
\end{proof}

Taking $\theta$ to be the identity map on $A$, the next corollary is immediate, since if 
$A\subset_\gamma B$, then we can find $\gamma'<\gamma$ with $A
\subseteq_{\gamma'}B$ and take 
\begin{equation}
\vp=2(n+1)[(2\gamma+\gamma^2)(2+2\gamma+\gamma^2)-(2\gamma'+\gamma'^2)(2+2\gamma'+\gamma'^2)]
\end{equation}
 in the previous 
lemma.
\begin{corollary}\label{Near.Cpc}
Let $A$ and $B$ be ${\mathrm C}^*$-algebras on a Hilbert space $H$ with $A\subset_\gamma B$ for 
some $\gamma>0$ and suppose that $\dim_{\mathrm{nuc}}(A)\leq n$ for some $n\geq 
0$.  For any finite subset $X$ of the unit ball of $A$, there exists a cpc map $\phi:A
\rightarrow B$ with
\begin{equation}
\|\phi(x)-x\|\leq2(n+1)(2\gamma+\gamma^2)(2+2\gamma+\gamma^2),\quad x\in X.
\end{equation}
\end{corollary}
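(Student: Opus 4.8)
The plan is to read off the corollary as an immediate specialisation of Lemma~\ref{Near.DirectLem}, with the order zero map chosen to be the identity. Indeed, $\id_A\colon A\to A$ is a $*$-homomorphism, hence in particular a cpc order zero map in the sense of Definition~\ref{Near.DefOrderZero}, and $\dim_{\mathrm{nuc}}(A)\le n$ by hypothesis. So I would invoke Lemma~\ref{Near.DirectLem} with the dictionary: its algebra $D$ of finite nuclear dimension is the corollary's $A$, its ambient algebra acting on $H$ is again the corollary's $A$, its order zero map $\theta$ is $\id_A$, and its target $B$ is the corollary's $B$.

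The only preliminary manoeuvre is to convert the strict near inclusion into a non-strict one. By Definition~\ref{Prelim.DefNear}, the hypothesis $A\subset_\gamma B$ furnishes some $\gamma'<\gamma$ with $A\subseteq_{\gamma'}B$. I then fix the tolerance
\[
\vp \;=\; 2(n+1)\bigl[(2\gamma+\gamma^2)(2+2\gamma+\gamma^2)-(2\gamma'+\gamma'^2)(2+2\gamma'+\gamma'^2)\bigr],
\]
which is strictly positive because $t\mapsto(2t+t^2)(2+2t+t^2)$ is a product of two positive strictly increasing functions on $(0,\infty)$ and $\gamma'<\gamma$. Feeding the given finite set $X$ and this $\vp$ into Lemma~\ref{Near.DirectLem} produces a finite subset $Y$ of the unit ball of $A$; since $Y$ lies in the unit ball of $A$ and $A\subseteq_{\gamma'}B$, we automatically have $Y\subseteq_{\gamma'}B$, so the conclusion of the lemma is available with its near-inclusion constant taken to be $\gamma'$.

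That conclusion supplies a cpc map $\phi\colon A\to B$ with $\|\phi(x)-x\|\le 2(n+1)(2\gamma'+\gamma'^2)(2+2\gamma'+\gamma'^2)+\vp$ for all $x\in X$, and by the choice of $\vp$ the right-hand side collapses to exactly $2(n+1)(2\gamma+\gamma^2)(2+2\gamma+\gamma^2)$, which is the asserted estimate. I do not expect any genuine obstacle here: all the substantive content — the $(n+1)$-term order zero factorisation of $\id_A$ coming from finite nuclear dimension together with Proposition~\ref{Near.NDProp}, and the perturbation of each order zero summand via Theorem~\ref{Near.Perturb} — is already packaged inside Lemma~\ref{Near.DirectLem}. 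The only points needing a line of care are the passage from $\subset_\gamma$ to $\subseteq_{\gamma'}$, so that the finite set $Y$ really sits within $\gamma'$ of $B$, and the elementary monotonicity remark that makes $\vp$ positive and lets it absorb the gap between the $\gamma$- and $\gamma'$-bounds.
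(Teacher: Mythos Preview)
Your proposal is correct and follows exactly the paper's own argument: the paper likewise takes $\theta=\id_A$ in Lemma~\ref{Near.DirectLem}, passes from $A\subset_\gamma B$ to $A\subseteq_{\gamma'}B$ for some $\gamma'<\gamma$, and chooses the same $\vp$ so that the estimate collapses to the stated bound. You have even supplied the monotonicity justification for $\vp>0$ that the paper leaves implicit.
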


The intertwining argument of Lemma \ref{Close.Iso.Tech} combines with the previous corollary to give immediately  the following quantitative version of Theorem \ref{Intro.Near}. The number 20 appearing in \eqref{eq6.2000} is an integer estimate for $8\sqrt{6}$ from Lemma \ref{Close.Iso.Tech}.
\begin{theorem}\label{Near.Embedd}
Let $n\geq 0$. Let $A\subseteq_\gamma B$ be a near inclusion of ${\mathrm C}^*$-algebras on a Hilbert 
space $H$, let $\eta=2(n+1)(2\gamma+\gamma^2)(2+2\gamma+
\gamma^2)$, and suppose that $A$ is separable with nuclear dimension at most 
$n$.  If $\eta<1/210000$, then $A$ embeds into $B$.  Moreover, 
for each finite subset $X$ of the unit ball of $A$, there exists an embedding $
\theta:A\rightarrow B$ with
\begin{equation}\label{eq6.2000}
\|\theta(x)-x\|\leq20\eta^{1/2}.
\end{equation}
\end{theorem}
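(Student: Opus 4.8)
The plan is to feed Corollary~\ref{Near.Cpc} into the one-sided intertwining argument of Lemma~\ref{Close.Iso.Tech}; all the substantive work has already been carried out, so what remains is to verify the hypotheses and align the constants. First note that a $\mathrm{C}^*$-algebra of finite nuclear dimension is nuclear, and $A$ is separable by hypothesis, so the standing assumptions on $A$ in Lemma~\ref{Close.Iso.Tech} are satisfied; that lemma imposes no conditions on $B$, in keeping with Theorem~\ref{Intro.Near}. Since $\eta<1/210000$ is a strict inequality and $\gamma\mapsto 2(n+1)(2\gamma+\gamma^2)(2+2\gamma+\gamma^2)$ is continuous and strictly increasing, I can pick $\gamma_1>\gamma$ so close to $\gamma$ that $\eta_1:=2(n+1)(2\gamma_1+\gamma_1^2)(2+2\gamma_1+\gamma_1^2)$ still satisfies $\eta_1<1/210000$; and $A\subseteq_\gamma B$ together with $\gamma<\gamma_1$ gives $A\subset_{\gamma_1}B$. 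Then Corollary~\ref{Near.Cpc}, applied with $\gamma_1$ in place of $\gamma$, provides for every finite subset $Z$ of the unit ball of $A$ a cpc map $\phi:A\to B$ with $\phi\approx_{Z,\eta_1}\iota$. This is exactly the hypothesis required to invoke Lemma~\ref{Close.Iso.Tech} with the constant $\eta_1$.

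Now fix a finite subset $X$ of the unit ball of $A$ and a constant $0<\mu<1/2000$. Applying Lemma~\ref{Close.Iso.Tech} with this $\eta_1$ and $X_A=X$ yields an injective $*$-homomorphism $\theta:A\to B$ — that is, an embedding $A\hookrightarrow B$ — with $\theta\approx_{X,\,8\sqrt{6}\eta_1^{1/2}+\eta_1+\mu}\iota$. It remains only to absorb this error term into $20\eta^{1/2}$. Because $8\sqrt{6}<20$ and $\eta<1/210000$ forces $\eta^{1/2}<20-8\sqrt{6}$, one has $8\sqrt{6}\eta^{1/2}+\eta<20\eta^{1/2}$ with room to spare; hence, letting $\gamma_1\downarrow\gamma$ (so $\eta_1\to\eta$) and choosing $\mu$ small enough, we can guarantee $8\sqrt{6}\eta_1^{1/2}+\eta_1+\mu\le 20\eta^{1/2}$. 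This establishes \eqref{eq6.2000} and, a fortiori, the embedding $A\hookrightarrow B$.

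I do not anticipate a genuine obstacle: the real content of the theorem is packaged in Theorem~\ref{Near.Perturb} (perturbing order zero maps $\mathbb M_k\to A$ to nearby cp maps into $B$), Lemma~\ref{Near.DirectLem} and Corollary~\ref{Near.Cpc} (assembling these, via the order zero decomposition of a nuclear-dimension factorisation, into cpc maps $A\to B$ close to the inclusion on finite sets), and Lemma~\ref{Close.Iso.Tech} (the intertwining that promotes such maps to an embedding). The only points demanding any care are the constant bookkeeping just described, the passage from the strict hypothesis $\subset_{\gamma_1}$ of Corollary~\ref{Near.Cpc} to the stated near inclusion $\subseteq_\gamma$, and the elementary check that the integer $20$ dominates $8\sqrt{6}$ together with the lower-order terms throughout the admissible range $\eta<1/210000$.
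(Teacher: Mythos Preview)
Your proposal is correct and follows exactly the approach the paper takes: the paper's proof is a one-sentence observation that Corollary~\ref{Near.Cpc} supplies the cpc maps demanded by Lemma~\ref{Close.Iso.Tech}, together with the remark that $20$ is an integer upper bound for $8\sqrt{6}$. Your handling of the passage from $A\subseteq_\gamma B$ to the strict near inclusion $A\subset_{\gamma_1}B$ required by Corollary~\ref{Near.Cpc}, and your constant bookkeeping, are exactly the details the paper leaves implicit.
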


\begin{remark}
The hypotheses on $A$ in the previous theorem  are, in particular, satisfied for all 
separable simple nuclear C$^{*}$-algebras presently covered by known 
classification theorems. This includes Kirchberg algebras satisfying the UCT, simple 
unital  C$^{*}$-algebras with finite decomposition rank for which projections 
separate traces (and  also satisfying the UCT), and transformation group 
C$^{*}$-algebras associated to compact minimal uniquely ergodic dynamical systems (see 
\cite{Zacharias.NuclearDimension} and \cite{Toms.MinimalDynamicsPNAS}).  
\end{remark}

\section{Applications}\label{Direct}

In \cite{Bratteli.AF} Bratteli initiated the study of separable approximately finite 
dimensional (AF) ${\mathrm C}^*$-algebras, namely those separable ${\mathrm C}^*$-algebras arising 
as direct limits of finite dimensional ${\mathrm C}^*$-algebras.  He gave a local 
characterisation of these algebras, showing that a separable ${\mathrm C}^*$-algebra is AF if, 
and only if, for each  $\varepsilon>0$ and each finite set $X\subseteq A$, there exists a 
finite dimensional ${\mathrm C}^*$-subalgebra $A_0$ of $A$ such that $X
\subset_{\varepsilon}A_0$. By changing $\varepsilon$ if necessary, we can scale 
the set $X$ above so that it lies in the unit ball of $A$. This characterisation can be 
weakened: it is not necessary to be able to approximate a finite set of the unit ball 
of $A$ \emph{arbitrarily} closely by a finite dimensional ${\mathrm C}^*$-algebra; an approximation  up to a fixed small tolerance is sufficient to imply that $A$ is AF. The 
proposition below states this precisely and is implicit in the proof of \cite[Theorem 
6.1]{Christensen.NearInclusions}.  
\begin{proposition}
There exists a constant $\gamma_0>0$ with the following property. Let $A$ be a 
separable ${\mathrm C}^*$-algebra and suppose that for all finite sets $X$ in the unit ball of 
$A$, there exists a finite dimensional ${\mathrm C}^*$-subalgebra $A_0$ of $A$ such that $X
\subset_{\gamma_0}A_0$. Then $A$ is AF.
\end{proposition}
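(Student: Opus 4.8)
The plan is to realise $A$ directly as the closure of an increasing chain $F_0\subseteq F_1\subseteq\cdots$ of finite-dimensional ${\mathrm C}^*$-subalgebras, which is exactly what it means for a separable ${\mathrm C}^*$-algebra to be AF. Fix a dense sequence $\{a_k\}_{k\ge1}$ in the unit ball of $A$. The guiding idea is that, although the hypothesis only ever approximates a finite set up to the fixed precision $\gamma_0$, one can pass from $F_{m-1}$ to a larger $F_m$ in such a way that the errors in the approximations $p_k^{(m)}\in F_m$ of the $a_k$ contract geometrically; since $\{a_k\}$ is dense, this forces $A=\overline{\bigcup_mF_m}$.

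For the inductive step, suppose $F_{m-1}\subseteq A$ together with approximants $p_k^{(m-1)}\in F_{m-1}$ of $a_k$ ($k<m$) are given. Apply the hypothesis to the finite set made up of a fine net of the (compact) unit ball of $F_{m-1}$, the element $a_m$, and the normalised residuals $(a_k-p_k^{(m-1)})/\|a_k-p_k^{(m-1)}\|$ for $k<m$, obtaining a finite-dimensional $A_m\subseteq A$ with all of these within $\gamma_0$ of $A_m$; in particular $F_{m-1}$ is, as an algebra, within roughly $\gamma_0$ of $A_m$. Since $A_m$ and $F_{m-1}$ are finite-dimensional, hence nuclear and separable, Proposition \ref{Prelim.PtArveson} provides cpc maps $F_{m-1}\to A_m$ approximating the inclusion on finite sets, and Lemma \ref{Close.Iso.Tech} then upgrades these to an injective $*$-homomorphism $\alpha_m:F_{m-1}\hookrightarrow A_m$ that is close to the inclusion on a prescribed finite set. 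One realigns the copy $\alpha_m(F_{m-1})\subseteq A_m$ back onto $F_{m-1}$ by a unitary $u_m\in A^\dagger$ close to the identity, and sets $F_m:=u_m^*A_mu_m$: this is a finite-dimensional subalgebra of $u_m^*Au_m=A$ (as $A$ is an ideal in $A^\dagger$) which contains $F_{m-1}$. Finally, taking $\widehat r_k\in A_m$ within $\gamma_0\|a_k-p_k^{(m-1)}\|$ of $a_k-p_k^{(m-1)}$ and $\widehat a_m\in A_m$ within $\gamma_0$ of $a_m$ (again from the hypothesis), put $p_k^{(m)}:=p_k^{(m-1)}+u_m^*\widehat r_ku_m\in F_m$ for $k<m$ and $p_m^{(m)}:=u_m^*\widehat a_mu_m\in F_m$. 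Writing $r_k^{(m-1)}:=a_k-p_k^{(m-1)}$, the basic estimate is
\begin{equation}
 \|a_k-p_k^{(m)}\|\le\|r_k^{(m-1)}-\widehat r_k\|+\|\widehat r_k-u_m^*\widehat r_ku_m\|\le\big(\gamma_0+2\|u_m-1\|(1+\gamma_0)\big)\|r_k^{(m-1)}\|,
\end{equation}
so if $\gamma_0$ (and hence $\|u_m-1\|$) is small enough that the bracketed factor stays below $1$, the residuals decay geometrically, $p_k^{(m)}\to a_k$ for every $k$, and $A=\overline{\bigcup_mF_m}$ as desired; this is the constraint that fixes $\gamma_0$ (along with the smallness needed to apply Lemma \ref{Close.Iso.Tech}).

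The main obstacle — and the real content — is carrying out this procedure uniformly in the dimension: since $\dim F_m\to\infty$ while $\gamma_0$ is fixed, the realignment of $F_{m-1}$ into $A_m$ must produce $u_m$ with $\|u_m-1\|$ controlled by $\gamma_0$ alone, with no dependence on $\dim F_{m-1}$. This is delicate because a norm-one element of a large matrix algebra has ``matrix-unit complexity'' that grows with the dimension, so one cannot simply transport the old approximants $p_k^{(m-1)}$ along $\alpha_m$. The way around it is to exploit the freedom in Lemma \ref{Close.Iso.Tech} to prescribe the finite set on which $\alpha_m$ is close to the inclusion (matrix units of $F_{m-1}$, the units of its simple summands, suitable partial sums of diagonal matrix units), so that all the quantities entering the construction of $u_m$ are bounded independently of $\dim F_{m-1}$, and then to correct \emph{only} by a unitary close to the identity, whose conjugation moves norm-one elements by $O(\|u_m-1\|)$ irrespective of dimension. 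This dimensional bookkeeping, woven together with the geometric decay of the residuals, is precisely what is implicit in the proof of \cite[Theorem~6.1]{Christensen.NearInclusions}.
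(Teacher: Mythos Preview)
Your approach is essentially correct and faithfully reconstructs the argument of \cite[Theorem~6.1]{Christensen.NearInclusions}, which is all the paper itself offers as proof (it does not give an independent argument). Two remarks are worth making.

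First, invoking Lemma~\ref{Close.Iso.Tech} to produce the embedding $\alpha_m:F_{m-1}\hookrightarrow A_m$ is heavier machinery than needed: that lemma rests on Haagerup's theorem that nuclear implies amenable, whereas finite-dimensional ${\mathrm C}^*$-algebras are (weakly) semiprojective, so a $(Y,\delta)$-approximate $*$-homomorphism out of $F_{m-1}$ can be perturbed directly to a genuine one with elementary, dimension-independent bounds. This is how the original 1980 argument proceeds, and it is also the route the paper itself takes in the generalisation Theorem~\ref{Direct.Main} (via Lemma~\ref{Direct.Easy} and Proposition~\ref{Direct.WSP.ApproxHm}).

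Second, the passage from $\alpha_m$ to a unitary $u_m\in A^\dagger$ with $u_m^*\alpha_m(x)u_m=x$ \emph{exactly} for all $x\in F_{m-1}$ is not a consequence of the paper's averaging lemmas: Lemma~\ref{Avg.2} only yields $\mathrm{Ad}(u)\circ\alpha_m\approx_{X,\varepsilon}\iota$, which would not give the genuine inclusion $F_{m-1}\subseteq F_m$. What is needed here is the classical finite-dimensional perturbation fact (see \cite[\S5]{Christensen.NearInclusions}) that two close systems of matrix units are exactly unitarily conjugate by a unitary close to the identity, with bounds independent of the rank. You correctly flag this dimension-independent realignment as the crux of the argument, but it should be cited as a separate input rather than absorbed into Lemma~\ref{Close.Iso.Tech}.
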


Our first objective in this section is to generalise this last result to other inductive 
limits, which admit a local characterisation.  In \cite{Elliott.RealRankZero}, Elliott 
gave a local characterisation of the separable $A\mathbb T$-algebras (those C$^*
$-algebras arising as direct limits of algebras of the form $C(\mathbb T)\otimes F$, 
where $F$ is finite dimensional).  Loring developed a theory of finitely presented 
${\mathrm C}^*$-algebras and showed that local characterisations are possible for inductive 
limits of finitely presented weakly semiprojective ${\mathrm C}^*$-algebras. There are many 
examples of such algebras, including the  dimension drop intervals used in 
\cite{Jiang.Z}.  We refer to Loring's monograph \cite{Loring.LiftingBook} for more 
examples and background information on these concepts (see also \cite{EK}).  The proposition below is 
Lemma 15.2.2 of \cite{Loring.LiftingBook}, scaling the finite sets involved into the 
unit ball.
\begin{proposition}[Loring]
Suppose that $A$ is a ${\mathrm C}^*$-algebra containing a (not necessarily nested) sequence of 
 ${\mathrm C}^*$-subalgebras $A_n$ with the property that for each finite set $X$ of the unit 
ball of $A$ and each $\varepsilon>0$, there exists $n\in\mathbb N$ with $X\subseteq_
\varepsilon A_n$.  If each $A_n$ is weakly semiprojective and finitely presented, 
then $A$ is isomorphic to a direct limit $\displaystyle{\lim_{\rightarrow}}(A_{k_n},
\phi_n)$ for some subsequence $\{k_n\}_{n=1}^\infty$ and some connecting 
$*$-homomorphisms $\phi_n:A_{k_n}\rightarrow A_{k_{n+1}}$.
\end{proposition}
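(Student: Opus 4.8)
This proposition is due to Loring, so I only indicate how one would prove it from scratch; the idea is to run an Elliott-style approximate intertwining between the (non-nested) family $\{A_n\}$ and $A$ itself, using weak semiprojectivity to upgrade approximate embeddings into genuine $*$-homomorphisms.

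The one structural input I would isolate first is that a finitely presented weakly semiprojective C$^*$-algebra is \emph{weakly stable}: writing $A_n=\langle g_1,\dots,g_r\mid p_1,\dots,p_s\rangle$, for every finite $G\subseteq A_n$ and $\vp>0$ there is $\delta>0$ such that whenever a C$^*$-algebra $C$ contains elements $b_1,\dots,b_r$ with $\|p_j(b_1,\dots,b_r)\|<\delta$ for all $j$, there is a $*$-homomorphism $\chi\colon A_n\to C$ with $\chi(g_i)$ within $\vp$ of $b_i$ for every $i$. This is extracted from the defining lifting property of weakly semiprojective algebras by passing to a sequence algebra $\prod_m C_m/\bigoplus_m C_m$, and it is the only place semiprojectivity is used; since $A_n$ is generated by the $g_i$, closeness on generators propagates (with polynomial-dependent constants) to closeness on any prescribed finite subset of $A_n$.

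Next comes the bookkeeping. Fix a dense sequence in the unit ball of $A$, a finite generating set for each $A_n$ together with finite data witnessing its relations, and a summable sequence $\vp_n>0$. I would construct inductively a subsequence $k_1<k_2<\cdots$, increasing finite sets $\mathcal F_n\subseteq A_{k_n}$ that exhaust each $A_{k_j}$ and satisfy $\phi_{n-1}(\mathcal F_{n-1})\subseteq_{\vp_n}\mathcal F_n$, and $*$-homomorphisms $\phi_n\colon A_{k_n}\to A_{k_{n+1}}$ with $\iota_{k_{n+1}}\circ\phi_n\approx_{\mathcal F_n,\vp_n}\iota_{k_n}$, arranging also that the first $n$ terms of the dense sequence of $A$ lie within $\vp_n$ of $A_{k_n}$. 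The inductive step uses the hypothesis twice: first to find $k_{n+1}>k_n$ so that the (rescaled) generators and relation data of $A_{k_n}$ are approximated within the weak-stability tolerance $\delta_n$ of $A_{k_n}$ by elements of $A_{k_{n+1}}$ — these approximants then satisfy the relations of $A_{k_n}$ to within a $\delta_n$-dependent error, since the generators satisfy them exactly — and then weak stability of $A_{k_n}$ yields $\phi_n$; enlarging $k_{n+1}$ once more gives the exhaustion clause for the dense sequence of $A$.

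Finally I would pass to the limit. The relations $\iota_{k_{n+1}}\circ\phi_n\approx_{\mathcal F_n,\vp_n}\iota_{k_n}$ with $\sum_n\vp_n<\infty$ together with $\phi_{n-1}(\mathcal F_{n-1})\subseteq_{\vp_n}\mathcal F_n$ make $m\mapsto\iota_{k_m}(\phi_{m-1}\cdots\phi_n(x))$ Cauchy in $A$ for each $x\in A_{k_n}$, defining a $*$-homomorphism $\Phi\colon\varinjlim(A_{k_n},\phi_n)\to A$ which, writing $\bar x$ for the canonical image of $x\in A_{k_n}$ in the limit, agrees with $\iota_{k_n}$ on the $\bar x$ up to $\sum_{m\ge n}\vp_m$. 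Since each $\iota_{k_n}$ is an isometric inclusion, $\|\Phi(\bar x)\|=\lim_m\|\iota_{k_m}(\phi_{m-1}\cdots\phi_n(x))\|=\|\bar x\|$ in the inductive limit, so $\Phi$ is isometric on a dense subalgebra, hence injective; the exhaustion clause makes $\Phi$ have dense, hence full, range, so $\Phi$ is the desired isomorphism. I expect the weak-stability step to be the crux: pinning down the quantitative perturbation statement with a tolerance $\delta_n$ that must be fixed \emph{before} $k_{n+1}$ is available, and threading it through the bookkeeping so that the finite sets $\mathcal F_n$, the errors $\vp_n$, the Cauchy condition, and the exhaustion requirement all hold simultaneously.
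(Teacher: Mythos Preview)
The paper does not give its own proof of this proposition; it is quoted as Lemma 15.2.2 of Loring's monograph with no argument supplied. Your sketch is correct and follows precisely the standard line that Loring takes: extract weak stability from weak semiprojectivity of a finitely presented algebra, inductively select $k_{n+1}$ so that the generators (and relation witnesses) of $A_{k_n}$ are well approximated inside $A_{k_{n+1}}$, invoke weak stability to obtain genuine connecting maps $\phi_n$, and pass to the limit via an Elliott-type approximate intertwining. Your injectivity and surjectivity arguments are fine; the identity $\|\Phi(\bar x)\|=\lim_m\|\phi_{m-1}\cdots\phi_n(x)\|=\|\bar x\|$ is exactly how isometry is seen. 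The paper's own Theorem \ref{Direct.Main} later runs a quantitatively sharper version of this same argument (with a fixed tolerance replacing the arbitrary $\varepsilon$), so your outline is also consistent with the methods developed there.
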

Provided that the building blocks $A_n$ are all nuclear, the arbitrary tolerance $
\varepsilon>0$ appearing above can be replaced by the fixed quantity $1/120000$. The only 
fact we require about finitely presented weakly semiprojective ${\mathrm C}^*$-algebras is the 
following easy proposition, which is immediate from the definition of weak stability of 
a finite presentation of a ${\mathrm C}^*$-algebra.
\begin{proposition}\label{Direct.WSP.ApproxHm}
Let $A$ be a finitely presented, weakly semiprojective ${\mathrm C}^*$-algebra. Then, for each
finite subset $X$ of the unit ball of $A$ and each $\varepsilon>0$, there exists a finite 
subset $Y$ of the unit ball of $A$ and $\delta>0$ with the following property. If $\phi:A\rightarrow B$ is 
a $(Y,\delta)$-approximate $*$-homomorphism, then there is a 
$*$-homomorphism $\psi:A\rightarrow B$ with $\psi\approx_{X,\varepsilon}\phi$.
\end{proposition}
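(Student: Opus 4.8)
The plan is to translate both hypotheses into the single statement that $A$ admits a \emph{weakly stable} finite presentation in the sense of Loring \cite{Loring.LiftingBook}, and then to feed a $(Y,\delta)$-approximate $*$-homomorphism into that weak stability after checking that its values on the generators form an honest approximate representation. Since $A$ is finitely presented and weakly semiprojective, it has a finite presentation $A=C^*\langle g_1,\dots,g_m\mid\mathcal R\rangle$ whose finite relation set $\mathcal R$ is weakly stable; rescaling, we may take the generators $g_1,\dots,g_m$ in the unit ball of $A$, the norm bounds on generators being among the relations (and if $A$ is unital we also include $1_A$ among the $g_i$). Fix a finite subset $X$ of the unit ball of $A$ and $\varepsilon>0$, and choose for each $x\in X$ a noncommutative $*$-polynomial $p_x$ in $g_1,\dots,g_m$ with $\|x-p_x(g_1,\dots,g_m)\|<\varepsilon/3$ (abbreviate $p_x(g)=p_x(g_1,\dots,g_m)$, and similarly $p_x(b)$, $p_x(c)$ for tuples $b,c$). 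This is a \emph{fixed finite} family, so there is a common degree bound $N$, constants $C_{p_x}$ (essentially $N$ times the sum of the moduli of the coefficients) and local Lipschitz constants $L_{p_x}$ for $p_x$ on tuples of norm at most $2$; attach analogous constants $C_r$ to the relations $r\in\mathcal R$.

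The set $Y$ will be $\{g_1,\dots,g_m\}$. The one genuinely analytic ingredient is a telescoping estimate: if $\phi\colon A\to B$ is a $(Y,\delta)$-approximate $*$-homomorphism and $b_i:=\phi(g_i)$ (contractions, since $\phi$ is cpc), then pulling one generator off at a time and bounding each correction term by Proposition \ref{Prelim.ApproxhmEst}—each such term has the shape $\|\phi(g_iw)-\phi(g_i)\phi(w)\|\le\|\phi(g_ig_i^*)-\phi(g_i)\phi(g_i^*)\|^{1/2}\|w\|\le\delta^{1/2}$ for a contraction $w$, using $g_i\in Y\cup Y^*$—shows that $\|\phi(q(g))-q(b)\|\le C_q\delta^{1/2}$ for every fixed $*$-polynomial $q$ of degree at most $N$. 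Applying this to each $r\in\mathcal R$ and using $\phi(r(g))=\phi(0)=0$ gives $\|r(b_1,\dots,b_m)\|\le C_r\delta^{1/2}$ (inequality, self-adjointness and positivity relations being handled in the same way, using that $\phi$ is positive and $*$-preserving); hence $(b_1,\dots,b_m)$ is a $\delta'$-representation of the presentation once $\delta$ is small relative to $\delta'$ and $\mathcal R$. If $A$ is unital, $\phi(1_A)$ is close to a projection $q$ because $\|\phi(1_A)^2-\phi(1_A)\|\le\delta$, and one first replaces $\phi$ by its compression $x\mapsto q\phi(x)q$ into the unital corner $qBq$; this compression agrees with $\phi$ to within $O(\delta^{1/2})$ on the unit ball, is approximately unital, and still satisfies the telescoping estimates, so that its values on the $g_i$ form a $\delta'$-representation in $qBq$.

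With $(b_i)$ (or its corner version) an approximate representation, the remaining steps are mechanical. First fix $\varepsilon'\in(0,1)$ with $\max_{x\in X}L_{p_x}\varepsilon'<\varepsilon/6$; weak stability of $\mathcal R$ then yields $\delta'>0$ such that every $\delta'$-representation can be perturbed on the generators, within $\varepsilon'$, to a tuple $(c_1,\dots,c_m)$ satisfying $\mathcal R$ exactly; finally take $\delta>0$ small enough that the telescoping bounds force $(b_i)$ to be a $\delta'$-representation and also $\max_{x\in X}C_{p_x}\delta^{1/2}<\varepsilon/6$. The universal property of the presentation yields a $*$-homomorphism $\psi\colon A\to B$ with $\psi(g_i)=c_i$, and for $x\in X$,
\begin{align*}
\|\psi(x)-\phi(x)\|&\le\|\psi(x)-\psi(p_x(g))\|+\|p_x(c)-p_x(b)\|\\
&\qquad+\|p_x(b)-\phi(p_x(g))\|+\|\phi(p_x(g))-\phi(x)\|\\
&<\tfrac{\varepsilon}{3}+L_{p_x}\varepsilon'+C_{p_x}\delta^{1/2}+\tfrac{\varepsilon}{3}<\varepsilon,
\end{align*}
using contractivity of $\psi$ and $\phi$ for the outer terms, the Lipschitz estimate for the second and the telescoping bound for the third (in the unital case $\phi$ is the corner compression throughout, costing an extra $O(\delta^{1/2})$). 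This is $\psi\approx_{X,\varepsilon}\phi$.

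The paper calls all of this immediate, and indeed the only work is organisational: getting the order of quantifiers right ($\varepsilon'$, then $\delta'$ from weak stability, then $\delta$), confirming that a finite presentation of a C$^*$-algebra carries norm bounds on its generators so the $b_i$ are legitimate inputs to weak stability, and—the only point requiring a little care—the unital corner adjustment needed so that relations referring to the unit are approximately respected. Beyond these, the single quantitative tool is Proposition \ref{Prelim.ApproxhmEst}, which powers the telescoping estimate.
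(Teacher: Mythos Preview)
Your proof is correct and follows exactly the approach the paper sketches: take $Y$ to be a weakly stable generating set, use Proposition \ref{Prelim.ApproxhmEst} to see that a $(Y,\delta)$-approximate $*$-homomorphism sends the generators to an $\eta$-representation with $\eta\to 0$ as $\delta\to 0$, invoke weak stability, and pass back to the original $X$ via polynomial approximation. You have simply spelled out the details (telescoping, Lipschitz bounds, the unital corner adjustment) that the paper declares immediate.
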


In the proposition above, the choice of $\delta$  depends on both $X$ and $
\varepsilon$.  Indeed, one proves the proposition by replacing $X$ by a weakly stable generating set for $A$, reducing $\varepsilon$ if necessary, and taking $Y=X$. The result follows since the image of $Y$ under a $(Y,\delta)$-approximate $*$-homomorphism is an $\eta$-representation for the presentation $Y$ (where $\eta\rightarrow 0$ as $\delta\rightarrow 0$).  When $A$ is nuclear,  Lemma \ref{Avg.1} can be used to show that the 
$\delta$ appearing in Proposition \ref{Direct.WSP.ApproxHm} only depends on $\varepsilon$ and not on $X$ or $A$. This approach results in  enlarging the set $Y$.
\begin{lemma}\label{Direct.Easy}
Fix $\varepsilon>0$ and let $\delta<\min(1/17,\varepsilon^2/128)$.   Suppose 
that $A$ is a finitely presented, weakly semiprojective nuclear ${\mathrm C}^*$-algebra. Then 
for each finite subset $X$ of the unit ball of $A$, there exists a finite subset $Y$ of 
the unit ball of $A$ with the following property. If $\phi:A\rightarrow B$ is a $(Y,\delta)$-approximate 
$*$-homomorphism, then there is a $*$-homomorphism $\psi:A\rightarrow B$ 
with $\psi\approx_{X,\varepsilon}\phi$.
\end{lemma}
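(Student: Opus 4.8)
The plan is to build $Y$ by chaining Lemma \ref{Avg.1} into Proposition \ref{Direct.WSP.ApproxHm}. Proposition \ref{Direct.WSP.ApproxHm} already produces, for any prescribed finite set $X$ and any prescribed tolerance, a finite set $Y_0$ and constant $\delta_0$ that detect genuine $*$-homomorphisms up to that tolerance on $X$; the trouble is that $\delta_0$ is allowed to depend on $X$. Lemma \ref{Avg.1}, which is where nuclearity of $A$ enters, lets us trade an approximate $*$-homomorphism that is multiplicative only on a large set $Y$ up to the \emph{fixed} tolerance $\delta\leq 1/17$ for one that is multiplicative on $Y_0$ up to the small tolerance $\delta_0$, paying only a norm error of $8\sqrt{2}\delta^{1/2}+\mu$. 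Feeding the output of Lemma \ref{Avg.1} into Proposition \ref{Direct.WSP.ApproxHm} then removes the $X$-dependence of the tolerance, at the cost of enlarging the set.

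Concretely, given $X$ and the fixed $\varepsilon,\delta$, I would first observe that $128=(8\sqrt{2})^2$, so $\delta<\varepsilon^2/128$ forces $8\sqrt{2}\delta^{1/2}<\varepsilon$ and hence $\varepsilon':=\tfrac12\bigl(\varepsilon-8\sqrt{2}\delta^{1/2}\bigr)>0$. Apply Proposition \ref{Direct.WSP.ApproxHm} to $(X,\varepsilon')$ to get a finite set $Y_0$ in the unit ball of $A$ and $\delta_0>0$ such that every $(Y_0,\delta_0)$-approximate $*$-homomorphism out of $A$ agrees on $X$, to within $\varepsilon'$, with a $*$-homomorphism. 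Choose $\mu>0$ with $\mu\leq\varepsilon'$ and $\mu<(25\sqrt{2})^{-1}$, and apply Lemma \ref{Avg.1} with $Y_0$ playing the role of its ``$X$'', $\delta_0$ playing the role of its ``$\varepsilon$'', and this $\mu$; the finite set it returns is the required $Y$. All of $Y_0,\delta_0,\mu,\varepsilon'$, and hence $Y$, depend only on $X$, $\varepsilon$, $\delta$ and $A$.

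To check that $Y$ works, take a $(Y,\delta)$-approximate $*$-homomorphism $\phi:A\to B$. Since $\delta\leq 1/17$, Lemma \ref{Avg.1} produces a $(Y_0,\delta_0)$-approximate $*$-homomorphism $\psi_1:A\to B$ --- which is cpc, hence a legitimate input for Proposition \ref{Direct.WSP.ApproxHm} --- with $\|\phi-\psi_1\|\leq 8\sqrt{2}\delta^{1/2}+\mu$. Proposition \ref{Direct.WSP.ApproxHm} then gives a $*$-homomorphism $\psi:A\to B$ with $\psi\approx_{X,\varepsilon'}\psi_1$, and for $x\in X$,
\[
\|\psi(x)-\phi(x)\|\leq\varepsilon'+\bigl(8\sqrt{2}\delta^{1/2}+\mu\bigr)\leq 2\varepsilon'+8\sqrt{2}\delta^{1/2}=\varepsilon ,
\]
so $\psi\approx_{X,\varepsilon}\phi$, as wanted.

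I do not expect a genuine obstacle here; the argument is bookkeeping built on Lemma \ref{Avg.1} and Proposition \ref{Direct.WSP.ApproxHm}. The points needing care are all about the order of quantifiers: because $\delta$ is fixed before $X$ appears, it is essential that the hypothesis of Lemma \ref{Avg.1} is the absolute bound $\delta\leq 1/17$ rather than something depending on $Y_0$; and it is the \emph{strict} inequality $\delta<\varepsilon^2/128$ that supplies the positive slack $\varepsilon-8\sqrt{2}\delta^{1/2}$, which then has to be divided between the tolerance $\varepsilon'$ of Proposition \ref{Direct.WSP.ApproxHm} and the error $\mu$ of Lemma \ref{Avg.1}. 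One should also keep in mind that $\mu$ must be admissible for Lemma \ref{Avg.1}, i.e.\ $\mu<(25\sqrt{2})^{-1}$, which is harmless since $\mu$ may be taken arbitrarily small.
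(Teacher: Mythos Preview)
Your proposal is correct and follows essentially the same route as the paper: apply Proposition \ref{Direct.WSP.ApproxHm} to obtain a set $Y_0$ and tolerance $\delta_0$ that detect $*$-homomorphisms on $X$, then use Lemma \ref{Avg.1} (with the fixed $\delta\leq 1/17$) to produce the larger set $Y$ that upgrades a $(Y,\delta)$-approximate $*$-homomorphism to a $(Y_0,\delta_0)$-approximate one at cost $8\sqrt{2}\delta^{1/2}+\mu$. The only cosmetic difference is how the slack $\varepsilon-8\sqrt{2}\delta^{1/2}$ is partitioned between $\varepsilon'$ and $\mu$; your halving is just as valid as the paper's choice.
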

\begin{proof}
Fix $\varepsilon>0$ and take $0<\delta<1/17$ such that $8\sqrt{2}\delta^{1/2}<\varepsilon
$. Fix $\mu>0$ so that $8\sqrt{2}\delta^{1/2}+\mu<\varepsilon$ and write $\varepsilon'=
\varepsilon-8\sqrt{2}\delta^{1/2}-\mu>0$. Given a finitely presented, weakly 
semiprojective nuclear ${\mathrm C}^*$-algebra $A$ and a finite set $X$ in the unit ball of $A
$, use Proposition \ref{Direct.WSP.ApproxHm} to find $\delta'>0$ and a finite set $Z
$ in the unit ball of $A$ such that if $\phi_1:A\rightarrow B$ is a 
$(Z,\delta')$-approximate $*$-homomorphism, then there exists a $*$-homomorphism $\psi:A
\rightarrow B$ with $\psi\approx_{X,\varepsilon'}\phi_1$.  By Lemma \ref{Avg.1}, 
there is a finite set $Y$ in the unit ball of $A$ such that given any 
$(Y,\delta)$-approximate $*$-homomorphism $\phi:A\rightarrow B$, there is a 
$(Z,\delta')$-approximate $*$-homomorphism $\phi_1:A\rightarrow B$ with $\|\phi-\phi_1\|\leq 
8\sqrt{2}\delta^{1/2}+\mu$. It follows that $\delta$ has the property claimed in the 
lemma.
\end{proof}

We now show that we do not need  arbitrarily close approximations in 
order to detect direct limits of finitely presented weakly semiprojective nuclear C$^*
$-algebras. 
\begin{theorem}\label{Direct.Main}
Let $A$ be a separable ${\mathrm C}^*$-algebra and suppose that there is a (not necessarily) 
nested sequence $\{A_k\}_{k=1}^\infty$ of finitely presented, weakly semiprojective 
nuclear ${\mathrm C}^*$-subalgebras of $A$ with the following property. For each finite subset $X$ of the unit ball 
of $A$, there exists $k\in\mathbb N$ such that $X\subset_{\eta} A_k$ for some $\eta$ 
satisfying $\eta<1/120000$. Then $A$ is isomorphic to a direct limit $\displaystyle{\lim_{\rightarrow}}(A_{k_n},
\phi_{n+1})$ for some subsequence $\{k_n\}$ and some connecting 
$*$-homomorphisms $\phi_{n+1}:A_{k_n}\rightarrow A_{k_{n+1}}$.
\end{theorem}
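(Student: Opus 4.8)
The plan is a one-sided Elliott-type approximate intertwining, combining Propositions~\ref{Prelim.PtArveson} and~\ref{EasyFact} with Lemmas~\ref{Direct.Easy} and~\ref{Avg.2} in the spirit of Lemma~\ref{Close.Iso.Tech}. Fix a faithful representation of $A$ on a Hilbert space $H$, so that every $A_k\subseteq A\subseteq\mathbb B(H)$, and fix a dense sequence $\{a_m\}_{m\ge 1}$ in the unit ball of $A$. I will construct inductively indices $k_1,k_2,\dots$, genuine $*$-homomorphisms $\phi_{n+1}\colon A_{k_n}\to A_{k_{n+1}}$, coherent finite sets $G_n$ in the unit ball of $A_{k_n}$ (meaning $\phi_{n+1}(G_n)\subseteq G_{n+1}$), and unitaries $w_n\in\Au$, so that, writing $\iota_n\colon A_{k_n}\hookrightarrow A$ and $\mu_n=\ad(w_1\cdots w_n)\circ\iota_n\colon A_{k_n}\to A$, one has $\mu_{n+1}\circ\phi_{n+1}\approx_{G_n,2^{-n}}\mu_n$, the sets $G_n$ exhaust a dense subset of the inductive system, and for each $1\le i\le n$ there is $x^{(n)}_i\in G_n$ with $\|\mu_n(x^{(n)}_i)-a_i\|<1/2$.

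For the inductive step, starting from stage $n$ I first apply Lemma~\ref{Avg.2} (with Remark~\ref{Avg.2.Rem}(i)) to the data $(G_n,2^{-n})$ for the nuclear algebra $A_{k_n}$, obtaining a F{\o}lner set $Y$ in its unit ball and a small $\delta>0$. Next I apply Lemma~\ref{Direct.Easy} to $A_{k_n}$ with $X=Y$, with $\varepsilon=0.08$, and with a fixed $\delta_{\mathrm{DE}}$ satisfying $6\eta\le\delta_{\mathrm{DE}}<\varepsilon^2/128$ --- possible exactly because $\eta<1/120000$ --- obtaining a further finite set $Y'$ in the unit ball of $A_{k_n}$. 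I then set $Z=Y'\cup(Y')^*\cup\{yy^*\colon y\in Y'\cup(Y')^*\}\cup Y\cup\{(w_1\cdots w_n)^*a_i(w_1\cdots w_n)\colon 1\le i\le n+1\}$, a finite subset of the unit ball of $A$. The hypothesis supplies $k_{n+1}$ with $Z\subset_\eta A_{k_{n+1}}$; since $A_{k_{n+1}}$ is nuclear, Proposition~\ref{Prelim.PtArveson} gives a cpc map $\beta\colon A\to A_{k_{n+1}}$ with $\|\beta(z)-z\|\le 2\eta$ for $z\in Z$. A routine estimate shows $\beta|_{A_{k_n}}$ is a $(Y',6\eta)$-approximate $*$-homomorphism, hence a $(Y',\delta_{\mathrm{DE}})$-approximate $*$-homomorphism, so Lemma~\ref{Direct.Easy} upgrades it to a genuine $*$-homomorphism $\phi_{n+1}\colon A_{k_n}\to A_{k_{n+1}}$ with $\iota_{n+1}\circ\phi_{n+1}\approx_{Y,\,0.08+2\eta}\iota_n$. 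Since $0.08+2\eta<13/150$, Lemma~\ref{Avg.2} now yields a unitary $w_{n+1}\in\Au$ with $\|w_{n+1}-1\|<2\sqrt2(0.08+2\eta)+5\sqrt2\delta$ and $\ad(w_{n+1})\circ\iota_{n+1}\circ\phi_{n+1}\approx_{G_n,2^{-n}}\iota_n$; conjugating by the unitary $w_1\cdots w_n$ of $\Au$ (an isometry of the ideal $A$) gives $\mu_{n+1}\circ\phi_{n+1}\approx_{G_n,2^{-n}}\mu_n$. Finally I pick $x^{(n+1)}_i\in A_{k_{n+1}}$ within $\eta$ of $(w_1\cdots w_n)^*a_i(w_1\cdots w_n)\in Z$ (rescaling into the unit ball at the harmless cost of $2\eta$); since $\|w_{n+1}-1\|$ is small ($\approx 0.23$) one gets $\|\mu_{n+1}(x^{(n+1)}_i)-a_i\|\le 2\|w_{n+1}-1\|+2\eta<1/2$, and a standard diagonal bookkeeping lets me choose $G_{n+1}\supseteq\phi_{n+1}(G_n)\cup\{x^{(n+1)}_i\}$ large enough to also secure the exhaustion property.

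With the induction complete, the relations $\mu_{n+1}\circ\phi_{n+1}\approx_{G_n,2^{-n}}\mu_n$, summability of $\sum 2^{-n}$, and the coherence and exhaustion of the $G_n$ produce a $*$-homomorphism $\mu_\infty\colon A_\infty:=\lim_{\rightarrow}(A_{k_n},\phi_{n+1})\to A$, with canonical maps $\lambda_n\colon A_{k_n}\to A_\infty$. Each $\mu_n$ is isometric, being a conjugate of the isometric inclusion $\iota_n$ by a unitary of $\Au$, so $\|\mu_\infty([x])\|=\lim_n\|\phi_{n\to m}(x)\|=\|[x]\|$ whenever $[x]$ is represented by $x\in A_{k_m}$; hence $\mu_\infty$ is isometric, in particular injective with closed image $\mu_\infty(A_\infty)\subseteq A$. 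For surjectivity, coherence gives $\|\mu_\infty(\lambda_{n+1}(x^{(n+1)}_i))-\mu_{n+1}(x^{(n+1)}_i)\|<2^{-n}$, so $\|\mu_\infty(\lambda_{n+1}(x^{(n+1)}_i))-a_i\|<1/2+2^{-n}<1$ once $n$ is large; since $\{a_m\}$ is dense in the unit ball of $A$ this upgrades to $A\subset_1\mu_\infty(A_\infty)$, whence Proposition~\ref{EasyFact} forces $\mu_\infty(A_\infty)=A$. Therefore $\mu_\infty\colon A_\infty\to A$ is an isomorphism, which is the assertion of the theorem.

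The main obstacle is the bookkeeping inside the inductive step: each of Lemma~\ref{Avg.2}, Lemma~\ref{Direct.Easy} and Proposition~\ref{Prelim.PtArveson} outputs precisely the finite set on which the next result must be run, so the order (first Lemma~\ref{Avg.2} to fix the F{\o}lner set, then Lemma~\ref{Direct.Easy}, then choose $k_{n+1}$ and apply Proposition~\ref{Prelim.PtArveson}) is forced, and one must verify that the single available tolerance $\eta<1/120000$ survives the whole chain --- doubling to $2\eta$ under Proposition~\ref{Prelim.PtArveson}, becoming a multiplicativity defect $6\eta$, then incurring the unavoidable error $8\sqrt2\sqrt{6\eta}<0.08$ hidden in the constraint $6\eta<\varepsilon^2/128$ of Lemma~\ref{Direct.Easy} --- while still leaving the accumulated closeness $0.08+2\eta$ below $13/150$ so that the conjugation in Lemma~\ref{Avg.2} is legitimate, and producing a unitary $w_{n+1}$ so close to $1$ that the surjectivity witnesses land within distance $<1$ of $\mu_\infty(A_\infty)$. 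The constant $1/120000$ in the hypothesis is exactly what makes all these inequalities hold simultaneously.
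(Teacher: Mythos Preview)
Your proposal is correct and follows essentially the same route as the paper's proof: the identical chain of lemmas (first Lemma~\ref{Avg.2} to produce the F{\o}lner set, then Lemma~\ref{Direct.Easy} with $\varepsilon=2/25=0.08$, then the hypothesis together with Proposition~\ref{Prelim.PtArveson}, then back through Lemma~\ref{Direct.Easy} and Lemma~\ref{Avg.2}) in the same order, the same numerical checkpoints ($6\eta<(2/25)^2/128$ and $2/25+2\eta\le 13/150$), and the same one-sided approximate-intertwining scaffolding with surjectivity via Proposition~\ref{EasyFact}. Your bookkeeping (coherent finite sets $G_n$ in place of the paper's dense sequences, and an explicit isometry argument for injectivity in place of the paper's appeal to \cite[2.3]{Elliott.RealRankZero}) is a cosmetic repackaging of the same ideas.
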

\begin{proof}
Fix a dense sequence $\{a_i\}_{i=1}^\infty$ of the unit ball of $A$.  For each $n\geq 
1$ we will construct $k_n\in\mathbb N$, a dense sequence $\big\{a_i^{(n)}\big\}_{i=1}^
\infty$ in the unit ball of $A_{k_n}$ and a unitary $u_n$ in the unitisation ${A^{\dagger}}$. For $n>1$, we 
will define connecting $*$-homorphisms $\phi_n:A_{k_{n-1}}\rightarrow A_{k_n}$. 
These objects will satisfy the following properties:
\begin{enumerate}
\item $\|a_i-u_1\dots u_{n-1}a_i^{(n)}u_{n-1}^*\dots u_1^*\|<1/10$ for all $n\geq 1$ 
and $i=1,\dots,n$.
\item For $n>1$, $\|u_n(\phi_n(x))u_n^*-x\|<2^{-n}$ whenever $x\in A_{k_{n-1}}$ is 
of the form 
\begin{equation}
(\phi_{n-1}\circ\dots\circ\phi_j)(a_i^{(j-1)}),\ \ 1\leq i\leq n,\ \ 2\leq j \leq n-1.
\end{equation} 
\item $\|u_n-1_{{A}^{\dagger}}\|\leq 2/5$.
\end{enumerate}

Once the induction is complete, the second condition above ensures that the 
following diagram gives an approximate intertwining in the sense of \cite[2.3]
{Elliott.RealRankZero}, where $\iota_n:A_n\hookrightarrow A$ is the inclusion map.  
$$
\xymatrix{A_{k_1}\ar[rr]_{\phi_2}\ar[dd]_{\ad(u_1)\circ\iota_1=
\iota_{k_1}}&&A_{k_2}\ar[rr]_{\phi_3}\ar[dd]_{\ad(u_1u_2)\circ
\iota_{k_2}}&&A_{k_3}\ar[rr]\ar[dd]_{\ad(u_1u_2u_3)\circ\iota_{k_3}}&&{\dots}\\\\A
\ar[rr]_{\text{id}_A}&&A\ar[rr]_{\text{id}_A}&&A\ar[rr]&&{\dots}}
$$
In particular \cite[2.3]{Elliott.RealRankZero} produces a $*$-homomorphism $
\theta:\displaystyle{\lim_{\rightarrow}}(A_{k_n},\phi_{n+1})\rightarrow A$ as a 
point-norm limit. This map is injective since each of the vertical maps is injective.  For 
surjectivity, fix $n\in\mathbb N$.  Let $b_n$ denote the image of $a_n^{(n)}$ in $
\displaystyle{\lim_{\rightarrow}}(A_{k_n},\phi_{n+1})$ so that 
\begin{equation}
\theta(b_n)=\lim_{m\rightarrow\infty}u_1\dots u_m(\phi_m\circ\dots\circ\phi_{n+1})
(a^{(n)}_n)u_m^*\dots u_1^*
\end{equation}
in norm.  Repeatedly using the second condition, we have
\begin{equation}
\|\theta(b_n)-u_1\dots u_na_n^{(n)}u_n^*\dots u_1^*\|\leq \sum_{m>n}2^{-m},
\end{equation}
so that
\begin{align}
\|\theta(b_n)-a_n\|&\leq\sum_{m>n}2^{-m}+\|u_1\dots u_na^{(n)}_nu_n^*\dots 
u_1^*-a_n\|\notag\\
&\leq\sum_{m>n}2^{-m}+2\|u_n-1_{{A}^{\dagger}}\|+\|u_1\dots u_{n-1}a^{(n)}_nu_{n-1}^*
\dots u_1^*-a_n\|\notag\\
&\leq\sum_{m>n}2^{-m}+4/5+1/10
\end{align}
from conditions 1 and 3.  Hence $d(A,\theta(\displaystyle{\lim_{\rightarrow}}
(A_{k_n},\phi_{n+1})))\leq 9/10<1$ so that $\theta$ is surjective by Proposition 
\ref{EasyFact}.

We start the construction by using the hypothesis to find $k_1$ and a dense 
sequence $\big\{a^{(1)}_i\big\}_{i=1}^\infty$ so that condition $1$ holds.  We take 
$u_1=1_{{A}^{\dagger}}$ so condition 3 holds and at this first stage condition 2 is empty.  
Suppose that all objects have been constructed up to and including stage $n-1$ for 
some $n>1$.  Let $X$ be the finite subset of the unit ball of $A_{k_{n-1}}$ 
consisting of the elements $(\phi_{n-1}\circ\dots\circ\phi_j)\big(a_i^{(j-1)}\big)$ for $i\leq n$ 
and $j=2,\dots,n-1$.   By Lemma \ref{Avg.2} (with $\gamma=13/150$, $
\delta=1/50$ and $B$ any ${\mathrm C}^*$-algebra), there exists a finite subset $Y$ of the unit ball of $A_{k_{n-1}}$ such 
that, given any two $*$-homomorphisms $\psi_1,\psi_2:A_{k_{n-1}}\rightarrow B$ 
with $\psi_1\approx_{Y,13/150}\psi_2$, there is a unitary $u\in {B}^{\dagger}$ with $\|
u-1_{{B}^{\dagger}}\|\leq 2/5$ and $\ad(u)\circ\psi_1\approx_{X,2^{-n}}\psi_2$.  By enlarging $Y$, we may 
assume that $Y\supseteq X$. By Proposition \ref{Direct.Easy}, there is a finite 
subset $Z$ of the unit ball of $A_{k_{n-1}}$ such that if $\phi:A_{k_{n-1}}\rightarrow 
B$ is a $(Z,6\eta)$-approximate $*$-homomorphism for some $\eta$ satisfying 
$\eta<1/120000=\frac{1}{6}\left(\frac{2}{25}\right)^2\frac{1}{128}$, then there is a $*$-homomorphism $\psi:A_{k_{n-1}}
\rightarrow B$ with $\psi\approx_{Y,2/25}\phi$. Now use the hypothesis to find 
$k_n\in\mathbb N$ such that
\begin{equation}
Z\cup\{u_{n-1}^*\dots u_1^*a_ju_1\dots u_{n-1}\colon 1\leq j \leq n\}\subset_{\eta} A_{k_n}.
\end{equation}
Since $A_{k_n}$ is nuclear, Proposition \ref{Prelim.PtArveson} gives a cpc map $
\phi:A_{k_{n-1}}\rightarrow A_{k_n}$ with $\|\phi(z)-z\|\leq 2\eta$ for $z\in Y\cup Z
\cup Z^*\cup \{zz^*:z\in Z\cup Z^*\}$.  In particular such a map is a $(Z,6\eta)$-
approximate $*$-homomorphism and hence there is a $*$-homomorphism $
\phi_n:A_{k_{n-1}}\rightarrow A_{k_n}$ with $\phi_n\approx_{Y,2/25}\phi$. In 
particular
\begin{equation}
\|\phi_n(y)-y\|\leq\frac{2}{25}+2\eta\leq\frac{13}{150},\quad y\in Y,
\end{equation}
so our choice of $Y$ gives us a unitary $u_n\in {A}^{\dagger}$ with $\|u_n-1_{{A}^{\dagger}}\|
\leq2/5$ and 
\begin{equation}
\|(\ad(u_n)\circ\phi_n)(x)-x\|\leq 2^{-n},\quad x\in X,
\end{equation}
and conditions 2 and 3 hold.  Since $\{u_{n-1}^*\dots u_1^*a_ju_1\dots u_{n-1}\colon 1\leq j \leq n\}\subset_\eta A_{k_n}$ for some $
\eta<1/120000$, we may choose  a dense sequence $
\big\{a_i^{(n)}\big\}_{i=1}^\infty$ to fulfill condition 1. This completes the induction.
\end{proof}

As an example, Elliott's local characterisation of $A\mathbb T$-algebras from 
\cite{Elliott.RealRankZero} can be weakened to give the following statement. Note that the algebras $A_0$ below are all semiprojective by combining 
\cite[14.1.7, 14.1.8, 14.2.1, 14.2.2]{Loring.LiftingBook}. 
\begin{corollary}
Let $A$ be a separable ${\mathrm C}^*$-algebra.  Suppose that, for any finite set $X$ in the 
unit ball of $A$, there exists a ${\mathrm C}^*$-subalgebra $A_0$ of $A$ with 
$X\subset_{1/120000}A_0$, where $A_0$ has the form 
$C(\mathbb T)\otimes F_1\oplus C[0,1]\otimes F_2\oplus F_3$ for finite 
dimensional ${\mathrm C}^*$-algebras $F_1,F_2$ and $F_3$. 
Then $A$ is $A\mathbb T$.
\end{corollary}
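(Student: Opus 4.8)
The plan is to derive this as a corollary of Theorem~\ref{Direct.Main}. The first thing to record is that each ${\mathrm C}^*$-algebra of the form $A_0 = C(\mathbb{T})\otimes F_1 \oplus C[0,1]\otimes F_2 \oplus F_3$ meets the requirements imposed on the building blocks there: $A_0$ is nuclear, being subhomogeneous; it is finitely presented, being a finite direct sum of tensor products of the finitely presented algebras $C(\mathbb{T})$ (the universal unital ${\mathrm C}^*$-algebra on a unitary), $C[0,1]$ (the universal unital ${\mathrm C}^*$-algebra on a self-adjoint contraction), $\mathbb{C}$, and matrix algebras; and it is weakly semiprojective --- indeed semiprojective, by combining \cite[14.1.7, 14.1.8, 14.2.1, 14.2.2]{Loring.LiftingBook}, and semiprojectivity implies weak semiprojectivity.

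Next I would use separability of $A$ to set up the limit. Fixing a dense sequence $\{a_i\}_{i\ge 1}$ in the unit ball of $A$ and applying the hypothesis to $\{a_1,\dots,a_n\}$ for each $n$ produces ${\mathrm C}^*$-subalgebras $A_n\subseteq A$, each of the stated form, to which Theorem~\ref{Direct.Main} applies. (In fact the inductive construction behind Theorem~\ref{Direct.Main} only ever calls on the existence, at each stage, of \emph{some} subalgebra of the stated form that $\eta$-contains a prescribed finite set for some $\eta<1/120000$; the corollary's hypothesis supplies precisely this, and since that construction already permits a non-nested sequence of blocks and uses only that each block is a nuclear, finitely presented, weakly semiprojective subalgebra of $A$, it carries over verbatim.) The output is an isomorphism $A\cong \lim_{\rightarrow}(A_{k_n},\phi_{n+1})$ onto a direct limit of algebras of the stated form along $*$-homomorphism connecting maps $\phi_{n+1}\colon A_{k_n}\to A_{k_{n+1}}$.

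It then remains to recognise such a direct limit as an $A\mathbb{T}$-algebra. Each individual summand type is already $A\mathbb{T}$: a finite-dimensional ${\mathrm C}^*$-algebra is a direct limit of circle algebras under constant connecting maps, and $C[0,1]$ is realised as $\lim_{\rightarrow}(C(\mathbb{T}),C(f_n))$ where $f_n\colon \mathbb{T}\to\mathbb{T}$ is a retraction of $\mathbb{T}$ onto a proper closed sub-arc $J$ followed by the inclusion $J\hookrightarrow\mathbb{T}$, so that the inverse limit of the corresponding circles is $J$. Hence every $A_{k_n}$ is an $A\mathbb{T}$-algebra, and a sequential direct limit of $A\mathbb{T}$-algebras is again $A\mathbb{T}$ by the approximate intertwining argument of \cite[2.3]{Elliott.RealRankZero} --- the very device already used to manufacture $\theta$ in the proof of Theorem~\ref{Direct.Main}. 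Therefore $A$ is $A\mathbb{T}$.

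The verification that the building blocks are finitely presented and weakly semiprojective, and the separability bookkeeping, are routine (the former being cited from Loring's monograph). The only step carrying real content is the last one: identifying a direct limit of the \emph{enlarged} class of building blocks $C(\mathbb{T})\otimes F_1\oplus C[0,1]\otimes F_2\oplus F_3$ with a genuine $A\mathbb{T}$-algebra, for which the key inputs are the presentation of $C[0,1]$ itself as a direct limit of circle algebras and the stability of the $A\mathbb{T}$ class under sequential direct limits.
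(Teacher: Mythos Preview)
Your proposal is correct and follows essentially the same approach as the paper: verify that the building blocks $C(\mathbb{T})\otimes F_1\oplus C[0,1]\otimes F_2\oplus F_3$ are nuclear, finitely presented and (weakly) semiprojective via Loring's results, apply Theorem~\ref{Direct.Main}, and conclude that the resulting direct limit is $A\mathbb{T}$. The paper is terser---it simply cites the Loring references for semiprojectivity and relies implicitly on Elliott's local characterisation of $A\mathbb{T}$-algebras for the last step---whereas you spell out explicitly why each block is itself $A\mathbb{T}$ and why the class is closed under sequential direct limits; but this is elaboration of the same argument rather than a different route.
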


In a similar fashion, we obtain a  generalised characterisation of the Jiang--Su 
algebra $\mathcal{Z}$, (see \cite[Theorems~2.9 and 6.2]{Jiang.Z}). The algebras $Z_{p,q}$
in the statement are semiprojective from \cite{ELP}. 
\begin{corollary}
Let $A$ be a unital, simple, separable ${\mathrm C}^*$-algebra with a unique tracial state.  
Suppose that for each finite subset $X$ of the unit ball of $A$, there exists a prime 
dimension drop C$^{*}$-algebra $Z_{p,q}$ with $X\subset_{1/120000} Z_{p,q}$. 
Then $A\cong \mathcal{Z}$.
\end{corollary}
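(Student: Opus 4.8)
The plan is to follow the template of the two preceding corollaries: use Theorem \ref{Direct.Main} to turn the fixed‑tolerance local approximation hypothesis into a genuine inductive‑limit decomposition of $A$, and then appeal to the Jiang--Su uniqueness theorem for such limits.

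First I would record that a prime dimension drop algebra $Z_{p,q}$ is subhomogeneous, hence nuclear, and is finitely presented and weakly semiprojective by \cite{ELP}; thus the subalgebras of $A$ supplied by the hypothesis are of the type required by Theorem \ref{Direct.Main}. Although that theorem is phrased with a prescribed countable family $\{A_k\}$, its proof only uses the hypothesis ``adaptively'', invoking it on the particular finite set that arises at each inductive stage, so the hypothesis of the present corollary is exactly what that proof consumes; it then produces an isomorphism $A\cong\lim_{\rightarrow}(Z_{p_n,q_n},\phi_{n+1})$ with $*$-homomorphism connecting maps. Before running this I would insist that $1_A$ belong to every finite set that appears (permissible, as $1_A$ lies in the unit ball of $A$). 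Two things then follow. First, any subalgebra $A_0\cong Z_{p,q}$ with $X\subset_{1/120000}A_0$ and $1_A\in X$ shares the unit of $A$: choosing $a\in A_0$ with $\|a-1_A\|<1/120000$ and letting $p_0$ denote the unit of $A_0$, one has $\|p_0-1_A\|\le 2\|a-1_A\|<1$, so the projections $p_0$ and $1_A$ coincide. Second, inspecting the construction in the proof of Theorem \ref{Direct.Main}, each connecting map $\phi_{n+1}$ agrees on $1_A$ with a cpc map that sends $1_A$ to within $13/150<1$ of $1_A$; since $\phi_{n+1}(1_A)$ is a projection it must equal $1_A$, so the connecting maps are unital.

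Next I would feed this decomposition into the classification of $\mathcal Z$. By hypothesis $A$ is unital, simple and separable with a unique tracial state, and it is infinite dimensional, since the hypothesis exhibits a genuine subalgebra of $A$ isomorphic to some (infinite dimensional) $Z_{p,q}$. Jiang and Su's uniqueness theorem for unital simple inductive limits of prime dimension drop algebras with unique trace, \cite[Theorems 2.9 and 6.2]{Jiang.Z}, then yields $A\cong\mathcal Z$. (One may note in passing that such a limit has no projections other than $0$ and $1$, since a projection in $A$ would be approximated by a self-adjoint, hence by a projection, in one of the projectionless building blocks; but this is subsumed in the cited results.)

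The routine work is the unit‑bookkeeping described above together with the observation that the proof of Theorem \ref{Direct.Main} applies verbatim from the adaptive hypothesis. The point requiring genuine care is the interface with \cite{Jiang.Z}: one must verify that their classification covers the inductive system obtained here --- in particular that unital connecting maps suffice, or else that one may pass to the images of the limit maps so as to arrange injective connecting maps while keeping the Elliott invariant under control. I expect this matching of hypotheses to be the main obstacle, and it is precisely the content imported from the Jiang--Su paper.
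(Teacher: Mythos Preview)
Your approach is correct and is exactly the one the paper intends: apply Theorem \ref{Direct.Main} (using that prime dimension drop algebras are nuclear, finitely presented and semiprojective by \cite{ELP}) to realise $A$ as a unital inductive limit of prime dimension drop algebras, and then invoke \cite[Theorems~2.9 and 6.2]{Jiang.Z}. The paper supplies no proof beyond these citations, so your unit-bookkeeping and your observation that the proof of Theorem \ref{Direct.Main} only uses the approximation hypothesis adaptively are precisely the details left implicit.
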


We now return to our perturbation result for order zero maps (Theorem 
\ref{Near.Perturb}) and show that the resulting maps can also be taken of order 
zero. For simplicity, we establish this result for near inclusions rather than for the context of finite 
sets used in Theorem \ref{Near.Perturb}. 
In the next theorem we will make use of the  fact that a map $\phi$ whose domain is a finite dimensional  operator space $E$ is completely bounded with $\|\phi\|_{{\mathrm{cb}}}\leq ({\mathrm{dim}}\,E)\,\|\phi\|$, (see \cite{EH}).  
\begin{theorem}\label{OZ.Nose2}
Let $A\subset_\gamma B$ be a near inclusion of ${\mathrm C}^*$-algebras, where $\gamma$ satisfies
\begin{equation}\label{OZ.100}
0<\gamma<10^{-7}.
\end{equation}
Given a finite dimensional ${\mathrm C}^*$-algebra $F$ and an order zero 
map $\phi:F\rightarrow A$, there exists an order zero map $\psi:F\rightarrow B$ 
satisfying
\begin{equation}
\|\phi-\psi\|_{{\mathrm{cb}}}<493\gamma^{1/2}.
\end{equation}
\end{theorem}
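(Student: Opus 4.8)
The plan is to start from the cp map $\psi:F\rightarrow B$ produced by Theorem~\ref{Near.Perturb} (equivalently, its corollary for near inclusions) and correct it to an order zero map. Recall that by Proposition~\ref{Near.OZ.Cone} the order zero map $\phi:F\rightarrow A$ corresponds to a $*$-homomorphism $\rho_\phi:C_0(0,1]\otimes F\rightarrow A$, and conversely. So the task of perturbing an order zero map is really the task of perturbing a $*$-homomorphism from the cone $C_0(0,1]\otimes F$; but this cone is not finitely presented or semiprojective in general, so we cannot directly invoke the results of Section~\ref{Direct}. Instead I would work with the structure obtained in the proof of Theorem~\ref{Near.Perturb}: there the map $\phi_0(x)=t\theta(x)t^*$ on $F$ is built from the fixed $*$-homomorphism $\theta:F\rightarrow\mathbb M_r(\mathbb B(H)\otimes\mathbb M_m)$ and a single contraction $t$ in $\mathbb M_{1\times r}(A\otimes\mathbb M_m)$, with $\phi$ recovered by compressing $\phi_0$ to a corner. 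The key point is that $t$ has a special form, $t=(\phi(1_{\mathbb M_{n_k}})^{1/2}\otimes 1)s_k$, where the $s_k$ are self-adjoint partial isometries built from $\pi(F)$; in particular $t$ carries the orthogonality structure of $F$, and the corresponding $\phi_0$ is itself order zero.

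The strategy, then, is: first choose (as in Theorem~\ref{Near.Perturb}) a near approximant $u=(u_1,\dots,u_r)$ to $t$ in $\mathbb M_{1\times r}(B\otimes\mathbb M_m)$ with $\|t-u\|\le\mu$, $\mu=2\gamma+\gamma^2$. This gives a cp map $\psi_0(x)=u\theta(x)u^*$ on $F$ with $\|\phi_0-\psi_0\|_{\mathrm{cb}}\le\mu+\mu(1+\mu)$, but $\psi_0$ need not be order zero because $u$ need not inherit the partial-isometry/orthogonality relations that $t$ enjoys. To fix this I would pass to the $*$-homomorphism picture: $\psi_0$ is order zero iff the associated map on the cone is multiplicative, and the obstruction is measured by how badly the relevant products $u\theta(e)u^*u\theta(f)u^*$ fail to vanish for orthogonal $e,f\in F^+$. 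Since $\|t-u\|$ is small and $t\theta(e)t^*t\theta(f)t^*=0$ exactly (as $\phi_0$ is order zero), these products are of size $O(\gamma)$. Now one invokes a standard stability/semiprojectivity-type argument \emph{for the cone over a finite-dimensional algebra}: the cone $C_0(0,1]\otimes F$ is (weakly) semiprojective (this is classical — cones over finite dimensional algebras, indeed over any semiprojective algebra, are projective/semiprojective), so an approximate $*$-representation of $C_0(0,1]\otimes F$ into $B\otimes\mathbb M_m$ is close to an honest one on the finitely many generators coming from matrix units of $F$. Pulling this honest $*$-homomorphism back through Proposition~\ref{Near.OZ.Cone} and compressing to the corner $f_{1,1}(B\otimes\mathbb M_m)f_{1,1}\cong B$ produces a genuine order zero map $\psi:F\rightarrow B$.

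For the quantitative bound I would keep track of the two contributions: $\|\phi-\psi_0\|$ (norm, hence cb after multiplying by $\dim F$, using the remark before the theorem that $\|\cdot\|_{\mathrm{cb}}\le(\dim E)\|\cdot\|$ on a finite-dimensional domain), which is $O(\gamma)$; and $\|\psi_0-\psi\|$, coming from the semiprojectivity correction, which will be $O(\gamma^{1/2})$ because stability estimates for $*$-homomorphisms typically produce a square-root loss (the approximate representation is within $O(\gamma)$ of multiplicative, and the nearby honest representation is within $O(\gamma^{1/2})$). The constant $\gamma$ is taken small enough ($\gamma<10^{-7}$) that all the approximate-representation hypotheses are met and the final estimate $8\sqrt{6}$-type rounding gives $\|\phi-\psi\|_{\mathrm{cb}}<493\gamma^{1/2}$. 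The main obstacle will be organising the semiprojectivity step with \emph{explicit, dimension-independent} constants: the naive invocation of semiprojectivity of the cone gives no control, so one must instead use a concrete functional-calculus/polar-decomposition argument directly on $u$ — roughly, replace $u_k$ by $(\psi_0(1_{\mathbb M_{n_k}}))^{1/2}$ times the partial isometry in the polar decomposition of the appropriate compression of $u_k$, mimicking the construction of $t$ from $s_k$ — and then verify that this adjusted $u$ yields an order zero $\psi$ while only moving things by $O(\gamma^{1/2})$. Making that adjustment and bookkeeping the constants is the technical heart of the proof.
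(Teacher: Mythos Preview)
Your overall instinct---pass to the cone $C_0(0,1]\otimes F$, use its weak semiprojectivity, and accept a $\gamma^{1/2}$ loss---is right, but the route you outline has two genuine gaps and misses the tool the paper actually uses. First, you assert at the outset that the cone ``is not finitely presented or semiprojective in general'' and therefore dismiss Section~\ref{Direct}; in fact $C_0(0,1]\otimes F$ for finite-dimensional $F$ \emph{is} finitely presented and weakly semiprojective (as you yourself later say), and the paper's proof runs precisely through Section~\ref{Direct}. The dimension-independent constant you flag as ``the main obstacle'' is exactly what Lemma~\ref{Direct.Easy} supplies: it upgrades the naive semiprojectivity statement (Proposition~\ref{Direct.WSP.ApproxHm}), whose tolerance $\delta$ depends on the finite set, to one whose $\delta$ depends only on $\varepsilon$, by feeding the approximate $*$-homomorphism through the amenability averaging of Lemma~\ref{Avg.1}. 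Your alternative---a polar-decomposition repair of the approximant $u$ to $t$---is not worked out and would almost certainly produce constants depending on the number of matrix units of $F$.

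Second, the paper avoids the $B\otimes\mathbb M_m$ detour entirely, and this matters: compression by $I_H\otimes f_{1,1}$ does \emph{not} preserve the order zero property in general, so even if you corrected $\psi_0$ to an honest order zero map into $B\otimes\mathbb M_m$, compressing to the corner would not give an order zero map into $B$. (In Theorem~\ref{Near.Perturb} this works for $\phi_0$ only because of its special diagonal form.) Instead the paper applies Lemma~\ref{Near.DirectLem} directly to the $*$-homomorphism $\rho_\phi:C_0(0,1]\otimes F\to A$ (legitimate since the cone has nuclear dimension $1$), obtaining a cpc map $\theta:C_0(0,1]\otimes F\to B$ which is an $(X_1,3\eta)$-approximate $*$-homomorphism; Lemma~\ref{Direct.Easy} then gives an honest $*$-homomorphism $\pi:C_0(0,1]\otimes F\to B$, and $\psi(x)=\pi(\mathrm{id}_{(0,1]}\otimes x)$ is automatically order zero with range in $B$. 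The cb-norm bound comes from a $\beta$-net trick: choose a $\beta$-net $X_0$ of the unit ball of $F$ with $3(\dim F)\beta<\gamma^{1/2}$, use Lemma~\ref{Avg.2} to find a unitary $u$ with $\|u-1\|\le 246\gamma^{1/2}$ and $\rho_\phi\approx_{X,\beta}\mathrm{Ad}(u)\circ\pi$ on $X=\{\mathrm{id}_{(0,1]}\otimes x:x\in X_0\}$, then convert the resulting norm estimate on $\phi-\mathrm{Ad}(u)\circ\psi$ to a cb estimate via $\|\cdot\|_{\mathrm{cb}}\le(\dim F)\|\cdot\|$ and absorb the $\mathrm{Ad}(u)$ at a cost of $2\|u-1\|$.
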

\begin{proof}
Let $\gamma'<\gamma$ be such that $A\subseteq_{\gamma'} B$.
Denote the linear dimension of $F$ by $m$ and choose
$\beta>0$ so that $3m\beta<\gamma^{1/2}$. Let $X_0$ be a
$\beta$-net for the unit ball of $F$, and let
\begin{equation}
X=\{{\mathrm{id}}_{(0,1]}\otimes x \colon x\in
X_0\}\subseteq C_0(0,1]\otimes F.
\end{equation}
Now apply Lemma \ref{Avg.2} and Remark \ref{Avg.2.Rem} (ii) to the
nuclear ${\mathrm C}^*$-algebra $C_0(0,1]\otimes F$ with $\gamma$
replaced by $82 \gamma^{1/2}<13/150$, $\vp>0$ replaced by
$\beta >0$, and $D$ replaced by C$^*(A,B)$. Then there is a finite
subset $Z$ of the unit ball of  $C_0(0,1]\otimes F$ such that if
$\phi_1,\phi_2  :  C_0(0,1]\otimes F\to {\mathrm C}^*(A,B)$ are
$*$-homomorphisms satisfying $\phi_1
\approx_{Z,82\gamma^{1/2}}\phi_2$, then there exists a
unitary $u\in {\mathrm C}^*(A,B)^{\dagger}$ with
$\|u-1\|<246\gamma^{1/2}$ and
$\phi\approx_{X,\beta}{\mathrm{Ad}}(u)\circ \phi_2$.
Choose $\vp >0$ so that
\begin{equation}
(17\gamma'+\vp)^{1/2} +2\vp<\sqrt{17}\gamma^{1/2},
\end{equation}
and then define
\begin{align}
\eta&=4(2\gamma'+\gamma'^{2})(2+2\gamma'+\gamma'^2)+\vp<17\gamma'
+\vp,
\end{align}
since $\gamma'<10^{-7}$. The ${\mathrm C}^*$-algebra $C_0(0,1]\otimes
F$ is finitely presented and weakly semiprojective,
\cite[Chapter 14]{Loring.LiftingBook}, and has nuclear dimension 1, \cite{Zacharias.NuclearDimension}. The
choice of $\eta$ gives
\begin{equation}
3\eta<\min\, \{1/17,(81\gamma^{1/2})^2/128\},
\end{equation}
since $(81\gamma^{1/2})^2/128>51\gamma>3\eta$. We may now
apply Lemma \ref{Direct.Easy} with $X$ replaced by $Z$ to conclude
that there is a finite subset $X_1$ of the unit ball of $C_0(0,1]\otimes F$
with the following property. If $\theta :  C_0(0,1]\otimes F\to B$ is an
$(X_1,3\eta)$-approximate $*$-homomorphism, then there is a
$*$-homomorphism $\pi :  C_0(0,1]\otimes F\to B$ with
$\pi\approx_{Z,81\gamma^{1/2}}\theta$. We may enlarge $X_1$
if necessary so that $Z\subseteq X_1$.

Given an order zero map $\phi :  F\to A$, Proposition
\ref{Near.OZ.Cone} gives a $*$-homomorphism $\rho_\phi  : 
C_0(0,1]\otimes F\to A$ defined by
$\rho_\phi({\mathrm{id}}_{(0,1]}\otimes x)=\phi(x)$ for
$x\in F$. The nuclear dimension of $C_0(0,1]\otimes F$ is 1, so Lemma \ref{Near.DirectLem} gives a cpc map
$\theta :  C_0(0,1]\otimes F\to B$ with
\begin{equation}
\|\rho_\phi(x)-\theta(x)\|\leq
4(2\gamma'+\gamma'^2)(2+2\gamma'+\gamma'^2)+\vp=\eta
\end{equation}
for $x\in X_1\cup X_1^*\cup\{xx^*\colon x\in
X_1\cup X_1^*\}$. Since $\rho_\phi$ is a $*$-homomorphism,
$\theta$ is an $(X_1,3\eta)$-approximate $*$-homomorphism.
By choice of $X_1$, there is a $*$-homomorphism $\pi : 
C_0(0,1]\otimes F\to B$ with $\pi\approx_{Z,81\gamma^{1/2}}\theta$, and the
inequality $\eta<17 \gamma$ leads to
$\pi\approx_{Z,82\gamma^{1/2}}\rho_\phi$. Thus the choice of
$Z$ ensures the existence of a unitary $u$ with $\|u-1\|\leq
246\gamma^{1/2}$ such that
$\rho_\phi\approx_{X,\beta}{\mathrm{Ad}}(u)\circ \pi$. Define
an order zero map $\psi :  F\to B$ by
$\psi(x)=\pi({\mathrm{id}}_{(0,1]}\otimes x)$ for $x\in F$.
Then $\phi\approx_{X_0,\beta}{\mathrm{Ad}}(u)\circ \psi$.
Since $X_0$ is a $\beta$-net for the unit ball of $A$, a
simple approximation argument gives
\begin{equation}
\|\phi(x)-({\mathrm{Ad}}(u)\circ\psi)(x)\|\leq
3\beta\|x\|,\qquad
x\in F.
\end{equation}
Recalling that $F$ has dimension $m$, we find that
\begin{equation}
\|\phi-{\mathrm{Ad}}(u)\circ \psi\|_{{\mathrm{cb}}}\leq
m\|\phi-{\mathrm{Ad}}(u)\circ \psi\|\leq
3m\beta<\gamma^{1/2}.
\end{equation}
We also have the estimate
\begin{equation}
\|\psi-{\mathrm{Ad}}(u)\circ \psi\|_{{\mathrm{cb}}}\leq
2\|u-1\|< 492 \gamma^{1/2},
\end{equation}
and the desired conclusion
$\|\phi-\psi\|_{{\mathrm{cb}}}\leq 493\gamma^{1/2}$ follows from the previous two inequalities.
\end{proof}

We end by   using our methods to give a new characterisation of when a
separable nuclear C$^{*}$-algebra is  $D$-stable, where $D$ is any
separable strongly self-absorbing C$^{*}$-algebra. For simplicity we only state
and prove a unital version, but it seems clear that, with some extra
effort, one can use \cite[Theorem~2.3]{Toms.StrongSelfAbsorbing} to
give a non-unital version as well. We first establish some notation. For a 
${\mathrm C}^*$-algebra $A$, $\prod_{n=1}^{\infty} A$ will denote the space of bounded sequences with entries from $A$ while $\sum_{n=1}^{\infty} A$ is the ideal of sequences $\{a_n\}_{n=1}^{\infty}$ for which $\lim_{n\to\infty}\|a_n\|=0$. We write $A_{\infty}$ for the quotient space and $\pi$ for the quotient map of $\prod_{n=1}^{\infty} A$ onto $A_{\infty}$. We identify $A$ with  a subalgebra of $A_{\infty}$ by first regarding $A$ as the algebra of constant sequences in 
$\prod_{n=1}^{\infty} A$ and then applying $\pi$. The relative commutant $A'\cap A_{\infty}$ is the algebra of central sequences.  
\begin{theorem}
Let $D$ be a  separable unital strongly self-absorbing ${\mathrm C}^*$-algebra  and let $\gamma$ satisfy  $0 \leq
\gamma\leq1/169$. Suppose that $A$ is a separable unital nuclear
${\mathrm C}^*$-algebra and that, for any finite subsets $X$ and $Y$ of the unit balls
of $A$ and $D$ respectively, there exists a ucp $(Y,\gamma)$-approximate
$*$-homomorphism  $\theta:D\rightarrow A$ with
$\|\theta(y)x-x\theta(y)\|\leq\gamma$ for $x\in X$ and $y\in Y$.  Then
$A$ is $D$-stable.
\end{theorem}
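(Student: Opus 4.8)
The plan is to reduce to the Toms--Winter characterisation of $D$-stability and then to use the approximately inner half flip of $D$, together with the averaging results of Section \ref{Avg} and the intertwining technique of Lemma \ref{Close.Iso.Tech}, to trade the fixed tolerance $\gamma$ for arbitrarily good exact data.

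Since $D$ is separable, unital and strongly self-absorbing, it has approximately inner half flip and is simple and nuclear by \cite[Proposition~1.5]{Toms.StrongSelfAbsorbing}. By the equivalence in \cite[Theorem~2.2]{Toms.StrongSelfAbsorbing} it is enough to show: given finite subsets $X$ of the unit ball of $A$ and $Y$ of the unit ball of $D$, and given $\varepsilon>0$, there is a unital $*$-homomorphism $\psi\colon D\to A$ with $\|\psi(y)x-x\psi(y)\|<\varepsilon$ for $x\in X$, $y\in Y$. Equivalently, passing to the sequence algebra $A_\infty$ and its relative commutant, it suffices to produce a single unital $*$-homomorphism $D\to A_\infty\cap A'$; here one uses separability of $A$ together with a diagonal reindexing to pass between the two formulations, and the completely positive lifting theorem (via nuclearity of $D$) to lift unital completely positive maps into $A_\infty$ to sequences of such maps into $A$.

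Fix then $X\subseteq A$, $Y\subseteq D$ in the unit balls and $\varepsilon<1/2000$. Applying the hypothesis with $X$ enlarged to a large finite set $X'$ and $Y$ enlarged to a large finite set $Y'$ gives a ucp $(Y',\gamma)$-approximate $*$-homomorphism $\theta\colon D\to A$ that $\gamma$-commutes with $X'$; by Proposition \ref{Prelim.ApproxhmEst}, $\theta$ is $\gamma^{1/2}$-multiplicative as a map. This is where $\gamma\le 1/169$, i.e. $\gamma^{1/2}\le 1/13$, is used: it keeps the quantities of order $\gamma^{1/2}$ appearing below beneath the thresholds $1/17$ and $13/150$ demanded by Lemmas \ref{Avg.1} and \ref{Avg.2} and beneath $2$ (a spectral gap when polar parts are taken). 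I would then run a loop modelled on the proofs of Proposition \ref{InnerFlip} and Corollary \ref{half-flip-close}: using the approximately inner half flip, choose a unitary $w\in D\otimes D$ sending $1\otimes d$ to $d\otimes 1$ (to tiny error on the relevant finite set); apply the hypothesis once more with a finite set of $A$ large enough to contain a fine net of the image of a second approximately central approximate copy of $D$, through which $w$ is transported; take its polar part and run a slice-map argument as in Proposition \ref{InnerFlip} to obtain cpc maps $D\to A$ approximating a $*$-homomorphism to an arbitrarily small prescribed tolerance on an arbitrarily large prescribed finite set, still $\varepsilon$-commuting with $X$ since the half-flip unitary and the slicing state can be chosen to respect $X$. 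Feeding these into the intertwining argument of Lemma \ref{Close.Iso.Tech}, with Lemma \ref{Avg.1} improving multiplicativity and Lemma \ref{Avg.2} controlling the successive conjugating unitaries, produces in the point-norm limit an exact unital $*$-homomorphism $\psi\colon D\to A$ with the required commutation estimate.

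The main obstacle --- and the reason strong self-absorption, not merely nuclearity of $D$, is needed --- is that the tolerance $\gamma$ in the hypothesis is a fixed constant rather than a quantity tending to $0$. The averaging results of Section \ref{Avg} improve multiplicativity only at the cost of displacing the map by a quantity of order $\gamma^{1/2}$, which on its own would destroy any approximate centrality. The approximately inner half flip is exactly what lets one replace a given approximately central approximate copy of $D$ in $A$ by a unitarily rotated copy that is again an admissible input, so that these $\gamma^{1/2}$-corrections can be absorbed repeatedly without accumulating and without losing control of the commutation with $X$. Organising this bookkeeping so that every constant stays below the thresholds of Lemmas \ref{Avg.1}, \ref{Avg.2} and \ref{Close.Iso.Tech} is where the real work lies.
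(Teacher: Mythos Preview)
Your proposal takes a substantially different route from the paper, and the route you sketch has a genuine gap.

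The paper's argument does not use the approximately inner half flip of $D$ at all in the body of the proof. Instead, the decisive idea is to exploit the amenability of $A$ (not of $D$): given a finite set $\{a_1,\ldots,a_n\}$ in $A$, choose an approximate diagonal $\sum_i \lambda_i b_i^* \otimes b_i$ for $A$ via Lemma~\ref{Avg.ApproxDiagonal} that nearly commutes with the $a_j$, apply the hypothesis with the $b_i$'s in the role of $X$ to obtain $\theta$, and set $\phi_n(x) = \sum_i \lambda_i b_i^* \theta(x) b_i$. Because $\theta(y)$ $\gamma$-commutes with each $b_i$, this $\phi_n$ stays within $\gamma + 1/n$ of $\theta$ and hence is a $(Y, 4\gamma+3/n)$-approximate $*$-homomorphism; because of the approximate-diagonal identity, $\phi_n(d)$ commutes with each $a_j$ up to $1/n$ \emph{regardless of $\gamma$}. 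Passing to $A_\infty$, the resulting $\phi_Y\colon D \to A_\infty \cap A'$ is a $(Y,4\gamma)$-approximate $*$-homomorphism whose range already lies in the relative commutant. Now Lemma~\ref{Avg.1} (applied with the nuclear algebra $D$ as source and $A_\infty \cap A'$ as target, using $4\gamma \le 1/17$) improves this to a $(Z,\varepsilon)$-approximate $*$-homomorphism \emph{still mapping into} $A_\infty \cap A'$, because that lemma operates entirely within the target algebra. A further passage to $(A_\infty\cap A')_\infty$, a Choi--Effros lift, and a diagonalisation give the required unital $*$-homomorphism $D \to A_\infty \cap A'$.

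Your mechanism for preserving approximate centrality is where the gap lies. You correctly identify the problem: Lemma~\ref{Avg.1} displaces the map by order $\gamma^{1/2}$, which destroys the $\gamma$-commutation with $X$. But your proposed fix --- transporting the half-flip unitary $w \in D\otimes D$ into $A$ via two approximately central approximate copies of $D$ and slicing --- is not justified. The $\theta_i$ are only approximate $*$-homomorphisms, so there is no well-defined map $D\otimes D \to A$ through which to push $w$; the slice-map construction of Proposition~\ref{InnerFlip} relies on a genuine near inclusion of algebras, not on a pair of approximate morphisms with approximately commuting ranges. Moreover, the unitaries produced by Lemma~\ref{Avg.2} and the intertwining of Lemma~\ref{Close.Iso.Tech} lie in $A^\dagger$, and conjugation by such unitaries does not preserve approximate commutation with a fixed finite subset of $A$ unless the unitaries themselves nearly commute with that subset --- something none of these lemmas provide. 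The paper sidesteps all of this by landing in $A_\infty \cap A'$ \emph{before} any multiplicativity improvement, so that centrality is automatic thereafter. Your final paragraph's assertion that strong self-absorption (beyond nuclearity of $D$) is essential to the body of the argument is therefore also mistaken: in the paper, strong self-absorption enters only through the Toms--Winter criterion at the very end; inside the proof only nuclearity of $D$ (for Lemma~\ref{Avg.1} and the Choi--Effros lifting) and nuclearity of $A$ (for the approximate diagonal) are used.
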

\begin{proof}
Suppose that we are given a finite set $Z$ (containing $1_{D}$) in the unit
ball of $D$ and some $\varepsilon>0$. Since $4\gamma\leq1/17$ and $D$
is nuclear, Lemma \ref{Avg.1} shows that there exists a finite set $Y$
(also containing $1_{D}$) in the unit ball of $D$ so that, if
$\phi_{Y}:D \rightarrow A_{\infty}\cap A'$ is a
$(Y,4\gamma)$-approximate $*$-homomorphism,   then there is a
$(Z,\varepsilon)$-approximate $*$-homomorphism
$\psi_{Z,\varepsilon}:D\rightarrow A_\infty\cap A'$ near to
$\phi_{Y}$.

Let $\{a_n\}_{n=1}^{\infty}$ be dense in the unit ball of $A$. Fix $n$ and use
Lemma~\ref{Avg.ApproxDiagonal} to find positive real numbers
$(\lambda_i)_{i=1}^m$ summing  to $1$ and contractions
$\{b_i\}_{i=1}^m$ in $A$ such that
$\|\sum_{i=1}^m\lambda_ib_i^*b_i-1_A\|<1/n$ and
\begin{equation}\label{thm5.9e1}
\left\|\sum_{i=1}^m\lambda_i(a_jb_i^*\otimes b_i-b_i^*\otimes b_ia_j)\right\|_{A\
\potimes\ A}<1/n,\qquad 1\leq j\leq n.
\end{equation}
By our hypotheses, there is a ucp $(Y,\gamma)$-approximate
$*$-homomorphism $\theta:D\rightarrow A$ with
$\|\theta(y)b_i-b_i\theta(y)\|\leq \gamma$ for $y\in Y\cup Y^* \cup
\{yy^*:y\in Y\cup Y^*\}$ and $i=1,\dots,m$.  Define a cpc map
$\phi_n:D\rightarrow A$ by
$\phi_n(x)=\sum_{i=1}^m\lambda_ib_i^*\theta(x)b_i$. For $y\in Y\cup
Y^* \cup\{yy^*:y\in Y\cup Y^*\}$, we have
\begin{align}
\|\phi_n(y)-\theta(y)\|&=\left\|\sum_{i=1}^m\lambda_ib_i^*\theta(y)b_i-\sum_{i=1}^
m
\lambda_{i} b_i^*b_i\theta(y)\right\|+\left\|(\sum_{i=1}^m \lambda_{i}
b_i^*b_i-1_A) \theta(y)\right\|\notag\\
&<\gamma+1/n.
\end{align}
As $\theta$ is a $(Y,\gamma)$-approximate $*$-homomorphism, $\phi_n$
is a $(Y,4\gamma+3/n)$-approximate $*$-homo\-morphism.     Therefore
we can define a $(Y,4\gamma)$-approximate $*$-homo\-morphism
$\phi_Y:D\rightarrow A_\infty$ by $\phi_Y(x)=\pi((\phi_n(x)))$, where
$\pi:\prod_{\mathbb{N}} A\rightarrow A_\infty$ is the quotient map.
For each $d$ in the unit ball of $D$, the map $x_1\otimes x_2\mapsto
x_1\theta(d)x_2$ extends to contractive linear map from $A\ \potimes\
A$ into $A$.   The estimate (\ref{thm5.9e1}) then gives
$\|a_j\phi_n(d)-\phi_n(d)a_j\|<1/n$ for $j\leq n$ and so $\phi_Y$
takes values in $A_\infty\cap A'$. Moreover, as $\theta$ is unital, we
have $\|\phi_{Y}(1_{D}) - 1_{A_{\infty}}\|\leq \gamma$. Now, by our
choice of $Y$, Lemma \ref{Avg.1} yields a
$(Z,\varepsilon)$-approximate $*$-homomorphism
$\psi_{Z,\varepsilon}:D\rightarrow A_\infty\cap A'$ such that $\|
\psi_{Z,\varepsilon} - \phi_Y\| \leq 12 \gamma^{1/2}$.

Using separability of $D$, upon increasing $Z$ and decreasing
$\varepsilon$ we obtain a $*$-homomorphism $
\psi:D \rightarrow (A_{\infty} \cap A')_{\infty}$ such that $\|
\psi(1_{D}) - 1_{(A_{\infty})_{\infty}}\| \leq 12 \gamma^{1/2} +
\gamma < 1$. The latter in particular implies that $\psi$ is unital.
As $D$ is nuclear, we may use the Choi--Effros lifting theorem to
obtain a ucp lift $\bar{\psi}:D \rightarrow \prod_{\mathbb{N} \times
\mathbb{N}} A$. Now a standard diagonal argument yields a unital
$*$-homomorphism $\tilde{\psi}:D \to A_{\infty} \cap A'$. By
\cite[Theorem~2.2]{Toms.StrongSelfAbsorbing} this shows that $A$ is
$D$-stable.
 \end{proof}

\end{document}